\newtheorem{thm}{Theorem}[section]
\newtheorem{cor}[thm]{Corollary}
\newtheorem{lem}[thm]{Lemma}
\newtheorem{prop}[thm]{Proposition}
\newtheorem{conj}[thm]{Conjecture}
{\theoremstyle{definition}
\newtheorem{defn}[thm]{Definition}
\newtheorem{Example}[thm]{Example}
\newtheorem{rem}[thm]{Remark}
}
\numberwithin{equation}{section}
\newcommand{\To}{\longrightarrow}
\newcommand*{\Longhookrightarrow}{\ensuremath{\lhook\joinrel\relbar\joinrel\rightarrow}}
\newcommand{\Z}{\mathbb Z}
\newcommand{\Q}{\mathbb Q}
\newcommand{\C}{\mathbb C}
\newcommand{\R}{\mathbb R}
\newcommand{\Pro}{\mathbb P}
\newcommand{\GC}{\mathcal{GC}}
\newcommand{\q}{/\!/}
\newcommand{\tr}{\mathrm{tr}}
\newcommand{\id}{\mathrm{id}}
\newcommand{\can}{\mathrm{can}}
\newcommand{\mm}{\mathfrak{m}}
\newcommand{\GL}{\mathrm{GL}}
\newcommand{\mc}{\mu}
\newcommand{\0}{\color{blue}{\mathsf{0}}}
\begin{document}
\allowdisplaybreaks

\newcommand{\arXivNumber}{2101.04419}

\renewcommand{\thefootnote}{}

\renewcommand{\PaperNumber}{103}

\FirstPageHeading

\ShortArticleName{Invariant Differential Forms on Complexes of Graphs and Feynman Integrals}

\ArticleName{Invariant Differential Forms on Complexes of Graphs\\ and Feynman Integrals\footnote{This paper is a~contribution to the Special Issue on Algebraic Structures in Perturbative Quantum Field Theory in honor of Dirk Kreimer for his 60th birthday. The~full collection is available at \href{https://www.emis.de/journals/SIGMA/Kreimer.html}{https://www.emis.de/journals/SIGMA/Kreimer.html}}}

\Author{Francis BROWN}

\AuthorNameForHeading{F.~Brown}

\Address{All Souls College, University of Oxford, Oxford, OX1 4AL, UK}
\Email{\href{mailto:francis.brown@all-souls.ox.ac.uk}{francis.brown@all-souls.ox.ac.uk}}

\ArticleDates{Received March 04, 2021, in final form November 14, 2021; Published online November 23, 2021}

\Abstract{We study differential forms on an algebraic compactification of a moduli space of metric graphs. Canonical examples of such forms are obtained by pulling back invariant differentials along a tropical Torelli map. The invariant differential forms in question generate the stable real cohomology of the general linear group, as shown by Borel. By~integrating such invariant forms over the space of metrics on a graph, we define canonical period integrals associated to graphs, which we prove are always finite and take the form of generalised Feynman integrals. Furthermore, canonical integrals can be used to detect the non-vanishing of homology classes in the commutative graph complex. This theory leads to insights about the structure of the cohomology of the commutative graph complex, and new connections between graph complexes, motivic Galois groups and quantum field theory.}

\Keywords{graph complexes; Outer space; tropical curves; motives; multiple zeta values; Feynman integrals; quantum field theory}

\Classification{18G85; 11F75; 11M32; 81Q30}

\renewcommand{\thefootnote}{\arabic{footnote}}
\setcounter{footnote}{0}

\section{Homology of the commutative graph complex}

We consider the graph complex introduced by Kontsevich in \cite{Kontsevich93}, which he refers to as the odd, commutative graph complex. It is denoted by $\GC_N$ in \cite{WillwacherGRT}, where $N$ is any fixed even integer. We review the definitions and some known results about its homology.
\subsection{Definitions}
Let $G$ be a connected graph. Let $V_G$, $E_G$ denote its set of vertices, and edges, and denote by
\begin{gather*}
h_G\colon\ \text{the number of loops, or genus, of } G,
 \\
e_G = |E_G|\colon\ \text{the number of edges of } G ,
\\
\deg_N G = e_G - N h_G\colon\ \text{which will be called the degree of } G .
\end{gather*}
In the case $N=0$ the degree coincides with the number of edges. In the case $N=2$,
the degree is minus what is sometimes called the ``superficial degree of divergence'' in the physics literature. An \emph{orientation} of $G$ is an element
\begin{gather*}
\eta \in \big(\textstyle{\bigwedge^{\!e_G}} \Z^{E_G} \big)^{\times} .
\end{gather*}
If the edges of $G$ are denoted by $e_1,\dots, e_n$, where $n=e_G$, then an orientation
is equal to either $e_1\wedge \dots \wedge e_n$ or its negative. Thus an orientation is simply an ordering of the edges of $G$ up to the action of even permutations.

The notation
 $G/\gamma$ will denote the graph obtained by contracting all the edges of a~sub\-graph~$\gamma$ of $G$
(defined by a subset of the set of edges of $G$). It is defined by removing every edge of $\gamma$, in any order, and identifying its endpoints.
It is convenient to use a different notation for the operation:
\begin{gather*}
G\q \gamma = \begin{cases}
G/\gamma& \text{if}\ h_{\gamma}=0,
\\
\varnothing & \text{if}\ h_{\gamma}>0.
\end{cases}
\end{gather*}
In other words, the contraction $G\q \gamma$ is the empty graph if $\gamma$ contains a loop.

Let $\GC_N$ denote the $\Q$-vector space generated by pairs $(G,\eta)$, where $G$ is a connected graph and $\eta$ an orientation, such that: $G$ has no tadpoles (edges bounding on a single vertex) and no vertices of degree $\leq 2$, modulo the equivalence relations
\begin{gather}
(G, - \eta) = -(G, \eta), \nonumber
\\
 (G, \eta) = (G', \sigma (\eta)),\label{GequivalenceRelations}
 \end{gather}
where $\sigma$ is any isomorphism $\sigma\colon G \overset{\sim}{\rightarrow} G'$. Denote the equivalence class of $(G,\eta)$ by $[G,\eta]$. The differential in $\GC_N$ is defined by
\begin{gather*}
{\rm d} [ G, e_1 \wedge \dots \wedge e_n ] = \sum_{i=1}^n (-1)^i [G\q e_i, e_1 \wedge \dots \wedge \widehat{e_i} \wedge \dots \wedge e_n ].
\end{gather*}
No tadpoles can arise in the right-hand side because graphs with double edges vanish in $\GC_N$ by~\eqref{GequivalenceRelations}. One checks that the differential is well-defined and satisfies ${\rm d}^2=0$. Furthermore, it preserves the loop number~$h$, and decreases the degree $\deg_N$ by $1$.

\begin{defn} The \emph{graph homology} is defined to be the vector space:
\begin{gather*}
H(\GC_N) = \frac{\ker {\rm d}}{\operatorname{Im} {\rm d}}.
\end{gather*}
 It is graded by homological degree (denoted $H_n(\GC_N)$), where $n=\deg_N G$ is the degree of $G$:
 \begin{gather*}
 H(\GC_N ) = \bigoplus_{n \in \Z} H_n(\GC_N),
 \end{gather*}
and also by the number of loops $H_n(\GC_N) = \bigoplus_{h\geq0} H_n(\GC_N)^{(h)}$. It is therefore bigraded.
 \end{defn}
 The graph complexes $\GC_N$ for all even $N$ are mutually isomorphic, so modifying $N$ merely changes the grading by degree.
 In this paper, the grading by loops plays a secondary role, and we work essentially with $\GC_0$ for the most part. However, for the purposes of the introduction we will discuss the case of $\GC_2$ because it makes the comparison with results in the literature more explicit and because the figures below take up considerably less space on the page.

\subsection{Examples}
Any graph admitting an automorphism which acts on its set of edges by an odd permutation vanishes in $\GC_N$ by \eqref{GequivalenceRelations}. In particular, a graph which contains
 a doubled edge is zero. It follows that any graph with the property that every edge is contained in a triangle is closed in the graph complex, since contracting an edge of a triangle leads to a doubled edge.

 Consider the wheel with $n$ spokes depicted in Figure \ref{figWheel}.
 \begin{figure}[h]
\centering
{\includegraphics[width=4.8cm]{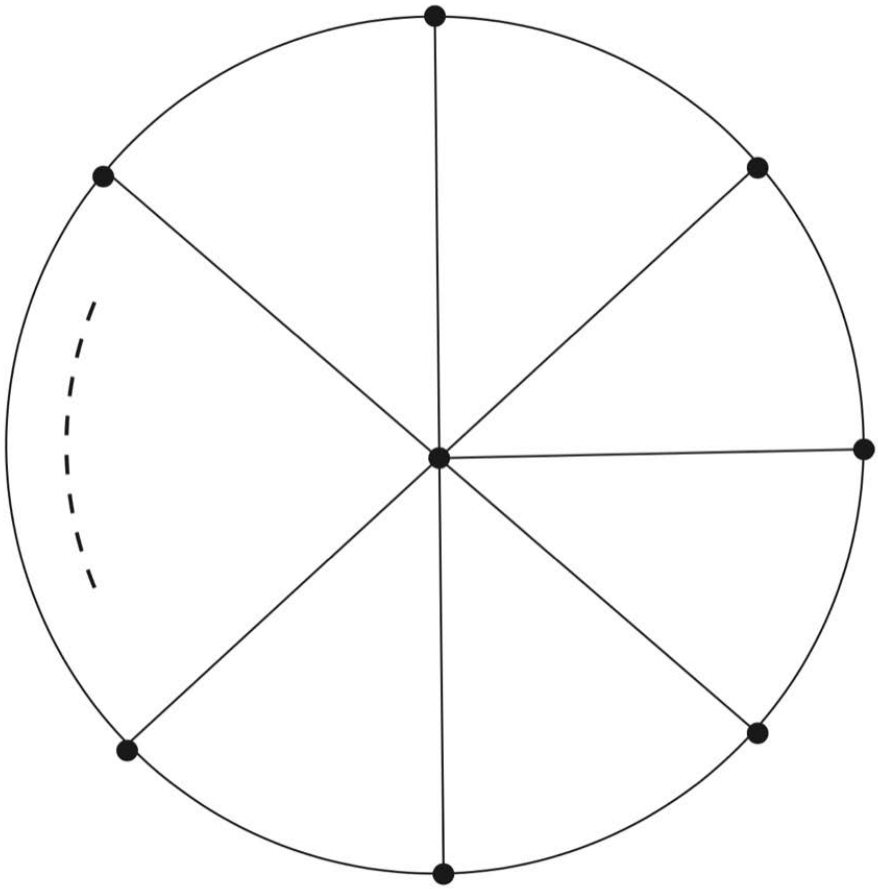}}
 \caption{The wheel with $n$ spokes $W_n$.} \label{figWheel} \end{figure}
Since every edge lies in a triangle, ${\rm d}[W_n]=0$ (here and henceforth, a choice of orientation will be implicit in the notation for a graph and will be omitted). Since the even wheels $W_{2k}$ admit an odd automorphism, they vanish in the graph complex. One knows (e.g., by \cite{SpectralSequenceGC2}) that the odd wheel classes $[W_{2n+1}] $ are non-zero in homology:
\begin{gather*}
[W_{2n+1} ] \in H_0(\GC_2)
\end{gather*}
for all $n \geq 1$. The graph $W_{2n+1}$ has $2n+1$ loops, and $4n+2$ edges.

\subsection{Known results}
Table \ref{table:knownresults} depicts computer calculations of graph homology for $N=2$ in low degrees. At the time of writing, little is known explicitly in homological degrees $\geq 1$ beyond $11$ loops.

\begin{table}[h]
\caption{Dimensions of $H_n(\GC_2)$ at low loop order \cite{GraphComplexComputations}. The (red) classes in $H_0(\GC_2)$ with 3, 5, 7, 9 loops are generated by the wheels $W_3$, $W_5$, $W_7$, $W_9$. Other classes in this diagram are presumably only representable as linear combinations of graphs.}
\label{table:knownresults}
\begin{center}
\begin{tabular}{c|cccccccccc}
$H_8$ &&&&&&&&&& $\0$ \\
$H_7$ &&&&&&&&& $\0$& 1 \\
$H_6$ &&&&&&&& $\0$ & 0& 0 \\
$H_5$ &&&&&&& $\0$ & 0 & 0& 0 \\
$H_4$ &&&&&& $\0$ & 0 & 0& 0 &0\\
$H_3$ &&&&& $\0$ & 1 & 0& 1 &1 & 2 \\
$H_2$ &&&& $\0$ & 0&0 & 0 & 0 & 0 & 0 \\
$H_1$ &&& $\0$ & 0 & 0 & 0 & 0 & 0 & 0 & 0 \\
$H_0$ && $\0$ & \color{red}{1} & 0&\color{red}{1} &0 & \color{red}{1} & 1& \color{red}{1} & 1 \\
\hline
$h_G$ & $1$ & $2$ & $3$ & $4$ & $5$ & $6$ & $7$ & $8$& $9$& $10$
\end{tabular}
\end{center}
\end{table}
All trivalent (3-regular) graphs lie along the diagonal line $e_G = 3(h_G-1)$. All graphs above this line (blue entries and above) satisfy $e_G \geq 3h_G-2$ and vanish in $\GC_2$ since they have a~2-valent vertex.
\medskip

One knows that:
\begin{enumerate}\itemsep=0pt
\item The homology groups $H_n(\GC_2)$ vanish in negative degrees $n<0$ in loop degree $\geq 1$ (shown in \cite{WillwacherGRT} and interpreted geometrically in \cite{CGP}).

\item Willwacher showed \cite{WillwacherGRT} that there is an isomorphism of coalgebras (see below for the definition of the coalgebra structure on graph homology)
\begin{gather} \label{WillwacherGRT} H_0(\GC_2) \cong \mathfrak{grt}^{\vee},
\end{gather}
where $\mathfrak{grt}$ denotes the Grothendieck--Teichm\"uller Lie algebra introduced by Drinfeld in~\cite{Drinfeld}.
It is explicitly defined by generators and relations \cite{Furusho}, but little is known about its structure. A conjecture of Deligne, proved in \cite{BrMTZ}, implies that it contains the graded Lie algebra of the motivic Galois group of mixed Tate motives over the integers
$\mathcal{MT}(\Z)$:
\begin{gather} \label{MTZinjection}
\mathbb{L} (\sigma_3, \sigma_5, \dots ) \cong \mathrm{Lie} \big(G^{\mathrm{mot}}_{\mathcal{MT}(\Z)}\big) \Longhookrightarrow \mathfrak{grt}.
\end{gather}
The latter Lie algebra is isomorphic to the free graded Lie algebra $\mathbb{L} (\sigma_3, \sigma_5, \dots )$ with one generator $\sigma_{2n+1}$ in every odd degree $-(2n+1)$, for $n\geq 1$. These generators are not canonical for $n\geq 5$, but are known to pair non-trivially with the wheel graphs $W_{2n+1}$ via \eqref{WillwacherGRT}. Note that the isomorphism~\eqref{WillwacherGRT} is combinatorial -- there is presently no known geometric action of the motivic Lie algebra on graph homology.
\end{enumerate}
From (2) one infers the existence of a graph homology class $\xi_{3,5} \in H_0(\GC_2)$ at $8$ loops, dual to $[\sigma_3, \sigma_5]$; and a class
$\xi_{3,7} \in H_0(\GC_2)$ at 10 loops dual to $[\sigma_{3},\sigma_{7}]$. In degree $11$, an $11$-loop class $\xi_{3,3,5}\in H_0(\GC_2)$ dual to $[\sigma_3, [\sigma_3, \sigma_5]]$ appears. It is only well-defined up to addition of a rational multiple of $[W_{11}]$.

\begin{rem} 
Drinfeld asked the question of whether \eqref{MTZinjection} is an isomorphism. The graded Lie coalgebra dual to $ \mathrm{Lie} \big(G^{\mathrm{mot}}_{\mathcal{MT}(\Z)}\big)$ is isomorphic to the Lie coalgebra of motivic multiple zeta values modulo the motivic version of $\zeta(2)$ and modulo products. The latter space carries many additional structures, including a depth filtration and an intimate relation to modular forms. These two additional structures are not presently understood on the level of graph homology, to our knowledge.
\end{rem}

\subsection{Further structures} \label{sect: FurtherStructures}
In addition to the differential ${\rm d}$, we consider two more operations on graphs. They do not preserve~$\GC_N$, so in order to incorporate them, one must relax the definitions of the graph complex. Instead of doing this, we observe that these operations will only appear via an integration formula \eqref{IntroStokes}, in which all terms corresponding to graphs which lie outside $\GC_N$, i.e., which have a vertex of degree $\leq 2$ or a tadpole, automatically vanish by Proposition~\ref{prop: vanishing}.

The first additional structure is a ``second'' differential which deletes edges:
\begin{gather} \label{deltadef}
\delta [ G , e_1\wedge \dots \wedge e_n] = \sum_{i=1}^n (-1)^i \big[ G\backslash e_i,, \ e_1\wedge \dots \wedge \widehat{e_i} \wedge \dots \wedge e_n\big],
\end{gather}
where $G \backslash e_i$ is the graph $G$ with the same vertex set but with the edge $e_i$ deleted. One checks again that $\delta$ is well-defined on graph isomorphism classes and satisfies $\delta^2=0$ and ${\rm d} \delta+ \delta {\rm d} =0$. It has degree $N-1$.
Note that deleting an edge can generate 2-valent vertices, and so $\delta$ does not preserve the graph complex $\GC_N$.
It does, however, preserve the complex $\GC_N^{\geq 2}$ of graphs with no vertices of degree $\leq 1$, and it is
observed in \cite{SpectralSequenceGC2} that the graph complex $\GC_0^{\geq 2}$ has trivial homology with respect to $\delta$, since adjoining an edge in all possible ways defines a homology inverse.
Consequently, one shows that there exists an infinite family of non-trivial higher degree classes in $H_n(\GC_2)$, $n>0$, via a spectral sequence argument \cite{SpectralSequenceGC2}. The existence of these classes unfortunately uses \eqref{MTZinjection} in an essential way.

The second additional structure is the Connes--Kreimer coproduct \cite{ConnesKreimer}:
\begin{gather} \label{DeltaCK}
\Delta G = \sum_{\gamma \subset G} \gamma \otimes G/\gamma ,
\end{gather}
where $\gamma$ ranges over core (1-particle irreducible, or bridgeless) subgraphs of $G$. It defines a~coassociative coproduct which is compatible with both differentials. However, once again it does not preserve the graph complex -- for example, $G/\gamma$ may contain tadpoles, and if $G$ has a~bridge, then $G/\gamma$ may have a vertex of degree one. By antisymmetrizing the coproduct one obtains a~cobracket dual
 to the Connes--Kreimer Lie bracket \cite{ConnesKreimer}, which is given
by a signed sum of all vertex insertions of one graph into another. See \cite[Section~6.9]{FeynmanCat} for another interpretation. It~nduces a Lie algebra structure on graph cohomology.

We shall provide a geometric interpretation of both $\eqref{deltadef}$ and $\eqref{DeltaCK}$ via the boundary structure of a compactification of the space of metric graphs.
\subsection{Comments and questions}
Recently Chan, Galatius and Payne proved in \cite[Theorems 1 and 2]{CGP} that for all $g\geq 2$, the highest non-zero weight-graded piece of the cohomology of $\mathcal{M}_g$, the moduli space of curves of genus $g$ (which by Deligne \cite{delignehodge2} carries a canonical mixed Hodge structure) satisfies
\begin{gather} \label{CGP}
{\rm g r}^W_{6g-6} H^{4g-6-n} (\mathcal{M}_g;\Q) \overset{\sim}{\longrightarrow} H_{n}( \GC_2)^{(g)}.
\end{gather}
Using known results about the graph complex they deduced new information about the cohomology of $\mathcal{M}_g$. The existence of the
wheel class $[W_3]$, for example,
 corresponds to the fact, first proved by Looijenga \cite{Looijenga}, that
$H^6(\mathcal{M}_3;\Q)\cong \Q(-6)$, a pure Tate mixed Hodge structure of weight 12.

\begin{rem} \label{remPuzzlezeta(3)} The following puzzle was a principal motivation for this project. Simply put, \eqref{WillwacherGRT} and \eqref{MTZinjection} suggest that the motivic Galois group $G^{\mathrm{mot}}_{\mathcal{MT}(\Z)}$, and hence its Lie algebra, should act naturally on $H_0(\GC_2)$. The point is that not every graded Lie algebra which is structurally isomorphic to a free Lie algebra of the form $\mathbb{L}(\sigma_3, \sigma_5,\dots )$, is necessarily naturally motivic, i.e., admits a natural action by $\mathrm{Lie} \big(G^{\mathrm{mot}}_{\mathcal{MT}(\Z)}\big)$.

If the motivic Galois group were to act naturally upon $H_0(\GC_2)$, then by the Tannakian formalism, the latter would be endowed with the structure of a mixed Tate motive over $\Z$, and hence we would expect the left-hand side of \eqref{CGP}, or certainly the part which corresponds to $H_0(\GC_2)$, to correspond naturally to a mixed Tate motive over the integers. It would involve non-trivial extensions of pure Tate objects, whose extension classes are detected by periods which are multiple zeta values. However, the object on the left-hand side of \eqref{CGP} is by definition only a pure motive: in fact, a direct sum of copies of Tate motives $\Q(3-3g)$.

For example, the very meaning of the element $\sigma_3$ is that it corresponds (or rather, is dual) to an extension class
\begin{gather} \label{introExtzeta3}
0 \To \Q \To \mathcal{E} \To \Q(-3) \To 0,
\end{gather}
where $\mathcal{E}$ is a mixed Tate motive.
The non-triviality of this extension is detected by its period, which is proportional to $\zeta(3)$.
In this paper we shall naturally associate an extension of Tate motives of the form \eqref{introExtzeta3} to the class $[W_3]$ whose period is indeed a multiple of $\zeta(3)$ (in fact $60 \zeta(3)$) and conjecture that the same applies to all the odd wheel classes.
It seems that, up to Tate twisting, the left-hand side of \eqref{CGP} sees only one piece of the associated weight-graded object $\mathrm{gr}_{\bullet}^W \mathcal{E} = \Q\oplus \Q(-3)$, which is split.
\end{rem}

In the light of the previous remark, it may be reasonable to expect that the cohomology of the graph complex in its entirety has the structure of a non-trivial mixed motive.

\medskip

The previous discussion thus raises the following questions:
\begin{enumerate}\itemsep=0pt
\item How should one interpret higher degree graph homology classes?
\item How is the graph complex related to mixed motives and periods?
\end{enumerate}

In this paper we shall use the theory of invariant forms on locally symmetric spaces to define (motivic) periods associated to graphs. This leads to a conjectural interpretation of infinitely many higher degree classes in the graph complex.

\section{Overview of contents}

This section provides some commentary and background motivation for the main contents of the paper. The reader may wish to return to the present section periodically while reading the rest of the paper.

The main thrust of this paper is to study differential forms on a geometric incarnation of the graph complex.
For this, we consider a certain moduli space of metric graphs, which is related to both the moduli space of tropical curves \cite{BMV} and Culler and Vogtmann's Outer space \cite{CullerVogtmann}, and then go on to explain how to construct differential forms upon this space.

A possible point of confusion is the different use of the word ``marking'' in the literature, which can refer to three different things. A ``marked graph'' commonly means a graph with external half-edges, corresponding to the moduli space of curves with \emph{marked} points. However, we shall not consider any such graphs in this paper, and will therefore not use the term. In~\cite{BMV}, a ``marking'' refers to what we shall call a weighting on vertices; finally, in the context of Outer space \cite{CullerVogtmann}, ``marking'' refers to an ordered set of generators in the fundamental group of a graph, which we shall call a ``framing'' in order to avoid conflict with the other notions.

\subsection{Metric graphs} \label{sect: MetricGraphs} All graphs will be finite, and connected in the following discussion. A metric graph $G$ is one in which every edge $e$ is assigned a length $\ell_e \in \R_{>0}$. The lengths are normalised so that their total sum $\sum_{e\in E_G} \ell_e $ equals $1$. The metrics on $G$ define an open Euclidean simplex of dimension $e_G-1$
\begin{gather*}
\sigma_G = \bigg\{(\ell_e)_e \in \R_{>0}^{E_G}\colon \sum_{e\in E_G} \ell_e=1\bigg\} .
\end{gather*}
 Let $\overline{\sigma}_G \subset \R_{\geq 0}^{E_G}$ denote the closed simplex where all lengths are positive or zero.
 Contraction of an edge $e \in E_G$ corresponds to the natural inclusion
 \begin{gather*}
 \iota\colon\ \sigma_{G/e} \ \Longhookrightarrow \ \overline{\sigma}_G,
 \end{gather*}
 where $\sigma_{G/ e}$ is identified with the open face defined by $\ell_e=0$.
 An edge contraction is called admissible if $e$ has distinct end points and therefore $G/e= G\q e$.

 The group of automorphisms $\operatorname{Aut}(G)$ acts via permutation of the edges and vertices of $G$, and acts by linear transformations on $\sigma_G$, and its closure $\overline{\sigma}_G$.

\subsection{Differential forms} A first definition of a smooth differential form of degree $k$ and genus $g$ is
 the data of a collection $\{\omega_G\}_G$ of differential forms
 \begin{gather*}
 \omega_G\colon\ \text{a smooth $k$-form on $\sigma_G$ for every graph $G$ with } h_G =g,
 \end{gather*}
 which are functorial and compatible with each other: in other words, $\pi^* \omega_{G'} = \omega_{G}$, where by abuse of notation, $\pi\colon \sigma_G \rightarrow \sigma_{G'}$ denotes the linear isomorphism on cells induced by any isomorphism $\pi\colon G \overset{\sim}{\rightarrow} G'$; and
 for every admissible edge contraction of $G$, the form $\omega_G$ extends smoothly to the open face $\iota (\sigma_{G/\!/ e}) \subset \overline{\sigma}_G$ and its restriction satisfies
 \begin{gather*}
 \iota^* \omega_G = \omega_{G/\!/ e} \qquad ( = \omega_{G/e}).
 \end{gather*}
 It is important to note that the forms $\omega_G$ all have the same degree, independent of $G$ or $g$.
 The differential is defined in the usual manner: ${\rm d} \{\omega_G\}_G = \{{\rm d} \omega_G\}_G$; as is the exterior product
 $\{\omega\}_G \wedge \{\eta\}_G = \{\omega \wedge \eta\}_G$.
 This leads to a simple definition of a de Rham complex of smooth forms. We briefly discuss geometric interpretations in the next section before turning to a definition of \emph{algebraic} differential forms.

 \begin{figure}[h]
\centering
\includegraphics[width=11cm]{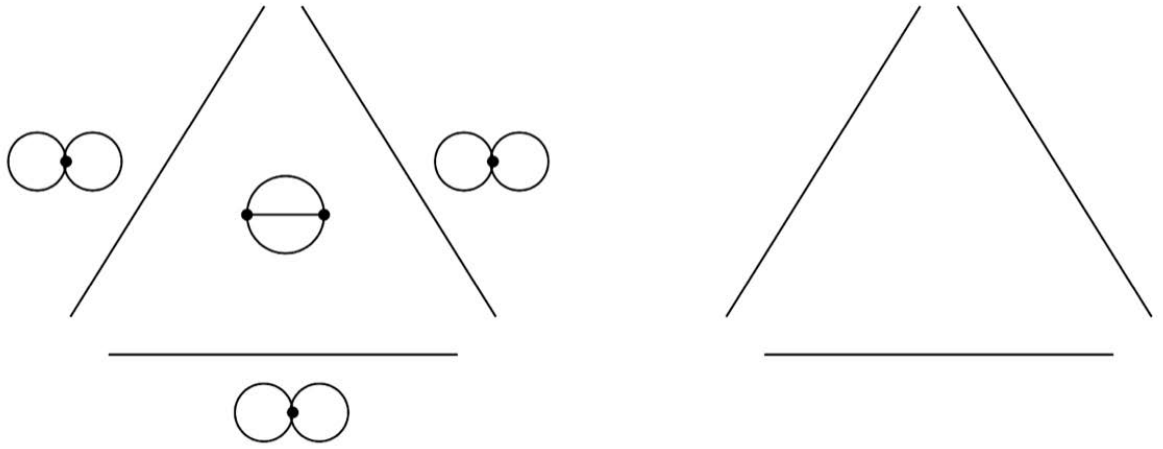}
\put(-258,60){$G$}
\put(-239,45){\footnotesize $\ell_1$}
\put(-239,66){\footnotesize $\ell_2$}
\put(-239,76){\footnotesize $\ell_3$}
\put(-310,76){\footnotesize $\ell_2$}
\put(-289,76){\footnotesize $\ell_3$}
\put(-258,8){\footnotesize $\ell_1$}
\put(-218,8){\footnotesize $\ell_2$}
\put(-195,76){\footnotesize $\ell_1$}
\put(-174,76){\footnotesize $\ell_3$}
\put(-86,60){\small $\omega_G(\ell_1,\ell_2,\ell_3)$}
\put(-132,90){\footnotesize $\omega_{G\! /\!/ \!e_1}\! (\ell_2,\ell_3)$}
\put(-34,90){\footnotesize $\omega_{G\!/\!/\!e_2} \!(\ell_1,\ell_3)$}
\put(-78,17){\footnotesize $\omega_{G\!/\!/\!e_3} \!(\ell_1,\ell_2)$}
\caption{{\it Left}: The cell $\sigma_G$ corresponding to a sunrise graph $G$ with three edges. It is the open simplex $\ell_1+\ell_2+\ell_3=1$ in $\R_{>0}^3$. Each open facet $\ell_i=0$ of its closure is identified with $\sigma_{G \!/\!/\! e_i}$, where~$G\q e_i$ is the graph obtained by contracting the edge $e_i$.
The corners, which arise from contracting loops, are omitted.
{\it Right}: A differential form $\omega_G$ on $\sigma_G$ which extends smoothly to the open facets $\ell_i=0$, restricted to which, $\omega_G$ coincides with $\omega_{G\!/\!/\! e_i}$. The form $\omega_G$ must be invariant under all permutations of $\ell_1$, $\ell_2$, $\ell_3$
since $\operatorname{Aut}(G)$ is isomorphic to the symmetric group on three letters.}
\label{figuresunrisesimplex}
\end{figure}

\subsection{Geometric digression} For the convenience of the interested reader, we relate the rather informal discussion above to the moduli space of tropical curves, and Outer space. The following is not required for the rest of the paper.

\subsubsection{Moduli space of tropical curves} \label{Sect: ModTrop}
A weighted graph $(G,w)$ is a graph $G$ which has a weight function $w\colon V_G \rightarrow \mathbb{Z}_{\geq 0}$ on its set of vertices. A graph with no weightings will usually be regarded as the graph $(G,0)$, where $0$ denotes the zero weight function.
 The genus of a connected weighted graph is
 \begin{gather*}
 g = h_G + \sum_{v\in V_G} w(v).
 \end{gather*}
The cell associated to a weighted, metric graph is the set
\begin{gather*}
C(G,w) = \left\{ ( \ell_e)_e \in \R_{>0}^{E_G} \right\}
\end{gather*}
of all possible edge lengths. It does not depend on $w$.
A tropical curve \cite{BMV} is defined to be a weighted metric graph $(G,w)$ which is stable: in other words the degree (valency) of every vertex of weight zero is $\geq 3$, and every vertex of weight $1$ has degree $\geq 1$.
 The automorphism group $\operatorname{Aut}(G,w)$ is the subgroup of the full group of automorphisms $\operatorname{Aut}(G)$ which preserves the weight function. It acts linearly upon the cell $C(G,w)$ and upon its closure $\overline{C(G,w)} \cong \R^{E_G}_{\geq 0}$.

 A specialisation (contraction) of a tropical curve with respect to an edge $e$ is the tropical curve obtained by contracting $e$. If the edge $e$ has two distinct endpoints of weights $w_1$, $w_2$, then the new vertex obtained after contracting the edge $e$ has weight $w_1+w_2$; if the edge $e$ is a loop (or tadpole) with a single endpoint of weight $w$, then after contraction it leads to a vertex of weight $w+1$. The former contractions were considered admissible in the previous paragraphs; the latter not.

 The moduli space of tropical curves \cite{BMV} of genus $g$ is the topological space
 \begin{gather*}
 M^{\tr}_g = \bigsqcup_{(G,w)} \big( \overline{C(G,w)}/\operatorname{Aut}(G,w)\big)_{/\sim},
 \end{gather*}
 where the disjoint union is over all stable weighted graphs of genus $g$. In this definition, the spaces are endowed with the quotient topology, and $\sim$ is the equivalence relation given by common specialisations of weighted metric graphs.
 Alternatively, one may define $M^{\tr}_g$ as a~colimit \cite{CGP}, by identifying the boundaries of each closed cell $\overline{C(G,w)}$ with the cells of their specialisations.

 The simplices $\sigma_G $ we considered above may be embedded in $C(G,0)$, and may also be identified with $C(G,0)/\R_{>0}$, where $\R_{>0}$ acts by scalar multiplication on the edge lengths. In this manner, consider the open subspace
 \begin{gather*}
 \big(M^{\tr}_g\big)_{w=0} \subset M^{\tr}_g
 \end{gather*}
 defined to be the complement in $M^{\tr}_g$ of the images of all cells $C(G,w)$ (or their closures, it does not matter) which involve a non-trivial weighting function $w\neq 0$, or equivalently, of graphs $(G,w)$ whose total weight $w(G) = \sum_{v\in V_g} w(v) $ is positive.

 A collection of smooth differential forms $\{\omega_G\}$ of degree $k$ and genus $g$
 may thus be interpreted as a differential $k$-form on the quotient of the locus $\big(M^{\tr}_g\big)_{w=0}$ by $\R^{>0}$.

 \subsubsection{Outer space}
 Outer space is constructed from connected metric graphs $(G,\ell)$ which have no vertices of degree $\leq 2$, and which are equipped with a homotopy equivalence from the ``rose'' graph $R_g$ which has one vertex and $g$ edges:
 \begin{gather*}
 f\colon\ R_g \To G,
 \end{gather*}
 where $g= h_G$. Such a map $f$ is called a ``marking'' in \cite{CullerVogtmann}; we shall call it a framing to avoid confusion for the reasons mentioned earlier. The metric $\ell\colon E_G \rightarrow \R_{>0}$ is normalised so that $\sum_{e \in E_G} \ell(e) =1$.
 The map $f$ induces an isomorphism
 \begin{gather*}
 f_*\colon\ \Z^g \cong H_1(R_g;\Z) \ \overset{\sim}{\To} \ H_1(G;\Z)
 \end{gather*}
 and hence defines a basis of the homology group $H_1(G;\Z)$.
 An isomorphism of framed graphs $(G,f) \cong (G',f')$ is an isomorphism $\pi\colon G \overset{\sim}{\rightarrow} G'$ such that $f'$ is homotopy equivalent to $\pi f$. The contraction of an edge in $(G,f)$ is the framed graph $(G/e, f')$, where $f'$ is the composition of $f$ with the quotient $G \rightarrow G/e$. It is admissible if $e$ has distinct endpoints.

 Outer space $\mathcal{O}_g$ is defined \cite{CullerVogtmann} by gluing together simplices $\sigma_{(G,f)}$ along the maps $\iota\colon \sigma_{(G,f)/e} \hookrightarrow \overline{\sigma}_{(G,f)}$ for admissible edge contractions, modulo the action of isomorphisms of framed graphs.
 Therefore the images of open cells in $\mathcal{O}_g$ correspond to isomorphism classes of framed graphs $(G,f)$.
 It is important to note that since only admissible edge contractions are allowed, the closure of an open cell $\sigma_G$ in Outer space is not necessarily compact (not all faces of $\overline{\sigma}_G$ are admitted\footnote{If one does admit all such faces, i.e., uses the closed simplices $\overline{\sigma}_{(G,f)}$ in place of $\sigma_{(G,f)}$, then one obtains the simplicial closure $\mathcal{O}^*_g$, whose quotient $\mathcal{O}^*_g/\mathrm{Out}(F_g)$ is isomorphic to the link of the vertex of $M^{\mathrm{tr}}_{g}$.}).
The group $\mathrm{Out}(F_g)$ of outer automorphisms of the free group on $g$ generators acts properly on the space $\mathcal{O}_g$, and its quotient $\mathcal{O}_g / \mathrm{Out}(F_g)$ is the quotient of $\big(M_g^\mathrm{tr}\big)_{w=0}$ by $\R^{>0}$.

 A collection of smooth differential forms $\{\omega_G\}_G$ of degree $k$ and genus $g$
 may thus be interpreted as an $\mathrm{Out}(F_g)$-invariant differential form on Outer space $\mathcal{O}_g$, or viewed as a form on the quotient of Outer space $\mathcal{O}_g$ by $\mathrm{Out}(F_g)$. These interpretations are not to be taken too literally, since $\mathcal{O}_g$ is not even a manifold.

\subsection{Algebraic differential forms}
In order to provide a connection with the theory of periods and motives, we require a notion of \emph{algebraic} differential forms.
Since neither the moduli space of tropical curves, nor Outer space, is even remotely close to being an algebraic variety, this must be achieved by passing to an algebraic model. In order to do this,
 the first step is to identify the simplex $\sigma_G$ of Section~\ref{sect: MetricGraphs} with the open real coordinate simplex in projective space
 \begin{gather*}
 \sigma_G \subset \Pro^{E_G-1} (\R).
 \end{gather*}
 The coordinates on the projective space will be denoted by $\alpha_e$ for all $e \in E_G$.
 The inclusion of faces $\iota\colon \sigma_{G/ \!/ e} \hookrightarrow \overline{\sigma}_G$ is induced by the inclusion of the coordinate hyperplane $\alpha_e=0$:
 \begin{gather} \label{iotae}
 \iota_e \colon \ \Pro^{E_{G\q e}-1} \To \Pro^{E_G-1},
 \end{gather}
 which is a morphism of algebraic varieties.
 Furthermore, every isomorphism $\pi\colon G \overset{\sim}{\rightarrow} G'$ induces an algebraic isomorphism of projective spaces
 $\pi\colon \Pro^{E_{G}-1} \overset{\sim}{\rightarrow} \Pro^{E_{G'}-1} $ which permutes the set of coordinate hyperplanes $V(\alpha_e)= \{\alpha_e=0\}$ for $e\in E_G$.

 We can then define an \emph{algebraic differential form} of degree $k$ and genus $g$ to be a collection $\{\omega_G\}_G$
 of projectively-invariant meromorphic differential $k$-forms on the
 spaces $\Pro^{E_G-1}$ for all $G$ with $h_G=g$, which are smooth on $\sigma_G$, and which are
 \begin{gather}
({\it compatible})\colon\ \iota_e^* \omega_G = \omega_{G\q e} \text{ for every admissible edge contraction, and } \nonumber
\\
 (\text{\it equivariant})\colon\ \pi^* \omega_{G'} = \omega_{G} \text{ for every isomorphism } \pi\colon G \overset{\sim}{\rightarrow} G'.\label{CompatibleEquivariant}
 \end{gather}
 A projectively-invariant differential form is one which is homogeneous of degree zero and annihilated by contraction with the Euler vector-field.
 A form $\omega_G$ is allowed to have poles anywhere \emph{away} from the open real locus $\sigma_G$.

Now, if $\deg (\omega_G) = e_G-1= \dim \sigma_{G}$, we would like to consider the integral
 \begin{gather*}
 I_G (\omega) = \int_{\sigma_G} \omega_G .
 \end{gather*}
 It makes sense by projectivity of the form $\omega_G$.
However, if the form $\omega_G$ blows up in an uncontrolled manner along the boundary faces of the closure $\overline{\sigma}_G$ (see Figure~\ref{figuresunrisesimplex}) then there is nothing to guarantee that the integral is finite.

\subsection{Tropical Torelli map and invariant forms} In order to construct families of algebraic forms, consider the ``tropical Torelli'' map \cite{Baker, CaporasoViviani, ChanTorelli, Nagnibeda},
 from the moduli space of tropical curves to the moduli space of tropical Abelian varieties:
 \begin{gather} \label{Intro: TropTorelli}
 M_g^{\tr} \To A_g^{\tr}.
 \end{gather}
 It associates, in particular, to a stable metric graph $(G,0)$ with zero weight function the class of a graph Laplacian matrix $\Lambda_G$.
 The space $A_g^{\tr} = \Omega^{\mathrm{rt}}/\GL_g(\Z)$ is the quotient of the space $\Omega^{\mathrm{rt}}$ of positive semi-definite quadratic forms with rational null space by the general linear group. The graph Laplacian matrix $\Lambda_G$ is a positive semi-definite symmetric $g\times g$ matrix whose entries are linear combinations of edge lengths of $G$, and depends on a choice of basis of $H_1(G;\Z)$; nevertheless, its class in $A_g^{\tr}$ is well-defined.

 A basic idea of this paper is to write down differential forms on the space of positive definite symmetric matrices which are left and right invariant under the action of $\GL_g(\Z)$ and pull-them back along the tropical Torelli map \eqref{Intro: TropTorelli}. For all $k\geq 1$, consider the forms
 \begin{gather*}
 \beta_X^{4k+1} = \tr \big(\big(X^{-1} {\rm d}X\big)^{4k+1}\big)
 \end{gather*}
 for any invertible symmetric matrix $X$, which were shown by Borel~\cite{Borel} to generate the stable cohomology of the general linear group.
 Note that since they involve inverting $X$, they are smooth only on the sublocus given by positive definite symmetric matrices, and thus have singularities along $A_g^{\mathrm{tr}}$ at infinity.

 Concretely, then, this means that to any connected graph $G$, we write down a graph Laplacian matrix $\Lambda_G$ and define for all $k\geq 1$,
 \begin{gather} \label{omegacan}
 \omega^{4k+1}_G = \tr \big(\big(\Lambda_G^{-1} {\rm d} \Lambda_G\big)^{4k+1}\big).
 \end{gather}
It does not depend on the choices which go into defining $\Lambda_G$, namely a choice of basis for $H_1(G;\Z)$.
 The determinant $\Psi_G = \det \Lambda_G$ is the Kirchhoff graph polynomial.

\begin{thm} For all $k\geq 1$, the $\omega^{4k+1}_G$ are projective forms on
$\Pro^{E_G-1} \backslash X_G$,
where $X_G = V(\Psi_G)$ is known as the graph hypersurface. They satisfy the compatiblity and equivariance properties~\eqref{CompatibleEquivariant}.
They have the following shape:
\begin{gather} \label{intro: omegashape}
\omega^{4k+1}_G = \frac{N_G}{\Psi^{k+1}_G},
\end{gather}
where $N_G$ is a polynomial in the parameters $\alpha_e$ and their differentials $d
\alpha_e$, with coefficients in~$\Q$. The form $\omega_G$ has a pole along $X_G$ of order at most $k+1$.
\end{thm}

Since the graph polynomial $\Psi_G$ is positive on the simplex $\sigma_G$, the family $\{\omega_G\}_G$ satisfies the conditions required of an algebraic differential Section~\ref{sect: algdiffblowup}, and has many other properties. The statement about the order of the poles is the content of Theorem \ref{cor: omegaprojective} and is the result of many cancellations between numerator and denominator in the definition.

Note that the $\omega_G$ are defined for every $g\geq 1$. A priori they may be viewed, for any such $g$, as differential forms on the quotient of the open set $(M^{\tr}_g)_{w=0}$
 of $0$-weighted graphs by $\R^{>0}$ via Section~\ref{Sect: ModTrop}, but in some cases they extend to a strictly larger locus inside $M^{\tr}_g$.

\subsection{Canonical algebra of differential forms}
We define the \emph{canonical algebra} of differential forms to be the exterior algebra on the forms \eqref{omegacan}
\begin{gather*} 
\Omega^{\bullet}_{\can} = \bigwedge \bigg( \bigoplus_{k\geq 1} \Q \omega^{4k+1}\bigg) .
\end{gather*}
It is a graded Hopf algebra for the coproduct $\Delta_{\can}\big(\omega^{4k+1}\big) = \omega^{4k+1} \otimes 1 + 1 \otimes \omega^{4k+1}$ with respect to which the
 generators $\omega^{4k+1}$ are primitive.
 Given any form $\omega \in \Omega_{\can}^k$ of degree $k$, which we call a \emph{canonical form}, we obtain an integral
\begin{gather} \label{IGdefn} I_G(\omega) = \int_{\sigma_G} \omega_G \end{gather}
for every graph $G$ with $k+1$ edges. One of our main results (Theorem~\ref{thm: nopolesonD}) implies

\begin{thm} The integral $I_G(\omega)$ is always finite.
\end{thm}

From the particular shape of the integrand \eqref{intro: omegashape}, one deduces that the integral $I_G(\omega)$ is what is known as a generalised Feynman integral (or ``Feynman period'') in quantum field theory. The previous theorem is
 in stark contrast with the usual situation for Feynman integrals, which are often highly divergent.

\begin{Example} Let $G= W_3$ be the wheel with three spokes, and let $\omega^5$ be the first non-trivial canonical form \eqref{omegacan}. Then
\begin{gather*}
I_{W_3} \big(\omega^5 \big) = 60 \zeta(3)
\end{gather*}
in accordance with Remark~\ref{remPuzzlezeta(3)}. Further examples are given in Section~\ref{sect: Examples}.
\end{Example}

The integrals \eqref{IGdefn} only depend on the isomorphism class of $G$ in the graph complex $\GC_N$. From this we deduce a pairing between the component of edge-degree $k$ and the space of canonical forms of degree $k-1$:
\begin{gather*} 
I \colon\ (\GC_N)_{k} \otimes_{\Q} \Omega^{k-1}_{\can} \To \C.
\end{gather*}
 This pairing can in principle be used to prove the non-vanishing of homology classes.

 \subsection{Bordification and blow-up} \label{sect: algdiffblowup} In order to prove the convergence of the integrals $I_G(\omega)$
 one can construct an algebraic compactification of the space of metric graphs, and use it to study the behaviour of the forms $\omega_G$ at infinity.
 This can be done by repeatedly blowing up intersections of coordinate hyperplanes $L_{\gamma}= V(\{\alpha_e, e \in E(\gamma)\})$ in projective space in increasing order of dimension, where $\gamma$ ranges over a specific family $\mathcal{B}_G$ of subgraphs of $G$. This leads to a projective algebraic variety
 \begin{gather} \label{PGblowup}
 \pi_G\colon\ P^G \To \Pro^{E_G-1}.
 \end{gather}
 One way to do this is to perform blow-ups corresponding to all core\footnote{A core graph, also called 1-particle irreducible, is one whose loop number decreases on cutting any edge, or equivalently, which has no bridges.} subgraphs $\mathcal{B}_G= \mathcal{B}^{\mathrm{core}}_G$~\cite{BEK}, another is to simply to blow up subspaces corresponding to all subgraphs. The required conditions on $\mathcal{B}_G$ are spelled out in \cite[Section~5.1]{Cosmic}. In either case,
 the exceptional divisor corresponding to a subgraph $\gamma \in \mathcal{B}_G$ is canonically isomorphic to a product $P^{\gamma}\times P^{G/\gamma}$ , and gives rise to a ``face map'' \begin{gather} \label{introiotagamma}
 \iota_{\gamma}\colon \ P^{\gamma} \times P^{G/\gamma} \To P^G.
 \end{gather}
 Note that the map $\iota_e\colon P^{G\q e} \rightarrow P^{G}$ coming from \eqref{iotae} may also be written in the form \eqref{introiotagamma} in the case when $\gamma =e$ is a single edge (with distinct endpoints), since $P^e= \mathop{\rm Spec} \Q $ is a point. Another interesting case is when $\gamma = G\backslash e$, for then $P^{G/\gamma}$ also reduces to a point.
 In general, the face maps \eqref{introiotagamma} provide extra structure which relate metric graphs of different genera.

The closure $\widetilde{\sigma}_G$ of the inverse image $\pi_G^{-1}(\sigma_G)$ inside $P^G(\R)$ defines a compact polytope with corners (or ``Feynman polytope''), which is essentially the basic building block of the bordification of Outer space constructed in \cite{BordificationOuterSpace}. Via \eqref{introiotagamma} its faces are isomorphic to products of $\widetilde{\sigma}_{\gamma}$, where~$\gamma$ are minors of $G$. See Figure \ref{figuresunriseblowup} for an illustration.

 Now consider the pull-backs of canonical forms
 \begin{gather*}
 \widetilde{\omega}_G=\pi_G^* \omega_G .
 \end{gather*}
 They are meromorphic differential forms on $P^G$
which may \emph{a priori} have poles along exceptional divisors. However, in Theorem~\ref{thm: nopolesonD} we show that this is not so: any primitive form satisfies \begin{gather} \label{introiotastargamma} \iota^*_{\gamma} \widetilde{\omega}^{4k+1}_G = \widetilde{\omega}^{4k+1}_{\gamma} \wedge 1 + 1 \wedge \widetilde{\omega}^{4k+1}_{G/\gamma}.
 \end{gather}
 The corresponding formula for general $\omega \in \Omega_{\can}$
 is obtained by taking exterior products and is expressible using the coalgebra structure on $\Omega_{\can}$.

Formula \eqref{introiotastargamma} implies that $\widetilde{\omega}$ has no poles on the compactification $\widetilde{\sigma}_G$ of the simplex $\sigma_G$, and therefore that the following integral is finite
 \begin{gather*}
 I_G (\omega) = \int_{\sigma_G} \omega_G = \int_{\widetilde{\sigma}_G} \widetilde{\omega}_G < \infty ,
 \end{gather*}
where $G$ is any connected graph such that $e_G = \deg(\widetilde{\omega})+1$.

\begin{figure}[h]
\centering
\includegraphics[width=11cm]{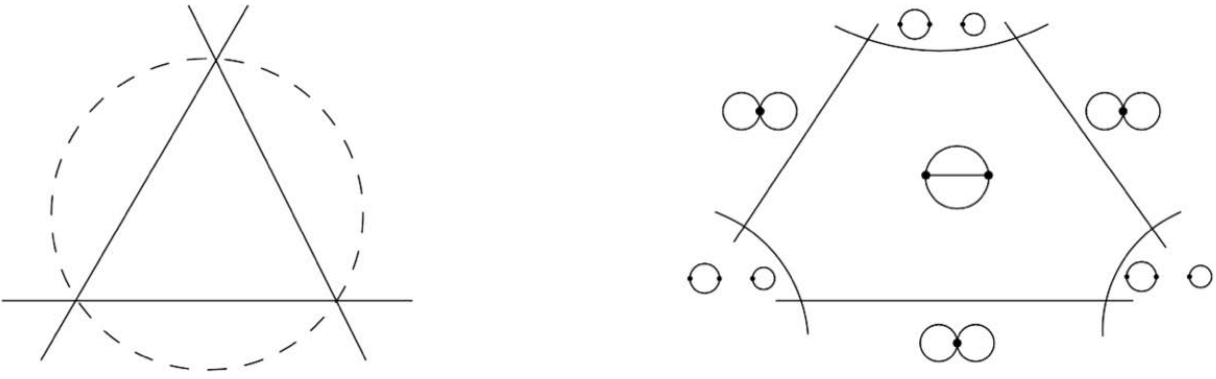}
\put(-202,85){$\Pro^2$}
\put(-20,85){$P^G$}
\put(-312,65){ \small $X_G$}
\put(-267,43){ \small $\sigma_G$}
\put(-212,23){ \small $\alpha_3=0$}
\put(-236,-2){ \small $\alpha_2=0$}
\put(-321,-2){ \small $\alpha_1=0$}
\put(-126,24){\tiny $\otimes$}
\put(-113.5,23){\tiny $2$}
\put(-133,31){\tiny $1$}
\put(-133,17){\tiny $3$}
\put(-14,25){\tiny $\otimes$}
\put(-21,17){\tiny $3$}
\put(-1,24){\tiny $1$}
\put(-21,31){\tiny $2$}
\put(-72.25,90){\tiny $\otimes$}
\put(-79,96.5){\tiny $1$}
\put(-79,82.5){\tiny $2$}
\put(-58.5,88.5){\tiny $3$}
\put(-80,7){\tiny $1$}
\put(-56,7){\tiny $2$}
\put(-132,67){\tiny $2$}
\put(-107,67){\tiny $3$}
\put(-37,67){\tiny $1$}
\put(-14,67){\tiny $3$}
\put(-68,61){\tiny $1$}
\put(-68,54){\tiny $2$}
\put(-68,46){\tiny $3$}
\caption{{\it Left}: The cell $\sigma_G$ for the sunrise graph may be identified with the open coordinate simplex $\{(\alpha_1:\alpha_2:\alpha_3)\colon \alpha_i>0\}$ in projective space $\Pro^2$. The dotted circle indicates the graph hypersurface~$X_G$, which meets its corners. {\it Right}: After blowing up the three corners $\alpha_1=\alpha_2=0$, $\alpha_1=\alpha_3=0$ and $\alpha_2=\alpha_3=0$, we obtain a space $P^G\rightarrow \Pro^2$, in which the total inverse image of the coordinate hyperplanes form a hexagon (the strict transform of the graph hypersurface $X_G= V(\Psi_G)$ is not shown). The exceptional divisors are isomorphic to products $P^{\gamma} \times P^{G / \gamma}$ corresponding to a subgraph $\gamma$ and the quotient $ G/ \gamma$. }
\label{figuresunriseblowup}
\end{figure}

\subsection{Stokes' formula}
 Equation \eqref{introiotastargamma} is an extra property of canonical forms ``at infinity'' over and above the compatibility and equivariance properties \eqref{CompatibleEquivariant}. It can be exploited to prove relations between canonical integrals for graphs with different loop numbers.
 For a canonical form $\omega \in \Omega^k_{\can}$ of degree $k$, write its coproduct in Sweedler notation:
\begin{gather*}
\Delta_{\can} \omega = \omega\otimes 1 + 1 \otimes \omega + \sum_i \omega'_i \otimes \omega''_i.
\end{gather*}
Then we prove that
\begin{gather} \label{IntroStokes}
0 = \underbrace{ \sum_{e\in E_G} \int_{\sigma_{G/e}} \omega_{G/e} }_{\rm d} + \underbrace{ \sum_{e\in E_G} \int_{\sigma_{G\backslash e}} \omega_{G\backslash e} }_{\delta} + \underbrace{\sum_{\gamma\subset G} \sum_i \int_{\sigma_{\gamma}} (\omega'_i)_{\gamma} \int_{\sigma_{G/\gamma}} (\omega''_i)_{G/\gamma} }_{\Delta'},
\end{gather}
where the sum is over core subgraphs $\gamma \subset G$ such that $\deg \omega'_i = e_{\gamma}-1$. The terms in the formula~\eqref{IntroStokes} reflect the structure of the boundary faces of the polytope $\widetilde{\sigma}_G$.
After taking into account the orientations on graphs which are consistent with the orientations of simplices $\sigma_G$, the three braced terms in this expression can be interpreted as: the differential in the graph complex~$d$; the differential \eqref{deltadef}; and the reduced version of the Connes--Kreimer coproduct~\eqref{DeltaCK}.

Thus, by extending the notation $I$ appropriately, we may rewrite \eqref{IntroStokes} equivalently as
\begin{gather*}
 0 = I_{{\rm d}G}(\omega) + I_{\delta G} (\omega) + I_{\Delta' G} (\Delta_{\can}'(\omega)),
 \end{gather*}
where $\Delta_{\can}'=\Delta_{\can} - 1 \otimes \id - \id \otimes 1$ is the reduced coproduct associated to $\Delta_{\can}$.

\begin{rem}The formula \eqref{IntroStokes} allows one in principle to detect homology classes.
A simple example is given in Corollary~\ref{cor: IGnonzero}, which states that the conjectural non-vanishing of the canonical integrals associated to wheels $W_{2n+1}$ gives another proof of the fact that the classes $[W_{2n+1}]$ are non-zero in $H_0(\GC_2)$.
Another situation in which non-vanishing of a canonical integral implies non-vanishing of a homology class is given in Corollary~\ref{cor: detectclassv2}.
\end{rem}

\subsection{Relation to motivic periods}
The integrals considered above may be lifted to ``motivic'' periods.
Concretely, define for any $\omega \in \Omega^{k}_{\can}$ and any graph $G$ with $k+1$ edges, a motivic period, defined by an equivalence class
\begin{gather*}
I^{\mm}_G(\omega) = \big[ \mathrm{mot}_G, \big[\widetilde{\sigma}_G\big], \big[\widetilde{\omega}_G\big]\big]^{\mm},
\end{gather*}
where $\mathrm{mot}_G$ is a relative cohomology ``motive'' of $G$, which is defined using the geometry of the blow up \eqref{PGblowup}, and $\widetilde{\omega}_G = \pi_G^* \omega_G$. Applying the period homomorphism allows one to recover the integral \eqref{IGdefn}, $I_G(\omega)= \operatorname{per} I^{\mm}_G(\omega)$.
We show that the formula \eqref{IntroStokes} is motivic, i.e., holds for the objects $I_G^{\mm}(\omega)$. In this manner, one can assign motivic periods to graphs, which provides a~connection between the homology of the graph complex and motivic Galois groups.

\subsection{A conjecture for graph cohomology}
The calculations of Section~\ref{sect: Examples} lead us to expect, for every increasing sequence of integers
\begin{gather*}
1\leq k_1< k_2 < \dots < k_r
\end{gather*}
 the existence of an element $X \in \GC_N$ satisfying ${\rm d}X=\delta X= 0 $ such that
\begin{gather*}
I_X\big(\omega^{4k_1+1} \wedge \dots \wedge \omega^{4k_r+1} \big) = \prod_{i=1}^r \zeta(2k_i+1) .
\end{gather*}
A similar statement should hold for motivic periods.
By the types of argument outlined above, this suggests the existence of (at least one) non-trivial graph homology class which pairs non-trivially with every canonical form, and whose canonical integral is a product of odd zeta values.
Dually, this suggests the existence of a non-canonical injective map from $\Omega^{\bullet}_{\can}$ into the cohomology of the graph complex.
Since graph cohomology is a Lie algebra one is led to the following conjecture.

\begin{conj} \label{conj: Mainconj} There is a non-canonical injective map of graded Lie algebras from the free Lie algebra on $\Omega^{\bullet}_{\can}$ into graph cohomology:
\begin{gather} \label{FreeOmegatoGraphCohom} \mathbb{L} \left( \Omega^{\bullet}_{\can} \right) \To \bigoplus_{n\in \Z} H^{n} (\GC_2)
\end{gather}
such that its restriction to the Lie subalgebra generated by primitive elements
maps to the Lie subalgebra of cohomology in degre zero:
\begin{gather} \label{FreePrimOmega}
\mathbb{L} \bigg(\bigoplus_{k\geq 1}\omega^{4k+1} \Q \bigg) \To H^{0} (\GC_2).
\end{gather}
All other elements map to higher degree cohomology $\bigoplus_{n>0} H^{n} (\GC_2)$.
Furthermore, we expect that the exterior product of $m$ primitive forms $\omega^{4k+1}$ occurs in even cohomological degree if $m$ is odd, and odd cohomological degree if $m$ is even.

The grading on the left-hand side of \eqref{FreeOmegatoGraphCohom} is by the degree of differential forms; on the right, it is by edge number only, so in fact the conjecture \eqref{FreeOmegatoGraphCohom} is more naturally expressed using $\GC_0$ rather than $\GC_2$.
\end{conj}

Information about the loop number (or equivalently, about the cohomological grading, if one rephrases the conjecture in terms of the cohomology of $\GC_N$ for some $N\neq 0$) is mostly lost in this conjecture. It is possible that some of the information can be recovered by replacing these gradings with a suitable filtration. Indeed, vanishing properties such as Proposition~\ref{prop: betavanishes} places some mild additional constraints on the loop order where canonical forms could occur in the cohomology of the graph complex, which we omitted for simplicity.

\begin{rem} The previous conjecture is slightly artificial because the natural integration pairing \eqref{IGdefn} gives rise to irrational numbers and is thus not defined over $\Q$, and because a canonical form $\omega$ could conceivably pair with several closed elements $X \in \GC_2$ representing independent graph homology classes, and giving distinct periods. Indeed, we do not expect there to be a~canonical candidate for a map \eqref{FreeOmegatoGraphCohom} since its restriction \eqref{FreePrimOmega} would give rise to an injection~\eqref{MTZinjection} of the free Lie algebra on generators of every odd degree into the motivic Lie algebra, which is \emph{a priori} not canonical (it depends on a choice of basis of motivic multiple zeta values).
\end{rem}

In order to help with the visualisation of the conjecture, or rather its equivalent formulation for $\GC_2$, Table \ref{tableForms} depicts the possible location of classes in low degrees. The table was generated using the examples of Section~\ref{sect: Examples}, the argument of Section~\ref{sect: precip}, and known results about graph cohomology.

Note that the Lie algebra $\mathbb{L}(\Omega^\bullet_{\can})$ carries extra structures not obviously apparent on graph cohomology: for example, the map $\big[\omega^{4k_1+1}, \omega^{4k_2+1}\big] \mapsto \omega^{4k_1+1} \wedge \omega^{4k_2+1}$ and its generalisations appear to be related to the differential in the spectral sequence of~\cite{SpectralSequenceGC2}.

\begin{table}[htp]
\caption{A mostly conjectural picture to illustrate the alignment between conjecture~\eqref{FreeOmegatoGraphCohom} and the known dimensions for graph cohomology groups. It is consistent with computations \cite{WZEulerCharacteristic} for the Euler characteristics of the graph complex.
}
\label{tableForms}
\begin{center}\renewcommand{\arraystretch}{1.1}
\begin{tabular}{c|ccccccccccc}
$H^9$ &&&&&&&&& & $\0$ \\
$H^8$ &&&&&&&&& $\0$ & 0 \\
$H^7$ &&&&&&&& $\0$& $\substack{\omega^9\wedge \omega^{17}}$ & 0 \\
$H^6$ &&&&&&& $\0$ & 0& 0 & $\substack{\omega^5\wedge \omega^{9} \wedge \omega^{13}}$ \\
$H^5$ &&&&&& $\0$ & 0 & 0& 0 & 0 \\
$H^4$ &&&&& $\0$ & 0 & 0& 0 &0 & 0\\
$H^3$ &&&& $\0$ & $ \substack{\omega^5\wedge\omega^{9}}$ & 0& $ \substack{\omega^5\wedge\omega^{13}}$ & $\substack{[\omega^5\!, \, \omega^5\wedge \omega^9]} $ & $ \substack{ \omega^5 \wedge \omega^{17} \\ \omega^9 \wedge \omega^{13}} $ & $\substack{[\omega^5 \! ,\, \omega^5 \wedge \omega^{13}] \\ [ \omega^9\!, \, \omega^5 \wedge \omega^9]}$ \\
$H^2$ &&& $\0$ & 0&0 & 0 & 0 & 0 & 0& 0 \\
$H^1$ && $\0$ & 0 & 0 & 0 & 0 & 0 & 0 & 0 & 0 \\
$H^0$ & $\0$ & $\,\,\, \omega^5 \,\,\, $ & 0& $\,\,\, \omega^9 \,\,\, $ &0 & $\,\, \omega^{13} \,\, $ & $\substack{[\omega^5\!, \, \omega^9]}$ & $\omega^{17}$ & $\substack{[\omega^{5}\!, \,\omega^{13}]}$ & $\substack{[\omega^5 \!,\, [ \omega^5\! ,\, \omega^{9}]] \\ \omega^{21} \vspace{.5ex}}$ \\
\hline
$h_G$ & $2$ & $3$ & $4$ & $5$ & $6$ & $7$ & $8$& $9$& $10$& $11$
\end{tabular}
\end{center}
\end{table}

\subsection{Questions}

An obvious question is whether \eqref{FreeOmegatoGraphCohom}
 is an isomorphism. This is probably false since $H(\GC_N)$ is expected to be too large.
There exists a formula for the Euler characteristic of the graph complex \cite{WZEulerCharacteristic} but its asymptotics are unknown to our knowledge. However,
M. Borinsky has recently informed us of a more compact formula \cite{BorinskyEuler} for the Euler characterstic which strongly suggests super-exponential growth. This was anticipated in \cite[Section~7.2]{Kontsevich93} based on virtual Euler characteristic computations (see also \cite{ EulerOutFn,GetzlerKapranov}). Since the free Lie algebra $\mathbb{L}\left(\Omega^{\bullet}_{\can} \right)$ grows exponentially with respect to the degree, the cokernel of any map of the form \eqref{FreeOmegatoGraphCohom} will be huge.

One explanation for this fact could be the possible existence of more general families of differential forms $\{\omega_G\}_G$ of genus $g$ which lie outside the canonical algebra $\Omega_{\can}$. A possible source might be unstable classes in the cohomology of the general linear group $\GL_g(\Z)$ which are not expressible using invariant forms $\beta_X^{4k+1}$. Another possible explanation is that the canonical forms $\omega \in \Omega^{k}_{\can}$ could pair non-trivially with several different graph homology classes. Some possible evidence in this direction is the fact that the classes of graph hypersurfaces in the Grothendieck ring are of general type \cite{BelkaleBrosnan}. One knows, furthermore, that modular motives can arise in the middle cohomology degree \cite{BrownDoryn, BrownSchnetz}, which is the case of relevance here. In such cases, the Feynman residues are related to modular forms and are conjecturally not multiple zeta values. By contrast, all presently known examples of canonical integrals (see Section~\ref{sect: Examples}) are multiple zeta values, so it would be very interesting to know if canonical integrals differ or not from Feynman residues in this regard. Section \ref{sect: QuestionGalois} discusses the possible relations between Feynman residues, canonical integrals, and motivic Galois groups.

Although our constructions provide a connection between graph homology and motivic Galois groups, it is not yet clear whether one can deduce a natural geometric action of the motivic Galois group $G^{\mathrm{mot}}_{\mathcal{MT}(\Z)
}$ on $H^0(\GC_2)$ as \eqref{WillwacherGRT} and \eqref{MTZinjection} might suggest. The wheel graphs may be a first step in this direction, since computations suggest their canonical motivic integrals are proportional to motivic odd zeta values, which are dual to the generators $\sigma_{2n+1}$ of the motivic Lie algebra.

Finally, many of the constructions in this paper are valid more generally for certain classes of regular matroids, which warrants further investigation. Indeed, linear combinations of matroids whose edge contractions are graphs may provide a possible source, and explanation for, non-trivial homology classes in $\GC_2$.

 \subsection{Related work} We draw the reader's attention to the recent work of Berghoff and Kreimer \cite{BerghoffKreimer} in which they study properties of Feynman differential forms with respect to combinatorial operations on Outer space. A key difference with the present paper is the fact that the forms they consider have different degrees on the image of each cell. Nevertheless, it raises the interesting possibility of constructing forms (in the sense defined here) on moduli spaces of graphs with external legs whose denominator involves both the first and second Symanzik polynomials.

In a different direction, Kontsevich has suggested a possible relationship between the homology of the graph complex with a ``derived'' Grothendieck--Teichm\"uller Lie algebra \cite{KontsevichBourbaki} defined from the moduli spaces $\mathcal{M}_{0,n}$ of curves of genus 0, but we do not know how it relates to the constructions in this paper.
The work of Alm \cite{Alm} is possibly also related, in which he introduces ``Stokes relations'' between multiple zeta values expressed as integrals over $\mathcal{M}_{0,n}$.
\section{Graph polynomial and Laplacian matrix}
We recall the definition of the graph polynomial and its relation to various definitions of Laplacian and incidence matrices. We also discuss a generalisation to matroids.

\subsection{Graph polynomial}
Let $G$ be a connected graph with $h_G$ loops.
Choose an orientation of every edge of $G$. The definitions to follow will ultimately not depend on this, or any other choices. There is an exact sequence
\begin{gather}\label{ShortExactHG} 0 \To H_1(G;\Z) \overset{\mathcal{H}_G}{\To} \Z^{E_G} \overset{\partial}{\To} \Z^{V_G}\To \Z \To 0,
\end{gather}
where the boundary map $\partial$ satisfies
$\partial(e) = t_e - s_e$
for any oriented edge $e$ whose source is $s_e \in V_G$ and whose target is $t_e \in V_G$. Denote the second map in \eqref{ShortExactHG} by
\begin{gather*}
\mathcal{H}_G \in \operatorname{Hom}\big(H_1(G;\Z), \Z^{E_G}\big) .
\end{gather*}

\begin{defn} Assign to every edge $e$ in $G$ a variable $x_e$, and let $\Z[x_e]$ denote the polynomial ring in the variables $x_e$, for $e\in E_G$.

Define a symmetric bilinear form on the space of edges
\begin{align*}
\Z^{E_G} \times \Z^{E_G} & \To \Z[x_e],
\\
\langle e, e'\rangle &= \ \ \delta_{e,e'} x_e,
\end{align*}
where $\delta_{e,e'}$ denotes the Kronecker delta function.
Via the map $\mathcal{H}_G$ it induces a quadratic form on $H_1(G;\Z)$, which can in turn be expressed as a linear map between $H_1(G;\Z)$ and its dual. Therefore
let us denote by
\begin{gather*}
D_G\colon\ \Z^{E_G} \To \operatorname{Hom}\big(\Z^{E_G}, \Z[x_e]\big)
\end{gather*}
the linear map which satisfies $D_G (e) = x_e e^{\!\vee}$, for all $e\in E_G$, where $\{e^{\!\vee}\}$ denotes the dual basis to $E_G$.
Composing with $\mathcal{H}_G$ defines a linear map:
\begin{gather*}
\Lambda_G = \mathcal{H}^T_G D_G \mathcal{H}_G \colon \ H_1(G; \Z) \To \operatorname{Hom}\big(H_1(G;\Z),\Z[x_e]\big) .
\end{gather*}
 \end{defn}
 The determinant of a bilinear form over the integers is an intrinsic invariant, since, in any representation as a symmetric matrix with respect to an integer basis, changing the basis multiplies the determinant by an element in $(\Z^{\times})^2=1$.
 \begin{defn}
 Define the \emph{graph polynomial} to be
 \begin{gather*}
 \Psi_G = \det \Lambda_G \in \Z[x_e].
 \end{gather*}
 \end{defn}
 The graph polynomial is also known as the first Symanzik polynomial, and was first discovered by Kirchhoff. It plays a central role in quantum field theory, and its combinatorial properties have been studied intensively. We shall argue that one should equally study combinatorial properties of the whole graph Laplacian matrix, and its invariant differentials, defined in the next section.

 \begin{thm}[{dual matrix tree theorem}] \label{thm: MatrixTree}
 The graph polynomial is equal to
 \begin{gather*}
 \Psi_G = \sum_{T \subset G} \prod_{e\notin T} x_e,
 \end{gather*}
 where the sum is over all spanning trees $T\subset G.$ Since a non-empty connected graph has a~spanning tree, it follows that $\Psi_G \neq 0$.
 \end{thm}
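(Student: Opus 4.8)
The plan is to prove the dual matrix-tree theorem by identifying the graph Laplacian $\Lambda_G$ with the Gram matrix of the bilinear form $\langle\,,\,\rangle$ restricted to the cycle space $H_1(G;\Z)$, and then applying the Cauchy--Binet formula. First I would unwind the definition: fix a basis $c_1,\ldots,c_h$ of $H_1(G;\Z)$ (with $h = h_G$), so that $\mathcal{H}_G$ is represented by an $E_G \times h$ integer matrix $M$ whose columns are the coordinate vectors of the $c_i$ in $\Z^{E_G}$. Writing $X = \mathrm{diag}(x_e)_{e\in E_G}$ for the diagonal matrix of edge variables, one reads off directly from the definition $\Lambda_G = \mathcal{H}_G^T D_G \mathcal{H}_G$ that the matrix of $\Lambda_G$ is $M^T X M$. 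Hence $\Psi_G = \det(M^T X M)$, which is well-defined up to $(\Z^\times)^2 = \{1\}$ as already noted in the text.

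Next I would apply the Cauchy--Binet formula to $M^T X M$. Since $X$ is diagonal, this gives
\begin{equation} \nonumber \det(M^T X M) = \sum_{\substack{S \subset E_G \\ |S| = h}} \Big(\prod_{e\in S} x_e\Big) \, \big(\det M_S\big)^2 \ , \end{equation}
where $M_S$ is the $h\times h$ submatrix of $M$ on the rows indexed by $S$. The crucial combinatorial input is then the classical fact that $\det M_S = \pm 1$ if the complement $E_G\setminus S$ is a spanning tree of $G$, and $\det M_S = 0$ otherwise. Equivalently: a set $S$ of $h$ edges has the property that $\{c_i|_S\}$ is a basis of $\Z^S$ iff $S$ is the complement of a spanning tree. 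I would prove this by observing that $E_G\setminus S$ being a spanning tree is equivalent to the composite $\Z^S \hookrightarrow \Z^{E_G} \twoheadrightarrow \Z^{E_G}/\mathcal{H}_G(H_1) \cong \Z^{V_G}/\Z$ (using the exact sequence \eqref{ShortExactHG}) failing to be surjective with cokernel $\ldots$ — more cleanly, one uses that removing the edges of $S$ leaves a graph with the same first homology as $G$ modulo... Actually the cleanest route: $E_G\setminus S$ spans a tree $\iff$ the projection $\mathcal{H}_G(H_1(G;\Z)) \to \Z^S$ (forget coordinates outside $S$) is an isomorphism, and since source and target are free of the same rank, its determinant is $\pm 1$ exactly in that case and $0$ otherwise. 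Substituting $|S| = h$ edges and setting $T = E_G\setminus S$, each surviving term contributes $\prod_{e\notin T} x_e$ with coefficient $1$, yielding $\Psi_G = \sum_{T} \prod_{e\notin T} x_e$.

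The main obstacle is the combinatorial lemma identifying which $h$-subsets $S$ give $\det M_S = \pm 1$, i.e. the statement that the nonvanishing of the cycle-space minor on $S$ is equivalent to $E_G\setminus S$ being a spanning tree. The honest way to see this is through the duality between the cycle space and the cut (bond) space: the columns of $M$ span $H_1(G;\Z)$ inside $\Z^{E_G}$, its orthogonal complement is the image of $\partial^T$, i.e. the cut space, and a standard fact (equivalent to the ordinary matrix-tree theorem for the incidence matrix) says that an $(|V_G|-1)$-subset of edges supports a basis of the cut space iff it is a spanning tree; dualizing gives the claim for the cycle space and the complementary $h$-subset. I would either cite this as standard or give the short argument: $E_G \setminus S$ is a spanning tree $\iff$ it is acyclic (contains no cycle) and connected $\iff$ contributes no relations and enough of them; since $|E_G \setminus S| = |V_G| - 1$, being a tree is equivalent to being a forest, equivalently to $S$ hitting every cycle, equivalently to the restriction map $H_1(G;\Z) \to \Z^S$ being injective, hence (rank count) an isomorphism over $\Q$ and $\pm 1$ over $\Z$ by the same unimodularity of $\mathcal{H}_G$ used throughout. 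Finally, $\Psi_G \neq 0$ follows since any nonempty connected graph has at least one spanning tree, giving a genuine monomial in the sum.
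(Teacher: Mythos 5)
The paper itself gives no proof of this theorem: it is stated as the classical (dual) matrix-tree theorem of Kirchhoff, so there is no argument in the text to compare against. Your Cauchy--Binet proof is the standard one and is essentially correct: writing $\Lambda_G=M^TXM$ with $M$ the $E_G\times h$ cycle matrix and $X=\mathrm{diag}(x_e)$, Cauchy--Binet does give $\Psi_G=\sum_{|S|=h}\bigl(\prod_{e\in S}x_e\bigr)(\det M_S)^2$, and the vanishing criterion is right: $\det M_S\neq 0$ over $\Q$ iff no nonzero cycle is supported on $E_G\setminus S$, i.e.\ iff $E_G\setminus S$ is acyclic, which for a set of $v_G-1$ edges is the same as being a spanning tree. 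The one step you assert rather than prove is that the surviving minors are exactly $\pm 1$ (not merely nonzero integers), and this is genuinely needed, since the theorem asserts all coefficients equal $1$ and Cauchy--Binet only hands you $(\det M_S)^2$; your appeal to ``the same unimodularity of $\mathcal{H}_G$ used throughout'' is a little loose, as the paper never establishes any unimodularity (it only remarks that $\det\Lambda_G$ is well defined up to $(\Z^\times)^2$). The cleanest repair is two lines: for $T=E_G\setminus S$ a spanning tree, take as basis of $H_1(G;\Z)$ the fundamental cycles $c_e$, $e\in S$, associated to $T$; each $c_e$ contains $e$ with coefficient $\pm 1$ and contains no other edge of $S$, so in this basis $M_S$ is a signed diagonal matrix with $\det M_S=\pm 1$, and since any change of integral basis multiplies the minor by an element of $\Z^\times$, the same holds in every basis. (Incidentally, this also shows the fundamental cycles form a $\Z$-basis, which is the unimodularity you wanted.) With that patch, and discarding the garbled intermediate sentence about cokernels in favour of the clean projection criterion you settle on, the proof is complete; your alternative route through cut-space duality is also fine but unnecessary.
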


 If $G$ is not connected but has connected components $G_1,\dots, G_n$, then $\Lambda_G$ is the direct sum of the $\Lambda_{G_i}$ and one has $\Psi_G = \prod_{i=1}^n \Psi_{G_i}$.

\begin{Example} \label{example: W3}
If one chooses a basis of $H_1(G;\Z)$ consisting of cycles $c_1,\dots, c_h$ and if the edges of $G$ are labelled $1,\dots, N$, then $\mathcal{H}_G$ is represented by the \emph{edge-cycle incidence matrix} of $G$: the entry $(\mathcal{H}_G)_{e,c}$ corresponding to an edge $e$ and cycle $c$ is the number of times (counted with orientations) that $e$ appears in $c$.

Let $G$ be the wheel with 3 spokes, with inner edges oriented outwards from the center and outer edges oriented counter-clockwise. A basis for homology is given by the cycles consisting of edges $\{1,5,6\}$, $\{2,4,6\}$, $\{3,5,4\}$:
 \begin{figure}[h]
 \centering
\quad {\includegraphics[width=3cm]{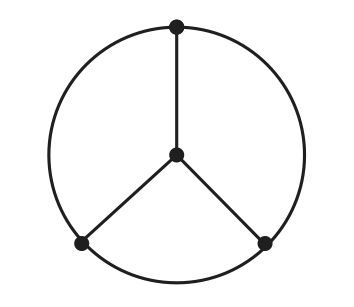}}
\put(-80,50){$1$}\put(-12,50){$3$}\put(-50,-5){$2$}
\put(-50,45){$5$}\put(-62,27){$6$}
\put(-30,27){$4$}
\end{figure}

\noindent
With respect to these bases,
\begin{gather*}
\mathcal{H}^T_G =\begin{pmatrix} 1 & 0 & 0 & 0 & 1 & -1 \\
0 & 1 & 0 & -1 & 0 & 1\\
0 & 0 & 1 & 1 & -1 & 0
\end{pmatrix}.
\end{gather*}
Therefore the graph Laplacian is respresented by the $3\times 3$ matrix
\begin{gather*}
\Lambda_G= \mathcal{H}^T_G D_G \mathcal{H}_G =
 \begin{pmatrix}
x_1 +x_5 +x_6 & - x_6 & -x_5 \\
-x_6 & x_2+ x_4+x_6 & - x_4 \\
-x_5 & -x_4 & x_3+x_4+x_5
\end{pmatrix}.
\end{gather*}
Its determinant is
\begin{align*}
\Psi_G = {}&x_{1} x_{2} x_{3}+ x_1 x_2 x_{4}+x_1x_2 x_{5} +x_1 x_{3}x_{4}+x_1x_{3}x_{6}+x_1x_{4}x_{5}+x_1x_{4}x_{6}+x_1x_{5}x_{6}
\\
& +x_{2} x_{3}x_{5}+x_2 x_{3}x_{6}+x_2 x_{4}x_{5}+x_2 x_{4}x_{6}+x_2 x_{5}x_{6} +x_{3}x_{4}x_{5}+x_{3}x_{4}x_{6}+x_{3}x_{5}x_{6}.
\end{align*}
\end{Example}

\subsection{Dual Laplacian} It is more common to express the graph polynomial using the incidence matrix between edges and vertices as opposed to between cycles and edges. The exact sequence \eqref{ShortExactHG} gives rise to a~sequence
\begin{gather} \label{SEHGv2}
0 \To H_1(G;\Z) \To \Z^{E}\overset{\partial}{\To} \mathrm{Im}(\partial) \To 0.
\end{gather}
The inverse bilinear form $D_G^{-1}$ on $\big(\Z^E\big)^{\vee}=\operatorname{Hom}\big(\Z^E,\Z\big)$ \big(taking values in $\Z\big[x_e^{-1}\big]$\big) restricts to a~bilinear form on the dual $\mathrm{Im}(\partial)^{\vee}= \operatorname{Hom} (\mathrm{Im}(\partial), \Z)$ which we denote by
\begin{gather*} 
L_G = \partial D_G^{-1} \partial^{T} \in \operatorname{Hom} \big( \mathrm{Im}(\partial)^{\vee}, \mathrm{Im}(\partial) \otimes_{\Z} \Z\big[x_e^{-1}\big]\big) .
\end{gather*}
The determinant $\det(L_G)$ is well-defined and is related to the graph polynomial by Lemma~\ref{lem: DGdirectsum} below.
It is usual in the literature to compute $L_G$ as follows.
Since the map $\Z^{V_G} \rightarrow \Z$ in \eqref{ShortExactHG} is given by the sum of all components, the choice of any vertex $w \in V_G$ defines a splitting $ \Z \rightarrow \Z^{V_G}$ by sending $1$ to the element $(0,\dots, 0,1,0,\dots ,0)$, where the non-zero entry lies in the component indexed by $w$.
Set $V'_G = V_G \backslash \{w\}$ and hence $\Z^{V_G}= \Z^{V'_G} \oplus \Z$.
Since $\mathrm{Im}(\partial) \subset \Z^{V_G}$ is given by the subspace of vectors whose coordinates sum to zero,
 the projection $ \Z^{V_G}\rightarrow \Z^{V'_G}$ induces an isomorphism
\begin{gather*}
 \mathrm{Im}(\partial) \cong \Z^{V'_G},
 \end{gather*}
and hence \eqref{ShortExactHG} can be expressed as a short exact sequence
\begin{gather} \label{reducedShortExact}
0 \To H_1(G; \Z) \To \Z^{E_G} \overset{\varepsilon_G}{\To} \Z^{V'_G}\To 0,
\end{gather}
where $\varepsilon_G$ is the composition of $\partial$ with the projection $ \Z^{V_G}\rightarrow \Z^{V'_G}$. With respect to the natural bases, $\varepsilon_G$ can be represented by the $\big(V'_G\times E_G\big)$ matrix
\begin{gather*}
(\varepsilon_G)_{v,e} = \begin{cases}
1 &\text{if} \ v=t(e), \\
-1 &\text{if} \ v=s(e), \\
0 &\text{otherwise},
 \end{cases}
 \end{gather*}
where $s(e)$, $t(e)$ denote the source and targets of $e$. This is nothing other than the edge-vertex incidence matrix of $G$ in which the row corresponding to the vertex $w$ has been removed. Thus~$L_G$ is represented by the matrix
 \begin{gather} \label{LGdefn}
 L_G = \varepsilon_G D^{-1}_G \varepsilon_G^T .
 \end{gather}

 \begin{lem} \label{lem: DGdirectsum}
 There is a unique splitting of \eqref{reducedShortExact} over the field $\Q(x_e, e\in E_G)$, which is orthogonal with respect to the bilinear form $D_G$. There is a basis which is adapted to this splitting in which the matrix $D_G$ is equal to
\begin{gather*} 
D_G = \left(
\begin{array}{c|c}
 \Lambda_G & 0 \\ \hline
 0 & L_G^{-1^{\vphantom{1}}} \\
\end{array}
\right)\!.
\end{gather*}
It follows that $\det(\Lambda_G) \det(L_G)^{-1} = \prod_{e \in E_G} x_e$ and hence
\begin{gather*}
\Psi_G = \det(L_G) \prod_{e \in E_G} x_e.
\end{gather*}
\end{lem}

\begin{proof} Let $K=\Q(x_e,e\in E_G)$.
Consider the short exact sequence:
\begin{gather*}
0 \To H_1(G; K) \overset{\mathcal{H}_G}{\To} K^{E_G} \overset{\varepsilon_G}{\To} K^{V'_G} \To 0.
\end{gather*}
Let $f_G\colon K^{V'_G} \rightarrow K^{E_G}$ denote the unique splitting whose image is orthogonal to $H_1(G;K)$. In~other words, $\varepsilon_G f_G$ is the identity map on $K^{V'_G}$ and the decomposition
\begin{gather} \label{inproof:orthogdecomp}
(\mathcal{H}_G, f_G) \colon\ H_1(G; K) \oplus K^{V_G'} \overset{\sim}{\To} K^{E_G}
\end{gather}
is orthogonal with respect to $D_G$.
The isomorphism $D_G\colon K^{E_G} \cong \big(K^{E_G}\big)^{\vee}$
can be represented, via \eqref{inproof:orthogdecomp}, as a block diagonal matrix of the following form:
\begin{gather*}
 D_G = \left(\begin{array}{c|c}
 \mathcal{H}_G^T D_G \mathcal{H}_G & 0 \\ \hline
 0 & f_G^{T^{\vphantom{1}}} D_G f_G \\
\end{array}
\right) = \left(
\begin{array}{c|c}
 \Lambda_G & 0 \\ \hline
 0 & f_G^{T^{\vphantom{1}}} D_G f_G \\
\end{array}\right)\!.
\end{gather*}
Since $f_G \varepsilon_G\colon K^{E_G} \rightarrow K^{V'_G}$, viewed as an element in $\mathrm{End}(K^{E_G})$, is the idempotent which projects onto the second factor of \eqref{inproof:orthogdecomp}, it follows that the composition
$f_G^T D_G f_G \varepsilon_G D_G^{-1} \varepsilon_G^T\colon \big(K^{V_G'}\big)^{\vee} \rightarrow \big(K^{V_G'}\big)^{\vee}$ equals $f_G^T\varepsilon^T_G =(\varepsilon_G f_G)^T$, which is simply the identity. Therefore we can replace $f_G^T D_G f_G$ in the previous matrix by $\big(\varepsilon_G D_G^{-1} \varepsilon_G^T\big)^{-1} = L_G^{-1}$.
\end{proof}

\begin{Example} \label{Example: Kn}
Let $K_n$ be the complete graph with $n$ vertices numbered $1,\dots, n$.
The $(n-1)\times (n-1)$ matrix $L_{K_n} $ corresponding to removing the final vertex has entries $\left(L_{K_n} \right)_{ij} =y_{ij}$, where for
 all $1\leq i < j \leq n$,
\begin{gather*}
y_{ij} =y_{ji}= -x_e^{-1}
\end{gather*}
whenever $e$ is the edge between vertices $i$ and $j$, and
\begin{gather*}
y_{ii} = \sum_{e {\text{ meets }} i} x_e^{-1} = - \sum_{k\neq i } y_{ik},
\end{gather*}
where the sum is over all edges $e$ which meet vertex $i$.
For $n=3$,
\begin{gather*}
L_{K_3} = \begin{pmatrix}
 -y_{12}-y_{13} & y_{12} \\
y_{21} & -y_{21}-y_{23}
 \end{pmatrix}\!.
 \end{gather*}
A general $L_{K_n}$ is equivalent to the generic symmetric matrix of rank $n-1$.
\end{Example}

 \subsection{Matroids} \label{sect: Matroids} The previous discussion can be extended to a certain class of matroids \cite{Matroids}. The main application will be to exploit the fact that regular matroids, as opposed to graphs, are closed under the operation of taking duals. This will be used to simplify several proofs, but is not essential to the rest of the paper.

 First of all, observe more generally that the definitions above are valid for any exact sequence of finite-dimensional vector spaces over $\Q$ of the form
 \begin{gather} \tag{$S$}
 0 \To H \To \Q^E \To V \To 0,
 \end{gather}
 where $E$ is a finite set.
 One can define a Laplacian as before:
 \begin{gather*}
 \Lambda_S \in \operatorname{Hom}\big(H, H^{\vee} \otimes_{\Q} \Q[x_e, e \in E] \big)
 \end{gather*}
 which defines a symmetric bilinear form on $H$.
 If one chooses a basis $B$ of $H$, and denotes by~$\mathcal{H}$ the matrix of $H\rightarrow \Q^{E}$ in this basis, then the bilinear form $\Lambda$ is
 represented by the matrix
 $\Lambda_B = \mathcal{H}^T D \mathcal{H}$, where $D$ is the diagonal matrix with entries $x_e$ in the row and column indexed by $e\in E$. Changing basis via a matrix $P \in \mathrm{GL}(H)$ corresponds to the transformation
 \begin{gather} \label{LambdaTransformationMatroids}
 \Lambda_{B'} = P^T \Lambda_B P
 \end{gather}
 from which it follows that $\Psi_S= \det(\Lambda_S)\in \Q[x_e, e\in E]$ is well-defined up to an element of~$(\Q^{\times})^2$. Similarly, we can define a dual Laplacian
 \begin{gather*}
 L_S \in \operatorname{Hom}\big(V^{\vee}, V \otimes_{\Q} \Q\big[x_e^{-1}, e\in E \big] \big)
 \end{gather*}
associated to $S$, and its determinant is likewise well-defined up to an element of $(\Q^{\times})^2$. By~identifying $\Q^E$ with its dual, we can write the dual sequence
\begin{gather} \tag{$S^{\vee}$}
 0 \To V^{\vee} \To \Q^E \To H^{\vee} \To 0 .
 \end{gather}

\begin{lem} \label{lem: dualityandLG}
We have
\begin{gather*}
 \Lambda_{S^{\vee}} = i^* L_{S},
 \end{gather*}
 where $i\colon \Q[x_e, e\in E] \rightarrow \Q\big[x_{e}^{-1}, e\in E\big]$ satisfies $i(x_e) = x_e^{-1}$. Therefore
 \begin{gather*}
 \big( \Psi_{S^{\vee}}(x_e)\big)^{-1} \Psi_{S} \big(x_{e}^{-1}\big) \prod_{e \in E} x_e \in (\Q^{\times})^2.
 \end{gather*}
 \end{lem}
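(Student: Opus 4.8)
The plan is to unwind the definitions of $\Lambda_S$ and $L_S$ and observe that the two sequences $(S)$ and $(S^\vee)$ are built from the same inclusion $H \hookrightarrow \Q^E$ and its transpose, so that the Laplacian of one is literally the dual Laplacian of the other, up to the formal substitution $x_e \mapsto x_e^{-1}$. Concretely, pick a basis $B$ of $H$ and let $\mathcal H$ be the matrix of $H \to \Q^E$. Then $\Lambda_S = \mathcal H^T D \mathcal H$ with $D = \mathrm{diag}(x_e)$, while in the dual sequence $(S^\vee)$ the map $V^\vee \to \Q^E$ (after identifying $\Q^E$ with its dual) has image the annihilator of $H$, which is exactly $\mathrm{Im}(\partial^T)$ in the graph case, and in general is the image of the transpose of the quotient map $\Q^E \to V$. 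The point is that $\Lambda_{S^\vee}$, computed as $\varepsilon^T D' \varepsilon$ for the appropriate incidence-type matrix $\varepsilon$ of $(S^\vee)$ with $D' = \mathrm{diag}(x_e)$, coincides after relabelling with $L_S = \varepsilon_S D_G^{-1} \varepsilon_S^T$ where the inverse diagonal matrix $D_G^{-1} = \mathrm{diag}(x_e^{-1})$ produces exactly the substitution $i^*$.

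First I would set up notation: write $a: H \to \Q^E$ and $b: \Q^E \to V$ for the two maps of $(S)$, so $ba = 0$ and $H = \ker b$. Dualizing and using the self-duality $\Q^E \cong (\Q^E)^\vee$ given by the standard basis, the sequence $(S^\vee)$ has maps $b^T: V^\vee \to \Q^E$ and $a^T: \Q^E \to H^\vee$. Then by definition $\Lambda_{S^\vee} = (b^T)^\vee \, D \, b^T$ computed with the diagonal form $\langle e, e'\rangle = \delta_{e,e'} x_e$ on $\Q^E$; that is, $\Lambda_{S^\vee} = b\, D\, b^T$ as a form on $V^\vee$. On the other hand, $L_S = b\, D^{-1}\, b^T$ by \eqref{LGdefn} (with $\partial$ replaced by the abstract $b$), where $D^{-1} = \mathrm{diag}(x_e^{-1})$. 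Comparing these two expressions, the only difference is $D$ versus $D^{-1}$, i.e. the entrywise substitution $x_e \mapsto x_e^{-1}$, which is precisely the ring homomorphism $i^*$. Hence $\Lambda_{S^\vee} = i^* L_S$, proving the first claimed identity.

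For the consequence about $\Psi$, I would take determinants. From $\Lambda_{S^\vee} = i^* L_S$ we get $\Psi_{S^\vee}(x_e) = \det \Lambda_{S^\vee} = i^*(\det L_S) = \det(L_S)(x_e^{-1})$ up to a square in $\Q^\times$ (since the Laplacians are only well-defined up to a change of basis, which scales the determinant by a square, by \eqref{LambdaTransformationMatroids}). The matroid analogue of Lemma~\ref{lem: DGdirectsum} — whose proof goes through verbatim with $\mathcal H_G, \varepsilon_G$ replaced by the abstract $a, b$, since it only used the orthogonal splitting of $(S)$ with respect to $D$ — gives $\det(\Lambda_S)\det(L_S)^{-1} = \prod_{e\in E} x_e$ up to a square, hence $\det(L_S)(x_e^{-1}) = \Psi_S(x_e^{-1}) / \prod_e x_e^{-1} = \Psi_S(x_e^{-1}) \prod_e x_e$ up to a square. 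Combining, $\Psi_{S^\vee}(x_e)^{-1} \Psi_S(x_e^{-1}) \prod_{e\in E} x_e \in (\Q^\times)^2$.

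The only real subtlety — and the step I would be most careful with — is keeping track of the various self-dual identifications and making sure the diagonal form $D$ (rather than its inverse) is the one appearing when one writes $\Lambda_{S^\vee}$ directly from the definition applied to the sequence $(S^\vee)$; it is easy to introduce a spurious extra inversion. One should also record, once and for all, that Lemma~\ref{lem: DGdirectsum} and its proof are purely formal statements about the exact sequence and the form $D$, so that "$\det \Lambda_S / \det L_S = \prod_e x_e$ up to a square" holds in the matroid generality; granting that, the rest is bookkeeping with determinants and the square-ambiguity inherent in $\Psi$.
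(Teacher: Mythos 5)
Your proof is correct and follows essentially the same route as the paper: the identity $\Lambda_{S^{\vee}} = i^* L_S$ is read off from the definitions via $D^{-1} = i^* D$ (your $b\,D\,b^T$ versus $b\,D^{-1}\,b^T$ comparison), and the determinant statement is then deduced from the matroid form of Lemma \ref{lem: DGdirectsum}, exactly as in the paper. Your extra care about the self-dual identification and the $(\Q^{\times})^2$ ambiguity is just an expanded version of what the paper leaves implicit.
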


 \begin{proof} The first part follows from the definitions and $D^{-1} = i^* D$. The second part is a~consequence of
 Lemma~\ref{lem: DGdirectsum}.
\end{proof}

In particular, we may write the statement of Lemma~\ref{lem: DGdirectsum} in the form
 \begin{gather} \label{DdecompSdualS}
 D = \Lambda_S \oplus i^* \Lambda_{S^{\vee}}^{-1},
 \end{gather}
 where $D$ denotes the bilinear form on $\Q^E$ considered above.

 \begin{rem}
 Let $M$ be a regular matroid with edge set $E$. A choice of realisation of the matroid defines a surjective map
 $\Q^E \rightarrow V$, where $V$ is a finite-dimensional vector space over~$\Q$. If $H$ denotes its kernel, we obtain a short exact sequence $(M)$
 $0 \rightarrow H \rightarrow \Q^E \rightarrow V \rightarrow 0$.
 When~$M$ is the matroid associated to a graph $G$, it is the exact sequence \eqref{SEHGv2} tensored with~$\Q$.
 The~matroid polynomial is defined to be
 \begin{gather*}
 \Psi_M = \sum_{B} \prod_{e \notin B} x_e,
 \end{gather*}
 where $B$ ranges over the set of bases in $M$. A matroid version of the matrix tree theorem \cite{DSWmatroid, Maurer}
 states that $\Psi_M$ is proportional to $\det(\Lambda_M)$, up to a non-zero element in $(\Q^{\times})^2$.
 It is well-known that the dual matroid $M^{\vee}$ to $M$ can be represented by the exact sequence dual to $(M)$.
Since the coefficients of monomials in the matroid polynomial are $0$ or $1$, it follows from Lemma~\ref{lem: dualityandLG} that
\begin{gather*}
\Psi_{M^{\vee}}(x_e) = \Psi_{M} \big(x_{e}^{-1}\big) \prod_{e \in E} x_e .
\end{gather*}
 In particular, when $G$ is a planar graph, and $G^{\vee}$ a planar dual, one deduces the well-known relationship $\Psi_{G^{\vee}}(x_e) = \Psi_{G} \big(x_{e}^{-1}\big) \prod_{e \in E} x_e$.
\end{rem}

\subsection{Graph matrix} \label{sect: GraphMatrix}
A third way to express the graph polynomial as a matrix determinant arises naturally in the context of Feynman integrals via the Schwinger trick.
It is defined for an exact sequence $(S)$ as follows. Denote the map $\Q^E \rightarrow V$ by $\varepsilon$, its dual $V^{\vee} \rightarrow \big(\Q^E\big)^{\vee}$ by $\varepsilon^T$, and consider the map \begin{align*}
\Q^E \oplus V^{\vee} &\To \big(\big(\Q^E\big)^{\vee} \oplus V \big)\otimes_{\Q} \Q[x_e, e\in E],
\\
( f, v) &\mapsto \ \ \big(Df - \varepsilon^{T}(v), \varepsilon(f)\big),
 \end{align*}
 where $D$ was defined earlier. It defines a bilinear form on $\Q^E\oplus V^{\vee}$ taking values in $\Q[x_e, e\in E]$, whose restriction to the subspace $V^{\vee}$ is identically zero.

 In the case when the exact sequence $(S)$ arises from a graph, we call
 the following square matrix of rank
 $(E_G+V_G-1) \times (E_G+V_G-1)$
\begin{gather*}
M_G = \left(
\begin{array}{c|c}
 D_G & - \varepsilon^T_G \\ \hline
 \varepsilon_G & 0 \\
\end{array}
\right)
\end{gather*}
a (choice of) graph matrix. Here, $\varepsilon_G$ is a reduced incidence matrix, which, we recall, depends on a choice of deleted vertex $v$ (and choice of bases).

\begin{lem} \label{lemMGLBU}
We can write $M_G = LBU$, where
\begin{gather*}
L= \left(\begin{array}{c|c}
 I & 0 \\ \hline
 \varepsilon_G D_G^{-1^{\vphantom{1}}} & I
\end{array}\right)\!, \qquad
B = \left(\begin{array}{c|c}
 D_G & 0 \\ \hline
 0 & L_G
\end{array}\right)\!, \qquad
U = \left(\begin{array}{c|c}
 I & - D^{-1}_G \varepsilon^T_G \\ \hline
 0 & I
\end{array}\right)
\end{gather*}
and $I$ are identity matrices of the appropriate rank. In particular, $\det(M_G) = \Psi_G$.
\end{lem}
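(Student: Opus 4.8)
The plan is to verify the factorization $M_G = LBU$ by direct block-matrix multiplication, and then read off the determinant from the triangular structure of the factors. First I would compute the product $BU$: multiplying the block-diagonal matrix $B=\bigl(\begin{smallmatrix} D_G & 0\\ 0 & L_G\end{smallmatrix}\bigr)$ by the upper block-unitriangular $U=\bigl(\begin{smallmatrix} I & -D_G^{-1}\varepsilon_G^T\\ 0 & I\end{smallmatrix}\bigr)$ gives $\bigl(\begin{smallmatrix} D_G & -\varepsilon_G^T\\ 0 & L_G\end{smallmatrix}\bigr)$. Then I would left-multiply by $L=\bigl(\begin{smallmatrix} I & 0\\ \varepsilon_G D_G^{-1} & I\end{smallmatrix}\bigr)$, obtaining $\bigl(\begin{smallmatrix} D_G & -\varepsilon_G^T\\ \varepsilon_G & -\varepsilon_G D_G^{-1}\varepsilon_G^T + L_G\end{smallmatrix}\bigr)$. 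The bottom-right block vanishes precisely because $L_G = \varepsilon_G D_G^{-1}\varepsilon_G^T$ by definition \eqref{LGdefn}, so the product equals $M_G$. This is the only computation, and it is routine; the only thing to be careful about is that $D_G$ is invertible over the field $K=\Q(x_e, e\in E_G)$, which is clear since it is diagonal with nonzero entries $x_e$, so all the inverses appearing in $L$, $B$, $U$ make sense as matrices over $K$.

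For the determinant, $L$ and $U$ are block-unitriangular, hence have determinant $1$, and $B$ is block-diagonal, so $\det(M_G) = \det(L)\det(B)\det(U) = \det(B) = \det(D_G)\det(L_G) = \bigl(\prod_{e\in E_G} x_e\bigr)\det(L_G)$. By Lemma \ref{lem: DGdirectsum} this last quantity is exactly $\Psi_G$, which gives the claim. (Alternatively one can invoke the Schur-complement formula for the block matrix $M_G$ directly: since the top-left block $D_G$ is invertible, $\det(M_G) = \det(D_G)\det\bigl(0 - \varepsilon_G D_G^{-1}(-\varepsilon_G^T)\bigr) = \det(D_G)\det(L_G)$, which makes the $LBU$ factorization essentially a repackaging of the Schur complement; but writing out $L$, $B$, $U$ explicitly as in the statement is cleaner for later use.)

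There is really no substantive obstacle here — the lemma is a bookkeeping identity. The one point worth a sentence of care is the sign: the $(1,2)$ block of $M_G$ is $-\varepsilon_G^T$, and one must check that the $-D_G^{-1}\varepsilon_G^T$ in $U$ together with the $+D_G$ in $B$ reproduce it (they do: $D_G\cdot(-D_G^{-1}\varepsilon_G^T) = -\varepsilon_G^T$), and that the cross term in the $(2,2)$ position comes out as $-\varepsilon_G D_G^{-1}\varepsilon_G^T$ with the right sign to cancel $L_G$. Beyond that, I would simply remark that the determinant statement is consistent with Theorem \ref{thm: MatrixTree}: the $x_e$-degree of $\det(M_G)$ is $|E_G| - (h_G)$... in fact $\Psi_G$ is homogeneous of degree $|E_G| - h_G = |V_G| - 1$, matching the size $|V_G|-1$ of the block $L_G$, which is a useful sanity check but not needed for the proof.
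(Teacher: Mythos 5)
Your proof is correct and takes essentially the same route as the paper: the paper's own proof simply declares the factorization $M_G=LBU$ "straightforward" (which is exactly the block multiplication you carry out, with the $(2,2)$ entry cancelling because $L_G=\varepsilon_G D_G^{-1}\varepsilon_G^T$) and then concludes $\det(M_G)=\det(B)=\det(D_G)\det(L_G)$ and applies Lemma \ref{lem: DGdirectsum}. One small caveat on your final parenthetical sanity check, which you rightly flag as unnecessary: $\Psi_G$ is homogeneous of degree $h_G$ (each spanning-tree complement has $h_G$ edges), not of degree $|E_G|-h_G=|V_G|-1$; this matches $\det(L_G)\prod_e x_e$ having degree $|E_G|-(|V_G|-1)=h_G$, but it does not affect your argument.
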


\begin{proof}
The decomposition $M_G = LBU$ is straightforward. We deduce that
$\det(M_G) = \det(LBU)=\det(B)= \det(D_G) \det(L_G) $ and apply Lemma~\ref{lem: DGdirectsum}.
\end{proof}

\subsection{Variants of graph polynomials}
The following polynomials are instances of what we called ``Dodgson polynomials'' in \cite{PeriodsFeynman}.
\begin{defn} \label{defn: Dodgson}
Let us denote by
\begin{gather*}
\Psi^{I,J}_G = \det(M_G(I,J)),
\end{gather*}
where $M_G(I,J)$ denotes the minor of $M_G$ with rows $I$ and columns $J$ removed, where $I$, $J$ are subsets of $E_G$ such that $|I|=|J|$.
 We write $\Psi_G^{ij}$ instead of $\Psi_G^{\{i\}, \{j\}}$. \end{defn}

For general $I$, $J$, the polynomial $\Psi^{I,J}_G$ depends on the choice of graph matrix $M_G$ by a possible sign.
Since $M_G$ is symmetric, $\Psi_G^{ij}= \Psi_G^{ji}$
 and can be expressed as sums over spanning forests which include or avoid the edges $i$,$j$. In particular:
\begin{gather*} 
\Psi_G^{ii} = \Psi_{G \backslash i } = \frac{\partial}{\partial x_i} \Psi_G.
\end{gather*}

\section{Maurer--Cartan differential forms and invariant traces}

Let $R=\bigoplus_{n\geq 0 } R^n$ be a graded-commutative unitary differential graded algebra over $\Q$ whose differential ${\rm d}\colon R^n \rightarrow R^{n+1}$ has degree $+1$. In particular, for any homogeneous elements $a$, $b$ one has $a. b = (-1)^{\deg(a) \deg(b)} b.a$.

\subsection{Definition of the invariant trace} \label{sect: defbetabasic}

\begin{defn}
For any invertible $(k\times k)$ matrix $X \in \GL_k\big(R^0\big)$, let
\begin{gather*}
\mc_X = X^{-1} {\rm d}X \in M_{k \times k} \big(R^1\big) .
\end{gather*}
For any $n\geq 0$ consider the elements
\begin{gather*}
\beta^n_X = \tr \big(\big(X^{-1} {\rm d}X\big)^n\big) \in R^n .
\end{gather*}
\end{defn}
Denote by $I_k \in \GL_k\big(R^0\big)$ the identity matrix of rank~$k$.
 \begin{lem} \label{lemma: MC}
 The matrix $\mc_X$ satisfies the Maurer--Cartan equation
 \begin{gather*}
 {\rm d} \mc_X + \mc_X \mc_X = 0 .
 \end{gather*}
 From this it follows that $ {\rm d}\big(\mc_X^{2n}\big)=0$ and ${\rm d}\big(\mc_X^{2n-1}\big) = - \mc_X^{2n}$ for all $n \geq 1$.
 \end{lem}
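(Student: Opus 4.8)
The plan is to verify the Maurer--Cartan equation by a direct computation, then deduce the two vanishing/derivative statements by a clean induction using the graded Leibniz rule.

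\medskip

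\noindent\textbf{Step 1: The Maurer--Cartan identity.} First I would compute $d\mc_X$ directly from $\mc_X = X^{-1}dX$. Differentiating a product requires knowing $d(X^{-1})$; since $X X^{-1} = I_k$ has constant (degree-zero) entries, $d(X X^{-1}) = 0$, and the Leibniz rule gives $(dX)X^{-1} + X\, d(X^{-1}) = 0$, whence $d(X^{-1}) = -X^{-1}(dX)X^{-1}$. Here one must be slightly careful that $X$ has entries in $R^0$, so no signs intervene in this step. Then
$$
d\mc_X = d(X^{-1})\wedge dX + X^{-1} d(dX) = -X^{-1}(dX)X^{-1}\wedge dX + 0 = -\mc_X\,\mc_X\ ,
$$
using $d^2 = 0$ on each entry. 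This is exactly $d\mc_X + \mc_X\mc_X = 0$. The only subtlety is bookkeeping with matrix entries living in a graded-commutative algebra, but since $dX$ has entries in $R^1$ and $X^{-1}$ has entries in $R^0$, the matrix multiplications behave like ordinary ones and no sign corrections arise; this is the main (mild) thing to get right.

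\medskip

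\noindent\textbf{Step 2: Derivative of odd powers.} For $\mc = \mc_X$ with entries in $R^1$, I would show $d(\mc^{2n-1}) = -\mc^{2n}$ by induction on $n$. The base case $n=1$ is Step 1. For the inductive step, write $\mc^{2n+1} = \mc^{2n-1}\,\mc^2$ and apply the graded Leibniz rule entrywise; since each factor $\mc$ raises degree by $1$, the sign $(-1)^{\deg}$ must be tracked. Using $d(\mc^2) = d\mc\,\mc - \mc\,d\mc$ (the graded Leibniz rule for the square of a degree-one matrix of forms, with the middle sign from commuting $d$ past one factor of $\mc$) together with $d\mc = -\mc^2$, one gets $d(\mc^2) = -\mc^3 + \mc^3 = 0$. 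Hence $d(\mc^{2n+1}) = d(\mc^{2n-1})\,\mc^2 \pm \mc^{2n-1} d(\mc^2) = (-\mc^{2n})\,\mc^2 = -\mc^{2n+2}$, completing the induction. In fact the cleanest route is to first record $d(\mc^{2})=0$, hence $d(\mc^{2m})=0$ for all $m\ge 1$ (Leibniz, since $\mc^{2m}$ is a product of the closed degree-two element $\mc^2$), and then $d(\mc^{2n-1}) = d(\mc\cdot \mc^{2n-2}) = (d\mc)\mc^{2n-2} - \mc\, d(\mc^{2n-2}) = -\mc^{2n} - 0 = -\mc^{2n}$.

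\medskip

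\noindent\textbf{Step 3: Closedness of even powers, and passing to traces.} The statement $d(\mc^{2n})=0$ then follows either from Step 2's observation that $\mc^{2}$ is closed, or directly as $d(\mc^{2n}) = d(\mc^{2n-1}\cdot \mc) = (d\mc^{2n-1})\mc + (-1)^{2n-1}\mc^{2n-1}d\mc = -\mc^{2n}\mc + \mc^{2n-1}\mc^{2} = 0$. Finally, the elements in the lemma are the traces $\beta^n_X = \tr(\mc^n)$, and $d$ commutes with $\tr$ (trace is a $\Q$-linear combination of the entry maps, and $d$ acts entrywise), so the matrix identities descend immediately: $d\beta^{2n}_X = \tr(d(\mc^{2n})) = 0$ and $d\beta^{2n-1}_X = \tr(d(\mc^{2n-1})) = -\tr(\mc^{2n}) = -\beta^{2n}_X$. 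I do not anticipate any serious obstacle; the only place demanding care is the consistent treatment of Koszul signs when applying the graded Leibniz rule to matrices of forms, and the observation that $X$ and $X^{-1}$ carry degree-zero entries so they commute past everything without sign.
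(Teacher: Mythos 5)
Your proposal is correct and follows essentially the same route as the paper: derive $d(X^{-1})=-X^{-1}dX\,X^{-1}$ from $XX^{-1}=I_k$, obtain $d\mc_X=-\mc_X^2$, note $d(\mc_X^2)=d\mc_X\,\mc_X-\mc_X\,d\mc_X=0$ so all even powers are closed, and then get $d(\mc_X^{2n-1})=-\mc_X^{2n}$ by the graded Leibniz rule. The remarks on traces in your Step 3 are not needed for this lemma (they belong to the subsequent lemma on $\beta^n_X$), but they are harmless and correct.
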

 \begin{proof}
 Since $X. X^{-1} =I_k$ we deduce that $X {\rm d}\big(X^{-1}\big) + {\rm d}X. X^{-1} =0$. It follows that ${\rm d}\big(X^{-1}\big) = -X^{-1} {\rm d}X. X^{-1}$, and therefore ${\rm d} \mc_X = {\rm d}\big(X^{-1}\big) {\rm d}X= - \mc_X^2$. Now
 \begin{gather*}
 {\rm d} \mc_X^2 = {\rm d}\mc_X . \mc_X - \mc_X {\rm d}\mc_X = - \mc_X^3 + \mc_X^3=0.
 \end{gather*}
 From this it follows that all even powers $\mc^{2n}_X = \big(\mc_X^2\big)^n$ are closed under ${\rm d}$, including the case $n=0$, since $\mc_X^0$ is the identity. This in turn implies that for any $n\geq 1$, we have ${\rm d} \big(\mc_X . \mc_X^{2n-2}\big) = {\rm d}\mc_X. \mc_X^{2n-2} = -\mc_X^2 \mc_X^{2n-2} =- \mc_X^{2n}$ as required.
 \end{proof}

 The following properties of $\beta^n_X$ are well-known.

 \begin{lem} \label{lem: BasicPropertiesBeta}
 The elements $\beta_X$ satisfy the following properties for all $n\geq 1{:}$
\begin{enumerate}\itemsep=0pt
\item[$(i)$] $\beta_X^n = \tr \big( \big({\rm d}X. X^{-1}\big)^n\big)$,
\item[$(ii)$] $\beta_{X^{-1}}^n = (-1)^n \beta_{X}^n$,
\item[$(iii)$] $\beta_{X^T}^n = (-1)^{\frac{n(n-1)}{2}} \beta_{X}^n$,
\item[$(iv)$] $\beta_{X}^{2n} =0$,
\item[$(v)$] ${\rm d}\beta_{X}^{2n+1} =0$,
\item[$(vi)$] $\beta_{X_1 \oplus X_2}^{n} =\beta_{X_1}^{n} + \beta_{X_2}^n$.
\end{enumerate}
 The map $X\mapsto \beta^n_X$ is invariant under left or right multiplication by any constant invertible matrix $A \in \GL_k\big(R^0\big)$. In other words,
 \begin{gather*}
 \beta^n_X= \beta^n_{AX} = \beta^n_{XA} \qquad \text{if} \quad {\rm d}A=0 .
 \end{gather*}
 \end{lem}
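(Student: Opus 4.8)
The plan is to verify each identity $(i)$ through $(vi)$ together with the multiplication invariance, all of which are formal consequences of the definitions and the graded-commutativity of $R$, plus the cyclicity of the trace (which, in the graded setting, requires a Koszul sign that must be tracked carefully). Before anything else I would record the basic fact that for homogeneous matrices $A \in M_{k\times k}(R^a)$ and $B \in M_{k\times k}(R^b)$ one has $\tr(AB) = (-1)^{ab}\tr(BA)$; this is the only nontrivial input, and it follows entry-wise from graded-commutativity of $R$. In our situation every factor of $\mu_X = X^{-1}dX$ lies in $M_{k\times k}(R^1)$, so moving a single factor of $\mu_X$ past a word of length $m$ in the $\mu_X$'s costs a sign $(-1)^m$.

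For the individual parts: $(v)$ is immediate from Lemma \ref{lemma: MC}, since $\beta_X^{2n+1} = \tr(\mu_X^{2n+1})$ and $d\mu_X^{2n+1} = -\mu_X^{2n+2}$, whose trace vanishes by $(iv)$ applied with exponent $2n+2$; alternatively one derives $(v)$ directly after establishing $(iv)$. For $(iv)$: using cyclicity, $\tr(\mu_X^{2n}) = \tr(\mu_X \cdot \mu_X^{2n-1}) = (-1)^{2n-1}\tr(\mu_X^{2n-1}\cdot \mu_X) = -\tr(\mu_X^{2n})$, forcing $2\beta_X^{2n}=0$, hence $\beta_X^{2n}=0$ since we work over $\Q$. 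For $(i)$: write $dX\cdot X^{-1} = X(X^{-1}dX)X^{-1} = X\mu_X X^{-1}$, so $(dX\cdot X^{-1})^n = X\mu_X^n X^{-1}$, and the trace is unchanged because $X$ and $X^{-1}$ are of degree $0$ and cyclicity introduces no sign; this also immediately gives the left/right multiplication invariance, since if $dA=0$ then $\mu_{AX} = X^{-1}A^{-1}d(AX) = X^{-1}A^{-1}A\,dX = \mu_X$, and $\mu_{XA} = A^{-1}X^{-1}(dX)A = A^{-1}\mu_X A$, whose $n$-th power is $A^{-1}\mu_X^n A$ with the same trace. For $(vi)$: $\mu_{X_1\oplus X_2} = \mu_{X_1}\oplus \mu_{X_2}$ as a block-diagonal matrix, its powers are block-diagonal, and the trace is additive over blocks.

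The two parts requiring genuine sign bookkeeping are $(ii)$ and $(iii)$. For $(ii)$: from $d(X^{-1}) = -X^{-1}(dX)X^{-1} = -\mu_X X^{-1}$ (derived as in the proof of Lemma \ref{lemma: MC}), compute $\mu_{X^{-1}} = (X^{-1})^{-1}d(X^{-1}) = X\cdot(-\mu_X X^{-1}) = -X\mu_X X^{-1}$; hence $\mu_{X^{-1}}^n = (-1)^n X\mu_X^n X^{-1}$ and taking traces gives $\beta_{X^{-1}}^n = (-1)^n \beta_X^n$. For $(iii)$: the key is that $(AB)^T = (-1)^{ab}B^T A^T$ for homogeneous matrix-valued forms of degrees $a,b$, because transposition reverses the order of matrix multiplication and swapping the scalar entries costs a Koszul sign; consequently $\mu_{X^T} = (X^T)^{-1}d(X^T) = (X^{-1})^T(dX)^T$, and one checks $(X^{-1})^T(dX)^T = -(\mu_X^T)$ up to the appropriate sign coming from $(dX\cdot X^{-1})^T$; more directly, $\mu_{X^T} = (dX\cdot X^{-1})^T$ up to sign, so by $(i)$ it suffices to transpose the word $\mu_X^n$. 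Expanding $(\mu_X^n)^T$ by repeatedly applying the graded transpose rule reverses the $n$ factors, and reversing a word of $n$ degree-one factors costs the sign $(-1)^{\binom{n}{2}} = (-1)^{n(n-1)/2}$; taking the trace (which is transpose-invariant with no sign since $\tr(A^T)=\tr(A)$) yields $\beta_{X^T}^n = (-1)^{n(n-1)/2}\beta_X^n$.

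The main obstacle is purely a matter of discipline rather than depth: keeping the Koszul signs consistent across the interacting conventions for graded matrix multiplication, graded transpose, and graded trace, so that the signs in $(iii)$ in particular come out to exactly $(-1)^{n(n-1)/2}$ and not its inverse (they agree, being $\pm1$, but the intermediate steps must be coherent). I would fix once and for all the convention that the $(i,j)$ entry of a product $AB$ is $\sum_k A_{ik}B_{kj}$ with \emph{no} sign, push all signs into the transpose and trace-cyclicity rules, and then every identity above falls out mechanically. Everything else is formal.
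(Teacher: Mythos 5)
Your proposal is correct and follows essentially the same route as the paper's own proof: graded cyclicity of the trace for $(i)$, $(iv)$ and the $A$-invariance, the Maurer--Cartan relation $d\mu_X^{2n+1}=-\mu_X^{2n+2}$ combined with $(iv)$ for $(v)$, the computation of $\mu_{X^{-1}}$ for $(ii)$, block-additivity for $(vi)$, and for $(iii)$ the identity $\mu_{X^T}=(dX\cdot X^{-1})^T$ together with the anti-homomorphism property of transposition and the reversal sign $(-1)^{n(n-1)/2}$. (Your passing remark that $(X^{-1})^T(dX)^T$ equals $\pm\mu_X^T$ is not literally accurate -- those are transposes of the two different products $dX\cdot X^{-1}$ and $X^{-1}dX$ -- but this is harmless since your ``more direct'' route via $(i)$ is exactly the paper's argument.)
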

\begin{proof}
Property $(i)$ follows from cyclicity of the trace. From this follows $(ii)$ since $\mc_{X^{-1}} = - {\rm d}X. X^{-1}$ via the computation in the proof of Lemma~\ref{lemma: MC}. To deduce $(iii)$, note that $(\mc_X)^T = {\rm d} \big(X^T\big) \big(X^T\big)^{-1}$. Therefore we check that:
\begin{gather*}
\beta^n_{X^T} \overset{(i)}{=} \tr \big(\big({\rm d}X^T. \big(X^T\big)^{-1}\big)^n \big) =\tr \big(\big(\mc_X^T\big)^n\big) .
\end{gather*}
Since transposition is an anti-homomorphism, $\big(\mc_X^n\big)^T = (-1)^{\frac{n(n-1)}{2}} \big(\mc_X^T\big)^n$ since $\mu_X$ has degree~$1$, and the sign is that of the permutation which reverses the order of a sequence of $n$ objects. We~therefore obtain
\begin{gather*}
 \beta^n_{X^T} = (-1)^{\frac{n(n-1)}{2}} \tr \big((\mc^n_X)^T\big) =(-1)^{\frac{n(n-1)}{2}} \beta_X^n .
 \end{gather*}
Property $(iv)$ uses the cyclicity of the trace and graded-commutativity:
\begin{gather*}
\tr\big(\mc_X^{2n}\big) = \tr\big( \mc_X^{2n-1} \mc_X\big)= \tr\big( (-1)^{2n-1} \mc_X \mc_X^{2n-1}\big) = (-1)^{2n-1} \tr\big(\mc_X^{2n}\big) .
\end{gather*}
Property $(v)$ follows from the fact that ${\rm d} \big(\mc_X^{2n+1}\big) = -\mc_X^{2n+2}$ by Lemma~\ref{lemma: MC}, which has vanishing trace by $(iv)$. Since the trace is linear it clearly commutes with the differential ${\rm d}$. Property~$(vi)$ is immediate from the definitions, where $X_1\oplus X_2$ is the block diagonal matrix with two non-zero blocks~$X_1$,~$X_2$ on the diagonal.
For the last statement, consider any two invertible matrices $A,B \in \GL_k\big(R^0\big)$, which are constant, i.e., ${\rm d}A={\rm d}B=0$. We have
\begin{gather*}
\mc^n_{AXB} = \big((A X B)^{-1} A \,{\rm d}X. B\big)^n = (B^{-1} \big(X^{-1} {\rm d}X\big) B)^n = B^{-1} \mc_X^n B,
\end{gather*}
from which it follows that $\beta_{AXB}^n= \beta_{X}^n$ by the cyclic invariance of the trace.
\end{proof}

The following lemma is a projective invariance property for $\beta_X^{2n+1}$ for $n\geq 1$.
\begin{lem} \label{lem: projinv}
Let $\lambda \in \big(R^0\big)^{\times}$ be invertible of degree zero. Then
\begin{gather*}
\beta^{2n+1}_{\lambda X} = \beta^{2n+1}_{X} \qquad \text{for all} \quad n\geq 1.
\end{gather*}
For $n=0$ however, one has $\beta^1_{\lambda X} = \beta^1_{X} + k \lambda^{-1} {\rm d}\lambda$, where $k$ is the rank of $X$.
\end{lem}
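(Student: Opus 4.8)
The plan is to compute $\mc_{\lambda X} = (\lambda X)^{-1} d(\lambda X)$ directly and expand, exploiting the fact that $\lambda$ is a scalar of degree zero and hence central in the matrix algebra (it commutes with every matrix and with $d$). First I would use the Leibniz rule: since $\lambda \in (R^0)^\times$, we have $d(\lambda X) = d\lambda \cdot X + \lambda\, dX$, and $(\lambda X)^{-1} = \lambda^{-1} X^{-1}$. Multiplying out, and using that $\lambda^{-1} d\lambda \in R^1$ is a scalar multiple of the identity matrix $I_k$, one gets the clean identity
\begin{equation} \nonumber
\mc_{\lambda X} = \lambda^{-1} X^{-1}\bigl(d\lambda\cdot X + \lambda\, dX\bigr) = (\lambda^{-1} d\lambda)\, I_k + X^{-1} dX = \nu\, I_k + \mc_X,
\end{equation}
where I abbreviate $\nu = \lambda^{-1} d\lambda \in R^1$. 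Because $\nu$ is a $1$-form scalar, $\nu\, I_k$ commutes with $\mc_X$ only up to the usual Koszul sign; more precisely $\nu\, I_k$ and $\mc_X$ are both degree-$1$ elements of the matrix algebra $M_{k\times k}(R)$, so they anticommute as scalars would, i.e. $(\nu I_k)\mc_X = -\mc_X(\nu I_k)$ as elements of $M_{k\times k}(R^2)$, while $(\nu I_k)^2 = \nu^2 I_k = 0$ since $\nu \in R^1$ and $R$ is graded-commutative.

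Next I would raise this to the $(2n+1)$-st power. Writing $A = \nu I_k$ and $B = \mc_X$, we have $A^2 = 0$ and $AB = -BA$, so the binomial-type expansion of $(A+B)^{2n+1}$ collapses: any monomial containing two or more factors of $A$ vanishes (two $A$'s can be moved adjacent at the cost of a sign and then $A^2=0$), so
\begin{equation} \nonumber
(A+B)^{2n+1} = B^{2n+1} + \sum_{j=0}^{2n} (-1)^{?}\, B^{j} A\, B^{2n-j},
\end{equation}
where the sign is irrelevant because I will apply the trace. Taking $\tr$ and using cyclicity together with the fact that moving $A$ (degree $1$) past $B^{j}$ (degree $j$) introduces a sign $(-1)^{j}$, each of the $2n+1$ terms $\tr(B^{j} A B^{2n-j})$ equals $\pm\tr(A B^{2n}) = \pm \nu\, \tr(B^{2n}) = \pm\nu\,\beta_X^{2n}$. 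By Lemma \ref{lem: BasicPropertiesBeta}(iv), $\beta_X^{2n} = 0$ for $n\geq 1$, so the entire sum vanishes and $\beta_{\lambda X}^{2n+1} = \tr(B^{2n+1}) = \beta_X^{2n+1}$. This is exactly the claimed projective invariance.

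For the case $n=0$, the same expansion gives $\mc_{\lambda X} = \nu I_k + \mc_X$ and hence $\beta_{\lambda X}^1 = \tr(\nu I_k + \mc_X) = k\nu + \tr(\mc_X) = \beta_X^1 + k\,\lambda^{-1} d\lambda$, since $\beta_X^0 = \tr(\mc_X^0)$ plays no role and $\tr(\nu I_k) = k\nu$. I do not expect a genuine obstacle here: the only point requiring care is the bookkeeping of Koszul signs when commuting the degree-one scalar form $\nu I_k$ past powers of $\mc_X$ and when applying cyclicity of the graded trace, and the observation that $\nu^2 = 0$ which forces all higher-order terms in $\nu$ to drop out. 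The key structural input is simply that $\lambda$ is central of degree zero, so that $\lambda^{-1} d\lambda$ enters as a scalar $1$-form, and then Lemma \ref{lem: BasicPropertiesBeta}(iv) does the rest for $n \geq 1$.
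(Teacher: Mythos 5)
Your proposal is correct and follows essentially the same route as the paper: both establish the key identity $\mc_{\lambda X} = \mc_X + (\lambda^{-1}d\lambda)\,I_k$ and then kill all extra terms using $(\lambda^{-1}d\lambda)^2=0$, the (anti)commutation of the scalar $1$-form with $\mc_X$, and $\beta_X^{2n}=0$ from Lemma \ref{lem: BasicPropertiesBeta}$(iv)$. The only cosmetic difference is that the paper organises the expansion by first observing $\mc_{\lambda X}^{2}=\mc_X^{2}$, whereas you expand $(\nu I_k+\mc_X)^{2n+1}$ directly and collapse the cross terms.
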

\begin{proof} Writing $\lambda X = X. \lambda I_k$, we have
\begin{gather*}
\mc_{\lambda X}=\lambda^{-1}\mc_X\lambda + \mc_{\lambda I_k}=\mc_X + (\lambda^{-1} {\rm d}\lambda)I_k.
\end{gather*}
Taking the trace proves the last statement.
Since $({\rm d} \lambda)^2=0$ and $I_k$ is central, we deduce that
$ \mc^{2m}_{\lambda X} = \mc_X^{2m}$ and $ \mc^{2m+1}_{\lambda X} = \mc_X^{2m+1} + \mc_X^{2m} \big(\lambda^{-1} {\rm d}\lambda \big) $
for all $m\geq 0$. Taking the trace gives $\beta^{2m+1}_{\lambda X} = \beta_X^{2m+1} +\tr \big(\mc_X^{2m}\big) \lambda^{-1} {\rm d}\lambda $. One concludes using Lemma~\ref{lem: BasicPropertiesBeta}$(iv)$.
\end{proof}

The following proposition has important consequences.
 \begin{prop} \label{prop: betavanishes}
 Let $X$ be an invertible $n\times n$ matrix. Then
\begin{gather*}
\beta_X^{m} =0 \qquad \text{for all}\quad m\geq 2n.
\end{gather*}
\end{prop}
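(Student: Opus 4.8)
The plan is to exploit the fact that the Maurer--Cartan form $\mc_X = X^{-1}dX$ takes values in $M_{n\times n}(R^1)$, together with the Cayley--Hamilton theorem, to express high powers of $\mc_X$ in terms of low powers. Concretely, since $\mc_X$ is an $n\times n$ matrix over a graded-commutative ring, its characteristic polynomial has degree $n$, so $\mc_X$ satisfies a polynomial identity $\mc_X^n = \sum_{j=0}^{n-1} c_j\, \mc_X^{j}$ where the coefficients $c_j$ are (up to sign) the elementary symmetric functions of the "eigenvalues", i.e. polynomials in the traces $\beta_X^1,\ldots,\beta_X^{n}$. The subtlety is that we are working over a graded ring where the entries of $\mc_X$ have odd degree $1$; I would first check that Cayley--Hamilton still applies. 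It does: the entries of $\mc_X$ all have the same odd degree, so any two of them anticommute, but Cayley--Hamilton holds over any commutative ring, and one can pass to the commutative ring generated by the \emph{products} $(\mc_X)_{ij}(\mc_X)_{k\ell}$ (which have even degree $2$ and commute), treating $\mc_X$ as a matrix whose square $\mc_X^2$ lives over a genuinely commutative ring --- or alternatively invoke the standard fact that the Cayley--Hamilton identity is a formal consequence of multilinear algebra valid over any (super)commutative base.

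The key steps, in order: (1) Record that by Lemma~\ref{lemma: MC}, $\mc_X^{2n}=0$ forces $\beta_X^{2m}=0$ for all $m$ (this is Lemma~\ref{lem: BasicPropertiesBeta}(iv)), so only the odd traces $\beta_X^{2m+1}$ are at stake, and it suffices to show $\beta_X^{2m+1}=0$ once $2m+1 > 2n$, equivalently $m \geq n$. (2) Apply Cayley--Hamilton to $\mc_X$ to get $\mc_X^{n} = \sum_{j=0}^{n-1} c_j\,\mc_X^{j}$ with $c_j \in R$ of degree $n-j$; multiply by $\mc_X^{m}$ for $m\geq n$ and take traces, obtaining $\beta_X^{n+m} = \sum_{j=0}^{n-1} c_j\,\beta_X^{j+m}$. (3) Use this as a downward recursion: it expresses every $\beta_X^{N}$ with $N\geq n+m \geq 2n$... actually more cleanly, multiply Cayley--Hamilton by $\mc_X^{k}$ for each $k\geq 0$, take traces, to get $\beta_X^{n+k} = \sum_{j=0}^{n-1} c_j \beta_X^{j+k}$; since the degrees of the $c_j$ are even when $n-j$ is even and odd when $n-j$ is odd, and since $\beta_X^{2\ell}=0$, a straightforward induction on $N$ starting from $N=2n$ shows every $\beta_X^N$ with $N\geq 2n$ is a sum of terms each of which contains a factor $\beta_X^{2\ell}=0$; hence $\beta_X^N=0$.

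I expect the main obstacle to be handling the signs and degrees carefully enough to run the induction in step (3): the coefficients $c_j$ of the characteristic polynomial are themselves polynomial expressions in the $\beta_X^i$ (via Newton's identities), and one must verify that in the expression for $\beta_X^N$ ($N\geq 2n$) obtained by iterating the recursion, \emph{every} monomial in the $\beta_X^i$ that appears necessarily involves at least one even-index factor --- equivalently, that one cannot build $\beta_X^N$ with $N\geq 2n$ purely out of the odd traces $\beta_X^1,\beta_X^3,\ldots,\beta_X^{2n-1}$ together with the scalar relations, which is a parity/counting argument. An alternative route that sidesteps this bookkeeping is the classical one: over a field, $\mc_X$ would be a nilpotent (indeed its square is related to $-d\mc_X$, and more to the point $\beta_X^{2m}=0$ kills the even power sums) matrix whose power sums $p_k=\beta_X^k$ vanish for $k$ even, and Newton's identities then force $p_k=0$ for all $k\geq 2n$; I would present whichever of these is cleanest, most likely the Cayley--Hamilton recursion made rigorous by passing to the commutative subring generated by the degree-$2$ elements $(\mc_X^2)_{ij}$ as indicated above. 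One should also remark that the statement is sharp and consistent with Lemma~\ref{lem: projinv} and with the fact (implicit in the paper) that $\beta_X^{2n-1}$ is the top non-trivial invariant, matching the appearance of $\tr((\Lambda_G^{-1}d\Lambda_G)^{4k+1})$ only for $2h_G-1 \geq 4k+1$.
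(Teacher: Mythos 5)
Your argument hinges on applying the Cayley--Hamilton theorem in degree $n$ to $\mc_X=X^{-1}dX$, and that step is false: the entries of $\mc_X$ are odd (degree-one) elements which anticommute rather than commute, and a matrix of anticommuting variables does not satisfy its characteristic polynomial. Already for $n=2$, writing $\mc_X=\begin{pmatrix} a& b\\ c& d\end{pmatrix}$ with $a,b,c,d$ one-forms, one computes
$$\mc_X^2-\tr(\mc_X)\,\mc_X=\begin{pmatrix} bc-da & 2bd \\ 2ca & cb-ad\end{pmatrix},$$
whose off-diagonal entries are non-zero for generic $X$, so no identity $\mc_X^{n}=\sum_{j<n}c_j\,\mc_X^{j}$ with scalar $c_j$ can hold. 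Neither of your justifications repairs this: the commutative subring generated by the even products $(\mc_X)_{ij}(\mc_X)_{k\ell}$ makes $\mc_X^2$, not $\mc_X$, a matrix over a commutative ring, and Cayley--Hamilton is not a formal identity valid for matrices with odd entries over a supercommutative base --- indeed the minimal polynomial identity satisfied by a generic odd $n\times n$ matrix has degree $2n$, which is precisely the content of the Amitsur--Levitzki theorem, of which this proposition is a weak form. The induction in your step (3) cannot be rescued by bookkeeping either: if one takes your recursion at face value for $n=4$ and evaluates the $c_j$ via Newton's identities (only $e_1,e_3,e_4$ survive the vanishing of even traces, with $e_4$ a non-zero rational multiple of $\beta^1_X\beta^3_X$), one obtains that $\beta^9_X$ equals a non-zero rational multiple of $\beta^1_X\beta^3_X\beta^5_X$, which is a non-vanishing form for the generic $4\times 4$ matrix; since the proposition asserts $\beta^9_X=0$, the degree-$n$ identity you start from is genuinely false, not merely in need of careful signs.

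The correct way to run your idea is the one you brush against but do not execute: apply Cayley--Hamilton and Newton's identities to $\mc_X^{2}$ rather than to $\mc_X$ (this is Rosset's proof of Amitsur--Levitzki). The entries of $\mc_X^2$ have even degree, hence commute, so $\mc_X^2$ is an honest $n\times n$ matrix over a commutative $\Q$-algebra. By lemma \ref{lem: BasicPropertiesBeta} (iv) all its power traces satisfy $\tr((\mc_X^2)^k)=\beta^{2k}_X=0$ for $k\geq 1$, so Newton's identities in characteristic zero force every coefficient of its characteristic polynomial to vanish, and Cayley--Hamilton then gives $(\mc_X^2)^n=0$. Hence $\mc_X^{2n}=0$, and therefore $\beta^m_X=\tr(\mc_X^m)=0$ for all $m\geq 2n$. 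Note that this argument proves the stronger statement $\mc_X^{2n}=0$ mentioned in the paper immediately after the proposition; the paper itself offers no proof of the proposition, quoting it as classical with a reference to Cartier, so a complete write-up along these corrected lines would be a genuine addition, whereas the argument as you have written it does not stand.
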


\begin{proof}
It suffices to prove the stronger statement:
\begin{gather} \label{mu2nvanishes}
\mc^{2n}_X= 0 .
\end{gather}
For this, we adapt an argument due to Rosset \cite{Rosset76}, final paragraph.
The matrix $\mu_X^2$ has entries in the commutative ring $R^{\mathrm{even}} = \bigoplus_{n\geq 0} R^{2n}$, and therefore by a well-known result in linear algebra, $\big(\mu_X^2\big)^{n}=0$ holds if $\tr\big(\mu_X^{2m}\big)=0$ for all $m\geq 1$. The latter statement follows from Lemma~\ref{lem: BasicPropertiesBeta}$(iv)$. The linear algebra result referred to above follows from the Cayley--Hamilton theorem, namely, that a matrix $M$ over a commutative ring satisfies its characteristic polynomial equation, and the fact that the coefficients in the characteristic polynomial can be expressed in terms of traces of powers of $M$, which follows from Newton's identities on symmetric functions.
\end{proof}

\begin{rem} In order to connect more directly with the presentation in \cite{Rosset76}, note that
the entries of $\mc^{2n}_X $ lie in the subspace $\textstyle{\bigwedge^{\!\!2n}} R^1 \subset R^{2n}$ generated by exterior products of elements of degree $1$. Let $\{e_i\}$, where $e_i \in R^1$, denote a $\Q$-basis for the vector space generated by the entries of $\mu_X$. We may write $\mc^{2n}_X$ as a finite sum
\begin{gather*}
\mc^{2n}_X = \sum_I \mu_I e_I,
\end{gather*}
where for a set of indices $I=\{i_1,\dots, i_{2n}\}$, $e_I = e_{i_1} \wedge \dots \wedge e_{i_{2n}}$, and where $\mu_I \in \Q$. Equation~\eqref{mu2nvanishes} is equivalent to $\mu_I=0$ for all $I$. Therefore \eqref{mu2nvanishes} reduces to the case where $R$ is the exterior algebra on the $\Q$-vector space with basis $e_1,\dots, e_{2n}$, and
\begin{gather*}
\mu_X = M_1 e_1 + \dots + M_{2n} e_{2n},
\end{gather*}
where $M_i\in M_{n\times n}(\Q)$ are $n\times n$ matrices with rational coefficients.
The statement $\mu_X^{2n}=0$ is proven by Rosset in \cite{Rosset76}, final paragraph. It is equivalent to the Amitsur--Levitzki theorem for the ring $\Q$, which in this case states that
\begin{gather*}
\sum_{\sigma \in \Sigma_{2n}} \mathrm{sgn}(\sigma) M_{\sigma(1)}\cdots M_{\sigma(2n)}=0.
\end{gather*}
\end{rem}
For historical background on invariant forms and their role in the development of Hopf algebras, see Cartier's survey paper
\cite[Section~2.1]{Cartier} and references therein.

\subsection{Invariant classes}
For any invertible matrix $X$ with coefficients in $R^0$, we obtain closed elements
\begin{gather*}
\beta^{2n+1}_X \in R^{2n+1} \qquad \text{for all}\quad n\geq 0
\end{gather*}
and hence potentially non-trivial cohomology classes for all $n\geq 1$:
\begin{gather*}
\big[\beta^{2n+1}_X \big] \in H^{2n+1}(R).
\end{gather*}
If, however, $X=X^T$ is symmetric, then $\beta_X^{4n+3}$ vanishes for all $n$ by property $(iii)$, and hence only the following subset could possibly give rise to non-trivial classes:
\begin{gather*}
\beta^{4n+1}_X \in R^{4n+1} \qquad \text{for all}\quad n\geq 0.
\end{gather*}
Since $\beta^1_X$ is not invariant under multiplication $X \mapsto \lambda X$ in general (see Lemma~\ref{lem: projinv}), we obtain a more restricted list of ``projectively-invariant''classes:
\begin{gather*}
\beta^5_X,\quad \beta^9_X,\quad \beta^{13}_X ,\quad \dots .
\end{gather*}

\begin{Example} \label{examples: smallbetas} Consider the generic two-by-two matrix
\begin{gather*} X= \begin{pmatrix}
a_{1} & a_{3}\\
a_{4} & a_{2}
\end{pmatrix}
\end{gather*}
with coefficients in the field $R^0 =\Q(a_{1},\dots, a_4)$, and set $R^n = \Omega^n_{R^0/\Q}$. Then
\begin{gather*}
\beta^1_X = \frac{ a_1 {\rm d}a_2 +a_2 {\rm d}a_1 -a_3 {\rm d}a_4 -a_4 {\rm d}a_3 }{a_1a_2-a_3a_4} =
{\rm d} \log ( \det(X))
\end{gather*}
and $\beta^3_X$ is given by the expression
\begin{gather*}
 \beta^3_X = 3 \frac{ \sum_{i=1}^4 (-1)^i a_i \, {\rm d}a_1 \cdots \widehat{{\rm d}a_i} \cdots {\rm d}a_4 }{ (a_1a_2-a_3a_4)^2}.
 \end{gather*}
All higher $\beta^{2n+1}_X$ vanish for reasons of degree.

Now consider the generic three-by-three symmetric matrix:
\begin{gather*}
X= \begin{pmatrix}
a_{1} & a_{4} & a_5\\
a_{4} & a_{2} & a_6 \\
a_5 & a_6 & a_3
\end{pmatrix}
\end{gather*}
with coefficients in the field $R^0 =\Q(a_1,\dots, a_6)$, and let $R^n = \Omega^n_{R_0/\Q}$. Then
\begin{gather*}
\det(X)= a_1a_2a_3-a_1a_6^2-a_2a_5^2-a_3a_4^2+2 a_4a_5a_6.
\end{gather*}
One has $\beta^1_X = {\rm d} \log (\det (X))$,
$\beta^3_X=0$ and we verify that
\begin{gather*}
\beta^5_X = -10 \frac{ \sum_{i=1}^6 (-1)^i a_i \, {\rm d}a_1 \cdots \widehat{{\rm d}a_i} \cdots {\rm d}a_6 }{ (\det(X))^2}.
\end{gather*}
Once again, all higher elements $\beta^{2n+1}_X$ vanish. For larger matrices, the number of terms occurring in an $\beta^{2n+1}_X$ grows rapidly.
\end{Example}

In general, the forms $\beta^{2n+1}_X$ for $n\geq 1$ define interesting cohomology classes on the complement of hypersurfaces in projective space which are defined by the vanishing locus of $\det(X)$. We shall mostly be concerned with symmetric matrices.

\subsection{Hopf algebra structure and stable cohomology of the general linear group}
Let $G= \GL_g(\R)$ be the general linear group of rank $g$ and let $K\leq G$ be a maximal compact subgroup. The symmetric space $X=K\backslash G$ may be identified with the space of positive definite real symmetric matrices of rank $g$. Each $\beta^{4k+1}$ for $k\geq 1$ defines a closed $\GL_g(\Z)$-invariant differential form on $X$ and hence
a class in the cohomology of the orbifold $X / \GL_g(\Z)$:
\begin{gather*}
 \big[\beta^{4k+1}\big] \in H^{4k+1} (X / \GL_g(\Z);\R) \cong H^{4k+1}(\GL_g(\Z);\R)
 \end{gather*}
which is compatible with the natural maps $\GL_g \rightarrow \GL_{g+1}$. Borel famously proved in \cite{Borel} that the invariant forms generate the stable real cohomology:
\begin{gather*}
 H^{\bullet}(\GL(\Z);\R) = \underset{\leftarrow}{\lim} \, H^{\bullet}(\GL_g(\Z);\R),
 \end{gather*}
which is consequently isomorphic to the graded exterior algebra on the classes $\beta^{4k+1}$, for all $k\geq 1$.
Taking the limits as $m,n \rightarrow \infty$ of the map
\begin{gather*}
(X_1, X_2) \mapsto X_1 \oplus X_2\colon\ \mathrm{GL}_{m} \times \mathrm{GL}_n \rightarrow \mathrm{GL}_{m+n}
\end{gather*}
 induces a comultiplication on $H^{\bullet}(\GL(\Z);\R)$.
Since $\beta^{4k+1}_{X_1\oplus X_2} = \beta^{4k+1}_{X_1} + \beta^{4k+1}_{X_2}$, it is induced by the coproduct with respect to which the classes $\beta^{4k+1}$ are primitive:
\begin{gather} \label{betaprimitive} \Delta \beta^{4k+1} = \beta^{4k+1} \otimes 1 + 1 \otimes \beta^{4k+1} .
\end{gather}
Borel deduced that the rank of the rational algebraic $K$-theory of the integers $K_i(\Z)\otimes \Q$ for $i\geq 2$ is one if $i\equiv 1 \mod 4$, and $0$ otherwise. Note that for every $k\geq 1$, the Lie algebra element $\sigma_{2k+1}$ mentioned in the introduction, or rather its class modulo commutators, is dual to a generator of $K_{4k+1}(\Z) \otimes \Q$.

\section{Further properties of invariant forms}
{\sloppy
The following, somewhat technical, section proves some additional formulae for invariant forms~$\beta^n_X$ by using matrix factorisations of $X$.

}

\subsection{Decomposition into block-matrix form} \label{sect: DecompBlockMatrix}
In order to obtain more precise information about the elements $\beta^{2n+1}_X$, it is convenient to fix a~decomposition of $X$ into block-matrix form. We shall either:
\begin{enumerate}\itemsep=0pt
\item Let $R^{\bullet}$ be the ring of K\"ahler differentials $\Omega^{\bullet}_{R^0/\Q}$, where $R^0=\Q(a_{ij})_{1\leq i,j\leq k}$,
and write $X=(a_{ij})_{ij}$ for the generic $(k\times k)$ matrix with entries in $R^0$.

\item As above except that $R^0 = \Q(a_{\{i,j\}})_{1\leq i\leq j\leq k}$,
and $X=(a_{\{i,j\}})_{ij}$ is the generic symmetric $(k\times k)$ matrix with entries in $R^0$.
\end{enumerate}

\noindent
In either situation,
we may view
$ X \in \GL_k\big(R^0\big) $ as an endomorphism of the $R^0$-vector space $V= \bigoplus_{i=1}^k R^0$. Let us fix a decomposition
\begin{gather*}
V= V_1 \oplus \dots \oplus V_n,
\end{gather*}
where each $V_i$ is a direct sum of copies of $R^0$. It follows from the theory of Schur complements\footnote{Namely, the following identity for block matrices, where $A$, $D$ are square matrices \begin{gather*}
\begin{pmatrix} A& B \\ C & D \end{pmatrix} = \begin{pmatrix} I & 0 \\ CA^{-1} & I \end{pmatrix} \begin{pmatrix} A & 0 \\ 0 & D- C A^{-1}B \end{pmatrix} \begin{pmatrix} I & A^{-1}B \\ 0 & I \end{pmatrix} \end{gather*}
which holds whenever the matrix $A$ is invertible. It can be applied repeatedly to any decomposition of $V$ as a~direct sum of two subspaces. }
 and genericity of $X$ that it can be written uniquely in the form
\begin{gather*} 
X = L B U,
\end{gather*}
where $B= \bigoplus_{i=1}^n B_i$ is block-diagonal, $L-I$ is strictly block lower-triangular, and $U-I$ is strictly block upper-triangular with entries in $R^0$. From this we deduce that
 \begin{align*} \nonumber
 U \mu_X U^{-1} & = U (LBU)^{-1} {\rm d}(LBU) U^{-1}
 \\
 & = \mathcal{L} + \mathcal{B} + \mathcal{U}, \nonumber
 \end{align*}
 where
 \begin{gather} \label{CurlyLBUdef}
 \mathcal{L} = B^{-1} \big( L^{-1} {\rm d}L\big) B , \qquad
 \mathcal{B} = B^{-1} {\rm d}B, \qquad \mathcal{U} = {\rm d}U. U^{-1}
 \end{gather}
 are strictly block lower-triangular, block diagonal, and strictly block upper-triangular respectively. By the cyclic invariance of the trace, we conclude that
 \begin{gather} \label{betaastraceLBU}
 \beta^n_X = \tr \big( U \mc^n_X U^{-1} \big) = \tr
 \big((\mathcal{L} + \mathcal{B} + \mathcal{U})^n\big).
 \end{gather}
 This formula can lead to more efficient ways of computing the $\beta^n_X$ than using the definition, since many terms in an expansion of $(\mathcal{L} + \mathcal{B} + \mathcal{U} )^n$ have vanishing trace.

\subsection[Decomposition of type (m,1)]{Decomposition of type $\boldsymbol{(m,1)}$}
Consider the special case
\begin{gather*}
V= V_1 \oplus V_2,
\end{gather*}
where $V_1 = \big(R^0\big)^{\oplus m}$ and $V_2 = R^0$ is one-dimensional.
We have
\begin{gather*}
L= \left(\begin{array}{c|c}
 I_m & \\ \hline
 \underline{\ell} & 1 \\
\end{array}\right)\!, \qquad
B= \left(\begin{array}{c|c}
 B_1 & \\ \hline
 & b \\
\end{array}\right)\!, \qquad
U= \left(\begin{array}{c|c}
 I_m & \underline{u}^T \\
 \hline
 & 1 \\
\end{array}\right)\!,
\end{gather*}
where $\underline{\ell}=(\ell_1 \cdots \ell_m)$ and $\underline{u}= (u_1\cdots u_m)$ are $(1 \times m)$ matrices and all blank entries denote zero matrices.
By solving $X=LBU$ for $\underline{\ell}$, $\underline{u}$, $B$, we find that
\begin{gather}
B_1 = X(m+1,m+1), \nonumber
\\
b = \det(X)/ \det(X(m+1,m+1)),\label{B1asXminor}
\end{gather}
where $X(m+1,m+1)$ denotes the $(m\times m)$ minor of $X$ obtained by deleting row $m+1$ and column $m+1$. It is invertible, hence in $\GL_{m}(R^0)$, by assumption of genericity.
We find that
\begin{gather*}
\mathcal{L} = \left(
\begin{array}{c|c}
 & \quad \\ \hline
 b^{-1^{\vphantom{1}}} {\rm d} \underline{\ell}. B_1 &
\end{array}
\right)\!,
 \qquad \mathcal{B} = \left(
\begin{array}{c|c}
 \mc_{B_1} & \\ \hline
 & b^{-1^{\vphantom{1}}} {\rm d}b
\end{array}
\right)\!, \qquad
\mathcal{U} = \left(
\begin{array}{c|c}
\quad & {\rm d}\underline{u}^T \\
 \hline
 & \\
\end{array}
\right)\!,
\end{gather*}
where all blank entries are zero. We have $\mathcal{L} \mathcal{B}^i \mathcal{L}= \mathcal{U} \mathcal{B}^i \mathcal{U}=0$ for all $i\geq 0$.
Since $\big(b^{-1} {\rm d}b\big)^2=0$, $\mathcal{B}^2$ is zero except in the top-left corner and so $\mathcal{B}^2 \mathcal{L} = \mathcal{U} \mathcal{B}^2=0$. It follows that $\beta^n_X$ is a linear combination of traces of words in $\mathcal{L}$, $\mathcal{B}$, $\mathcal{U}$ of the form
\begin{gather*}
 \mathcal{B}^{i_0} \mathcal{L} \mathcal{B}^{i_1} \mathcal{U} \mathcal{B}^{i_2} \mathcal{L} \mathcal{B}^{i_3} \mathcal{U} \cdots, \qquad \text{where} \quad i_0,i_1,\ldots \geq 0,
 \end{gather*}
 and where the matrices $\mathcal{L}$ and $\mathcal{U}$ alternate and are interspersed with a power of $\mathcal{B}$; or a similar expression in which $\mathcal{L}$, $\mathcal{U}$ are interchanged. By cyclicity of the trace, the latter reduces to the former; furthermore, the
number of $\mathcal{L}$'s and $\mathcal{U}$'s in such a word must be equal in order for the trace to be non-zero.
We can also assume $i_{2k} \in \{0, 1\}$ for all $k$ since $\mathcal{B}^2 \mathcal{L} = \mathcal{U} \mathcal{B}^2=0$. In~summary, $\beta^n_X$ is a linear combination of traces of products of block-diagonal matrices:
 \begin{gather*}
 \mathcal{B}\qquad \text{and} \qquad \mathcal{L} \mathcal{B}^i \mathcal{U} \qquad \text{for}\quad i \geq 0.
 \end{gather*}
 Write
\begin{gather*}
\mathcal{L}\mathcal{B}^i \mathcal{U} = \left(
\begin{array}{c|c}
 0 & 0 \\ \hline
 0 & \nu_i \\
\end{array}
\right)\!,
\end{gather*}
where for all $i\geq 0$, we define
\begin{gather} \label{nuidefn}
\nu_i = b^{-1} \big({\rm d} \underline{\ell} B_1 \big(B_1^{-1} {\rm d}B_1 \big)^i {\rm d} \underline{u}^T \big) \in R^{i+2}.
\end{gather}
By equation \eqref{betaastraceLBU}, we deduce that for all $n\geq 2$,
\begin{gather} \label{betaXasB1andnus}
\beta_{X}^n = \beta_{B_1}^n + \big(\text{a linear combination of exterior products of } \nu_i,\ b^{-1} {\rm d}b \big).
\end{gather}

\begin{lem} \label{lem: nuvanishes} If $X$ is symmetric, $\nu_i=0$ and
 $\mathcal{L} \mathcal{B}^i \mathcal{U} =0$ whenever $i\equiv 0,1 \pmod 4$.
\end{lem}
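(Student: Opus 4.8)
The plan is to use the symmetry of $X$ to produce a transposition identity for $\nu_i$ which, because $\nu_i$ is a scalar (a $1\times 1$ matrix), forces it to vanish for the stated residues modulo $4$.

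First I would record what symmetry of $X$ means for the factorisation $X = LBU$ of \eqref{XisLBU}. Transposing $X=X^T$ and noting that $B^T$ is again block-diagonal while $U^T$ is block lower-unitriangular and $L^T$ block upper-unitriangular gives $X = U^T B^T L^T$, another decomposition of the form \eqref{XisLBU}; uniqueness forces $L = U^T$, $U = L^T$ (and $B=B^T$), in particular $\underline{u} = \underline{\ell}$. Moreover $B_1 = X(m+1,m+1)$ is symmetric, being obtained from $X$ by deleting the same row and column. Substituting $\underline{u} = \underline{\ell}$ into \eqref{nuidefn} gives $\nu_i = b^{-1}\bigl(d\underline{\ell}\, B_1 (B_1^{-1}dB_1)^i\, d\underline{\ell}^T\bigr)$, whose transpose equals $\nu_i$ itself.

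Next I would compute that transpose directly. Set $\mu = B_1^{-1}dB_1$, whose entries lie in $R^1$. Transposing a product of matrices with entries homogeneous of degrees $d_1,\dots,d_r$ reverses the factors with the sign $(-1)^{\sum_{a<b} d_a d_b}$; applied to the four factors $d\underline{\ell}, B_1, \mu^i, d\underline{\ell}^T$ of degrees $1,0,i,1$, the overall sign is $(-1)^{2i+1} = -1$. Using $B_1^T = B_1$ one finds $\mu^T = (dB_1)B_1^{-1} = B_1\mu B_1^{-1}$, so $(\mu^i)^T = (-1)^{\binom{i}{2}} B_1\mu^i B_1^{-1}$, and the flanking factor $B_1^{-1}$ cancels against $B_1^T = B_1$. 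Collecting everything yields $\nu_i = -(-1)^{\binom{i}{2}}\nu_i$ (the unit $b^{-1}$, central of degree $0$, plays no role). Hence $\bigl(1 + (-1)^{\binom{i}{2}}\bigr)\nu_i = 0$, so $\nu_i = 0$ as soon as $\binom{i}{2} = \frac{i(i-1)}{2}$ is even --- which happens precisely for $i \equiv 0,1 \pmod 4$. The assertion $\mathcal{L}\mathcal{B}^i\mathcal{U} = 0$ then follows, since by the block computation just above the statement $\mathcal{L}\mathcal{B}^i\mathcal{U}$ has $\nu_i$ as its only nonzero entry.

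The argument needs no hard input; the one thing to be careful about is the sign bookkeeping forced by graded-commutativity --- in particular, confirming that the conjugating factors $B_1^{\pm 1}$ genuinely telescope and that the residual sign is $-(-1)^{\binom{i}{2}}$ rather than $+(-1)^{\binom{i}{2}}$ (the latter would give a vacuous conclusion), and correctly determining the parity of $\binom{i}{2}$.
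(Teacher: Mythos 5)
Your proof is correct and follows essentially the same route as the paper: both use symmetry to get $\underline{u}=\underline{\ell}$ and $B_1=B_1^T$, then transpose the $1\times 1$ quantity $b\,\nu_i$ and track the graded sign to conclude vanishing exactly when $i\equiv 0,1 \pmod 4$. The only cosmetic difference is that the paper rewrites $B_1(B_1^{-1}dB_1)^i$ in the palindromic form $dB_1 B_1^{-1}\cdots B_1^{-1}dB_1$ so the sign is read off directly as $(-1)^{(i+2)(i+1)/2}$, whereas you keep $B_1\mu^i$ and telescope the conjugating factors, obtaining the equivalent sign $-(-1)^{\binom{i}{2}}$.
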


\begin{proof} Since $X$ is symmetric, it follows that $B_1$ is also symmetric, and $\underline{\ell}= \underline{u}$. By the definition~\eqref{nuidefn}, we can write:
\begin{gather*}
 b \nu_i = {\rm d} \underline{\ell} \big({\rm d}B_1 B_1^{-1} \cdots B_1^{-1} {\rm d}B_1 \big) {\rm d} \underline{\ell}^T,
 \end{gather*}
 where the term in brackets in the middle has degree $i$. Since transposition is an anti-ho\-mo\-mor\-phism, we find that
\begin{gather*}
 (b \nu_i)^T = \big({\rm d}\underline{\ell} \big({\rm d}B_1 B_1^{-1} \cdots B_1^{-1} {\rm d}B_1 \big) {\rm d} \underline{\ell}^T \big)^T = (-1)^{\frac{(i+2)(i+1)}{2}} b \nu_i.
 \end{gather*}
 Since
$b \nu_i$ is a $(1\times 1)$ matrix and equals its own transpose, it must be equal to zero whenever the sign in the right-hand side is negative, i.e., if $i\equiv 0,1 \pmod 4$.
\end{proof}

We deduce the optimal power of $\det(X)$ in the denominator of the forms $\beta^{n}_X$.
 \begin{thm} \label{thm: optimaldenom} For
 any invertible matrix $X$ we have
\begin{gather*}
\beta^1_X = {\rm d} \log (\det(X))
\end{gather*}
and
\begin{gather} \label{denomBetaGeneral}
\beta^{2n+1}_X \in \frac{1}{\det(X)^{n+1}} \Omega^{2n+1}_{\Q[ a_{i,j}]/\Q}.
\end{gather}
If, furthermore, $X$ is symmetric then the power of the determinant in the denominator drops by another factor of two. Indeed, in this case we have
\begin{gather} \label{denomBetaSymmetric}
\beta^{4n+1}_X \in \frac{1}{\det(X)^{n+1}} \Omega^{4n+1}_{\Q[ a_{\{i,j\}}]/\Q},
\end{gather}
i.e., $\beta^{4n+1}_X$ is a polynomial form in $a_{\{i,j\}}$, ${\rm d}a_{\{i,j\}}$,
 divided by $\det(X)^{n+1}$.
\end{thm}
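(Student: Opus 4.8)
The plan is to deduce the statement from a bound on the order of pole of $\beta^{2n+1}_X$ along the irreducible hypersurface $\{\det X=0\}$. Since $\mc_X=X^{-1}dX=\mathrm{adj}(X)\,dX/\det(X)$ has entries in $\Q[a_{i,j}][\det(X)^{-1}]$, the form $\beta^{2n+1}_X=\tr(\mc_X^{2n+1})$ is regular away from $\{\det X=0\}$; so (the generic determinant, symmetric or not, being irreducible in the relevant polynomial ring) it is enough to prove that $\beta^{2n+1}_X$ has a pole of order $\le n+1$ there, and in the symmetric case that $\beta^{4n+1}_X$ does. The first asserted identity, $\beta^1_X=d\log\det(X)$, is simply Jacobi's formula $d\det(X)=\tr(\mathrm{adj}(X)\,dX)=\det(X)\,\tr(X^{-1}dX)$; in particular this settles the cases $n=0$, so from now on assume $n\ge 1$.

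Next I would invoke the block decomposition of type $(k-1,1)$ from \S\ref{sect: DecompBlockMatrix}: write $X=LBU$ with $B_1=X(k,k)$ the complementary $(k-1)\times(k-1)$ minor and $b=\det(X)/\det(X(k,k))$, which makes sense over the fraction field since $\det(B_1)\ne 0$ there. Formula \eqref{betaXasB1andnus} gives $\beta^{2n+1}_X=\beta^{2n+1}_{B_1}+\Sigma$, where $\Sigma$ is a $\Q$-linear combination of exterior products of the scalar forms $\nu_i\in R^{i+2}$ of \eqref{nuidefn} and of $b^{-1}db$. The point is to analyse these at the generic point of $\{\det X=0\}$. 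There $\det(B_1)$ is a unit, because $\{\det X=0\}$ and $\{\det B_1=0\}$ are distinct irreducible hypersurfaces; consequently (i) $\beta^{2n+1}_{B_1}$ — a rational form in the entries of $B_1$, with poles only along $\{\det B_1=0\}$ — is regular at that point; (ii) $b^{-1}db=d\log\det(X)-d\log\det(B_1)$ has pole order $\le 1$; and (iii) the row and column $\underline{\ell},\underline{u}$ are built polynomially from the entries of $X$ and from $B_1^{-1}$, so the bracketed expression $d\underline{\ell}\,B_1(B_1^{-1}dB_1)^i d\underline{u}^T$ in \eqref{nuidefn} is regular, whence $\nu_i=b^{-1}(\cdots)$ again has pole order $\le 1$.

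The remaining step is a degree count. Every nonzero term of $\Sigma$ has the form $\nu_{i_1}\wedge\cdots\wedge\nu_{i_s}\wedge(b^{-1}db)^t$ with $t\le 1$ (a $1$-form squares to zero) and $s\ge 1$ (its degree is $2n+1\ge 3$); comparing degrees, $\sum_{j=1}^s(i_j+2)+t=2n+1$, and since $i_j\ge 0$ this forces $2s\le 2n+1$, hence $s\le n$. By (i)--(iii) the pole order of such a term along $\{\det X=0\}$ is at most $s+t\le n+1$, while $\beta^{2n+1}_{B_1}$ contributes $0$; this proves \eqref{denomBetaGeneral}. In the symmetric case $B_1$ is symmetric, so Lemma \ref{lem: nuvanishes} forces $\nu_i=0$ unless $i\equiv 2,3\pmod 4$, i.e.\ $\deg\nu_i=i+2\ge 4$ for every surviving $\nu_i$; repeating the count for $\beta^{4n+1}_X$, the relation $\sum_{j}(i_j+2)+t=4n+1$ now gives $4s\le 4n+1$, so again $s\le n$ and the pole order is at most $s+t\le n+1$, which is \eqref{denomBetaSymmetric}.

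I expect the \emph{main obstacle} to be not any single hard argument but the care needed in the local analysis of the second paragraph: one must check that, at the generic point of $\{\det X=0\}$, nothing in $\beta^{2n+1}_{B_1}$, $\underline{\ell}$, or $\underline{u}$ acquires a denominator other than a power of $\det(B_1)$ — a unit there — so that the entire improvement over the naive bound $\det(X)^{2n+1}$ is accounted for by two cheap facts: each cross-term carries exactly one factor $b^{-1}$ (pole order one), and each $\nu_i$ consumes at least two units of form-degree (four in the symmetric case, by Lemma \ref{lem: nuvanishes}) while contributing only one unit of pole order.
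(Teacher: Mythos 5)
Your proposal is correct and follows essentially the same route as the paper: the $(m,1)$ block decomposition with formula \eqref{betaXasB1andnus}, the observation that $\beta_{B_1}$ is regular along $\{\det X=0\}$ while $b^{-1}db$ and each $\nu_i$ contribute pole order one, and the degree count using $\deg\nu_i\geq 2$ (respectively $\geq 4$ in the symmetric case via Lemma \ref{lem: nuvanishes}). The only cosmetic differences are that you prove $\beta^1_X=d\log\det(X)$ directly by Jacobi's formula where the paper uses induction on rank via the same decomposition, and the paper makes explicit the final specialisation from the generic matrix to an arbitrary invertible one.
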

\begin{proof}
The theorem is first proven for generic matrices (Section~\ref{sect: DecompBlockMatrix}, situation (1) in the general case, and situation (2) for the case when $X$ is symmetric). The statements for an arbitrary invertible matrix follow by specialisation. In other words, we first prove the identity \eqref{denomBetaGeneral} (resp.~\eqref{denomBetaSymmetric}) on the algebraic variety of generic (resp.~generic symmetric) matrices which is an open subvariety of the space of all invertible matrices. Since the identities are algebraic, they remain valid on its Zariski closure, where strict minors of $X$ (but not its determinant), are allowed to vanish.
The first statement can be proven by induction on the rank of $X$. It is clear for matrices of rank $1$. Using \eqref{B1asXminor} we have
\begin{gather*}
\beta^1_X = \beta^1_{B_1} + {\rm d} \log b.
\end{gather*}
Since $B_1 $ has smaller rank than $X$, the induction hypothesis gives
\begin{gather*}
\beta^1_X = {\rm d} \log (\det(X(m+1,m+1))) + {\rm d}\log b \overset{\eqref{B1asXminor}}{=} {\rm d}\log (\det(X)).
\end{gather*}
 It is immediate from the definition of the invariant trace $\beta^{2n+1}_X$ of $X$ that it only has denominator $\det(X)$, i.e., its entries lie in
\begin{gather*}
\Q[a_{ij}, {\rm d} a_{ij} , \det(X)^{-1}] .
\end{gather*}
 Let $v_{\det(X)}$ denote the valuation on $R$ defined by the negative of the order of poles in $\det(X)$. It is known, for both generic symmetric and generic non-symmetric matrices, that $\det(X)$ is irreducible. From equations \eqref{B1asXminor} and \eqref{nuidefn} we obtain
\begin{gather*}
v_{\det(X)} \big(\beta^{2n+1}_{X(m+1,m+1)}\big) =0, \qquad
v_{\det(X)} \big(b^{-1} {\rm d}b\big) = v_{\det(X)} (\nu_i) =-1 \qquad \text{for all} \quad i\geq 0.
\end{gather*}
 All terms in \eqref{betaXasB1andnus} have degree at most one in $b^{-1} {\rm d}b$ since it squares to zero. Because $\deg \nu_i =i+2 \geq 2$, there can be at most $n$ terms of type $\nu_i$ in the expression \eqref{betaXasB1andnus} for $\beta_X^{2n+1}$. We therefore deduce that $v_{\det(X)}\big(\beta^{2n+1}_X\big) \geq -n-1$, which proves~\eqref{denomBetaGeneral}.

 When $X$ is symmetric, the proof of \eqref{denomBetaSymmetric} goes along very similar lines. By Lemma~\ref{lem: nuvanishes}, $\nu_0 =\nu_1=0$ and therefore every non-trivial form $\nu_i$ has degree $\geq 4$. It follows that there can be at most $n$ of them in the expansion \eqref{betaXasB1andnus} for $\beta_X^{4n+1}$ and therefore $v\big(\beta^{4n+1}_X\big) \geq - n-1$.
 \end{proof}

\subsection[Decomposition of type (1,...,1)]
{Decomposition of type $\boldsymbol{(1,\dots,1)}$}

Consider a decomposition of the form $X=LBU$, where $B$ is diagonal, and $L$ (resp.~$U$) is lower (resp.~upper) triangular with 1's on the diagonal. Define $\mathcal{L}$, $\mathcal{B}$, $\mathcal{U}$ using \eqref{CurlyLBUdef}. Since $B$ is diagonal, $\mathcal{B}^2=0$. Suppose that $X$ is symmetric of rank $2n+1\geq 3$, and denote the diagonal entries of $B$ by $b_1,\dots, b_{2n+1}$.
Write $\mathcal{W} = \mathcal{L} + \mathcal{U}$. Using \eqref{betaastraceLBU} and $\mathcal{B}^2=0$ we find that
\begin{gather*}
\beta_X^{4n+1} = \tr (\mathcal{W} + \mathcal{B})^{4n+1} = (4n+1) \tr \big(\mathcal{W} (\mathcal{B} \mathcal{W})^{2n}\big) + \cdots,
\end{gather*}
where $\cdots$ denotes terms involving fewer than $2n$ matrices $\mathcal{B}$ (in some circumstances of interest, these terms vanish for reasons of degree). This uses the fact that $n\geq 1$.
If we write
\begin{gather*}
\Omega_B = \sum_{i=1}^{2n+1} (-1)^i b_i {\rm d}b_1 \wedge \dots \wedge \widehat{{\rm d}b_i} \wedge \dots \wedge {\rm d}b_{2n+1}
\end{gather*}
then one can deduce from the definition of the trace that the leading term of $\beta_X^{4n+1}$ is
\begin{gather} \label{PulloutEigenvalues} \tr \left( \mathcal{W} (\mathcal{B} \mathcal{W})^{2n}\right) = \frac{ 1}{\det(B)} \Omega_B \wedge \bigg( \sum_{\gamma} \mathcal{W}_{1, \gamma(1)} \wedge \dots \wedge \mathcal{W}_{2n+1, \gamma(2n+1)} \bigg),
\end{gather}
where the sum ranges over all $(2n)! = (2n+1)!/(2n+1)$ permutations $\gamma$ of $1,\dots, 2n+1$ modulo cyclic permutations.

\section{Canonical differential forms associated to graphs}
We define canonical differential forms associated to graphs via their Laplacian matrix
 and derive some first properties. In this section, the forms will be viewed as meromorphic functions on projective spaces (i.e., before performing any blow-ups).

\subsection{Canonical graph forms}
 For any finite set $S$, let $\Pro^S= \Pro\big(\Q^S\big)$ denote the projective space over $\Q$ of dimension $|S|-1$ with projective coordinates $x_s$ for $s\in S$.
 Let $G$ be a connected graph.

\begin{defn}The \emph{graph hypersurface} $X_G \subset \Pro^{E_G}$ is defined \cite{BEK} to be the zero locus of the homogeneous polynomial $\Psi_G$.

Define the open coordinate simplex $\sigma_G \subset \Pro^{E_G}(\R)$ to be
\begin{gather*}
 \sigma_G = \{ (x_e)_{e \in E_G}\colon x_e >0 \} .
 \end{gather*}
\end{defn}

The polynomial $\Psi_G$ is positive on $\sigma_G$
 since by Theorem~\ref{thm: MatrixTree} it
 is a non-trivial sum of monomials with positive coefficients. Therefore
 \begin{gather*}
 \sigma_G \cap X_G = \varnothing.
 \end{gather*}

 Let $\Lambda_G$ be any choice of Laplacian matrix. Its coefficients are elements of
 \begin{gather*}
 R_G^0= \Q \big[(x_e)_{e\in E_G}, \Psi_G^{-1}\big]
 \end{gather*}
 and $\Lambda_G \in \GL_{h_G} \big(R_G^0\big)$ is invertible.
 Let $R_G^{\bullet}= \Omega^{\bullet} \big( \mathrm{Spec}\big(R_G^0\big)\big)$
 be the K\"ahler differentials on the affine hypersurface complement $\mathbb{A}^{E_G} \backslash \big(X_G \cap \mathbb{A}^{E_G}\big)$.

 \begin{defn} For every integer $k\geq 1$, define
\begin{gather*} 
\omega^{4k+1}_G = \beta_{\Lambda_G}^{4k+1} \in R_G^{4k+1} .
\end{gather*}
Recall that this equals $ \mathrm{tr} \big( \big( \Lambda_G^{-1} {\rm d} \Lambda_G\big)^{4k+1} \big)$.
\end{defn}

The general properties stated in Section~\ref{sect: defbetabasic} imply the following.
\begin{thm} \label{cor: omegaprojective} The differential forms $\omega^{4k+1}_G$ are well-defined, and give rise for all $k\geq 1$ to closed, projective differential forms
\begin{gather*}
\omega^{4k+1}_G \in \Omega^{4k+1}\big( \Pro^{|E_G|-1} \backslash X_G\big)
\end{gather*}
whose singularities lie along the graph hypersurface, where they have a pole of order at most $k+1$. In particular, they are smooth on the open simplex $\sigma_G$.
\end{thm}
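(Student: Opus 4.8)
The plan is to deduce everything from the general results about the invariant traces $\beta^{m}_X$ established in Sections 3 and 4, applied to the particular matrix $X = \Lambda_G$. First I would record that the coefficients of $\Lambda_G$ lie in $R_G^0 = \Q[(x_e)_{e\in E_G}, \Psi_G^{-1}]$ and that $\Lambda_G$ is invertible over this ring, since $\det \Lambda_G = \Psi_G$ is a unit there by definition. Hence $\omega^{4k+1}_G = \beta^{4k+1}_{\Lambda_G}$ is a well-defined element of $R_G^{4k+1} = \Omega^{4k+1}(\mathbb{A}^{E_G}\setminus V(\Psi_G))$, and it is closed by Lemma~\ref{lem: BasicPropertiesBeta}(v). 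This gives an algebraic differential form on the affine hypersurface complement; the singular locus is contained in $V(\Psi_G)$ by construction.

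Next I would address \emph{projectivity}: the form $\omega^{4k+1}_G$ lives on $\mathbb{A}^{E_G}$, but $\Psi_G$ is homogeneous, so I must check that $\omega^{4k+1}_G$ descends to the projective hypersurface complement $\Pro^{|E_G|-1}\setminus X_G$, i.e. that it is invariant under the scaling action $x_e \mapsto \lambda x_e$ and has no component in the radial direction. This is exactly the content of Lemma~\ref{lem: projinv}: under $\Lambda_G \mapsto \lambda \Lambda_G$ one has $\beta^{2n+1}_{\lambda \Lambda_G} = \beta^{2n+1}_{\Lambda_G}$ for all $n \geq 1$ (the correction term $k\lambda^{-1}d\lambda$ only appears for $\beta^1$, which is why we restrict to $4k+1 \geq 5$). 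Homogeneity of $\Lambda_G$ of degree $1$ in the $x_e$ means the substitution $x_e \mapsto \lambda x_e$ is precisely multiplication of $\Lambda_G$ by $\lambda$, so projective invariance follows; together with closedness this shows $\omega^{4k+1}_G$ descends to a closed form on $\Pro^{|E_G|-1}\setminus X_G$.

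For the \emph{order of the pole}, I would invoke Theorem~\ref{thm: optimaldenom}: since $\Lambda_G$ is symmetric, $\beta^{4k+1}_{\Lambda_G}$ lies in $\frac{1}{\det(\Lambda_G)^{k+1}}\,\Omega^{4k+1}_{\Q[a_{\{i,j\}}]/\Q}$ for the generic symmetric matrix, and specialising the generic symmetric matrix to $\Lambda_G$ (whose entries are linear in the $x_e$) shows $\omega^{4k+1}_G$ is a polynomial form in the $x_e$, $dx_e$ divided by $\Psi_G^{\,k+1}$. Hence the pole along $X_G$ has order at most $k+1$. Finally, smoothness on $\sigma_G$ is immediate: $\Psi_G > 0$ on $\sigma_G$ by Theorem~\ref{thm: MatrixTree} (it is a nonempty sum of monomials with positive coefficients), so $\sigma_G \cap X_G = \emptyset$ and the only possible singularities avoid the open simplex.

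The main obstacle I anticipate is the bookkeeping around the passage from the \emph{generic} symmetric matrix to $\Lambda_G$: one must be sure that specialisation does not create new poles (it cannot, since $\det$ specialises to $\Psi_G\neq 0$ and the numerator stays polynomial) and does not accidentally drop the degree of homogeneity needed for projectivity. Everything else is a direct citation of Lemma~\ref{lem: projinv}, Lemma~\ref{lem: BasicPropertiesBeta}, and Theorem~\ref{thm: optimaldenom}; the only genuinely new input here is matching the scaling action on $\Pro^{E_G}$ with left/right scalar multiplication on $\Lambda_G$.
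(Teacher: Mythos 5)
Your proposal follows essentially the same route as the paper's proof: closedness from Lemma \ref{lem: BasicPropertiesBeta}\,(v), projectivity from Lemma \ref{lem: projinv} applied to $\lambda\Lambda_G$ (using that the entries of $\Lambda_G$ are linear in the $x_e$), the pole bound from the symmetric case \eqref{denomBetaSymmetric} of Theorem \ref{thm: optimaldenom}, and smoothness on $\sigma_G$ from positivity of $\Psi_G$ there. Two points deserve tightening. First, ``well-defined'' in the statement means independence of the choices entering $\Lambda_G$ (a basis of $H_1(G;\Z)$ and edge orientations): a change of basis replaces $\Lambda_G$ by $P^T\Lambda_G P$ with $P$ a constant invertible matrix, and the paper disposes of this via the invariance $\beta^n_{AXB}=\beta^n_X$ of Lemma \ref{lem: BasicPropertiesBeta}. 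Your reading of well-definedness as ``the expression makes sense over $R^0_G$'' skips this clause; the fix is one line, citing the lemma you already invoke elsewhere. Second, on projectivity: invariance under scaling by a \emph{constant} $\lambda$ together with closedness is not sufficient for descent to projective space (for instance $d\log(x_1x_2)$ is scale-invariant and closed but has non-zero contraction with the Euler vector field). What is needed, and what Lemma \ref{lem: projinv} actually supplies because $\lambda$ there is an arbitrary unit with $d\lambda\neq 0$, is invariance under a functional rescaling, equivalently the vanishing of the contraction of $\omega^{4k+1}_G$ with the Euler vector field; this is exactly how the paper phrases the computation, via $\frac{\partial}{\partial\lambda}\beta^{4k+1}_{\lambda\Lambda_G}=0$. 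Your parenthetical about the correction term $k\lambda^{-1}d\lambda$ shows you are implicitly using this stronger statement, so the argument is correct, but the sentence ``projective invariance follows; together with closedness this shows $\omega$ descends'' should be rephrased to make the radial-contraction step explicit rather than resting on closedness, which plays no role in the descent.
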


\begin{proof}
The invariance of $\beta^{4k+1}$ (Lemma~\ref{lem: BasicPropertiesBeta}) implies that $\omega^{4k+1}_G$ is independent of the choice of bases which go into defining the Laplacian matrix $\Lambda_G$. The fact that $\omega^{4k+1}_G$ is closed follows from Lemma~\ref{lem: BasicPropertiesBeta}$(v)$.
 Since $\det(\Lambda_G)$ is by definition the graph polynomial $\Psi_G$, it is immediate from the definition of $\omega^{4k+1}_G$ and the formula for the inverse of a matrix in terms of its adjugate that
\begin{gather*}
\omega^{4k+1}_G = \frac{ N_G }{\Psi_G^{4k+1}} \qquad \text{for some} \quad
N_G \in \Omega^{4k+1}\big(\Q[x_e, e\in E_G]\big),
\end{gather*}
 where $N_G$ is a polynomial form of degree $(4k+1)h_G$. In particular, $\omega^{4k+1}_G$ is homogeneous of degree $0$. The order of the pole is given by \eqref{denomBetaSymmetric}.
 The projectivity of $\omega^{4k+1}_G$ follows from vanishing under contraction with the Euler vector field:
 \begin{gather*}
 \bigg(\sum_{e\in E_G} x_e \frac{\partial}{\partial x_e}\bigg) \omega^{4k+1}_{G}(x_e) = \frac{\partial}{\partial \lambda} \omega^{4k+1}_G( \lambda x_e) = \frac{\partial}{\partial \lambda} \beta^{4k+1}_{\lambda \Lambda_G} = \frac{\partial}{\partial \lambda} \beta^{4k+1}_{\Lambda_G}=0,
 \end{gather*}
 where the penultimate equality is Lemma~\ref{lem: projinv}.
\end{proof}

Note that since $\Lambda_G$ is symmetric, the forms $\beta^{4n+3}_{\Lambda_G}$ vanish for all $n\geq 0$. If $G$ has connected components $G_1,\dots, G_n$ then using Lemma~\ref{lem: BasicPropertiesBeta}$(vi)$, we have
\begin{gather*}
\omega^{4k+1}_{G} = \sum_{i=1}^n \omega^{4k+1}_{G_i}
\end{gather*}
since $\Lambda_G = \bigoplus_{i=1}^n \Lambda_{G_i}$ with respect to the decomposition $H_1(G;\Z) \cong \bigoplus_{i} H_1(G_i;\Z)$.

\begin{Example} \label{example: W3form} For $G= W_3$, the wheel with 3 spokes, Example~\ref{examples: smallbetas} gives
\begin{gather*}
\omega^5_{W_3} = 10 \frac{\Omega_{W_3}} {\Psi_{W_3}^2},
\end{gather*}
where $\Omega_{W_3} = \sum_{i=1}^6 (-1)^i x_i {\rm d}x_1 \cdots \widehat{{\rm d}x_i} \cdots {\rm d}x_6$. It is the Feynman differential form which computes the residue in dimensional regularisation in massless $\phi^4$ theory. In general, this is not true: the forms $\omega^{4k+1}_G$ have complicated numerators, which are strongly reminiscent of the kinds of numerators occurring in a gauge theory \cite{Golz}. It would be very interesting to interpret the canonical forms $\omega^{4k+1}_G$ more generally in terms of a suitable quantum field theory, or conversely, interpret the integrands which arise in the parametric representation of quantum electrodynamics, for instance, as matrix-valued differential forms in the spirit of Section~\ref{sect: defbetabasic}.
\end{Example}

\begin{rem} 
More generally, for any exact sequence $(S)$ Section~\ref{sect: Matroids} we may define
\begin{gather} \label{omega(S)}
\omega_S^{4k+1} = \beta^{4k+1}_{\Lambda_B},
\end{gather}
where the Laplacian matrix $\Lambda_B$ is relative to a choice of basis $B$ of $H$. The latter depends on the basis $B$ only up to the transformation \eqref{LambdaTransformationMatroids}, and since the forms $\beta^{4k+1}_{\Lambda_B}$ are invariant (Lemma~\ref{lem: BasicPropertiesBeta}), it follows that $\omega_S^{4k+1}$ is well-defined.
As a consequence, for any regular matroid~$M$, we may define a form
\begin{gather*} 
\omega_M^{4k+1},
\end{gather*}
which does not depend on the choice of representation of the matroid.
\end{rem}

\subsection{First properties} The forms $\omega_G^{4k+1}$ are invariant under automorphisms.

\begin{lem} \label{lem: autoinvariant} Consider any automorphism $\pi$ of a graph $G$. It induces a map $\pi^*\colon R^0_G \cong R^0_G$ which permutes the edge variables via $\pi^* x_e = x_{\pi(e)}$. Then
\begin{gather*}
\omega^{4k+1}_G = \pi^* \omega^{4k+1}_G .
\end{gather*}
\end{lem}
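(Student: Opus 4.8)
The plan is to trace through how an automorphism $\pi$ of $G$ acts on the data defining $\Lambda_G$, and then invoke the basis-invariance of $\beta^{4k+1}$ already established in Lemma \ref{lem: BasicPropertiesBeta}. Concretely, an automorphism $\pi$ permutes the edges and vertices of $G$, hence induces a permutation matrix $P_E$ acting on $\Z^{E_G}$ and a permutation matrix $P_V$ on $\Z^{V_G}$; these are compatible with the boundary map $\partial$ in the exact sequence \eqref{ShortExactHG}, so $\pi$ also induces an automorphism $P_H$ of $H_1(G;\Z)$ making the obvious square commute, i.e. $P_E \,\mathcal{H}_G = \mathcal{H}_G\, P_H$.

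First I would record the effect on the pieces of $\Lambda_G = \mathcal{H}_G^T D_G \mathcal{H}_G$. Applying $\pi^*$ to the ring $R^0_G$ sends $x_e \mapsto x_{\pi(e)}$, so $\pi^* D_G = P_E^{-1} D_G P_E$ (the diagonal matrix of edge variables gets its entries permuted, equivalently conjugated by the edge permutation). Since $\pi^*$ acts entrywise on $\Lambda_G$ (its entries are polynomials in the $x_e$), and using $P_E\,\mathcal{H}_G = \mathcal{H}_G\,P_H$ together with $P_E^T = P_E^{-1}$, I would compute
$$\pi^* \Lambda_G = \mathcal{H}_G^T (\pi^* D_G) \mathcal{H}_G = \mathcal{H}_G^T P_E^{-1} D_G P_E \mathcal{H}_G = (P_E \mathcal{H}_G)^T D_G (P_E \mathcal{H}_G) = P_H^T \mathcal{H}_G^T D_G \mathcal{H}_G P_H = P_H^T \Lambda_G P_H\ .$$
Strictly, one should be slightly careful here: the automorphism $\pi$ may not literally preserve the chosen basis of $H_1(G;\Z)$ used to write $\Lambda_G$ as a matrix, but $P_H$ is precisely the constant (entries in $\Z \subset R^0_G$) change-of-basis matrix relating the two, and it is invertible. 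So $\pi^*\Lambda_G = A^T \Lambda_G A$ with $A = P_H \in \GL_{h_G}(\Q) \subset \GL_{h_G}(R^0_G)$ a constant matrix, $dA = 0$.

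Then the conclusion is immediate from the already-proved invariance: by Lemma \ref{lem: BasicPropertiesBeta} (the statement that $\beta^n_X = \beta^n_{AX} = \beta^n_{XA}$ when $dA=0$, applied twice, or equivalently its proof that $\beta^n_{AXB} = \beta^n_X$ via cyclicity of the trace) together with property $(iii)$ relating $\beta^n_{X^T}$ to $\beta^n_X$ — or, most directly, the computation $\mu^n_{AXB} = B^{-1}\mu^n_X B$ from that proof — we get
$$\pi^* \omega^{4k+1}_G = \beta^{4k+1}_{\pi^* \Lambda_G} = \beta^{4k+1}_{P_H^T \Lambda_G P_H} = \beta^{4k+1}_{\Lambda_G} = \omega^{4k+1}_G\ ,$$
where the key middle equality $\beta^{4k+1}_{P_H^T \Lambda_G P_H} = \beta^{4k+1}_{\Lambda_G}$ follows because $P_H^T$ and $P_H$ are constant invertible matrices. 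Note that $\pi^*$ commutes with $\beta^{4k+1}_{(-)}$ because $\beta^{4k+1}$ is built from $\Lambda_G$ by polynomial operations, $d$, and trace, all of which are natural with respect to the ring automorphism $\pi^*$ of $R^\bullet_G$.

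I do not expect a serious obstacle here; the only point requiring a little care is the bookkeeping in the first paragraph — making sure the permutation matrices on edges, vertices, and loops are set up consistently with $\partial$ and $\mathcal{H}_G$, and that $\pi^* D_G$ is correctly identified as the conjugate $P_E^{-1} D_G P_E$ rather than something with a stray transpose. Once $\pi^* \Lambda_G = A^T \Lambda_G A$ with $A$ constant and invertible is in hand, the statement is a one-line consequence of the invariance properties of $\beta$ collected in Lemma \ref{lem: BasicPropertiesBeta}, exactly as in the projectivity argument already used in the proof of Theorem \ref{cor: omegaprojective}.
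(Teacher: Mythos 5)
Your proposal is correct and follows essentially the same route as the paper: the automorphism induces an automorphism $P$ of $H_1(G)$ with $\pi^*\Lambda_G = P^T\Lambda_G P$, and the conclusion is the invariance $\beta^n_{AXB}=\beta^n_X$ for constant invertible matrices from Lemma \ref{lem: BasicPropertiesBeta}; you merely spell out the intermediate bookkeeping ($\pi^*D_G = P_E^{-1}D_GP_E$, $P_E\mathcal{H}_G=\mathcal{H}_GP_H$) that the paper leaves implicit. The only tiny imprecision is that an automorphism may reverse edge orientations, so $P_E$ is in general a \emph{signed} permutation matrix, but since it is still constant and orthogonal the computation is unchanged.
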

\begin{proof} The automorphism $\pi$ induces an automorphism $P$ of $H_1(G;\Q)$ and hence acts on the graph Laplacian via the formula $\pi^* \Lambda_{G} = P^{T} \Lambda_{G} P$.
The statement follows from the invariance of $\beta_{\Lambda_{G}}$ (Lemma~\ref{lem: BasicPropertiesBeta}).
\end{proof}

 The forms $\omega_{\bullet}^{4k+1}$ are compatible with contractions in the following sense.
First of all, if $\gamma$ is a subset of the set of edges of $G$, consider the linear subspace
 \begin{gather*}
 L_{\gamma} \subset \Pro^{E_G}
 \end{gather*}
 defined by the vanishing of the edge coordinates $x_e$ for all $ e\in E_{\gamma}$. It is canonically isomorphic to $\Pro^{E_{G/\gamma}}$.
 A basic property of graph polynomials with respect to contraction of edges implies that $\Psi_G$ vanishes along $L_{\gamma}$ if $h_{\gamma}>0$, but in the case $h_{\gamma}=0$, its restriction to $L_{\gamma}$ satisfies $\Psi_G\big|_{L_{\gamma}} = \Psi_{G/\gamma}$. Thus $L_{\gamma}$ is contained in the graph hypersurface $X_{G}$ if $h_{\gamma} >0$ but otherwise if $ h_{\gamma}=0$ one has
 \begin{gather*}
 L_{\gamma} \cap X_G = X_{G/\gamma},
 \end{gather*}
 via the canonical identification $L_{\gamma} = \Pro^{E_{G/\gamma}}$.
\begin{prop} \label{prop: restrictomega} Let $\gamma\subset E_G$ such that $h_{\gamma}=0$, i.e., $\gamma$ is a forest. Then
\begin{gather*}
\omega^{4k+1}_{G} \big|_{L_{\gamma}} = \omega^{4k+1}_{G/\gamma},
\end{gather*}
as meromorphic forms on $L_{\gamma} =\Pro^{E_{G/\gamma}}$. They are regular on the open complement of the graph hypersurface $L_{\gamma} \backslash (L_{\gamma} \cap X_{G}) = \Pro^{E_{G/\gamma}} \backslash X_{G/\gamma}$.
\end{prop}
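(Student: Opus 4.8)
The plan is to reduce the asserted identity of meromorphic forms to the invariance property of $\beta^{4k+1}$ recorded in Lemma \ref{lem: BasicPropertiesBeta}, by comparing the Laplacian matrices $\Lambda_G$ and $\Lambda_{G/\gamma}$ directly.

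First I would record the homological input. Since $\gamma$ is a forest, $h_\gamma=0$ and $h_{G/\gamma}=h_G$, and contraction of the edges of $\gamma$ is a homotopy equivalence; it induces an isomorphism $\phi: H_1(G;\Q)\overset{\sim}{\To} H_1(G/\gamma;\Q)$. Concretely, writing $p:\Q^{E_G}\To \Q^{E_{G/\gamma}}$ for the projection forgetting the coordinates indexed by $E_\gamma$, a cycle of $G$ is sent by $p$ to the corresponding cycle of $G/\gamma$ with unchanged edge multiplicities, so that $p\circ \mathcal{H}_G = \mathcal{H}_{G/\gamma}\circ\phi$ once the orientations of $E_{G/\gamma}$ are taken to agree with those in $G$. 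Here $\phi$ is injective, a homology class supported on the edges of a forest being zero, hence an isomorphism by equality of dimensions.

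Next I would pull $\omega^{4k+1}_G=\beta^{4k+1}_{\Lambda_G}$ back along the inclusion $\iota: L_\gamma \hookrightarrow \Pro^{E_G}$. Because $\Psi_G\big|_{L_\gamma}=\Psi_{G/\gamma}$ is not identically zero, $\omega^{4k+1}_G$ has no pole along all of $L_\gamma$ and its restriction is a well-defined meromorphic form; and since $\iota^*$ commutes with $d$, with matrix products, and with inversion of a matrix invertible at the generic point of $L_\gamma$, one has $\iota^*\beta^{4k+1}_{\Lambda_G}=\beta^{4k+1}_{\iota^*\Lambda_G}$. The matrix $\iota^*\Lambda_G$ is obtained from $\Lambda_G=\mathcal{H}_G^T D_G \mathcal{H}_G$ by setting $x_e=0$ for $e\in E_\gamma$, i.e.\ by replacing $D_G$ with $p^T D_{G/\gamma}\,p$; using $p\,\mathcal{H}_G = \mathcal{H}_{G/\gamma}\,\phi$ this gives
$$\iota^*\Lambda_G = (p\,\mathcal{H}_G)^T D_{G/\gamma}\,(p\,\mathcal{H}_G)=\phi^T\,\Lambda_{G/\gamma}\,\phi\ ,$$
whose determinant is $\Psi_{G/\gamma}\neq 0$, so it lies in $\GL_{h_G}$ over the function field of $L_\gamma$.

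To conclude, since $\phi$ has constant integer entries we have $d\phi=0$, so Lemma \ref{lem: BasicPropertiesBeta} (invariance of $\beta^n$ under left and right multiplication by constant invertible matrices) gives $\beta^{4k+1}_{\phi^T\Lambda_{G/\gamma}\phi}=\beta^{4k+1}_{\Lambda_{G/\gamma}}=\omega^{4k+1}_{G/\gamma}$ under the canonical identification $L_\gamma=\Pro^{E_{G/\gamma}}$; alternatively one transports a basis of $H_1(G;\Q)$ along $\phi$ so that $\phi$ becomes the identity matrix. Finally $L_\gamma\cap X_G = X_{G/\gamma}$ by the contraction property of the graph polynomial recalled before the statement, and regularity of $\omega^{4k+1}_{G/\gamma}$ on $\Pro^{E_{G/\gamma}}\setminus X_{G/\gamma}$ is exactly Theorem \ref{cor: omegaprojective} applied to $G/\gamma$. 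The one slightly delicate point is the interchange $\iota^*\beta^{4k+1}_{\Lambda_G}=\beta^{4k+1}_{\iota^*\Lambda_G}$: it is licit precisely because $\Psi_G$ does not vanish identically along $L_\gamma$, so one may first check the identity of regular forms on the dense open $L_\gamma\setminus X_{G/\gamma}$ and then propagate it to an identity of meromorphic forms on all of $L_\gamma$. Everything else is bookkeeping with the incidence matrices.
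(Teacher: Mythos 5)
Your proof is correct and follows essentially the same route as the paper: identify $H_1(G)$ with $H_1(G/\gamma)$ under contraction, observe that the restricted Laplacian is $\Lambda_{G/\gamma}$ up to the constant change of basis $\phi$ (which the invariance of $\beta^{4k+1}$ from Lemma \ref{lem: BasicPropertiesBeta} absorbs), and use $\Psi_G\big|_{L_\gamma}=\Psi_{G/\gamma}\neq 0$ to justify restricting the meromorphic form. The only cosmetic difference is that you contract the whole forest in one step, whereas the paper reduces by induction to a single edge $e$ and, with compatible bases, uses $\Lambda_{G/e}=\Lambda_G\big|_{x_e=0}$ directly.
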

\begin{proof} Since $\omega_G^{4k+1}$ is regular at the generic point of $L_{\gamma}$, and likewise for $L_I$ for all $I \subset E_{\gamma}$, the statement for a general forest $\gamma$ can be proved by contracting one edge in $\gamma$ at a time. We can thus assume that $\gamma$ consists of a single edge $e$. Since in this case $L_{\gamma} $ is the hyperplane defined by $x_e=0$, it suffices to show that
\begin{gather} \label{inproof: omegarestricte}
\omega^{4k+1}_G \big|_{x_e=0} = \omega^{4k+1}_{G/e}.
\end{gather}
By assumption, $e$ has distinct endpoints, and therefore contraction of the edge $e$ defines an~isomorphism $H_1(G;\Z) \cong H_1(G/e;\Z)$. By definition of the graph Laplacian matrix,
 $\Lambda_{G/e} = \Lambda_{G} \big|_{x_e=0}$ from which \eqref{inproof: omegarestricte} immediately follows.
 \end{proof}

The restriction of $\omega^{4k+1}_{G}$ to a linear subspace $L_{\gamma}$, where $h_{\gamma}>0$, is not defined. This is because~$L_{\gamma}$ is contained in $X_G$, along which $\omega^{4k+1}_{G}$ may have poles.
 \subsection{Further graph-theoretic properties}

\subsubsection{Duality and deletion of edges }

 \begin{lem}[duality] \label{lem: dual}
 Let $G$ be a graph and $\widecheck{G}$ the dual $($cographic$)$ matroid. Then
 \begin{gather*}
 \omega^{4k+1}_{\widecheck{G}} = i^* \omega^{4k+1}_{G}
 \end{gather*}
 for all $k \geq 1$,
 where $i$ is the involution $i\colon x_e \mapsto x_e^{-1} $. This relation holds, in particular, if $G$ is a~planar graph and $\widecheck{G}$ a planar dual.
 \end{lem}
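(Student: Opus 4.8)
The plan is to deduce the duality statement directly from the matroid-level duality already assembled in the excerpt, namely Lemma \ref{lem: dualityandLG} (in the form \eqref{DdecompSdualS}), together with the basis-invariance of $\beta^{4k+1}$ (Lemma \ref{lem: BasicPropertiesBeta}). First I would recall that the graph $G$ determines, via \eqref{ShortExactHG} tensored with $\Q$, a short exact sequence $(S): 0 \to H_1(G;\Q) \to \Q^{E_G} \to V \to 0$ with $V = \mathrm{Im}(\partial)\otimes\Q$, and that by definition the cographic matroid $\widecheck{G}$ is represented by the dual sequence $(S^{\vee}): 0 \to V^{\vee} \to \Q^{E_G} \to H_1(G;\Q)^{\vee} \to 0$. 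By Remark \ref{Rem: omegamatroid}, the forms $\omega^{4k+1}_{G} = \beta^{4k+1}_{\Lambda_S}$ and $\omega^{4k+1}_{\widecheck{G}} = \beta^{4k+1}_{\Lambda_{S^{\vee}}}$ are well-defined independently of the choices of bases entering $\Lambda_S$ and $\Lambda_{S^{\vee}}$.

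The core step is Lemma \ref{lem: dualityandLG}, which gives $\Lambda_{S^{\vee}} = i^* L_S$, where $L_S$ is the dual Laplacian and $i^*\colon \Q[x_e] \to \Q[x_e^{-1}]$ sends $x_e \mapsto x_e^{-1}$. Applying $\beta^{4k+1}$ and using that $i^*$ commutes with the exterior derivative and with $\mathrm{tr}$ of products of matrices (it is simply a substitution of variables), I would obtain
\begin{equation} \label{proof: dualstepone}
\omega^{4k+1}_{\widecheck{G}} = \beta^{4k+1}_{\Lambda_{S^{\vee}}} = \beta^{4k+1}_{i^* L_S} = i^* \beta^{4k+1}_{L_S}\ .
\end{equation}
So the lemma reduces to the identity $\beta^{4k+1}_{L_S} = \beta^{4k+1}_{\Lambda_S}$, i.e. that the Laplacian and dual Laplacian of a graph carry the same invariant forms. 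For this I would invoke \eqref{DdecompSdualS}, which expresses the diagonal matrix $D_S$ (with entries $x_e$) as the orthogonal direct sum $D_S = \Lambda_S \oplus i^*\Lambda_{S^{\vee}}^{-1} = \Lambda_S \oplus L_S^{-1}$ with respect to the decomposition \eqref{inproof:orthogdecomp}. Since $D_S$ is diagonal with invertible entries, $\beta^{4k+1}_{D_S}$ vanishes for $k\geq 1$ for degree reasons (all the relevant powers of $\mu_{D_S}$ have vanishing trace, cf. the computation in Example \ref{examples: smallbetas}); more carefully, $\mu_{D_S} = D_S^{-1}dD_S$ is diagonal, so $\mu_{D_S}^2 = 0$ and hence $\beta^n_{D_S} = 0$ for all $n \geq 2$. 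By Lemma \ref{lem: BasicPropertiesBeta}$(vi)$ the direct-sum decomposition gives $\beta^n_{D_S} = \beta^n_{\Lambda_S} + \beta^n_{L_S^{-1}}$, and by $(ii)$ we have $\beta^n_{L_S^{-1}} = (-1)^n \beta^n_{L_S}$. Therefore, for $n = 4k+1$ odd, $0 = \beta^{4k+1}_{D_S} = \beta^{4k+1}_{\Lambda_S} - \beta^{4k+1}_{L_S}$, i.e. $\beta^{4k+1}_{L_S} = \beta^{4k+1}_{\Lambda_S} = \omega^{4k+1}_{G}$. Combining this with \eqref{proof: dualstepone} yields $\omega^{4k+1}_{\widecheck{G}} = i^*\omega^{4k+1}_{G}$, and the planar-dual case is immediate since for a planar graph $G$ the cographic matroid $\widecheck{G}$ is the graphic matroid of a planar dual graph.

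One subtlety to handle carefully — and the step I expect to require the most attention — is that $L_S$ has entries in $\Q[x_e^{-1}]$ rather than being polynomial, so $\beta^{4k+1}_{L_S}$ and the manipulations above take place in $\Omega^{\bullet}_{K/\Q}$ where $K = \Q(x_e,\, e\in E_G)$; I would note that all the identities from Lemma \ref{lem: BasicPropertiesBeta} hold over any graded-commutative DGA and in particular over this field of rational functions, so there is no loss in working there and then observing that both sides are in fact regular away from the relevant graph hypersurfaces. A second point worth a sentence: the orthogonal splitting \eqref{inproof:orthogdecomp} realising $D_S = \Lambda_S \oplus L_S^{-1}$ is a change of basis of $\Q^{E_G}$ by a constant (edge-independent of the differentials, but $x_e$-dependent) invertible matrix $P$, under which $D_S \mapsto P^T D_S P$; since $\beta^n$ is \emph{not} invariant under conjugation by $x_e$-dependent matrices in general, I must instead use that $\beta^n_{D_S}$ is computed intrinsically and that Lemma \ref{lem: BasicPropertiesBeta}$(vi)$ applies to the block-diagonal form $\Lambda_S \oplus L_S^{-1}$ in the \emph{new} basis, which is legitimate because $\beta^n$ of a block-diagonal matrix is the sum of the $\beta^n$ of the blocks regardless of which basis exhibits the block structure — the content of \eqref{DdecompSdualS} is precisely that such a basis exists. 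I would therefore state the argument in terms of $D_S$ being isomorphic, as a bilinear form over $K$, to $\Lambda_S \oplus L_S^{-1}$, and read off the $\beta$-identity from that.
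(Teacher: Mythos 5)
Your proposal is correct and takes essentially the same route as the paper: the paper's proof also reads the identity off the orthogonal decomposition \eqref{DdecompSdualS} of lemma \ref{lem: DGdirectsum}, combined with the vanishing of $\beta^{4k+1}$ on the diagonal matrix $D$ and properties $(ii)$, $(vi)$ of lemma \ref{lem: BasicPropertiesBeta}. The only repackaging is that you factor the argument through $\Lambda_{S^{\vee}} = i^*L_S$ and the intermediate identity $\beta^{4k+1}_{L_S} = \beta^{4k+1}_{\Lambda_S}$, which is precisely the paper's lemma \ref{lem: betaviaLG}, proved there by the identical computation.
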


 \begin{proof} This holds more generally for the form \eqref{omega(S)} associated to an exact sequence and its dual, by \eqref{DdecompSdualS}. The latter, together with Lemma~\ref{lem: BasicPropertiesBeta}, implies that
 \begin{gather*}
 \omega^{4k+1}_D = \omega^{4k+1}_{\Lambda_S} + i^* \omega^{4k+1}_{\Lambda^{-1}_{S^{\vee}}} = \omega^{4k+1}_{\Lambda_S} - i^* \omega^{4k+1}_{\Lambda_{S^{\vee}}}.\end{gather*}
 The form $ \omega^{4k+1}_D$ vanishes for $k\geq 1$.
 In particular, the statement holds for any regular matroid~$M$ and its dual $M^{\vee}$, and in particular for graphs, whose matroids are regular.
 \end{proof}

 \begin{cor}[deletion of edges]
 Let $G$ be a graph. Then
 \begin{gather*}
 \omega^{4k+1}_{G\backslash e} = \big( i_e^* \omega^{4k+1}_{G}\big)\big|_{x_e=0},
 \end{gather*}
 where $i_e (x_f) = x_f$ if $f\neq e$ and $i_e(x_e)= x_e^{-1}$. Informally, $\omega^{4k+1}_{G\backslash e}$ is the coefficient of $x_e^n $ in
 $\omega^{4k+1}_G$ of highest degree $n$.
 \end{cor}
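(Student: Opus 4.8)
\medskip

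\noindent\textbf{Proof proposal.}
The plan is to transport everything to the dual Laplacian $L_G$, on which the deletion of an edge has a completely elementary description. The first step is to observe that, for $k\ge 1$,
$$\omega^{4k+1}_G \;=\; \beta^{4k+1}_{L_G}\ .$$
This follows from Lemma~\ref{lem: DGdirectsum}, which gives the orthogonal decomposition $D_G = \Lambda_G\oplus L_G^{-1}$: by Lemma~\ref{lem: BasicPropertiesBeta}~$(vi)$ and $(ii)$ one has $\beta^{4k+1}_{D_G} = \beta^{4k+1}_{\Lambda_G}+\beta^{4k+1}_{L_G^{-1}} = \omega^{4k+1}_G - \beta^{4k+1}_{L_G}$, while $D_G$ is diagonal, so $D_G^{-1}dD_G$ is diagonal with entries $d\log x_e$ and its $(4k+1)$-st power vanishes for $k\ge 1$; hence $\beta^{4k+1}_{D_G}=0$. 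This is exactly the computation already used in the proof of Lemma~\ref{lem: dual}.

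The second step is to understand $i_e^*L_G$ as a matrix-valued polynomial in $x_e$. Writing $L_G = \varepsilon_G D_G^{-1}\varepsilon_G^T = \sum_{f\in E_G} x_f^{-1}\, c_f c_f^{T}$, where $c_f$ is the column of the reduced incidence matrix $\varepsilon_G$ indexed by $f$, and noting that $\varepsilon_{G\backslash e}$ is obtained from $\varepsilon_G$ by deleting the column $c_e$ (using the same deleted vertex for $G$ and for $G\backslash e$, which is legitimate once $G\backslash e$ is connected), one gets $L_G = L_{G\backslash e} + x_e^{-1}c_e c_e^{T}$. Applying $i_e^*$, which replaces $x_e^{-1}$ by $x_e$ and fixes every $x_f$ with $f\ne e$, yields
$$i_e^*L_G \;=\; L_{G\backslash e} + x_e\, c_e c_e^{T}\ ,$$
a matrix whose entries are linear in $x_e$; in particular $i_e^*L_G$ is regular along $\{x_e=0\}$ and restricts there to $i_e^*L_G\big|_{x_e=0} = L_{G\backslash e}$.

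It then remains to push the operations $i_e^*$ and $(-)\big|_{x_e=0}$ through the formation of $\beta^{4k+1}$. Assume first that $e$ is not a bridge, so $G\backslash e$ is connected and $\det L_{G\backslash e} = \Psi_{G\backslash e}/\prod_{f\ne e} x_f \neq 0$ by Theorem~\ref{thm: MatrixTree} and Lemma~\ref{lem: DGdirectsum}; thus $\det(i_e^*L_G)$ does not vanish identically on $\{x_e=0\}$. Since $\beta^{4k+1}_X = \tr\big((X^{-1}dX)^{4k+1}\big)$ is a rational expression in the entries of $X$ whose denominator is a power of $\det X$, it commutes with pullback by the morphism $x_e\mapsto x_e^{-1}$ and, by the previous paragraph, with the specialization $x_e\mapsto 0$. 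Combining this with the first step applied to $G$ and to $G\backslash e$ gives
$$\big(i_e^*\omega^{4k+1}_G\big)\big|_{x_e=0} \;=\; \big(i_e^*\beta^{4k+1}_{L_G}\big)\big|_{x_e=0} \;=\; \beta^{4k+1}_{\,i_e^*L_G}\big|_{x_e=0} \;=\; \beta^{4k+1}_{\,L_{G\backslash e}} \;=\; \omega^{4k+1}_{G\backslash e}\ ,$$
which is the assertion; since $i_e^*L_G$ is linear in $x_e$, its value at $x_e=0$ is its constant term, and unwinding $i_e^*$ identifies $\omega^{4k+1}_{G\backslash e}$ with the part of $\omega^{4k+1}_G$ of top degree in $x_e$, as stated informally. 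The remaining case, $e$ a bridge, is degenerate: then $e$ lies in no cycle, so the row of $\mathcal{H}_G$ indexed by $e$ vanishes and $x_e$, $dx_e$ do not occur in $\Lambda_G$ at all, while $H_1$ and $\Lambda$ split as direct sums over the two components of $G\backslash e$; one reads off $\omega^{4k+1}_G = \omega^{4k+1}_{G\backslash e}$ and the identity is immediate. Alternatively, the whole statement can be obtained from the duality Lemma~\ref{lem: dual} together with the evident matroid analogue of Proposition~\ref{prop: restrictomega} (restriction to $\{x_e=0\}$ realizes matroid contraction $M/e$ whenever $e$ is not a loop of $M$), via the matroid identity $(M_G\backslash e)^{\vee} = M_G^{\vee}/e$.

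The main obstacle is precisely the interchange of the non-polynomial operation $X\mapsto\beta^{4k+1}_X$ with $i_e^*$ and with restriction to $\{x_e=0\}$: this hinges on the fact, established in step two, that $i_e^*L_G$ is \emph{polynomial} (indeed linear) in $x_e$ and specializes at $x_e=0$ to the invertible matrix $L_{G\backslash e}$, and it is what forces the separate — but trivial — treatment of bridges. Everything else is bookkeeping.
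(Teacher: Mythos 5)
Your argument is correct, but it takes a more computational route than the paper, which disposes of this corollary in two lines: deletion of $e$ in $G$ is contraction of $e$ in the dual (cographic) matroid, so the statement follows from the duality Lemma \ref{lem: dual} combined with the restriction-under-contraction formula \eqref{inproof: omegarestricte} from Proposition \ref{prop: restrictomega} — exactly the alternative you sketch in your closing sentences. You instead work directly with the vertex Laplacian: your first step, $\omega^{4k+1}_G=\beta^{4k+1}_{L_G}$, is precisely Lemma \ref{lem: betaviaLG}, which in the paper only appears later (in the subsection on variants); since you rederive it from Lemmas \ref{lem: DGdirectsum} and \ref{lem: BasicPropertiesBeta} — the same computation the paper uses there and in Lemma \ref{lem: dual} — there is no circularity, and your logical dependencies are the same as the paper's. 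The new ingredient is the rank-one update $L_G=L_{G\backslash e}+x_e^{-1}c_e^{\phantom{T}}c_e^{T}$, which makes $i_e^*L_G$ linear in $x_e$ and regular along $\{x_e=0\}$ with invertible restriction $L_{G\backslash e}$ when $e$ is not a bridge, so that specializing $\beta^{4k+1}$ at $x_e=0$ (the same mechanism as in Proposition \ref{prop: restrictomega}) finishes the proof. What your version buys: it makes the informal ``top coefficient in $x_e$'' remark transparent, and it treats explicitly the degenerate bridge case (where $G\backslash e$ is disconnected, $x_e$ does not occur in $\Lambda_G$, and one must use the componentwise definition of $\omega_G$), which the paper's one-line duality argument leaves implicit — a bridge of $G$ is a loop of the dual matroid, so that case does require the separate check you give. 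What the paper's version buys is brevity and the conceptual point that duality exchanges deletion and contraction.
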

 \begin{proof} Deletion of an edge is dual to contraction of the correponding edge in the dual matroid. The statement then follows from the previous lemma and
\eqref{inproof: omegarestricte}.
 \end{proof}

 \subsubsection{Series-parallel operations (dividing and doubling edges)}

\begin{lem}[series] \label{lem: series}
 Let $G'$ denote the graph obtained from $G$ by replacing an edge $e$ with two edges $e'$, $e''$ in series $($subdividing $e$ with a two-valent vertex$)$. Then
\begin{gather*}
 \omega^{4k+1}_{G'} = s_{e}^* \omega^{4k+1}_{G},
 \end{gather*}
where $s_e\colon R^{\bullet}_G \rightarrow R^{\bullet}_{G'}$ is the map
\begin{gather} \label{sedef}
s_e x_f = \begin{cases}
x_f &\text{if} \ f \neq e,
\\
 x_{e'}+x_{e''}& \text{if} \ f =e.
 \end{cases}
\end{gather}
\end{lem}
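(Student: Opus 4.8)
The plan is to reduce the statement about the series graph $G'$ to the behaviour of the Laplacian matrix under the subdivision operation, and then invoke the invariance of $\beta^{4k+1}$ (lemma \ref{lem: BasicPropertiesBeta}). First I would fix a basis of $H_1(G;\Z)$ and observe that subdividing the edge $e$ into $e', e''$ in series induces a canonical isomorphism $H_1(G;\Z) \cong H_1(G';\Z)$: a cycle of $G$ using $e$ is carried to the cycle of $G'$ that uses $e'$ and then $e''$, and cycles avoiding $e$ are unchanged. Under this identification the edge-cycle incidence matrix $\mathcal{H}_{G'}$ has the same rows as $\mathcal{H}_G$ except that the row indexed by $e$ is duplicated, once for $e'$ and once for $e''$ (with the same entries, since $e'$ and $e''$ appear in a cycle exactly when $e$ did, with the same orientation).

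From this, the next step is the key computation: the graph Laplacian of $G'$ is
$$\Lambda_{G'} = \mathcal{H}_{G'}^T D_{G'} \mathcal{H}_{G'} = \sum_{f \neq e} x_f\, (\text{rank-}1\text{ term for }f) + (x_{e'} + x_{e''})\, (\text{rank-}1\text{ term for }e)\ ,$$
because the contributions of the two duplicated rows $e'$ and $e''$, weighted by $x_{e'}$ and $x_{e''}$ respectively, add together. Comparing with $\Lambda_G = \sum_{f} x_f\,(\text{rank-}1\text{ term for }f)$, this says precisely that $\Lambda_{G'} = s_e^* \Lambda_G$, where $s_e^*$ is the substitution $x_e \mapsto x_{e'} + x_{e''}$ of \eqref{sedef}, applied entrywise. (One should check the substitution is well-defined on $R^\bullet_G$, i.e.\ that it does not introduce poles: $\Psi_{G'} = s_e^* \Psi_G$ by the standard series formula for graph polynomials, which follows from $\det$ being a polynomial functor of $\Lambda_G$, so $s_e^*$ extends to a map $R^\bullet_G \to R^\bullet_{G'}$.)

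Finally, since $\omega^{4k+1}_{G'} = \beta^{4k+1}_{\Lambda_{G'}} = \beta^{4k+1}_{s_e^* \Lambda_G}$, and $s_e^*$ commutes with $d$ and with the matrix operations (it is an algebra homomorphism $R^0_G \to R^0_{G'}$), we get $\beta^{4k+1}_{s_e^*\Lambda_G} = s_e^* \beta^{4k+1}_{\Lambda_G} = s_e^*\, \omega^{4k+1}_G$, which is the claim. The main obstacle is really just the bookkeeping in the identification $\Lambda_{G'} = s_e^*\Lambda_G$: one must be careful that the homology isomorphism is set up so that the two new edge-rows are genuine copies of the old one (this uses that $e'$, $e''$ are in series, so every cycle through one passes through the other), and that no sign issues arise from orientations of $e'$ versus $e''$. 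Everything else is a formal consequence of the invariance properties already established.
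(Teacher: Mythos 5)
Your proof is correct and follows the same route as the paper: the paper's argument is precisely that a representative of $\Lambda_{G'}$ is obtained from $\Lambda_G$ by substituting $x_e \mapsto x_{e'}+x_{e''}$, from which the claim follows by applying $\beta^{4k+1}$. Your version simply spells out the duplication of the incidence-matrix row and the compatibility of $s_e^*$ with $d$ and the trace, which the paper leaves implicit.
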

\begin{proof}
A representative for the graph Laplacian matrix $\Lambda_{G'}$ is obtained from $\Lambda_G$ by repla\-cing~$x_e$ with $x_{e'}+ x_{e''}$ from which the result immediately follows.
\end{proof}

\begin{lem}[parallel] \label{lem: parallel}
 Let $G'$ denote the graph obtained from $G$ by replacing an edge $e$ with two edges $e'$, $e''$ in parallel $($duplicate the edge $e)$. Then
\begin{gather*}
 \omega^{4k+1}_{G'} = p_e^* \omega^{4k+1}_{G}
 \end{gather*}
for all $k\geq 1$,
where $p_e= i s_e i$ is the map
\begin{gather} \label{pedef}
p_e x_f = \begin{cases}
x_f &\text{if} \ f \neq e,
\\
\big(x^{-1}_{e'}+x^{-1}_{e''}\big)^{-1} &\text{if}\ f =e .
\end{cases}
\end{gather}
\end{lem}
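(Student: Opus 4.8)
The plan is to run the argument through the \emph{dual} Laplacian $L_G$ of Lemma~\ref{lem: DGdirectsum} rather than through $\Lambda_G$. This is the natural choice because duplicating an edge leaves the vertex set of $G$ unchanged, so $L_{G'}$ has the same size as $L_G$, whereas $\Lambda_{G'}$ is strictly larger than $\Lambda_G$; in effect this is the situation dual to Lemma~\ref{lem: series}, with the roles of $\Lambda$ and $L$ exchanged. The first step is to record that for $k\geq 1$,
\[
\omega^{4k+1}_G \;=\; \beta^{4k+1}_{L_G}\ .
\]
Indeed, Lemma~\ref{lem: DGdirectsum} gives $D_G=\Lambda_G\oplus L_G^{-1}$, so Lemma~\ref{lem: BasicPropertiesBeta}$(vi)$ yields $\beta^{4k+1}_{D_G}=\beta^{4k+1}_{\Lambda_G}+\beta^{4k+1}_{L_G^{-1}}$; the left-hand side vanishes because $D_G$ is diagonal and $\beta^{4k+1}$ of a $1\times 1$ matrix is $(x_e^{-1}dx_e)^{4k+1}=0$ for $4k+1\geq 2$, while $\beta^{4k+1}_{L_G^{-1}}=-\beta^{4k+1}_{L_G}$ by Lemma~\ref{lem: BasicPropertiesBeta}$(ii)$. (This is essentially the computation already in the proof of Lemma~\ref{lem: dual}.)

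The second step is the computation of $L_{G'}$, which carries all the content. Let $e$ join the vertices $u,v$ of $G$, fix a deleted vertex $w$ (as in \S\ref{sect: GraphMatrix}), and orient $e'$ and $e''$ in the same direction as $e$. Then the reduced incidence matrix $\varepsilon_{G'}$ has its column indexed by $e'$ and its column indexed by $e''$ both equal to the column $\underline{\varepsilon}_e$ of $\varepsilon_G$ indexed by $e$, with all other columns unchanged. Writing $L_G=\varepsilon_G D_G^{-1}\varepsilon_G^T=\sum_{f\in E_G} x_f^{-1}\,\underline{\varepsilon}_f\,\underline{\varepsilon}_f^{\,T}$ and similarly for $G'$, the two new summands combine into $(x_{e'}^{-1}+x_{e''}^{-1})\,\underline{\varepsilon}_e\,\underline{\varepsilon}_e^{\,T}$, so $L_{G'}$ is obtained from $L_G$ by the single substitution $x_e^{-1}\mapsto x_{e'}^{-1}+x_{e''}^{-1}$, i.e.
\[
L_{G'}\;=\;p_e^*(L_G)
\]
with $p_e^*$ as in \eqref{pedef}. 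Since $p_e^*$ is a homomorphism of differential graded algebras of rational forms --- just as $s_e^*$ and $i^*$ are in Lemmas~\ref{lem: series} and~\ref{lem: dual} --- it commutes with the formation of $\beta^{4k+1}$, and combining the two steps gives $\omega^{4k+1}_{G'}=\beta^{4k+1}_{L_{G'}}=\beta^{4k+1}_{p_e^*(L_G)}=p_e^*\,\beta^{4k+1}_{L_G}=p_e^*\,\omega^{4k+1}_G$.

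I do not expect a serious obstacle: the lemma is genuinely just the rank-one identity $L_{G'}=p_e^*(L_G)$. The only points deserving care are keeping the orientations of $e'$ and $e''$ consistent (so their columns in $\varepsilon_{G'}$ really do agree), and working birationally, since $p_e^*x_e=(x_{e'}^{-1}+x_{e''}^{-1})^{-1}$ introduces a denominator $x_{e'}+x_{e''}$ which does not divide $\Psi_{G'}$ --- a caveat already present for $i^*$ in Lemma~\ref{lem: dual}. As an alternative to the direct computation, one could argue via $i^*\omega^{4k+1}_G=\omega^{4k+1}_{\widecheck G}$ (Lemma~\ref{lem: dual}), the fact that $\widecheck{G'}$ is the series extension of the cographic matroid $\widecheck G$ at $e$ (matroid duality interchanges parallel and series extensions), and the matroid version of Lemma~\ref{lem: series}, whose proof goes through verbatim by Remark~\ref{Rem: omegamatroid}; this is precisely what the factorisation $p_e^*=i^*s_e^*i^*$ records.
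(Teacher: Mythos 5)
Your proof is correct, but it follows a different route from the paper's. The paper proves the lemma by pure duality: it observes that duplicating $e$ in $G$ is dual to subdividing $e$ in the cographic matroid $\widecheck{G}$, and then simply composes Lemma~\ref{lem: dual} (twice) with Lemma~\ref{lem: series}, which is exactly what the factorisation $p_e^*=i^*s_e^*i^*$ in \eqref{pedef} records --- i.e.\ the ``alternative'' you sketch in your last sentence is the paper's actual argument (the paper also notes the case $k=1$ can be checked via $\omega^1_G=d\log\Psi_G$). Your main argument instead works directly with the dual Laplacian: the identity $\omega^{4k+1}_G=\beta^{4k+1}_{L_G}$ is precisely the paper's later Lemma~\ref{lem: betaviaLG}, whose proof depends only on Lemmas~\ref{lem: DGdirectsum} and~\ref{lem: BasicPropertiesBeta}, so there is no circularity in invoking it here; and the rank-one computation $L_{G'}=\sum_{f\neq e}x_f^{-1}\underline{\varepsilon}_f\underline{\varepsilon}_f^{\,T}+(x_{e'}^{-1}+x_{e''}^{-1})\underline{\varepsilon}_e\underline{\varepsilon}_e^{\,T}=p_e^*(L_G)$ is correct, as is the observation that $\beta^{4k+1}$ commutes with the ring homomorphism $p_e^*$ (the same principle the paper uses tacitly in Lemma~\ref{lem: series}). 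Your approach buys a transparent, self-contained explanation of where the substitution $x_e\mapsto(x_{e'}^{-1}+x_{e''}^{-1})^{-1}$ comes from, mirroring the series case with $\Lambda$ and $L$ exchanged, at the cost of importing Lemma~\ref{lem: betaviaLG}; the paper's route is shorter given that Lemmas~\ref{lem: dual} and~\ref{lem: series} are already in place and is structurally cleaner, since it exhibits $p_e^*$ as the conjugate of $s_e^*$ by the duality involution $i^*$. Your caveats (consistent orientations of $e',e''$, invariance of $\beta_{L_G}$ under the choice of deleted vertex, and the birational nature of $p_e^*$, whose denominator $x_{e'}+x_{e''}$ must ultimately cancel since $\omega_{G'}$ has poles only on $X_{G'}$) are all appropriate and handled by the invariance statements of Lemma~\ref{lem: BasicPropertiesBeta}.
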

\begin{proof}
Let $\widecheck{G}$ be the matroid dual to $G$. Contracting an edge on $G$ corresponds to deleting an~edge in $\widecheck{G}$ and vice versa.
 Since subdividing and duplicating edges are uniquely characterised in terms of contractions and deletions, one verifies that subdivision of an edge $e\in G$ is dual to the operation of duplicating the edge $e \in \widecheck{G}$. It follows from Lemmas~\ref{lem: dual} and \ref{lem: series} that $\omega^{4k+1}_{G'} = (-1)^2 p_e^* \omega^{4k+1}_{G} $, where $p_e^* =i^* s_e^* i^*$, which leads to the stated formula for $p_e^*$. \end{proof}

\begin{rem}
For $k=0$, the form $\beta^1$ is not projectively invariant and the relation needs to be modified:
$\omega^1_{G'} = p_e^* \omega^1_{G} + {\rm d}\log(x_{e'}+ x_{e''}) $. It is equivalent to the formula
$\Psi_{G'} = (x_{e'}+x_{e''} ) p_e^* \Psi_G$ (e.g., \cite[Lemma 18]{PeriodsFeynman}) via $\omega^1_G = {\rm d} \log \Psi_G$.
\end{rem}

Feynman integrals are known to satisfy a whole range of graph-theoretic identities \cite{BroadhurstKreimer, PeriodsFeynman, Schnetz}, and one can ask whether these identities hold on the level of the forms $\omega^{4k+1}_G$. Here we mention just two of the most simple ones.
\begin{lem} \label{lem: 1vertexjoin} Let $G$ be a $1$-vertex join of $G_1$ and $G_2$. Then
\begin{gather*}
\omega^{4k+1}_G = \omega^{4k+1}_{G_1} +\omega^{4k+1}_{G_2} .
\end{gather*}
\end{lem}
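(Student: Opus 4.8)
The plan is to reduce the statement to the additivity of the invariant trace under direct sums, Lemma \ref{lem: BasicPropertiesBeta}~$(vi)$, exactly as was done for disconnected graphs right after Theorem \ref{cor: omegaprojective}. The one point that genuinely needs checking is that a $1$-vertex join, although connected, still has a graph Laplacian which is block-diagonal with respect to the two pieces. So first I would fix notation: write $G = G_1 \vee_v G_2$ for the graph obtained by identifying a vertex of $G_1$ with a vertex of $G_2$, so that $E_G = E_{G_1} \sqcup E_{G_2}$. A $1$-vertex join creates no new independent cycles, so there is a canonical decomposition $H_1(G;\Z) \cong H_1(G_1;\Z) \oplus H_1(G_2;\Z)$ in which the summand $H_1(G_i;\Z)$ is spanned by cycles supported entirely on $E_{G_i}$. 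I would then pick an integral basis of $H_1(G;\Z)$ which is the union of an integral basis of $H_1(G_1;\Z)$ and one of $H_1(G_2;\Z)$; with respect to such a basis and the partition $E_G = E_{G_1} \sqcup E_{G_2}$, the edge-cycle incidence matrix $\mathcal{H}_G$ of Example \ref{example: W3} is block-diagonal, $\mathcal{H}_G = \mathcal{H}_{G_1} \oplus \mathcal{H}_{G_2}$, since a cycle lying in $H_1(G_i)$ has vanishing coefficient on every edge not in $E_{G_i}$.

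Next, since $D_G$ is the diagonal matrix with entries $x_e$ indexed by $E_G = E_{G_1} \sqcup E_{G_2}$, it is itself block-diagonal, $D_G = D_{G_1} \oplus D_{G_2}$, so
$$\Lambda_G = \mathcal{H}_G^T D_G \mathcal{H}_G = (\mathcal{H}_{G_1}^T D_{G_1} \mathcal{H}_{G_1}) \oplus (\mathcal{H}_{G_2}^T D_{G_2} \mathcal{H}_{G_2}) = \Lambda_{G_1} \oplus \Lambda_{G_2}.$$
(Equivalently, one can phrase this at the level of matroids: the cycle matroid of $G$ is the direct sum of the cycle matroids of $G_1$ and $G_2$, hence the exact sequence $(S)$ of \S\ref{sect: Matroids} attached to $G$ is the direct sum of those attached to $G_1$ and $G_2$, and then invoke Remark \ref{Rem: omegamatroid}.) Applying Lemma \ref{lem: BasicPropertiesBeta}~$(vi)$ with $X = \Lambda_G = \Lambda_{G_1}\oplus \Lambda_{G_2}$ immediately yields
$$\omega^{4k+1}_G = \beta^{4k+1}_{\Lambda_G} = \beta^{4k+1}_{\Lambda_{G_1}} + \beta^{4k+1}_{\Lambda_{G_2}} = \omega^{4k+1}_{G_1} + \omega^{4k+1}_{G_2},$$
understood as an identity of meromorphic forms on $\Pro^{E_G}$, where each $\omega^{4k+1}_{G_i}$ is pulled back along the projection forgetting the coordinates $x_e$ for $e\notin E_{G_i}$.

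There is no real obstacle here; the only mildly delicate step is the homological bookkeeping, namely being careful that the chosen integral basis of $H_1(G;\Z)$ actually respects the splitting so that $\mathcal{H}_G$ comes out genuinely block-diagonal rather than merely block-triangular (after which nothing in the argument sees any interaction between the two pieces). As a sanity check one can verify the case $k=1$ directly using $\omega^1_G = d\log\Psi_G$ together with the classical multiplicativity $\Psi_G = \Psi_{G_1}\Psi_{G_2}$ of the Kirchhoff polynomial over blocks.
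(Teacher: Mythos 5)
Your proof is correct and follows essentially the same route as the paper: the paper's own argument is exactly the observation that $H_1(G;\Z)=H_1(G_1;\Z)\oplus H_1(G_2;\Z)$ forces $\Lambda_G=\Lambda_{G_1}\oplus\Lambda_{G_2}$, after which Lemma \ref{lem: BasicPropertiesBeta}~$(vi)$ gives the result. Your write-up merely spells out the basis bookkeeping (block-diagonality of $\mathcal{H}_G$ and $D_G$) that the paper leaves implicit.
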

\begin{proof} Since $H_1(G;\Z) = H_1(G_1;\Z) \oplus H_1(G_2;\Z)$, it follows from Lemma~\ref{lem: BasicPropertiesBeta}$(vi)$ that $\Lambda_G = \Lambda_{G_1} \oplus \Lambda_{G_2}$ with respect to $\Q^{E_G} = \Q^{E_{G_1}} \oplus \Q^{E_{G_2}}$.
\end{proof}

\begin{lem} Let $G$ and $G'$ be any two graphs with a pair of distinguished vertices $\{v_1,v_2\}$ and $\{v'_1,v'_2\}$. There are two ways of joining these graphs together by gluing either $v_i$ with $v_i'$ $($or $v_i$ with $v'_{3-i})$ for $i=1,2$ to obtain two $2$-vertex joins $G_1$ and $G_2$. Their canonical differential forms are equal:
$\omega^{4k+1}_{G_1} = \omega^{4k+1}_{G_2}.$
\end{lem}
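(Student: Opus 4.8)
The plan is to identify $G_1$ and $G_2$ as two graphs related by the classical \emph{Whitney twist}, so that they share the same cycle matroid, and then to conclude via Remark~\ref{Rem: omegamatroid}, which tells us that $\omega^{4k+1}_{G}$ depends only on the regular matroid underlying $G$ and not on the particular graphic realisation. So the entire statement reduces to the combinatorial claim
\[
M(G_1)\;=\;M(G_2),
\]
both being matroids on the common edge set $E=E_G\sqcup E_{G'}$. (If $v_1=v_2$ the join degenerates to a $1$-vertex join, so the statement is then Lemma~\ref{lem: 1vertexjoin}; I therefore assume $v_1\neq v_2$ and $v_1'\neq v_2'$.)

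First I would compare the circuits --- the edge sets of cycles --- of the two matroids. A cycle of $G_1$, in which $v_j$ is glued to $v_j'$, is of exactly one of three types: (i) it lies inside $G$, in which case its edge set is a circuit of $M(G)$; (ii) it lies inside $G'$, giving a circuit of $M(G')$; (iii) it uses edges of both $G$ and $G'$. In case (iii), since the only vertices common to the two parts are the two glue vertices and a cycle meets each vertex at most once, the cycle must pass through \emph{both} glue vertices, hence is the disjoint union of a simple $v_1$--$v_2$ path $P$ in $G$ and a simple $v_1'$--$v_2'$ path $Q$ in $G'$, with edge set $E(P)\sqcup E(Q)$; conversely, every such pair $(P,Q)$ yields a cycle. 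The same analysis applied to $G_2$, where $v_1$ is glued to $v_2'$ and $v_2$ to $v_1'$, produces the same circuits in cases (i) and (ii), while in case (iii) it gives edge sets $E(P)\sqcup E(Q')$ with $P$ a $v_1$--$v_2$ path in $G$ and $Q'$ a $v_2'$--$v_1'$ path in $G'$; but a $v_2'$--$v_1'$ path is literally the same subgraph as a $v_1'$--$v_2'$ path, so the two families of circuits coincide. Hence $M(G_1)=M(G_2)$, and Remark~\ref{Rem: omegamatroid} gives
\[
\omega^{4k+1}_{G_1}=\omega^{4k+1}_{M(G_1)}=\omega^{4k+1}_{M(G_2)}=\omega^{4k+1}_{G_2}.
\]

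The argument is essentially bookkeeping, and the one place deserving care is case (iii): one must rule out a mixed cycle that crosses between $G$ and $G'$ at a single glue vertex, which is precisely where the hypothesis of two distinct glue vertices is used. If one prefers to avoid matroids, an alternative is to exhibit a constant invertible matrix $P$ with $\Lambda_{G_2}=P^{T}\Lambda_{G_1}P$, using the splitting $H_1(G_i;\Q)\cong H_1(G;\Q)\oplus H_1(G';\Q)\oplus\Q$ with the extra summand generated by a cycle passing through both glue vertices, tracking how that generator is expressed in edge coordinates, and then invoking the invariance of $\beta^{4k+1}$ under left and right multiplication by constant invertible matrices (Lemma~\ref{lem: BasicPropertiesBeta}); but the matroid route is shorter and needs no explicit coordinates. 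I do not foresee any real obstacle.
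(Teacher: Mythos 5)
Your proof is correct and takes essentially the same route as the paper: the paper's argument is simply ``by Whitney, the matroids associated to $G_1$ and $G_2$ are isomorphic, so $\Lambda_{G_1}$ is equivalent to $\Lambda_{G_2}$,'' i.e.\ Whitney's twist theorem plus the matroid-invariance of $\omega^{4k+1}$ (Remark \ref{Rem: omegamatroid}). Your explicit circuit comparison just supplies the proof of the Whitney step that the paper cites, so there is nothing substantively different.
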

\begin{proof} By Whitney, the matroids associated to $G_1$ and $G_2$ are isomorphic, so $\Lambda_{G_1}$ is equivalent to $\Lambda_{G_2}$.
\end{proof}

\begin{rem} \label{rem: 2VJ} The operation in the lemma is not to be confused with the 2-vertex join $G_1: G_2$, for which we assume in addition that $\{v_1,v_2\}$ (respectively $\{v_1',v_2'\}$) are connected by an edge~$e$ (resp.~$e'$). It is defined by joining together $G_1$, $G_2$ by identifying $v_1=v_1'$ and $v_2=v_2'$ and deleting the edges $e$, $e'$.
\end{rem}

\subsection{The Hopf algebra of canonical differential forms} Let us write
$ \Omega^0_{\can}=\Z$, generated by the constant form $1$ of degree zero.

\begin{defn} Let $\Omega^{\bullet}_{\mathrm{can}} = \bigoplus_{{\rm d}\geq 0} \Omega^d_{\can}$ denote the graded exterior algebra over $\Z$ generated by symbols $\beta^{4k+1}$ for $k\geq 1$. We can equip $\Omega^{\bullet}_{\can}$ with a coproduct
\begin{gather*}
\Delta\colon\ \Omega^{\bullet}_{\can} \To \Omega^{\bullet}_{\mathrm{can}} \otimes_{\Z} \Omega^{\bullet}_{\can}
\end{gather*}
such that each generator $\beta^{4k+1}$ is primitive:
 $\Delta \beta^{4k+1} = \beta^{4k+1} \otimes 1 + 1 \otimes \beta^{4k+1}$.
\end{defn}
Note that the coproduct is the same as that defined on the infinite general linear group \eqref{betaprimitive}.
An element $\omega \in \Omega^n_{\can}$ is primitive if and only if $n= 4k+1$ for some $k\geq 1$ and $\omega$ is proportional to $\beta^{4k+1}$.

\begin{Example} 
The smallest degrees $k$ for which $\Omega^k_{\can}$ is non-zero are
\begin{gather*}
0,\quad 5,\quad 9,\quad 13,\quad 14,\quad 17,\quad 18,\quad 21.
\end{gather*}
The space $\Omega^{22}_{\can}$ has rank 2, generated by $\beta^{5} \wedge \beta^{17}$ and $\beta^{9} \wedge \beta^{13}$. One has, for example,
\mbox{$\Delta_{\can} \big(\beta^5 \wedge \beta^9\big) = 1 \otimes \big(\beta^5 \wedge \beta^9\big) + \beta^5 \otimes \beta^9 - \beta^9 \otimes \beta^5 + \big(\beta^5 \wedge \beta^9\big) \otimes 1$}.
 \end{Example}

 Any element $\omega \in \Omega^{k}_{\can}$ defines a universal differential $k$-form which to any connected graph~$G$ assigns the projective differential form
\begin{gather*}
G \mapsto \omega_G \in \Omega^k \big(\Pro^{E_G} \backslash X_G\big).
\end{gather*}
It automatically vanishes on any graph with $k$ edges or fewer since there are no projective invariant differential forms of degree $k$ in $\leq k$ variables.
By Lemma~\ref{lem: autoinvariant} any canonical form $\omega$ is invariant under automorphisms of~$G$. A canonical form $\omega$ satisfies the functoriality properties which are deduced from those for primitive canonical forms by taking exterior products (for example, Proposition~\ref{prop: restrictomega} holds verbatim for any $\omega \in \Omega^{\bullet}_{\can}$). We leave the statements to the reader.

\begin{defn} Every canonical form defines universal cohomology classes in the cohomology of graph hypersurface complements. For all $\omega \in \Omega^{k}_{\can}$, we obtain a class
\begin{gather*}
[\omega_G] \in H_{{\rm d}R}^k \big(\Pro^{E_G} \backslash X_G\big)
\end{gather*}
in algebraic de Rham cohomology \cite{Grothendieck}, for every graph $G$.
\end{defn}

 \begin{rem} \label{rem: degreexe}
 Let $\omega$ be a canonical form of degree $k$. Suppose that $G$ satisfies $e_G = k+1$.
 Suppose that the order of the pole in the denominators of $\omega_G$ and $\omega_{\widecheck{G}}$ are bounded by $n$ (such an $n$ depends only on $\omega$ by Theorem~\ref{thm: optimaldenom}). The projective invariance of $\omega$, together with Lemma~\ref{lem: dual}, which implies that $\omega_G =i^*\big(\omega_{\widecheck{G}}\big)$, gives
 \begin{gather*}
 \omega_G = \frac{P_G}{\Psi_G^{n}} \Omega_G, \qquad \text{where} \quad \Omega_G= \sum_{i} (-1)^i x_i \,{\rm d}x_1 \wedge \cdots \wedge\widehat{{\rm d}x_i} \wedge \cdots \wedge {\rm d}x_{e_G},
 \end{gather*}
 where $P_G$ is a polynomial in $\Q[x_e]$ of degree at most $n-1$ in each variable $x_e$.
 \end{rem}

\subsection{Vanishing properties}
We now consider the case of most interest, namely when the dimension of the simplex $\sigma_G$ equals the degree of the form $\omega_G$, i.e.,
\begin{gather*}
e_G = \deg (\omega_G)+1 .
\end{gather*}

\begin{prop} \label{prop: vanishing} \samepage Let $\omega \in \Omega_{\can}^{k}$ of degree $k$. Then for any graph $G$ with $k+1$ edges the form~$\omega_G$ vanishes
 if one of the following holds:
\begin{enumerate}\itemsep=0pt
\item[$(i)$] $G$ has a vertex of degree $\leq 2$,

\item[$(ii)$] $G$ has a multiple edge,

\item[$(iii)$] $G$ has a tadpole,

\item[$(iv)$] $G$ is one-vertex reducible $($can be disconnected by deleting a vertex$)$,

\item[$(v)$] $G$ has a bridge $($can be disconnected by deleting an edge$)$. Thus in this situation, $\omega_G$~va\-nishes unless $G$ is core or ``$1$-particle irreducible''.
\end{enumerate}
\end{prop}

\begin{proof} In the cases $(i)$ and $(ii)$, $G$ is obtained from a graph $G'$ with $k$ edges by either duplicating or subdividing an edge $e$. Then, by Lemmas~\ref{lem: series} and~\ref{lem: parallel},
\begin{gather*}
\omega_G = f^* \omega_{G'},
\end{gather*}
where $f= s_e$ \eqref{sedef} in the case $(i)$ and $f=p_e$ \eqref{pedef} in the case $(ii)$. The differential form $\omega_{G'}$ is projective of degree $k$ in $k$ variables and therefore $\omega_{G'}$ vanishes, as does $\omega_G$.
The statement $(iii)$ is a special case of $(iv)$.
Suppose that $G$ is a one-vertex join of two graphs $G_1$ and $G_2$. Using Sweedler's notation we can write
\begin{gather*}
\Delta_{\can} \omega = \sum \omega' \otimes \omega'' .
\end{gather*}
Then by Lemma~\ref{lem: 1vertexjoin} and multiplicativity of the coproduct we have:
\begin{gather*}
\omega_G = \sum \omega'_{G_1} \wedge \omega''_{G_2},
\end{gather*}
 where each term satisfies $\omega' \in \Omega_{\can}^{k_1}$ and $\omega'' \in \Omega_{\can}^{k_2}$ for some $k_1+ k_2 = k$. Since $e_{G_1} + e_{G_2} = k+1$ we must have $e_{G_i} \leq k_i$ for some $i=1,2$, which implies that $\omega_{G_i}$ vanishes for the same reasons as above. Therefore $\omega_G$ is zero.

When $G$ has a bridge $e$, let $G_1$, $G_2$ denote the two connected components of $G\backslash e$. In this situation $\Lambda_G = \Lambda_{G_1} \oplus \Lambda_{G_2}$ as in Lemma~\ref{lem: 1vertexjoin}, and the proof proceeds as for a one-vertex join~$(iv)$ except that the equality
$e_{G_1} +e_{G_2} = k$ holds.
\end{proof}

\begin{cor} 
Let $\omega \in \Omega^n_{\can}$ be of degree $n$ and suppose that $G$ is a connected graph with $e_G=n+1$ edges and $h_G$ loops. Then $\omega_G$ vanishes unless
\begin{gather*} 
h_G \geq \frac{e_G}{3} + 1.
\end{gather*}
If $G$ is not three regular, then $\omega_G$ vanishes unless
 $h_G > \frac{e_G}{3} +1$.
 \end{cor}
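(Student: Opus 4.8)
The plan is to deduce both inequalities from Proposition~\ref{prop: vanishing} by a counting argument on the vertices and edges of $G$. Assume $\omega_G\neq 0$. By parts $(i)$ and $(iv)$ of Proposition~\ref{prop: vanishing}, $G$ has no vertex of degree $\leq 2$, so every vertex has degree $\geq 3$. First I would apply the handshake lemma: $2e_G=\sum_{v\in V_G}\deg(v)\geq 3|V_G|$, hence $|V_G|\leq \tfrac{2}{3}e_G$. Combining this with the Euler characteristic relation for a connected graph, $h_G = e_G - |V_G| + 1$, gives
$$h_G = e_G-|V_G|+1 \geq e_G - \tfrac{2}{3}e_G + 1 = \tfrac{e_G}{3}+1\ ,$$
which is \eqref{hGgeq3}.

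For the strict inequality, observe that equality in the handshake estimate forces every vertex of $G$ to have degree exactly $3$, i.e. $G$ is three-regular (trivalent). Hence if $G$ is not three-regular, at least one vertex has degree $\geq 4$, so $2e_G \geq 3|V_G|+1$, giving $|V_G|\leq \tfrac{2e_G-1}{3}$ and therefore
$$h_G = e_G-|V_G|+1 \geq e_G - \tfrac{2e_G-1}{3}+1 = \tfrac{e_G}{3}+\tfrac{4}{3} > \tfrac{e_G}{3}+1\ ,$$
as claimed. (Strictly speaking one should note $|V_G|$ is an integer, but the crude bound above already suffices.)

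The only subtlety I anticipate is making sure the hypotheses of Proposition~\ref{prop: vanishing} are genuinely available: we need $G$ to have $e_G = n+1$ edges, which is exactly the hypothesis $e_G = \deg(\omega)+1$ under which that proposition applies, so there is nothing to check there. One should also confirm that the Euler characteristic identity $h_G = e_G - |V_G| + 1$ is the correct normalisation for $h_G$ the first Betti number (loop number) of a connected graph — it is, since $h_G = \dim H_1(G;\Q) = e_G - \mathrm{rk}\,\partial = e_G - (|V_G|-1)$ by the exact sequence \eqref{ShortExactHG}. Thus the argument is entirely elementary once Proposition~\ref{prop: vanishing}$(i)$ is invoked; the "main obstacle" is really just bookkeeping, and there is no hard analytic or geometric input beyond the vanishing statements already proved.
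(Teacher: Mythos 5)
Your argument is correct and is essentially the paper's own proof: both invoke Proposition \ref{prop: vanishing}$(i)$ to force every vertex degree $\geq 3$, then combine the handshake count (the paper phrases it via the average degree $d=2e_G/v_G$) with $h_G = e_G - v_G + 1$ to get the bound, with strictness when some vertex has degree $\geq 4$. The only cosmetic remark is that citing part $(iv)$ is unnecessary — part $(i)$ alone gives the degree condition.
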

\begin{proof}
Let $d = 2e_G/ v_G$ be the average degree of the vertices in $G$. By the previous proposition, $\omega_G$ vanishes unless every vertex in $G$ has degree $\geq 3$. Therefore $d\geq 3$ with equality if and only if $G$ is three-regular. We deduce that
\begin{gather*}
h_G -1 = e_G - v_G \geq e_G - \frac{2}{d} e_G = \frac{d-2}{d} e_G
\end{gather*}
from which the statement follows.
\end{proof}

\subsection{Variants} Since there are several possible formulations of Laplacian matrices associated to graphs, it is natural to ask if the associated invariant forms lead to the same differential forms. We show that they do.

\begin{lem} \label{lem: betaviaLG} Let $L_G$ be a matrix \eqref{LGdefn}. Then, for all $k\geq 1$,
\begin{gather*}
\beta^{4k+1}_{L_G} = \beta^{4k+1}_{\Lambda_G} .
\end{gather*}
\end{lem}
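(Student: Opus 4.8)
The plan is to exhibit an explicit block-diagonal decomposition relating $\Lambda_G$ and $L_G$ so that the conclusion follows from the additivity property of $\beta^{4k+1}$ under direct sums (lemma \ref{lem: BasicPropertiesBeta} $(vi)$) together with the inversion property $(ii)$. The key observation is lemma \ref{lem: DGdirectsum}, which already expresses the diagonal matrix $D_G$ of edge variables in the orthogonal decomposition $(\mathcal{H}_G, f_G)$ as
\[
D_G = \Lambda_G \oplus L_G^{-1}\ ,
\]
cf. \eqref{DGasdirectsumofLambdaandL}. First I would apply $\beta^{4k+1}$ to both sides. The left-hand side $\beta^{4k+1}_{D_G}$ is the invariant form attached to the \emph{diagonal} matrix $\mathrm{diag}(x_e)$, and since the form $\beta^{4k+1}$ is additive under direct sums, $\beta^{4k+1}_{D_G} = \sum_{e\in E_G} \beta^{4k+1}_{(x_e)}$, where $(x_e)$ is a $1\times 1$ matrix; but $\beta^{4k+1}_{(x_e)} = \tr((x_e^{-1}dx_e)^{4k+1}) = (x_e^{-1}dx_e)^{4k+1} = 0$ for $4k+1 \geq 2$, i.e. for all $k\geq 1$, because $dx_e$ has square zero. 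Hence $\beta^{4k+1}_{D_G} = 0$ for $k\geq 1$.

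On the right-hand side, additivity again gives $\beta^{4k+1}_{D_G} = \beta^{4k+1}_{\Lambda_G} + \beta^{4k+1}_{L_G^{-1}}$. By lemma \ref{lem: BasicPropertiesBeta} $(ii)$, $\beta^{4k+1}_{L_G^{-1}} = (-1)^{4k+1}\beta^{4k+1}_{L_G} = -\beta^{4k+1}_{L_G}$. Combining the two computations yields $0 = \beta^{4k+1}_{\Lambda_G} - \beta^{4k+1}_{L_G}$, which is exactly the claimed identity. One subtlety to address carefully is that $\beta^{4k+1}$ is only invariant under multiplication by \emph{constant} invertible matrices, so the change of basis \eqref{inproof:orthogdecomp} used to realize the direct-sum decomposition — which is a constant (i.e. $d$-closed) $\GL$-transformation on $\Q^{E_G}$ — must genuinely have entries in the base field $K = \Q(x_e)$ rather than in $R_G^0$ with nonzero differential; indeed $\mathcal{H}_G$ and $f_G$ are defined over $K$ with zero differential, so this is fine, but the bookkeeping should be made explicit.

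I expect the main (and essentially only) obstacle to be notational: matching the source of $\Lambda_G$ (a form on $H_1(G;\Z)$) with $L_G$ (a form on $\mathrm{Im}(\partial)^\vee$) inside the common space $\Q^{E_G}$ with its quadratic form $D_G$, and checking that the orthogonal-splitting isomorphism is $d$-closed so that lemma \ref{lem: BasicPropertiesBeta} applies. Once that is set up, the argument is a three-line manipulation. Alternatively, and perhaps more cleanly, one can invoke Remark \ref{Rem: omegamatroid}: both $\Lambda_G$ and $L_G$ are Laplacian matrices of the same matroid (the graphic matroid of $G$ and, after the identification of lemma \ref{lem: DGdirectsum}, a realization of the orthogonal complement), so $\omega^{4k+1}_G = \beta^{4k+1}_{\Lambda_G} = \beta^{4k+1}_{L_G}$ follows from well-definedness of the matroid form; but I would still spell out the direct-sum/inversion computation above as the proof proper, since it is self-contained.
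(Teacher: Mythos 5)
Your proposal is correct and follows essentially the same route as the paper: the paper likewise applies $\beta^{4k+1}$ to the decomposition $D_G = \Lambda_G \oplus L_G^{-1}$ of lemma \ref{lem: DGdirectsum}, uses additivity (lemma \ref{lem: BasicPropertiesBeta} $(vi)$) together with the vanishing of $\beta^n_{D_G}$ for $n>1$, and then invokes property $(ii)$ to remove the inverse. Your extra remarks on why $\beta^{4k+1}_{D_G}=0$ and on the $d$-closedness of the splitting only make explicit what the paper leaves implicit.
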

\begin{proof}
From Lemmas \ref{lem: DGdirectsum} and \ref{lem: BasicPropertiesBeta}$(vi)$, we have
\begin{gather*}
\beta_{D_G}^n = \beta_{\Lambda_G}^n + \beta^n_{L_G^{-1}}.
\end{gather*}
Let $n>1$. Then $\beta_{D_G}^n=0$, and Lemma~\ref{lem: BasicPropertiesBeta}$(ii)$ implies that
$\beta^n_{L_G} = (-1)^{n+1} \beta^n_{\Lambda_G}$.
\end{proof}

We now turn to the graph matrix defined in Section~\ref{sect: GraphMatrix}.

\begin{prop} Let $M_G$ be any choice of graph matrix. Then for all $k\geq 1$,
\begin{gather*}
 \beta^{4k+1}_{M_G} = \beta^{4k+1}_{\Lambda_G} .
 \end{gather*}
\end{prop}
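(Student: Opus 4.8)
The plan is to reduce the statement to the previously established identity $\beta^{4k+1}_{L_G} = \beta^{4k+1}_{\Lambda_G}$ of Lemma \ref{lem: betaviaLG}, by exploiting the block factorisation $M_G = LBU$ of Lemma \ref{lemMGLBU}. The key observation is that $L$ and $U$ there are \emph{unipotent with constant entries}? — no, that is false, since $\varepsilon_G D_G^{-1}$ and $D_G^{-1}\varepsilon_G^T$ involve the variables $x_e$ — so instead I would argue as follows. By the cyclic invariance of the trace (Lemma \ref{lem: BasicPropertiesBeta}), $\beta^n_{M_G} = \beta^n_{LBU}$ can be rewritten, conjugating by $U$ as in \S\ref{sect: DecompBlockMatrix}, as $\tr\big((\mathcal{L}+\mathcal{B}+\mathcal{U})^n\big)$ where $\mathcal{L},\mathcal{B},\mathcal{U}$ are the block lower-triangular, block diagonal, and block upper-triangular pieces defined in \eqref{CurlyLBUdef}. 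Here the block decomposition is of type $(m,1)$ in the sense of \S\ref{sect: DecompBlockMatrix} with $V_1 = \Q^{E_G}$ carrying $D_G$ and $V_2 = (\Q^{V'_G})^\vee$: because $V^\vee_G$ is \emph{totally anisotropic} for the graph-matrix bilinear form (as noted in \S\ref{sect: GraphMatrix}), the diagonal block of $B$ on $V_2$ is $L_G$ and the diagonal block on $V_1$ is $D_G$, so $\mathcal{B} = \mu_{D_G} \oplus \mu_{L_G}$.

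Next I would invoke formula \eqref{betaXasB1andnus}, or rather its analogue for this block decomposition: $\beta^{4k+1}_{M_G} = \beta^{4k+1}_{D_G} + (\text{terms built from the } \nu_i \text{ and } \mathrm{d}\log b)$, where now the roles are slightly transposed from \S\ref{sect: DecompBlockMatrix} since the one-dimensional-type block sits in the lower-right; one may equally conjugate to put $L_G$ as the large block and $D_G$ as the anisotropic part, but the cleanest route is to note $\beta^n_{M_G} = \beta^n_{D_G} + \beta^n_{L_G} + (\text{mixed } \nu\text{-terms})$ by the same expansion that produced Lemma \ref{lem: betaviaLG}. Since $M_G$ is symmetric (as $D_G$ is diagonal and the off-diagonal blocks are $-\varepsilon_G^T$ and $\varepsilon_G$), Lemma \ref{lem: nuvanishes} applies: the mixed forms $\nu_i$ vanish for $i \equiv 0, 1 \pmod 4$, so in degree $4k+1$ one can bound the number of $\nu$-contributions and, more importantly, in the relevant low-degree cases they drop out entirely. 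Combined with $\beta^n_{D_G} = 0$ for $n > 1$ (Lemma \ref{lem: BasicPropertiesBeta} $(iv)$), this leaves $\beta^{4k+1}_{M_G} = \beta^{4k+1}_{L_G}$, which equals $\beta^{4k+1}_{\Lambda_G}$ by Lemma \ref{lem: betaviaLG}.

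The honest worry — and the step I expect to be the main obstacle — is controlling the mixed $\nu_i$ terms: a priori the expansion \eqref{betaXasB1andnus} for an arbitrary $n=4k+1$ contains products of several $\nu_i$'s of degrees $\geq 4$, and one needs these to vanish, not merely to be bounded in number. For $M_G$ the cleanest argument is that the off-diagonal blocks of $\mathcal{L}$ and $\mathcal{U}$ after the $U$-conjugation are $\mathrm{d}$ of \emph{constant} (variable-free) matrices — indeed $L - I$ has block $\varepsilon_G D_G^{-1}$ and $U - I$ has block $-D_G^{-1}\varepsilon_G^T$, whose differentials involve $\mathrm{d}(x_e^{-1})$ composed with the \emph{constant} incidence matrix $\varepsilon_G$ — so the relevant $\mathcal{W}$-type products simplify drastically; alternatively, one conjugates so that the genuinely relevant block is $L_G$ (which is the $\Lambda$-type matrix up to the $i^*$ involution) and appeals directly to the symmetric case of Theorem \ref{thm: optimaldenom} and Lemma \ref{lem: nuvanishes} to kill $\nu_0,\nu_1$. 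Either way the symmetry of $M_G$ together with Lemma \ref{lem: nuvanishes} is doing the real work, exactly as in the proof of Lemma \ref{lem: betaviaLG}, so I expect the proof to be short: set up the block-$(m,1)$ decomposition, quote \eqref{betaXasB1andnus}, apply $\beta^n_{D_G}=0$ and Lemma \ref{lem: nuvanishes}, and reduce to Lemma \ref{lem: betaviaLG}.
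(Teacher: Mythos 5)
Your skeleton --- factor $M_G=LBU$ via Lemma \ref{lemMGLBU}, conjugate by $U$ to write $\beta^n_{M_G}=\tr\left((\mathcal{L}+\mathcal{B}+\mathcal{U})^n\right)$, isolate the block-diagonal contribution $\beta^n_{D_G}+\beta^n_{L_G}$, use $\beta^n_{D_G}=0$ for $n>1$, and finish with Lemma \ref{lem: betaviaLG} --- is exactly the paper's, and that final reduction is fine. The gap is in how you dispose of the mixed terms, and the mechanism you say is ``doing the real work'' does not function. First, $M_G$ is not symmetric: its off-diagonal blocks are $-\varepsilon_G^T$ and $\varepsilon_G$, which differ by a sign (this is repairable, since multiplying on the left by the constant matrix $\mathrm{diag}(I,-I)$ leaves $\beta^n$ unchanged by Lemma \ref{lem: BasicPropertiesBeta}, but you would have to say so). Second, Lemma \ref{lem: nuvanishes} is proved for a decomposition of type $(m,1)$, where the second block is one-dimensional and $\nu_i$ is a scalar; here the second block has size $v_G-1$, so the lemma does not apply as stated. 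Third, and decisively, even in the genuinely symmetric setting that lemma only kills $\nu_i$ with $i\equiv 0,1 \pmod 4$, and the surviving mixed terms do \emph{not} vanish in general: for the generic $3\times 3$ symmetric matrix of Example \ref{examples: smallbetas} one has $\beta^5_X\neq 0$ while $\beta^5$ of its $2\times 2$ principal minor vanishes identically for rank reasons, so no argument resting on symmetry alone can yield the exact identity you need --- it can only bound pole orders, as in Theorem \ref{thm: optimaldenom}. (Incidentally, the proof of Lemma \ref{lem: betaviaLG} does not use Lemma \ref{lem: nuvanishes} either; it uses the orthogonal splitting of Lemma \ref{lem: DGdirectsum}.)

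What actually kills the mixed terms is the special structure of the graph matrix, which you mention in passing but never exploit: $dL$ has the single non-zero block $\varepsilon_G\, d(D_G^{-1})$, $dU$ has $-d(D_G^{-1})\,\varepsilon_G^T$, and $dB=dD_G\oplus dL_G$; because $D_G$ is diagonal, $d(D_G^{-1})\,dD_G=0$ and $d(D_G^{-1})\,D_G\,d(D_G^{-1})=0$, whence $dL\,dB=dB\,dU=dL\,B\,dU=0$ and therefore $\mathcal{L}\mathcal{B}=\mathcal{B}\mathcal{U}=\mathcal{L}\mathcal{U}=0$ (together with $\mathcal{L}^2=\mathcal{U}^2=0$). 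Consequently $(\mathcal{L}+\mathcal{B}+\mathcal{U})^n=\mathcal{B}^n+\mathcal{B}^{n-1}\mathcal{L}+\mathcal{U}\mathcal{B}^{n-1}+\mathcal{U}\mathcal{B}^{n-2}\mathcal{L}$, and the last three terms are traceless (the last by cyclicity and $\mathcal{L}\mathcal{U}=0$), giving the exact identity $\beta^n_{M_G}=\beta^n_{D_G}+\beta^n_{L_G}$ for every $n$, with no appeal to symmetry or to Lemma \ref{lem: nuvanishes}. Your ``cleanest argument'' sentence contains the right germ --- constant incidence matrices composed with $d(x_e^{-1})$ --- but as written the proposal leans on a lemma that neither applies nor suffices, so the central step of the proof is missing.
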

\begin{proof}
By Lemma~\ref{lemMGLBU} we may write $M_G= L B U $,
where $L$, $B$, $U$ are block lower triangular, diagonal and upper triangular respectively.
Using the notation of Section~\ref{sect: DecompBlockMatrix} we set $\mathcal{L} = B^{-1} L^{-1} {\rm d}L. B$, $\mathcal{B} = B^{-1} {\rm d}B$, and
$\mathcal{U} = {\rm d}U . U^{-1}$, where
\begin{gather*}
{\rm d} L= \left(\begin{array}{c|c}
 0 & 0 \\ \hline
 \varepsilon_G {\rm d} D_G^{-1^{\vphantom{1}}} & 0 \\
\end{array}\right)\!, \qquad
{\rm d} B = \left(\begin{array}{c|c}
 {\rm d} D_G & 0 \\ \hline
 0 & {\rm d} L_G \\
\end{array}\right) \!, \qquad
{\rm d} U = \left(\begin{array}{c|c}
 0 & - {\rm d} \big(D^{-1}_G\big) \varepsilon^T_G \\ \hline
 0 & 0 \\
\end{array}\right)\!.
\end{gather*}
 Since $D_G$ is diagonal, ${\rm d}D^{-1}_G . {\rm d}D_G=0$ and
${\rm d}L. {\rm d}B = {\rm d}B. {\rm d}U = {\rm d}L . B. {\rm d}U = 0$. From this we deduce that
\begin{gather*}
 \mathcal{L} \mathcal{B} = \mathcal{B} \mathcal{U} = \mathcal{L} \mathcal{U}=0.
 \end{gather*}
Since also $\mathcal{L}^2= \mathcal{U}^2 =0$ we deduce that
\begin{gather*}
 \left( \mathcal{L} + \mathcal{B} + \mathcal{U}\right)^n = \mathcal{B}^n + \mathcal{B}^{n-1} \mathcal{L} + \mathcal{U} \mathcal{B}^{n-1} + \mathcal{U} \mathcal{B}^{n-2} \mathcal{L}.
 \end{gather*}
By cyclicity, the traces of all terms on the right-hand side vanish except for the first, and therefore
$ \tr (\omega_{M_G}^n) = \tr ( \mathcal{B}^n)$.
By Lemma~\ref{lem: BasicPropertiesBeta}$(vi)$ we deduce that
\begin{gather*} 
\beta^n_{M_G} = \beta^n_{D_G} + \beta_{L_G}^n .
\end{gather*}
The term $ \beta^n_{D_G}$ vanishes for $n>1$ and we conclude using the previous lemma.
\end{proof}

The previous proposition leads to closed formulae for the canonical differential forms $\omega_G$ in terms of graph polynomials and their ``Dodgson'' variants (Definition \ref{defn: Dodgson}).
If we define $\eta_{G}$ to be the $(E_G \times E_G)$ square matrix
 \begin{gather*}
 (\eta_{G})_{ij} = \bigg(\frac{\Psi_G^{ij}}{\Psi_G} \, {\rm d} x_j\bigg), \qquad 1 \leq i \leq j \leq E_G
 \end{gather*}
then by writing the inverse of a matrix in terms of its adjugate matrix, we have
\begin{gather*} 
\mu_{M_G} =
\begin{pmatrix}
\eta_{G} & 0 \\ 0 & 0
\end{pmatrix}
\end{gather*}
in block matrix notation. From this we deduce:

\begin{cor}The canonical form is given by
\begin{gather*}
\omega^{4k+1}_G = \tr \big(\eta_G^{4k+1}\big).
\end{gather*}
As a consequence, it can be written as a polynomial in $\frac{\Psi^{ij}_G}{\Psi_G}$ and ${\rm d}x_j$.
\end{cor}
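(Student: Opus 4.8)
The plan is to read the identity off the preceding proposition together with the block shape of the matrix $\mu_{M_G}=M_G^{-1}dM_G$. By that proposition and the definition \eqref{def: omegacan}, $\omega_G^{4k+1}=\beta^{4k+1}_{\Lambda_G}=\beta^{4k+1}_{M_G}=\tr\bigl((\mu_{M_G})^{4k+1}\bigr)$, so the whole matter reduces to understanding $\mu_{M_G}$. The key point is that the reduced incidence matrix $\varepsilon_G$ has constant integer entries, hence only the top-left block $D_G$ of $M_G$ varies with the edge variables; so $dM_G$ has a single non-zero block $dD_G=\mathrm{diag}(dx_e)_{e\in E_G}$ in the top-left corner, and consequently $\mu_{M_G}=M_G^{-1}\,dM_G$ has all of its last $v_G-1$ columns equal to zero. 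Thus $\mu_{M_G}=\left(\begin{smallmatrix}\eta_G & 0\\ Z & 0\end{smallmatrix}\right)$ in block form, with $\eta_G$ the $(E_G\times E_G)$ top-left block and $Z$ an irrelevant $(v_G-1)\times E_G$ block. Such a block-lower-triangular matrix with vanishing lower-right block satisfies $(\mu_{M_G})^{n}=\left(\begin{smallmatrix}\eta_G^{n} & 0\\ Z\,\eta_G^{n-1} & 0\end{smallmatrix}\right)$, so $\tr\bigl((\mu_{M_G})^{n}\bigr)=\tr(\eta_G^{n})$ by cyclicity of the trace; taking $n=4k+1$ gives $\omega_G^{4k+1}=\tr(\eta_G^{4k+1})$ once $\eta_G$ has been identified.

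It then remains to check that the top-left block $\eta_G$ is precisely the matrix with entries $(\eta_G)_{ij}=(\Psi_G^{ij}/\Psi_G)\,dx_j$, i.e.\ to establish \eqref{omegaMGasPsiij}. For this I would write $M_G^{-1}=\Psi_G^{-1}\,\mathrm{adj}(M_G)$, using $\det M_G=\Psi_G$ from Lemma \ref{lemMGLBU}; the $(i,j)$-entry of the top-left block of $\mathrm{adj}(M_G)$ is, up to a sign, the minor of $M_G$ obtained by deleting row $j$ and column $i$, which for edge indices $i,j$ is by definition the Dodgson polynomial $\Psi_G^{ij}$ (Definition \ref{defn: Dodgson}). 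Right-multiplication by $dD_G$ scales the $j$-th column of this block by $dx_j$, yielding the entry $(\Psi_G^{ij}/\Psi_G)\,dx_j$; it remains only to verify the signs, which works out because the relevant minors of $M_G$ (whose off-diagonal blocks are $\pm\varepsilon_G^{T}$ and $\varepsilon_G$) are symmetric under $i\leftrightarrow j$, in accordance with $\Psi_G^{ij}=\Psi_G^{ji}$.

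Finally, the stated consequence is immediate: expanding the trace of the power gives $\tr(\eta_G^{4k+1})=\sum_{i_1,\dots,i_{4k+1}}(\eta_G)_{i_1i_2}(\eta_G)_{i_2i_3}\cdots(\eta_G)_{i_{4k+1}i_1}$, and each summand equals $\frac{\Psi_G^{i_1i_2}\Psi_G^{i_2i_3}\cdots\Psi_G^{i_{4k+1}i_1}}{\Psi_G^{4k+1}}\,dx_{i_2}\wedge dx_{i_3}\wedge\cdots\wedge dx_{i_1}$, which is visibly a polynomial of degree $4k+1$ in the functions $\Psi_G^{ij}/\Psi_G$ with exterior monomials in the $dx_j$ as coefficients. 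The only step that is not purely formal is the sign and normalisation bookkeeping in the second paragraph, matching the adjugate minors of $M_G$ with the Dodgson polynomials $\Psi_G^{ij}$; everything else follows from the block-triangular shape of $\mu_{M_G}$ and cyclicity of the trace, and in fact one can simply quote \eqref{omegaMGasPsiij} as stated in the text and bypass that bookkeeping entirely.
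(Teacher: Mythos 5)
Your proof is correct and takes essentially the same route as the paper: combine the preceding proposition $\beta^{4k+1}_{M_G}=\beta^{4k+1}_{\Lambda_G}$ with the block description of $\mu_{M_G}=M_G^{-1}dM_G$ obtained from the adjugate formula (equation \eqref{omegaMGasPsiij}), and conclude by cyclicity of the trace. Your remark that the lower-left block of $\mu_{M_G}$ need not vanish but is irrelevant for the trace is in fact slightly more careful than the displayed \eqref{omegaMGasPsiij}, and the residual sign bookkeeping you defer is precisely the ambiguity the paper itself acknowledges in the definition of the Dodgson polynomials.
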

From this one can write down a closed formula for $\omega^{4k+1}_G$ as a sum over permutations involving products of Dodgson polynomials. For example,
\begin{gather*} 
\beta^5_{M_G} = 10\! \sum_{I \subset E_G} \sum_{\sigma \in \mathrm{Dih}(I)} \frac{\Psi_G^{i_{\sigma_1} i_{\sigma_2}} }{\Psi_G} \frac{\Psi_G^{i_{\sigma_2} i_{\sigma_3}}}{\Psi_G} \frac{\Psi_G^{i_{\sigma_3} i_{\sigma_4}}}{\Psi_G} \frac{\Psi_G^{i_{\sigma_4} i_{\sigma_5}}}{\Psi_G} \frac{\Psi_G^{i_{\sigma_5} i_{\sigma_1}}}{\Psi_G} {\rm d}x_{i_{\sigma_1}} \cdots {\rm d}x_{i_{\sigma_5}},
\end{gather*}
where the sum is over all subsets $I=(i_1,\dots, i_5) \subseteq E_G$, and $\mathrm{Dih}(I) \cong \Sigma_5 /D_{10}$ is the set of dihedral orderings of $I$ (the twelve ways of writing the elements of $I$ around the vertices of pentagon, up to dihedral symmetries). This formula easily generalises, but is of limited practical use because of the sheer number of terms.

\begin{rem}
Using condensation identities (e.g., \cite[Sections~2.4--2.5]{PeriodsFeynman}) which are based on results of Dodgson and Leibniz, we can show that
\begin{gather*}
\beta^5_{M_G} = 10 \sum_{I \subset E_G} \bigg(\frac{\Psi_G^{i_1 i_2 i_3 , i_1i_4i_5}}{\Psi_G}
\frac{\Psi_G^{i_2 i_4, i_3i_5}}{\Psi_G}- \frac{\Psi_G^{i_1 i_3 i_5 , i_1i_2i_4}}{\Psi_G} \frac{\Psi_G^{i_2 i_3, i_4i_5}}{\Psi_G} \bigg) {\rm d}x_{i_1} \cdots {\rm d}x_{i_5},
\end{gather*}
which gives the optimal power of $\Psi_G$ in the denominator (Theorem~\ref{thm: optimaldenom}).
This phenomenon is very reminiscent of the cancellations which occur in the parametric formulation of quantum electrodynamics \cite{Golz} and suggests a matrix formulation of the latter. It also suggests a possible formulation of canonical graph forms using generalised Gaussian integrals.
\end{rem}

\section{Algebraic compactification of the space of metric graphs}
We construct an algebraic compactification of the space of metric graphs by blowing up, and define an algebraic differential form upon it to be an infinite collection of differential forms of~the same degree which satisfy certain compatibilities.
We then prove that the pull-backs of~canonical forms along the blow up satisfy all these compatibilities.

\subsection{Reminders on linear blow ups in projective space}
For any subset of edges $I \subset E_G$, recall that $L_I \subset \Pro^{E_G}$ denotes the linear space defined by the vanishing of the coordinates $x_e$ for all $e\in I$.

Consider subsets $\mathcal{B}_G\subset 2^{E_G}$ of sets of edges of $G$ with the properties:
\begin{enumerate}\itemsep=0pt
\item[$(i)$] $E_G \in \mathcal{B}_G$,

\item[$(ii)$] $I_1, I_2 \in \mathcal{B}_G \ \Longrightarrow \ I_1 \cup I_2 \in \mathcal{B}_G$.
\end{enumerate}
Furthermore, we require the assignment $G\mapsto \mathcal{B}_G$ to satisfy various properties including $\mathcal{B}_{\gamma} = \{I\in \mathcal{B}_{G} \colon I\subset E_\gamma\}$ for all subgraphs $\gamma \subset G$, and a similar property for quotients $G/\gamma$, for which we refer to~\cite[Section~5.1]{Cosmic}.
Examples of interest satisfying all the required properties include~$\mathcal{B}^{\mathrm{core}}_G$, consisting of all core subgraphs (the minimal case of relevance), or $\mathcal{B}^{\mathrm{all}}_G$ consisting of all subgraphs (the maximal case). We shall fix some such family of $\mathcal{B}_{G}$ once and for all. For the present application to canonical graph forms, $\mathcal{B}^{\mathrm{core}}_G$ suffices, but one can imagine situations where one should take $\mathcal{B}^{\mathrm{all}}_G$, for instance if one were to consider differential forms with a more complicated polar locus. We shall simply take $\mathcal{B}_G=\mathcal{B}^{\mathrm{core}}_G$ from now on.

 For any graph $G$, let
\begin{gather} \label{piGdefn}
\pi_G\colon\ P^{G} \To \Pro^{E_G}
\end{gather}
denote its iterated blow-up along linear subspaces $L_{\gamma}$ corresponding to $\gamma \in \mathcal{B}_G$ in increasing order of dimension \cite{BEK}, \cite[Definition 6.3]{Cosmic}. It does not depend on any choices.
 It is equipped with a divisor $D \subset P^{G}$
 \begin{gather*}
 D = \pi_G^{-1}\bigg(\bigcup_{e\in E_G} L_{e}\bigg),
 \end{gather*}
which is the total inverse image of the coordinate hyperplanes. Its irreducible components are of two types: the strict transforms $D_e$ of coordinate hyperplanes $x_e=0$, which are in one-to-one correspondence with the edges of $G$, and the inverse images of $L_{\gamma}$, for every $\gamma \in \mathcal{B}_G$ with $|\gamma| \geq 2$, which we denote by $D_{\gamma}$. Let
 \begin{gather*}
 \widetilde{\sigma}_G = \overline{\pi_G^{-1} ( \sigma_G )}
 \end{gather*}
 denote the closure, in the analytic topology, of the inverse image of the open coordinate simplex~$\sigma_G$. It is a compact manifold with corners which we have in the past called the Feynman polytope. The following theorem was first proved in \cite{BEK} for primitive-divergent graphs (for more general Feynman graphs, including those with arbitrary kinematics and masses, see \cite[Theorem~5.1]{Cosmic}).

\begin{thm} \label{thm: NormalCrossing} The divisor $D\subset P^G$ is simple normal crossing. Every irreducible component is canonically isomorphic to a space of the same type:
\begin{gather*}
D_e = P^{G/ e} \qquad \text{and} \qquad D_{\gamma} \cong P^{\gamma} \times P^{G/\gamma} .
\end{gather*}
The strict transform $Y_G \subset P^G$ of the graph hypersurface $X_G \subset \Pro^{E_G}$ does not meet $\widetilde{\sigma}_G$. Its intersection with the divisor $D$ satisfies:
\begin{gather*}
Y_G \cap D_e \cong Y_{G/e} \qquad \text{and} \qquad
Y_G \cap D_{\gamma} \cong \big(P^{\gamma} \times Y_{G/\gamma}\big) \cup \big(Y_{\gamma} \times P^{G/\gamma}\big) .
\end{gather*}
\end{thm}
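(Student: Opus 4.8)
The plan is to prove Theorem \ref{thm: NormalCrossing} by induction on the number of elements of $\mathcal{B}_G$, using the standard local model of an iterated blow-up of a smooth variety along a nested-or-disjoint arrangement of linear subspaces. First I would recall the combinatorial input: by the hypotheses on $\mathcal{B}_G$ (closure under union, plus the compatibility with subgraphs and quotients from \cite[\S5.1]{Cosmic}), the collection $\{L_\gamma : \gamma\in\mathcal{B}_G\}$ forms a \emph{building set} in $\Pro^{E_G}$ in the sense of De Concini--Procesi; blowing up in increasing order of dimension then yields a smooth variety $P^G$ in which the total transform of $\bigcup_{e}L_e$ is a simple normal crossing divisor. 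This gives the first assertion. The components are exactly the strict transforms $D_e$ of the hyperplanes $\{x_e=0\}$ and the exceptional divisors $D_\gamma$ over $L_\gamma$ for $\gamma\in\mathcal{B}_G$ with $|\gamma|\geq 2$, and two components meet if and only if the corresponding flats are comparable or their intersection is again in the building set.

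Next I would establish the canonical isomorphisms $D_e\cong P^{G/e}$ and $D_\gamma\cong P^{\gamma}\times P^{G/\gamma}$. For a single blow-up of a smooth subvariety, the exceptional divisor is the projectivised normal bundle; here $L_\gamma\cong\Pro^{E_{G/\gamma}}$ has normal bundle in $\Pro^{E_G}$ whose projectivisation is $\Pro^{E_\gamma-1}\times\Pro^{E_{G/\gamma}}=P$-spaces before further blow-ups. The key point is that the \emph{remaining} blow-ups, restricted to this exceptional divisor (resp. to the strict transform of a coordinate hyperplane), induce precisely the iterated blow-ups defining $P^{\gamma}$ and $P^{G/\gamma}$ (resp. $P^{G/e}$): this is where the compatibility conditions $\mathcal{B}_{\gamma}=\{I\in\mathcal{B}_G:I\subset\gamma\}$ and the analogous statement for $G/\gamma$ are used. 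One checks that the building set $\{L_I:I\in\mathcal{B}_G\}$ restricts, on the exceptional divisor over $L_\gamma$, to the product of the building sets for $\gamma$ and for $G/\gamma$ (a flat $L_I$ with $I\in\mathcal{B}_G$ either contains $L_\gamma$, is contained in it, or meets it in a flat of the product arrangement), and De Concini--Procisi functoriality for products of building sets gives the isomorphism $D_\gamma\cong P^{\gamma}\times P^{G/\gamma}$. The statement $Y_G\cap\widetilde\sigma_G=\emptyset$ follows because $\sigma_G\cap X_G=\emptyset$ (positivity of $\Psi_G$, already noted after Theorem \ref{thm: MatrixTree}) and the blow-up centres $L_\gamma$ all lie on the boundary $\partial\sigma_G$ in the corners, so $\pi_G$ is an isomorphism over $\sigma_G$; taking closures, $\widetilde\sigma_G$ meets $D$ but not $Y_G$ since $Y_G$ lies over $X_G$, which is disjoint from the closure $\overline{\sigma}_G$ in the relevant strata by the contraction property $\Psi_G|_{L_\gamma}=\Psi_{G/\gamma}$ and positivity again.

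Then I would compute $Y_G\cap D_e$ and $Y_G\cap D_\gamma$. For $D_e\cong P^{G/e}$: the strict transform of $X_G$ meets the strict transform of $\{x_e=0\}$ in the strict transform of $X_G\cap\{x_e=0\}=X_{G/e}$ (using $\Psi_G|_{x_e=0}=\Psi_{G/e}$ when $e$ has distinct endpoints, and the bookkeeping of which blow-ups survive on $D_e$), giving $Y_G\cap D_e\cong Y_{G/e}$. For $D_\gamma\cong P^{\gamma}\times P^{G/\gamma}$: the local equation of $\Psi_G$ near $L_\gamma$, after factoring out the exceptional coordinate to the appropriate power, degenerates to $\Psi_\gamma\cdot\widetilde{(\text{anything})}+(\text{exceptional})\cdot(\cdots)$; more precisely the leading term of $\Psi_G$ along $L_\gamma$ in the normal directions is $\Psi_\gamma$ (the part homogeneous in the $x_e$, $e\in E_\gamma$) and the restriction to $L_\gamma$ is $\Psi_{G/\gamma}$, so on the exceptional divisor the strict transform $Y_G$ restricts to the vanishing locus of $\Psi_\gamma\otimes 1$ together with $1\otimes\Psi_{G/\gamma}$ inside $P^\gamma\times P^{G/\gamma}$, i.e. $(Y_\gamma\times P^{G/\gamma})\cup(P^\gamma\times Y_{G/\gamma})$. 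The factorisation $\Psi_G = \det(L_G)\prod_e x_e$ from Lemma \ref{lem: DGdirectsum}, together with the description of contractions of Laplacians, makes this degeneration transparent.

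The main obstacle is the second step: verifying carefully that on each exceptional divisor $D_\gamma$ (and on each $D_e$) the \emph{subsequent} blow-ups in the tower defining $P^G$ restrict to exactly the blow-up towers defining $P^\gamma\times P^{G/\gamma}$ (resp. $P^{G/e}$), rather than to some larger or smaller set of centres. This requires showing that the trace on $D_\gamma$ of the building set $\{L_I:I\in\mathcal{B}_G\}$ is the product building set associated to $\mathcal{B}_\gamma$ and $\mathcal{B}_{G/\gamma}$, which is precisely where the axioms imposed on the assignment $G\mapsto\mathcal{B}_G$ in \cite[\S5.1]{Cosmic} must be invoked; once this identification of building sets is in place, the isomorphisms of the blown-up spaces, their normal-crossing property, and the behaviour of the strict transform of $X_G$ all follow from the functorial properties of wonderful compactifications and the elementary restriction properties of $\Psi_G$ recalled above. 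I would present the argument as an induction, treating the first (highest-codimension, smallest-dimension) blow-up explicitly and then applying the inductive hypothesis to $P^\gamma$, $P^{G/\gamma}$ and $P^{G/e}$, which have strictly smaller building sets.
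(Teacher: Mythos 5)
A preliminary remark: the paper does not prove Theorem \ref{thm: NormalCrossing} at all — it is quoted from \cite{BEK} (primitive-divergent case) and \cite[Theorem 5.1]{Cosmic} — so there is no in-paper proof to compare against. Judged on its own terms, your plan follows the same standard route as those references: realise $P^G$ as an iterated blow-up of wonderful-model type (modulo the standard convention that the divisorial elements $L_e$ must be adjoined to the building set, after which closure of $\mathcal{B}_G$ under unions does give a building set), identify the trace of the remaining centres on each $D_e$ and $D_\gamma$ with the centres defining $P^{G/e}$ and $P^{\gamma}\times P^{G/\gamma}$ — you are right that this is the crux and that it is exactly where $\mathcal{B}_\gamma=\{I\in\mathcal{B}_G: I\subset\gamma\}$ and its quotient analogue enter — and control the strict transform of $X_G$ via the degeneration of $\Psi_G$ along $L_\gamma$. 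In outline this is sound and correctly locates the hard step.

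However, your argument for $Y_G\cap\widetilde{\sigma}_G=\emptyset$ does not work as written. You argue that $\pi_G$ is an isomorphism over $\sigma_G$ and then ``take closures'', invoking disjointness of $X_G$ from $\overline{\sigma}_G$ ``in the relevant strata'' via the contraction property $\Psi_G|_{L_\gamma}=\Psi_{G/\gamma}$. That restriction formula holds only when $h_\gamma=0$, whereas every blow-up centre is a core subgraph, so $h_\gamma\geq 1$ and $\Psi_G$ vanishes identically on $L_\gamma$: the graph hypersurface genuinely meets $\overline{\sigma}_G$ at precisely those corners, and disjointness of two closures cannot be inferred from disjointness of their open parts. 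The correct argument (as in \cite{BEK, Cosmic}) is local: in every chart of the iterated blow-up meeting $\widetilde{\sigma}_G$, the pullback $\pi_G^*\Psi_G$ factors as a monomial in the exceptional coordinates times a function that is strictly positive on $\widetilde{\sigma}_G$, the positivity being proved inductively from the leading-term factorisation $\Psi_G=\Psi_\gamma\Psi_{G/\gamma}+(\hbox{higher order in } x_e,\ e\in\gamma)$ together with strict positivity of $\Psi_\gamma$ and $\Psi_{G/\gamma}$ on the closed strata of the smaller polytopes; the strict transform $Y_G$ is cut out by the positive factor and therefore misses $\widetilde{\sigma}_G$. You already use this factorisation a few lines later to compute $Y_G\cap D_\gamma$, so the ingredient is available — it just has to be deployed here instead of the closure argument. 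A smaller slip: your incidence criterion for components of $D$ is backwards. Since $\mathcal{B}_G$ is closed under unions, two exceptional divisors $D_{\gamma_1}$, $D_{\gamma_2}$ with $\gamma_1,\gamma_2$ incomparable are \emph{disjoint} (nested sets require the join of an antichain \emph{not} to lie in the building set); intersections of boundary components correspond to flags of subgraphs. This does not affect the main claims, but it should be corrected before it feeds into the normal-crossing bookkeeping.
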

In particular, the complements of the strict transform of the graph hypersurface in each boundary component $D_{\gamma}$ satisfy the product structure:
\begin{gather} \label{Dgammaproduct}
D_{\gamma} \backslash (D_{\gamma} \cap Y_{G}) \cong \big( P^{\gamma} \backslash Y_{\gamma} \big) \times \big(P^{G/\gamma} \backslash Y_{G/\gamma} \big) .
\end{gather}
This product structure is fundamental to both the existence of the renormalisation group \cite{BrownKreimer} and also the coaction principle \cite{Cosmic}. We call the maps
\begin{align*}
P^{G/e} \cong D_e & \Longhookrightarrow P^{G},
\\
P^{\gamma} \times P^{G/\gamma} \cong D_{\gamma} &\Longhookrightarrow P^{G}
\end{align*}
\emph{face maps}, since they induce inclusions of faces on the polytope $\widetilde{\sigma}_G$.
It is clear that the assignment $G\mapsto \big(P^G, D\big)$ is clearly functorial in $G$ with respect to graph isomorphisms.

\subsection[Differentials on the total space P Tot]
{Differentials on the total space $\boldsymbol{P^{\mathrm{Tot}}}$}
If $G$ has several connected components $G= \bigcup_{i=1}^n G_i$, let us set $P^G = P^{G_1} \times \dots \times P^{G_n}$.

Let us define the total space $P^{\mathrm{Tot}}$ to be the collection of schemes $\big(P^G\big)_G$ as $G$ ranges over all graphs, together with morphisms
\begin{align}
i_{e}\colon \ &\hphantom{P^{\gamma} \times{}\, }P^{G/e}\To P^G,\nonumber
\\
i_{\gamma}\colon \ &P^{\gamma} \times P^{G/\gamma} \To P^G\label{facemaps}
\end{align}
by taking products of face maps for every connected component of $G$. Every isomorphism $\tau\colon G\cong G'$ induces an isomorphism
\begin{gather}
\tau\colon\ P^{G} \cong P^{G'} . \label{tauiso}
\end{gather}
If $G$ has connected components $G_1,\dots, G_n$, define
\begin{gather*}
\widetilde{\sigma}_G = \prod_{i=1}^n \widetilde{\sigma}_{G_i}.
\end{gather*}
 An orientation on $G$ is equivalent to an orientation of each $\sigma_{G_i}$ and hence $\widetilde{\sigma}_G$.

\begin{defn} \label{defn: formsonPtot} Define a \emph{primitive algebraic differential form} $\{\widetilde{\omega}\}$ of degree $k$ on $P^{\mathrm{Tot}}$ to be a~collection of differential forms $\widetilde{\omega}_G$, for every $G$, such that:
\begin{enumerate}\itemsep=0pt

\item For all $G$, the form $\widetilde{\omega}_G$ is projective and meromorphic on $P^G$ of degree $k$, and its restriction to $\widetilde{\sigma}_G$ is smooth (i.e., its poles lie away from $\widetilde{\sigma}_G$).

\item Its restriction along face maps \eqref{facemaps} satisfies the compatibilities:
\begin{gather*}
i_e^* \widetilde{\omega}_G = \widetilde{\omega}_{G/e},
\\
i_{\gamma}^* \widetilde{\omega}_{G} = \widetilde{\omega}_{\gamma} \wedge 1 + 1\wedge \widetilde{\omega}_{G/\gamma},
\end{gather*}
where, by abuse, $\widetilde{\omega}_{\gamma} $ denotes the pull-back along the projection onto the first component $P^{\gamma} \times P^{G/\gamma} \rightarrow P^{\gamma}$, and similarly for $\widetilde{\omega}_{G/\gamma}$. The collection of forms $\widetilde{\omega}$ is
 also required to be compatible with isomorphisms \eqref{tauiso}:
\begin{gather*}
\tau^* \widetilde{\omega}_{G'} = \widetilde{\omega}_{G} \qquad \text{for all}\quad \tau\colon\ G \cong G'.
\end{gather*}
\end{enumerate}
An algebraic differential form $\{\widetilde{\omega}\}$ of degree $k$ on $P^{\mathrm{Tot}}$ is then defined to be an exterior product of primitive forms. Note that this will affect the formula for the restriction $i_{\gamma}^*$, but all other properties remain unchanged.
\end{defn}

The differential is defined component-wise: ${\rm d} \{\widetilde{\omega}\} = \{ {\rm d} \widetilde{\omega}_G\}_G$. One can clearly define various sheaves of differentials on $P^{\mathrm{Tot}}$, but the above ``global'' definition is adequate for our purposes.
An algebraic differential form restricts to a smooth form
$\widetilde{\omega}_G\big|_{ \widetilde{\sigma}_G} $ of degree $k$
on the polytope $\widetilde{\sigma}_G$, for every $G$.

\begin{rem} Instead of $P^{\mathrm{Tot}}$ we may also consider the collection of schemes $\big(P^G\big)_G$, where $G$ ranges over all graphs of bounded genus $\leq g$, equipped with the face maps. In this case, the topological space given by the collection of $\widetilde{\sigma}_G$, together with the identifications induced by face maps and automorphisms, is closely related to the quotient of the bordification~\cite{BordificationOuterSpace} of Outer space $\mathcal{O}_g$ by the action of $\mathrm{Out}(F_g)$. \end{rem}

\subsection{Canonical forms along exceptional divisors}
Let $\omega \in \Omega^n_{\can}$ be a canonical form.
Denote the exceptional divisor of \eqref{piGdefn} by $\mathcal{E} \subseteq D \subseteq P^G$ and define
\begin{gather*} 
\widetilde{\omega}_G \in \Omega^{n} \big(P^G \backslash (\mathcal{E} \cup Y_G)\big)
\end{gather*}
to be the smooth differential form $ \pi^*_G (\omega_G)$ for any connected $G$, where $\pi_G$ is the blow-up \eqref{piGdefn}. It could \emph{a priori} have poles along components of the exceptional locus $\mathcal{E}$. In fact, this is never the case, even if $G$ has subgraphs $\gamma$ which are called ``divergent'' in physics terminology
 (meaning that they satisfy $h_{\gamma} \geq 2 e_{\gamma}$).

\begin{thm} \label{thm: nopolesonD}
The form $\widetilde{\omega}_G$ has no poles along the divisor $D$ and therefore extends to a smooth form on $P^G \ \backslash \ Y_G$, i.e.,
\begin{gather*}
\widetilde{\omega}_G \quad \in \quad \Omega^{n} \left(P^G \ \backslash \ Y_G \right).
\end{gather*}
Its restrictions to irreducible boundary components of $D$ satisfy
\begin{gather*}
\widetilde{\omega}_G\big|_{D_e} = \widetilde{\omega}_{G/e}
\end{gather*}
if $D_e$ is the strict transform of the hyperplane $L_e$ corresponding to a single edge $e$ of $G$, and in the case when $D_{\gamma}$ is an exceptional component corresponding to a core subgraph $\gamma \subset G$, satisfy
\begin{gather} \label{RestrictomegaToDgammaAndCoproduct}
 \widetilde{\omega}_G\big|_{D_{\gamma}} = \sum \widetilde{\omega}'_{\gamma}\wedge \widetilde{\omega}''_{G/\gamma},
 \end{gather}
where $\Delta_{\can} \omega = \sum \omega' \otimes \omega''$ in Sweedler notation.
The forms on the right-hand side of this formula are viewed on $D_{\gamma} \backslash (D_{\gamma} \cap Y_{G})$ via the isomorphism \eqref{Dgammaproduct}.
\end{thm}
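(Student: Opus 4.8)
The plan is to reduce everything to the local analysis of the blow-up near the exceptional divisor $D_\gamma$, together with the compatibility properties of canonical forms under contraction (Proposition \ref{prop: restrictomega}) and under one-vertex joins / the coproduct $\Delta^{\can}$ (Lemma \ref{lem: 1vertexjoin}, Example \ref{example: SmallOmegas}). By multiplicativity of $\Delta^{\can}$ and the fact that an algebraic differential form is by definition an exterior product of primitives, it suffices to treat the case $\omega = \omega^{4k+1}$ primitive; the general case follows by expanding the exterior product and using the Leibniz-type rule for $i_\gamma^*$ on products. So fix $\omega = \omega^{4k+1}_G = \beta^{4k+1}_{\Lambda_G}$.

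First I would treat the \emph{edge components} $D_e$, where $D_e$ is the strict transform of the coordinate hyperplane $x_e = 0$ and $D_e \cong P^{G/e}$. Restricting $\widetilde{\omega}_G = \pi_G^* \omega_G$ to $D_e$ amounts, after commuting with $\pi_{G/e}$, to restricting $\omega_G$ to the hyperplane $L_e = \{x_e = 0\} \cong \Pro^{E_{G/e}}$. Since $e$ has distinct endpoints (a core subgraph that is a single edge is a non-bridge), contraction induces $H_1(G;\Z) \cong H_1(G/e;\Z)$ and $\Lambda_{G/e} = \Lambda_G|_{x_e = 0}$; hence $\omega_G|_{L_e} = \omega_{G/e}$ by Proposition \ref{prop: restrictomega}, and pulling back along $\pi_{G/e}$ gives $\widetilde{\omega}_G|_{D_e} = \widetilde{\omega}_{G/e}$. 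One subtlety: $\omega_G$ a priori has poles along $X_G$, and $L_e$ could meet $X_G$; but Theorem \ref{thm: NormalCrossing} ensures $Y_G \cap D_e \cong Y_{G/e}$, so off the graph hypersurface in $D_e$ everything is regular and the restriction makes sense.

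The main work, and the main obstacle, is the exceptional components $D_\gamma \cong P^\gamma \times P^{G/\gamma}$ for a core subgraph $\gamma$ with $|\gamma| \geq 2$, where one must (a) show $\widetilde{\omega}_G$ has \emph{no pole} along $D_\gamma$ even though $L_\gamma \subset X_G$ when $h_\gamma > 0$, and (b) identify the restriction with $\sum \widetilde{\omega}'_\gamma \wedge \widetilde{\omega}''_{G/\gamma}$. The key is a factorisation of the graph Laplacian in the local coordinates of the blow-up of $L_\gamma$: writing edge coordinates so that $\alpha_e = t\,\beta_e$ for $e \in E_\gamma$ (with $t$ the coordinate cutting out $D_\gamma$) and $\alpha_e$ unchanged for $e \notin E_\gamma$, one has, after a $t$-dependent change of homology basis adapting $H_1(\gamma;\Z) \subset H_1(G;\Z)$, a block decomposition $\Lambda_G \sim t\,\Lambda_\gamma(\beta) \oplus \Lambda_{G/\gamma}(\alpha) + O(t)$-corrections — more precisely, $\Lambda_G$ becomes block-triangular with diagonal blocks $t\Lambda_\gamma$ and $\Lambda_{G/\gamma}$ up to the $t$-linear blocks, exactly as in the Schur-complement factorisations of \S\ref{sect: DecompBlockMatrix}. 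Then, applying the projective invariance of $\beta^{4k+1}$ under scaling (Lemma \ref{lem: projinv}) to absorb the scalar $t$, and using $\beta^{4k+1}_{X_1 \oplus X_2} = \beta^{4k+1}_{X_1} + \beta^{4k+1}_{X_2}$ (Lemma \ref{lem: BasicPropertiesBeta}(vi)) together with $\beta^1_{\lambda X} = \beta^1_X + k\lambda^{-1}d\lambda$ only affecting degree one (irrelevant since $4k+1 \geq 5$), one finds $\mu_{\Lambda_G}$ is, modulo terms vanishing on $D_\gamma$ (i.e. divisible by $t$ or by $dt$ that then meets $\Omega_B$-type cancellations), block-triangular with diagonal $\mu_{\Lambda_\gamma} \oplus \mu_{\Lambda_{G/\gamma}}$; the off-diagonal blocks contribute nothing to the trace by cyclicity, exactly as in the proof of the proposition on graph matrices (the computation $\mathcal{L}\mathcal{B}=\mathcal{B}\mathcal{U}=\mathcal{L}\mathcal{U}=0$ there). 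This simultaneously shows regularity along $D_\gamma$ — the apparent pole order from $\Psi_G \sim t^{h_\gamma}\Psi_\gamma \Psi_{G/\gamma}$ is killed because the numerator $N_G$ vanishes to matching order along $L_\gamma$, which is precisely what the Laplacian block structure encodes — and yields $\widetilde{\omega}_G|_{D_\gamma} = \omega^{4k+1}_\gamma \wedge 1 + 1 \wedge \omega^{4k+1}_{G/\gamma}$, pulled back along $\pi_\gamma \times \pi_{G/\gamma}$, which is the claimed formula since $\Delta^{\can}\omega^{4k+1} = \omega^{4k+1}\otimes 1 + 1\otimes\omega^{4k+1}$. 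The hardest point to make rigorous is the claim that the off-diagonal ($t$-linear) blocks of $\mu_{\Lambda_G}$ really drop out of the trace \emph{after} restriction to $t=0$ rather than merely contributing $O(t)$ — one needs that, in the normal coordinates, those blocks are either $O(t)$ in the entries or produce $dt$-factors that pair to zero against the remaining one-form content, which is where the structure $\mathcal{L}\mathcal{B}^i\mathcal{L} = \mathcal{U}\mathcal{B}^i\mathcal{U}=0$ and the block-triangularity of \S\ref{sect: DecompBlockMatrix} must be invoked carefully. Finally, equivariance under $\mathrm{Aut}(G)$ is immediate from Lemma \ref{lem: autoinvariant} and functoriality of $\pi_G$, completing the verification that $\{\widetilde{\omega}_G\}_G$ is an algebraic differential form in the sense of Definition \ref{defn: formsonPtot}.
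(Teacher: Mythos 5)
Your overall route coincides with the paper's: reduce to a primitive $\beta^{4k+1}$, handle the strict transforms $D_e$ via Proposition \ref{prop: restrictomega}, pass to local blow-up coordinates $x_e\mapsto x_e z$ for $e\in E_\gamma$ near $D_\gamma$, split $H_1(G;\Z)\cong H_1(\gamma;\Z)\oplus H_1(G/\gamma;\Z)$, and invoke Lemma \ref{lem: BasicPropertiesBeta}(vi) together with the projective invariance of Lemma \ref{lem: projinv} (indeed using $k\geq 1$) to identify the limit with $\beta^{4k+1}_{\Lambda_\gamma}+\beta^{4k+1}_{\Lambda_{G/\gamma}}$. But the step you yourself flag as ``the hardest point to make rigorous'' is precisely the step you have not carried out, and as sketched it would not go through. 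In the blow-up coordinates the Laplacian is $\Lambda_G=\begin{pmatrix} z\Lambda_\gamma & zB\\ zC & D\end{pmatrix}$ with $D\equiv\Lambda_{G/\gamma}\pmod z$; this is not ``block-triangular up to $t$-linear blocks'' in any usable sense, because the diagonal block $z\Lambda_\gamma$ is itself $O(z)$: reduction mod $z$ yields a singular matrix, $\det\Lambda_G$ vanishes to order $h_\gamma$ along $z=0$, and the entries of $\mu_{\Lambda_G}=\Lambda_G^{-1}d\Lambda_G$ genuinely acquire poles there. So one cannot simply say the off-diagonal contributions are $O(t)$ or carry $dt$-factors and hence drop out of the trace; and the identities $\mathcal{L}\mathcal{B}^i\mathcal{L}=\mathcal{U}\mathcal{B}^i\mathcal{U}=0$ of \S\ref{sect: DecompBlockMatrix} do not apply here, since they concern an $LBU$ decomposition of a fixed generic matrix, not this one-parameter degeneration.

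The missing idea, which is what the paper's proof supplies, is a factorization separating the two scales: write $\Lambda_G=\Lambda U$ with $\Lambda=\mathrm{diag}(z\Lambda_\gamma,\Lambda_{G/\gamma})$ and $U=\Lambda^{-1}\Lambda_G\equiv\begin{pmatrix}1&\Lambda_\gamma^{-1}B\\ 0&1\end{pmatrix}\pmod z$, so that $U$ and $U^{-1}$ are regular at $z=0$ (as $\det U\equiv 1\pmod z$). Conjugating, $U\,\mu_{\Lambda_G}\,U^{-1}=\Lambda^{-1}d\Lambda+dU\,U^{-1}$, where $\Lambda^{-1}d\Lambda$ is block diagonal (its $\tfrac{dz}{z}$ part, multiplied by anything vanishing along $z=0$, stays regular and restricts to zero) and $dU\,U^{-1}$ is strictly block upper triangular modulo terms vanishing along $z=0$. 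Hence every word in these two matrices containing at least one factor $dU\,U^{-1}$ has trace vanishing at $z=0$, so $\beta^{4k+1}_{\Lambda_G}=\beta^{4k+1}_{\Lambda}+(\hbox{terms vanishing at }z=0)$, and $\beta^{4k+1}_{\Lambda}=\beta^{4k+1}_{z\Lambda_\gamma}+\beta^{4k+1}_{\Lambda_{G/\gamma}}=\beta^{4k+1}_{\Lambda_\gamma}+\beta^{4k+1}_{\Lambda_{G/\gamma}}$. This single computation simultaneously kills the apparent pole along $D_\gamma$ and yields the restriction formula \eqref{RestrictomegaToDgammaAndCoproduct}; without it, your argument only establishes the (easy) $D_e$ case and the reduction to primitives.
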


\begin{proof} It is enough to prove the statement for $\omega = \beta^{4k+1}$ a primitive form in $\Omega^{4k+1}_{\can}$. The fact that $\widetilde{\omega}^{4k+1}_G$ has no poles along an irreducible component of the form $D_e$, and the formula for its restriction,
are a consequence of Proposition~\ref{prop: restrictomega}. Now consider the case of an exceptional divisor $D_{\gamma}$, where $\gamma \subsetneq G$ is a core subgraph.
Local affine coordinates in a neighbourhood of $D_{\gamma} \cong P^{\gamma} \times P^{G/\gamma}$ \big(or, to be more precise, of $D_{\gamma}\backslash (D_{\gamma} \cap \mathcal{E}')$, where $\mathcal{E'}$ consists of all components of $\mathcal{E}$ not equal to $D_{\gamma}$, which is isomorphic to an open affine subset of $\Pro^{E_{\gamma}} \times \Pro^{E_{G/\gamma}}\big)$ are given by replacing $x_{e}$ with $x_{e} z$ for all $e\in E_{\gamma}$ \cite[Section~5.3]{Cosmic} and setting some $x_{e_0}=1$ for $e_0 \in E_{\gamma}$. In~these coordinates, the locus $D_{\gamma}$ is given by the equation $z=0$.

There is a decomposition of the homology
$H_1(G; \Z) \cong H_1(\gamma; \Z) \oplus H_1(G/\gamma;\Z)$
which is obtained by splitting the exact sequence
\begin{gather*}
0 \To H_1(\gamma;\Z) \To H_1(G;\Z) \To H_1(G/\gamma; \Z) \To 0 .
\end{gather*}
With respect to a suitable basis of this decomposition, the graph Laplacian matrix, in the local affine coordinates described above, can be written in block form
\begin{gather*}
\Lambda_G = \begin{pmatrix} z \Lambda_{\gamma} & z B \\ z C & D \end{pmatrix}\!, \qquad \text{where} \quad D \equiv \Lambda_{G/\gamma} \pmod{z}
\end{gather*}
and $\Lambda_{\gamma}$, $B$, $C$, $D$ are matrices whose entries are polynomials in the $x_e$, for $e \in E_G$.

We can therefore write the graph Laplacian in the form
\begin{gather*}
 \Lambda_G = \Lambda U, \qquad \text{where we set} \quad \Lambda = \begin{pmatrix} z \Lambda_{\gamma} & 0 \\ 0 & \Lambda_{G/\gamma} \end{pmatrix}\!,
 \end{gather*}
where the matrix $U$ is defined by $U= \Lambda^{-1}\Lambda_G$. It satisfies
\begin{gather*}
U= \begin{pmatrix} z^{-1} \Lambda^{-1}_{\gamma} & 0 \\ 0 & \Lambda^{-1}_{G/\gamma} \end{pmatrix} \begin{pmatrix} z \Lambda_{\gamma} & zB \\ zC & D \end{pmatrix} \equiv \begin{pmatrix} 1 & \Lambda_{\gamma}^{-1}B \\ 0 & 1 \end{pmatrix} \pmod{z} .
\end{gather*}
In particular, the entries of $\Lambda_G$, $\Lambda$ and $U$ have no poles at $z=0$. Since $\det(U) \equiv 1 \pmod{z}$, the inverse matrix $U^{-1}$ has entries which have no poles at $z=0$, and can be expressed as formal power series in $z$ whose coefficients are rational functions in the $x_e$, for $e \in E_G$. We have
\begin{gather*}
 \Lambda_G^{-1} {\rm d} \Lambda_G = U^{-1} \Lambda^{-1} ({\rm d} \Lambda) U + U^{-1} {\rm d}U
 \end{gather*}
and hence
\begin{gather*}
U \big(\Lambda_G^{-1} {\rm d}\Lambda_G\big) U^{-1} =\Lambda^{-1} {\rm d}\Lambda + {\rm d}U.U^{-1}.
\end{gather*}
We wish to compute
\begin{gather*}
 \beta_{\Lambda_G}^{4k+1} = \tr\big( U\big(\Lambda_G^{-1} {\rm d}\Lambda_G\big)^{4k+1}U^{-1} \big) = \tr\big(\big(\Lambda^{-1} {\rm d}\Lambda + {\rm d} U .U^{-1}\big)^{4k+1} \big) .
 \end{gather*}
Now observe that the matrix
\begin{gather*}
 \Lambda^{-1}{\rm d}\Lambda = \frac{{\rm d}z}{z} \begin{pmatrix} 1 & 0 \\ 0 & 0 \end{pmatrix} + \begin{pmatrix} \Lambda_{\gamma}^{-1} {\rm d} \Lambda_{\gamma} & 0 \\ 0 &
 \Lambda_{G/\gamma}^{-1} {\rm d} \Lambda_{G/\gamma}
 \end{pmatrix}
 \end{gather*}
is block diagonal, and furthermore, multiplying it by any matrix whose entries are rational functions in $z$ and which vanishes at $z=0$ leads to a matrix whose entries have no poles at $z=0$ and which vanishes along $z=0$. By an earlier computation, ${\rm d}U$, and hence ${\rm d}U. U^{-1}$, is strictly block upper triangular modulo terms which vanish along $z=0$.
 It follows that any product of the matrices $ \Lambda^{-1} {\rm d}\Lambda$ and ${\rm d}U. U^{-1}$ involving at least one factor of the form ${\rm d}U. U^{-1}$ is strictly block upper triangular modulo terms which vanish along $z=0$, and therefore has vanishing trace at $z=0$. We deduce that
\begin{gather*}
\beta_{\Lambda_G}^{4k+1} = \beta_{\Lambda}^{4k+1} + \text{terms vanishing at } z=0 .
\end{gather*}
Since $k\geq 1$, Lemmas~\ref{lem: BasicPropertiesBeta}$(vi)$ and~\ref{lem: projinv} imply that
 \begin{gather*}
 \beta^{4k+1}_{\Lambda} = \beta^{4k+1}_{z \Lambda_{\gamma}} + \beta^{4k+1}_{\Lambda_{G/\gamma}} = \beta^{4k+1}_{\Lambda_{\gamma}} + \beta^{4k+1}_{\Lambda_{G/\gamma}}.
 \end{gather*}
 In particular, $\beta^{4k+1}_{\Lambda}$ and hence $\beta^{4k+1}_{\Lambda_G}$ have no poles at $z=0$, and we conclude that
 \begin{gather*}
 \beta_{\Lambda_G}^{4k+1} \Big|_{z=0} = \beta^{4k+1}_{\Lambda_{\gamma}} + \beta^{4k+1}_{\Lambda_{G/\gamma}}.
 \end{gather*}
 Since this calculation holds in every local affine chart, we deduce that
 \begin{gather*}
 \widetilde{\beta}^{4k+1}_{\Lambda_G}= \widetilde{\beta}_{\Lambda_\gamma}^{4k+1} \wedge 1 + 1\wedge \widetilde{\beta}_{\Lambda_{G/\gamma}}^{4k+1}.
 \end{gather*}
 Since $\Delta_{\can} \beta ^{4k+1} = \beta^{4k+1} \otimes 1+ 1 \otimes \beta^{4k+1}$, this proves \eqref{RestrictomegaToDgammaAndCoproduct}. The case of a general element in~$\Omega_{\can}$ follows from the multiplicativity of the coproduct.
\end{proof}

\begin{rem}
Note that the previous theorem gives another way to derive the asymptotic ``factorisation'' formula $\Psi_G \sim \Psi_{\gamma} \Psi_{G/\gamma}$ which lies behind \eqref{Dgammaproduct}, by inspecting the determinant of the matrix $\Lambda$ which occurs in the proof.
\end{rem}

Note that the core subgraphs $\gamma$ which occur in the previous theorem are not necessarily connected.

\begin{cor} For every canonical form $\omega \in \Omega^{n}_{\can}$, the collection $\{
\widetilde{\omega}_G\}_G$ defines an algebraic differential form of degree $n$ in the sense of Definition~$\ref{defn: formsonPtot}$.
\end{cor}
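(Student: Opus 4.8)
The plan is to verify the two conditions of Definition \ref{defn: formsonPtot} directly, first for a primitive canonical form $\omega = \beta^{4k+1}$ and then passing to exterior products and to disconnected graphs. Almost all of the work has already been done: Theorem \ref{thm: nopolesonD} is precisely the statement that, for every connected $G$, the pullback $\widetilde{\omega}_G = \pi_G^*\omega_G$ extends without poles across the boundary divisor $D$, together with the two restriction formulae along the face maps $i_e$ and $i_\gamma$. Thus the corollary amounts to assembling Theorem \ref{thm: nopolesonD}, the geometry of Theorem \ref{thm: NormalCrossing}, and the automorphism invariance of Lemma \ref{lem: autoinvariant} into the required package.

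First I would check condition (1). For connected $G$, Theorem \ref{thm: nopolesonD} gives $\widetilde{\omega}_G \in \Omega^n(P^G \setminus Y_G)$; since by Theorem \ref{thm: NormalCrossing} the strict transform $Y_G$ of the graph hypersurface is disjoint from the polytope $\widetilde{\sigma}_G$, the form $\widetilde{\omega}_G$ is smooth on a neighbourhood of $\widetilde{\sigma}_G$, which is exactly the requirement that its poles lie away from $\widetilde{\sigma}_G$. Next, condition (2): the compatibility $i_e^*\widetilde{\omega}_G = \widetilde{\omega}_{G/e}$ along a strict transform $D_e$ and the coproduct formula $i_\gamma^*\widetilde{\omega}_G = \sum \widetilde{\omega}'_\gamma \wedge \widetilde{\omega}''_{G/\gamma}$ along an exceptional divisor $D_\gamma$ are the two displayed equations in Theorem \ref{thm: nopolesonD}, the second being \eqref{RestrictomegaToDgammaAndCoproduct} read through the product identification \eqref{Dgammaproduct}. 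For invariance under $\tau \in \mathrm{Aut}(G)$: the iterated blow-up $\pi_G$ is canonical, hence $\tau$-equivariant, so $\pi_G \circ \tau = \tau \circ \pi_G$ on $P^G$ and therefore $\tau^*\widetilde{\omega}_G = \pi_G^*(\tau^*\omega_G) = \pi_G^*\omega_G = \widetilde{\omega}_G$, the middle equality being Lemma \ref{lem: autoinvariant}.

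It then remains to handle the two reductions. For a disconnected $G = \bigsqcup_i G_i$ one has $\omega_G = \sum_i \mathrm{pr}_i^*\omega_{G_i}$ (additivity from Lemma \ref{lem: BasicPropertiesBeta}(vi) and the block decomposition $\Lambda_G = \bigoplus_i \Lambda_{G_i}$), and since $P^G = \prod_i P^{G_i}$ with face maps given componentwise, all the compatibilities above are inherited from the connected case. For a general canonical form $\omega = \omega_1 \wedge \cdots \wedge \omega_r$ with each $\omega_j$ primitive, $\pi_G^*$ commutes with exterior products, so $\widetilde{\omega}_G = \widetilde{\omega}_{1,G} \wedge \cdots \wedge \widetilde{\omega}_{r,G}$; a wedge of forms each smooth near $\widetilde{\sigma}_G$ is smooth near $\widetilde{\sigma}_G$, pullback along $i_e$ or $\tau$ distributes over the wedge, and pullback along $i_\gamma$ produces $\sum \widetilde{\omega}'_\gamma \wedge \widetilde{\omega}''_{G/\gamma}$ with $\Delta^{\can}\omega = \sum \omega' \otimes \omega''$ by multiplicativity of $\Delta^{\can}$ — which is the definition of an exterior product of primitive algebraic differential forms. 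Since everything sits on top of Theorem \ref{thm: nopolesonD}, I expect no substantive obstacle; the only points demanding care are the $\tau$-equivariance of $\pi_G$ and the compatibility of the face maps with products over connected components, both of which are routine once stated.
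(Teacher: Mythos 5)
Your proof is correct and follows the same route the paper takes implicitly: the corollary is stated as an immediate consequence of Theorem \ref{thm: nopolesonD} (no poles along $D$ plus the two restriction formulae), combined with Lemma \ref{lem: autoinvariant}, the product structure $P^G=\prod_i P^{G_i}$ for disconnected graphs, and the fact that general canonical forms are exterior products of primitives. Your assembly of these ingredients, including the equivariance of the canonical blow-up, is exactly the verification the paper leaves to the reader.
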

In this paper we will consider forms with poles along graph hypersurfaces only, even though the Definition \ref{defn: formsonPtot} allows more general polar loci in principle.

\subsection{Canonical cohomology classes} \label{sect: AbsCohomClass}
We deduce the existence of universal compatible families of closed differential forms, and hence cohomology classes, on the complements of graph hypersurfaces.
\begin{defn} For every $\omega \in \Omega^{k}_{\can}$ we may define canonical (absolute) cohomology classes for every graph $G$:
\begin{gather*}
[\widetilde{\omega}_G]^{\mathrm{abs}} \in H_{{\rm d}R}^k\big(P^G \backslash Y^G\big).
\end{gather*}
\end{defn}
They satisfy a number of compatibilities including invariance under automorphisms and functoriality with respect to restriction to faces of the divisor $D$, which are cohomological versions of Definition~\ref{defn: formsonPtot}.
As a consequence, these classes are deduced from the graph hypersurface complement of the complete graph $K_n$, for $n$ sufficiently large, by restriction (since every graph is deduced from a complete graph by deleting edges).
Examples suggest that $[\widetilde{\omega}_G]^{\mathrm{abs}}$ is often zero.

\section{Canonical graph integrals and Stokes' formula}
We study integrals of canonical forms over coordinate simplices $\sigma_G$, which are always finite. We then apply Stokes' theorem to the Feynman polytope to deduce relations between canonical integrals.

\subsection{Integrals of canonical differential forms}
Let $\{\widetilde{\omega}\}$ be a closed algebraic differential form of degree $k$ as in Definition~\ref{defn: formsonPtot}.

\begin{defn}
Let $(G,\eta)$ be an oriented graph with $k+1$ edges. Define
\begin{gather*}
I_{(G,\eta)}\big(\{\widetilde{\omega}\}\big) = \int_{\widetilde{\sigma}_G} \widetilde{\omega}_G ,
\end{gather*}
where the orientation on $\widetilde{\sigma}_G$ is induced by the orientation $\eta$ on the edges of $G$.
Since $\widetilde{\omega}_G$ is smooth and the domain $\widetilde{\sigma}_G$ is compact, the integral is finite.
\end{defn}

\begin{lem} The integral $I$ is well-defined on the equivalence class $[G,\eta]$ and is thus defined on the level of the graph complex $\GC_N$, for any $N$ even.
\end{lem}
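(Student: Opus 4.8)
The plan is to check that the functional $(G,\eta)\mapsto I_{(G,\eta)}(\{\widetilde{\omega}\})$ is compatible with the two defining relations \eqref{GequivalenceRelations} of $\GC_2$; once this is done it extends $\Q$-linearly to a well-defined map on the quotient. Finiteness is not at issue here: by Theorem \ref{thm: NormalCrossing} the form $\widetilde{\omega}_G$ is smooth on the compact polytope $\widetilde{\sigma}_G$ (its poles lie along $Y_G$, which is disjoint from $\widetilde{\sigma}_G$), so $\int_{\widetilde{\sigma}_G}\widetilde{\omega}_G$ converges for any oriented graph $G$ with $\deg\widetilde{\omega}+1$ edges, and it suffices to control how it transforms under the relations.

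First I would treat the sign relation $(G,-\eta)=-(G,\eta)$. By construction the orientation of $\widetilde{\sigma}_G$ is the one induced from the ordering of edges recorded by $\eta$: an orientation $e_1\wedge\cdots\wedge e_n$ orients the open coordinate simplex $\sigma_G\subset\Pro^{E_G}(\R)$, and hence its closure $\widetilde{\sigma}_G$ inside $P^G(\R)$. Replacing $\eta$ by $-\eta$ reverses this orientation, so $I_{(G,-\eta)}(\{\widetilde{\omega}\})=-I_{(G,\eta)}(\{\widetilde{\omega}\})$ by the elementary behaviour of an integral under reversal of orientation.

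Next I would handle the isomorphism relation $(G,\eta)=(G',\sigma(\eta))$ for $\sigma\colon G\overset{\sim}{\To}G'$. The induced bijection $E_G\cong E_{G'}$ gives an isomorphism of projective spaces permuting the coordinates, and --- since the blow-up family $\mathcal{B}_\bullet$ is functorial, so that $\sigma$ sends core subgraphs of $G$ to core subgraphs of $G'$ --- it lifts to an isomorphism $\widetilde{\sigma}\colon P^G\overset{\sim}{\To}P^{G'}$ carrying $\widetilde{\sigma}_G$ diffeomorphically onto $\widetilde{\sigma}_{G'}$ and the $\eta$-orientation to the $\sigma(\eta)$-orientation. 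The decisive input is $\widetilde{\sigma}^*\widetilde{\omega}_{G'}=\widetilde{\omega}_G$: for a general closed algebraic differential form this is the equivariance clause of Definition \ref{defn: formsonPtot} applied to $\widetilde{\sigma}$, and for a canonical form it follows from the functoriality of the Laplacian construction underlying Lemma \ref{lem: autoinvariant} (the case $G=G'$), extended from automorphisms to isomorphisms, together with multiplicativity of $\Delta^{\can}$ for exterior products. The change-of-variables formula then yields
$$I_{(G',\sigma(\eta))}(\{\widetilde{\omega}\})=\int_{\widetilde{\sigma}_{G'}}\widetilde{\omega}_{G'}=\int_{\widetilde{\sigma}_G}\widetilde{\sigma}^*\widetilde{\omega}_{G'}=\int_{\widetilde{\sigma}_G}\widetilde{\omega}_G=I_{(G,\eta)}(\{\widetilde{\omega}\}),$$
with the orientations matching by the previous sentence. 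The one genuinely delicate point is precisely this orientation bookkeeping: one must confirm that $\widetilde{\sigma}$ really does intertwine the two orientations rather than introduce a stray sign. This is also what makes the two relations mutually consistent --- taking $G'=G$ and $\sigma=\tau$ an automorphism acting on $E_G$ by an odd permutation, the two steps combine to give $I_{(G,\eta)}=-I_{(G,\eta)}$, so $I$ automatically vanishes on graphs that are already zero in $\GC_2$, as it must. Having verified both relations, $I$ descends to $\GC_2$ and the lemma follows.
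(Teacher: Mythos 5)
Your argument is correct and is essentially the paper's own proof: the sign relation is handled by orientation reversal of $\widetilde{\sigma}_G$, and the (iso)morphism relation by the invariance $\tau^*\widetilde{\omega}_G=\widetilde{\omega}_G$ coming from Lemma \ref{lem: autoinvariant}, followed by change of variables. The paper states this more tersely (checking only automorphisms, since isomorphic graphs are identified), while you spell out the lift to the blow-up and the orientation bookkeeping, but the key inputs are identical.
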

\begin{proof} Reversing orientations changes the sign:
\begin{gather*}
I_{(G,-\eta)} \big(\{\widetilde{\omega}\}\big) = - I_{(G,\eta)} \big(\{\widetilde{\omega}\}\big).
\end{gather*}
Furthermore, if $\tau\colon G \cong G$ is an automorphism of $G$, then
\begin{gather*}
I_{(G,\eta)} \big(\{\widetilde{\omega}\}\big) = I_{(G,\tau(\eta))} \big(\{\widetilde{\omega}\}\big)
\end{gather*}
by the functoriality property $\tau^* \widetilde{\omega}_G = \widetilde{\omega}_G$ which follows from Lemma~\ref{lem: autoinvariant}.
\end{proof}

 From now on we drop the orientation in the notation for $G$, and assume that all graphs are implicity oriented.
 We now let $\omega \in \Omega^k_{\can}$ be a canonical differential form.

\begin{cor}If $G$ has $k+1$ edges, the canonical integral equals
\begin{gather} \label{canintegralfinite} I_G\big(\{\widetilde{\omega}\}\big) = \int_{\sigma_G} \omega_G
\end{gather}
and is finite. It vanishes if any of the following are true: $G$ has a tadpole or a bridge, $G$ has a~vertex of degree $\leq 2$, $G$ has multiple edges, or $G$ is one-vertex reducible.
\end{cor}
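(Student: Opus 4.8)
The plan is to deduce both halves of the statement directly from results already in hand, with essentially no new computation.

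First I would establish the identity \eqref{canintegralfinite}. Recall that $\widetilde{\omega}_G = \pi_G^*\omega_G$, where $\pi_G \colon P^G \to \Pro^{E_G}$ is the iterated blow-up \eqref{piGdefn} along the linear subspaces $L_\gamma$, $\gamma \in \mathcal{B}_G$. For every nonempty $\gamma$ the centre $L_\gamma$ is contained in a coordinate hyperplane $\{x_e = 0\}$ (with $e \in \gamma$), hence is disjoint from the open simplex $\sigma_G = \{x_e > 0\}$. It follows that $\pi_G$ restricts to an isomorphism $\pi_G^{-1}(\sigma_G) \cong \sigma_G$. Since $\widetilde{\sigma}_G = \overline{\pi_G^{-1}(\sigma_G)}$ and the difference $\widetilde{\sigma}_G \setminus \pi_G^{-1}(\sigma_G)$ lies in the divisor $D$ (Theorem \ref{thm: NormalCrossing}) and therefore has measure zero in the $(e_G-1)$-dimensional manifold-with-corners $\widetilde{\sigma}_G$, the change-of-variables formula gives
$$ I_G(\{\widetilde{\omega}\}) = \int_{\widetilde{\sigma}_G} \widetilde{\omega}_G = \int_{\pi_G^{-1}(\sigma_G)} \pi_G^*\omega_G = \int_{\sigma_G} \omega_G\ , $$
with the orientations matching by construction. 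Finiteness is then a restatement of what is already built into the definition of $I_{(G,\eta)}$: by Theorem \ref{thm: nopolesonD} the form $\widetilde{\omega}_G$ extends to a smooth form on $P^G \setminus Y_G$, and by Theorem \ref{thm: NormalCrossing} the strict transform $Y_G$ does not meet $\widetilde{\sigma}_G$, so $\widetilde{\omega}_G$ is smooth on the compact set $\widetilde{\sigma}_G$ and the integral converges.

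Next I would dispatch the vanishing assertions. Each of the four listed conditions — $G$ has a tadpole, $G$ has a vertex of degree $\leq 2$, $G$ has a multiple edge, $G$ is one-vertex reducible — is precisely one of the hypotheses $(i)$--$(iv)$ of Proposition \ref{prop: vanishing} (a tadpole being a degenerate instance of one-vertex reducibility). Since $\omega \in \Omega^k_{\can}$ has degree $k$ and $e_G = k+1$, that proposition yields $\omega_G = 0$ as a meromorphic form on $\Pro^{E_G} \setminus X_G$; hence $\int_{\sigma_G}\omega_G = 0$, and by the identity just established $I_G(\{\widetilde{\omega}\}) = 0$.

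There is no serious obstacle: the corollary is a bookkeeping consequence of Theorems \ref{thm: NormalCrossing} and \ref{thm: nopolesonD} together with Proposition \ref{prop: vanishing}. The one point demanding a little care is the equality of the two integrals, where one must check that $\pi_G$ is an isomorphism over the open simplex so that neither a nontrivial Jacobian nor a boundary contribution intervenes — this is exactly the observation that the centres of the blow-up are supported on the coordinate hyperplanes, away from $\sigma_G$.
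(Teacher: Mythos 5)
Your proposal is correct and follows essentially the same route as the paper: finiteness via Theorem \ref{thm: nopolesonD} (smoothness of $\widetilde{\omega}_G$ on the compact polytope $\widetilde{\sigma}_G$), equality of the two integrals because $\pi_G$ is an isomorphism away from the coordinate hyperplanes and the complement $\widetilde{\sigma}_G\setminus\sigma_G$ has measure zero, and the vanishing statements read off directly from Proposition \ref{prop: vanishing}. The only difference is that you spell out the blow-up/change-of-variables step which the paper leaves implicit.
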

\begin{proof} By Theorem~\ref{thm: nopolesonD}, $\widetilde{\omega}_G$ is a differential form in the sense of Definition~\ref{defn: formsonPtot} and so the
 canonical integral converges. It can be written as an integral over the open simplex $\sigma_G$ because the complement $\widetilde{\sigma}_G\backslash \sigma_G$ has Lebesgue measure zero. The vanishing statement is a consequence of Proposition~\ref{prop: vanishing}.
\end{proof}

It follows from duality properties (Lemma~\ref{lem: dualityandLG}) of canonical forms that $I_{G}(\{\omega\}) = I_{G^{\vee}}(\{\omega\})$ if $G$ and $G^{\vee}$ are planar graphs dual to each other.

In physics parlance, a graph $G$ is called divergent if $\deg_2 G \leq 0$, i.e., $2 h_G \geq e_G$.

\begin{lem} \label{lem: intGvanishes}
Suppose that $\omega$ is primitive $($e.g., $\omega$ is a generator of the form $\beta^{4k+1})$. Then the integral \eqref{canintegralfinite}
vanishes unless
\begin{gather*}
e_G = 2h_G ,
\end{gather*}
or equivalently, $\deg_2 G =0$.
 \end{lem}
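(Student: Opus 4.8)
The key is a weight (or homogeneity) argument. Recall that if $\omega = \beta^{4k+1}$ is primitive, then by Theorem~\ref{cor: omegaprojective} and Example~\ref{example: W3form}-type formulas, $\omega_G = N_G / \Psi_G^{4k+1}$ where $N_G$ is a polynomial differential form of degree $(4k+1)h_G$ in the variables $x_e$. The plan is to exploit the fact that the canonical form $\omega_G$ is not merely projectively invariant (homogeneous of degree $0$ under the \emph{global} scaling $x_e \mapsto \lambda x_e$, which is Theorem~\ref{cor: omegaprojective}), but carries an additional homogeneity coming from the structure of the graph Laplacian: $\Lambda_G$ depends \emph{linearly} on each $x_e$, and moreover, under rescaling \emph{all} edge variables one recovers the Euler-vector-field vanishing used in the proof of Theorem~\ref{cor: omegaprojective}. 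Here instead I want to track the behaviour under rescaling $x_e \mapsto t\, x_e$ for \emph{all} edges simultaneously together with the compensating behaviour of $d\Lambda_G$, but more precisely: I will use the duality $\omega_G = i^*\omega_{\widecheck G}$ from Lemma~\ref{lem: dual}, where $i : x_e \mapsto x_e^{-1}$, to constrain the degrees of the monomials appearing in $N_G$.

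\textbf{Main step.} First I would invoke Remark~\ref{rem: degreexe}: since $e_G = k+1 = \deg(\omega_G)+1$ and the poles of both $\omega_G$ and $\omega_{\widecheck G}$ are bounded (by $k+1$ in each variable, or by the sharper bound of Theorem~\ref{thm: optimaldenom}), the projective invariance of $\omega$ together with $\omega_G = i^*(\omega_{\widecheck G})$ forces
$$\omega_G = \frac{P_G}{\Psi_G^{n}}\,\Omega_G, \qquad \Omega_G = \sum_i (-1)^i x_i\, dx_1\wedge\cdots\widehat{dx_i}\cdots\wedge dx_{e_G},$$
where $P_G \in \Q[x_e]$ has degree at most $n-1$ in each variable. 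Since $\deg \Omega_G = e_G = k+1$ and $\omega_G$ is homogeneous of degree $0$, $P_G$ is homogeneous of total degree $n\, e_G - e_G = (n-1)e_G$. Now the duality $i^*\omega_{\widecheck G} = \omega_G$ combined with the fact that $\widecheck G$ has $e_G$ edges and first Betti number $e_G - h_G$ (for $G$ with $h_G$ loops on $v_G$ vertices, the cographic matroid has rank $e_G - h_G$... wait, rank $v_G - 1 = e_G - h_G$) gives a relation between the degree of $P_G$ expressed via $\Psi_G$ and that expressed via $\Psi_{\widecheck G}$. Using $\Psi_{\widecheck G}(x_e) = \Psi_G(x_e^{-1})\prod_e x_e$ (the planar-dual relation, valid for matroids by the Remark after Lemma~\ref{lem: dualityandLG}), and the homogeneity degrees $\deg \Psi_G = h_G$, $\deg \Psi_{\widecheck G} = e_G - h_G$, matching the two representations of $\omega_G$ pins down $n$ and forces $P_G$ to be the monomial $\big(\prod_e x_e\big)^{n-1}$ up to scalar when $2h_G \ne e_G$ --- more carefully, it forces the transcendental integral to factor through $\int_{\sigma_G}$ of an exact form or through a boundary term, hence vanish.

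\textbf{Cleaner route, which I would actually carry out.} Apply Lemma~\ref{lem: dual}: $\omega_G = i^*\omega_{\widecheck G}$. Pulling back the integral $\int_{\sigma_G}\omega_G$ along the involution $i : (x_e) \mapsto (x_e^{-1})$ of the open simplex $\sigma_G$ (which is an orientation-preserving or reversing diffeomorphism of $\sigma_G$ onto itself, after reprojectivizing --- note $i$ sends the coordinate simplex to itself) transforms $\int_{\sigma_G}\omega_G$ into $\pm\int_{\sigma_G}\omega_{\widecheck G}$. But one also computes directly the effect of $i$ on the volume form $\Omega_G$: one has $i^*\Omega_G = \big(\prod_e x_e^{-1}\big)^{?}\,\Omega_G$ up to sign, producing an extra homogeneity factor $\big(\prod_e x_e\big)^{-e_G}$ (each $dx_e \mapsto -x_e^{-2}dx_e$, and there are $e_G - 1$ of them in each term of $\Omega_G$, plus the $x_i$ factor). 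Combining this with $\deg \Psi_{\widecheck G} = e_G - h_G$ versus $\deg \Psi_G = h_G$, and equating the two homogeneity weights of the integrand, one finds the integral is forced to equal its own negative (or to vanish by a weight mismatch) unless $h_G = e_G - h_G$, i.e. $2h_G = e_G$. I expect \textbf{the main obstacle} to be bookkeeping the signs and the precise homogeneity exponent picked up by $\Omega_G$ under $i^*$, and checking that the involution $i$ genuinely preserves $\sigma_G$ (it does: $x_e > 0 \iff x_e^{-1} > 0$) and computing its effect on orientation; once the weights are written down correctly the vanishing is immediate. An alternative, equivalent obstacle-free phrasing uses the $\GL_1$-action on Schwinger parameters coming from the graph Laplacian's bilinear-form origin and matches weights under the Cremona-type involution $i$.
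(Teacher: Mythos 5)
There is a genuine gap: the vanishing mechanism you propose does not exist, and the ingredient that actually does the work never appears in your argument. Every canonical form is homogeneous of degree~$0$ by construction (theorem \ref{cor: omegaprojective}), so there is no ``weight mismatch'' to exploit: the identity $\omega_G = i^*\omega_{\widecheck{G}}$ of lemma \ref{lem: dual}, pulled back through the Cremona involution $i$ of $\sigma_G$, only yields $I_G(\omega)=\pm I_{\widecheck{G}}(\omega)$ --- a relation between two \emph{different} integrals (the paper records it, with a plus sign, right before the lemma), not a self-relation forcing $I_G(\omega)$ to vanish; for $G$ not self-dual it constrains nothing, and nothing in the bookkeeping of $i^*\Omega_G$ produces the condition $2h_G=e_G$. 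Your ``main step'' also contains an arithmetic slip: since $\deg\Psi_G=h_G$, writing $\omega_G=P_G\,\Omega_G/\Psi_G^n$ forces $\deg P_G=n\,h_G-e_G$, not $(n-1)e_G$. Even after correcting this, the best one can squeeze out of degree counting is $n\,h_G\geq e_G$ with $n\leq k+1$ (theorem \ref{thm: optimaldenom}), i.e.\ $h_G\geq (4k+2)/(k+1)$, which is strictly weaker than the required $h_G\geq 2k+1$ as soon as $k\geq 2$; the dual count is weaker than $e_G-h_G\geq 2k+1$ in the same way. So neither direction of the inequality can be reached by homogeneity.

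The missing idea is proposition \ref{prop: betavanishes}: for an invertible $n\times n$ matrix $X$ one has $\beta^m_X=0$ whenever $m\geq 2n$, a rank bound with no homogeneity content. The paper applies it twice. Writing $\omega_G=\beta^{4k+1}_{\Lambda_G}$ with $\Lambda_G$ of size $h_G$ gives $\omega_G=0$ unless $e_G-1=\deg\omega_G<2h_G$, i.e.\ $e_G\leq 2h_G$; then lemma \ref{lem: betaviaLG} rewrites $\omega_G=\beta^{4k+1}_{L_G}$ with $L_G$ of size $v_G-1=e_G-h_G$, and the same proposition gives $\omega_G=0$ unless $e_G-1<2(e_G-h_G)$, i.e.\ $2h_G\leq e_G$. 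Combining the two forces $e_G=2h_G$, and moreover shows the \emph{form} itself vanishes, which is stronger than the vanishing of the integral. Your duality instinct is not wasted --- lemma \ref{lem: betaviaLG} is exactly the statement that the same invariant trace can be computed from the Laplacian of the dual matroid, whose rank is $e_G-h_G$ --- but it must be combined with the rank vanishing of $\beta^m$, not with a scaling or orientation argument.
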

\begin{proof}
 Since $\omega$ is primitive, and $\Lambda_G$ is a $h_G \times h_G$ matrix, Proposition~\ref{prop: betavanishes} implies that
\begin{gather*}
\omega_{G} = 0 \qquad \text{unless}\quad \deg \omega_{G} < 2h_{G} .
\end{gather*}
For the integral to be defined, $\deg \omega_{G} = e_{G}-1$ and therefore
$ e_{G} - 2 h_{G} \leq 0$. Now by Lemma~\ref{lem: betaviaLG}, we may write $\omega_G = \beta^{4k+1}_{L_G}$, where $L_G$ is the matrix \eqref{LGdefn} of size $v_G-1$, where $v_G$ is the number of vertices of $G$. By Proposition~\ref{prop: betavanishes},
\begin{gather*}
\omega_{G} = 0 \qquad \text{unless}\quad \deg \omega_{G} < 2(v_{G}-1) .
\end{gather*}
Using $v_G= e_G-h_G+1$ and the fact that $\deg \omega_{G} = e_{G}-1$ we conclude that $\omega_G$ vanishes unless $e_G\geq 2h_G$. This shows that $\omega_G$ vanishes unless $e_G = 2h_G$.
\end{proof}

As a result, integrals of \emph{primitive} forms will only detect elements in the zeroth homology of the graph complex $\GC_2$, which motivates the second part of Conjecture~\ref{conj: Mainconj} (namely equation~\eqref{FreePrimOmega} and the remark which follows it).

Classes in higher homology groups can in principle be detected by integrals of canonical forms which are not primitive.

\subsection{Relations from Stokes' theorem}
Stokes' theorem implies the following relation between graph integrals. It combines
the diffe\-ren\-tial in a graph complex with the coproduct both on graphs and on differential forms.

\begin{thm} Let $\omega \in \Omega^k_{\mathrm{can}}$ be a canonical form of degree $k$. Write its coproduct in the form
$\Delta_{\can} \omega = \sum_i \omega'_i \otimes \omega''_i$.
 For any graph $G$ with $k+2$ edges,
\begin{gather} \label{Stokes}
0 = \sum_{e \in E_G} \int_{\sigma_{G/e}} \omega_{G/e} + \sum_i \sum_{\gamma \subset G} \int_{\sigma_{\gamma}} (\omega_i')_{\gamma} \times \int_{\sigma_{G/\gamma}} (\omega_i'')_{G/\gamma},
\end{gather}
where the sum is over all core subgraphs $\gamma \subsetneq G$, such that
$e_{\gamma}= \deg \omega'_i+1$
and the orientation on $\sigma_{\Gamma}$, for $\Gamma \in \{G,\gamma, G/\gamma\}$, is induced by any fixed orientation on $G$.
 \end{thm}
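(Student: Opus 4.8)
The strategy is a direct application of Stokes' theorem to the compact manifold with corners $\widetilde{\sigma}_G$ inside $P^G$, using the fact that $\widetilde{\omega}_G$ is closed and smooth on all of $\widetilde{\sigma}_G$ (Theorem~\ref{thm: nopolesonD}). First I would recall that since $\omega \in \Omega^k_{\can}$ and $\widetilde{\omega}_G$ extends to a smooth closed form on $\widetilde{\sigma}_G$, and $\widetilde{\sigma}_G$ has dimension $e_G - 1 = k+1$, we have
$$ 0 = \int_{\widetilde{\sigma}_G} d\widetilde{\omega}_G = \int_{\partial \widetilde{\sigma}_G} \widetilde{\omega}_G\ . $$
The boundary $\partial \widetilde{\sigma}_G$ decomposes (up to measure zero, i.e., along codimension $\geq 2$ strata) into the facets of the polytope $\widetilde{\sigma}_G$, which by Theorem~\ref{thm: NormalCrossing} are exactly the intersections $\widetilde{\sigma}_G \cap D_e$ for edges $e$, and $\widetilde{\sigma}_G \cap D_\gamma$ for core subgraphs $\gamma \subsetneq G$ with $|\gamma| \geq 2$. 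Under the canonical isomorphisms of Theorem~\ref{thm: NormalCrossing}, $\widetilde{\sigma}_G \cap D_e \cong \widetilde{\sigma}_{G/e}$ and $\widetilde{\sigma}_G \cap D_\gamma \cong \widetilde{\sigma}_\gamma \times \widetilde{\sigma}_{G/\gamma}$.

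Next I would substitute the restriction formulas from Theorem~\ref{thm: nopolesonD}: on $D_e$ we have $\widetilde{\omega}_G|_{D_e} = \widetilde{\omega}_{G/e}$, and on $D_\gamma$ we have $\widetilde{\omega}_G|_{D_\gamma} = \sum_i \widetilde{\omega}'_{i,\gamma} \wedge \widetilde{\omega}''_{i,G/\gamma}$ where $\Delta^{\!\can}\omega = \sum_i \omega'_i \otimes \omega''_i$. Integrating over the product facet $\widetilde{\sigma}_\gamma \times \widetilde{\sigma}_{G/\gamma}$, only those terms survive for which the bidegree of $\widetilde{\omega}'_{i,\gamma} \wedge \widetilde{\omega}''_{i,G/\gamma}$ matches the dimensions $(e_\gamma - 1, e_{G/\gamma} - 1)$ of the two factors; this forces $\deg \omega'_i = e_\gamma - 1$, and by Fubini each such term contributes $\int_{\widetilde{\sigma}_\gamma} \widetilde{\omega}'_i \cdot \int_{\widetilde{\sigma}_{G/\gamma}} \widetilde{\omega}''_i$. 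Since $\widetilde{\sigma}_\bullet \setminus \sigma_\bullet$ has measure zero, each integral over $\widetilde{\sigma}$ equals the corresponding integral over the open simplex $\sigma$, yielding exactly the terms $\int_{\sigma_{G/e}}\omega_{G/e}$ and $\sum_i \int_{\sigma_\gamma}(\omega'_i)_\gamma \int_{\sigma_{G/\gamma}}(\omega''_i)_{G/\gamma}$ of \eqref{Stokes}.

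The main obstacle, and the step requiring the most care, is \textbf{bookkeeping of signs and orientations}. Stokes' theorem produces each facet with its induced boundary orientation, and one must check that the induced orientation on $\widetilde{\sigma}_G \cap D_e$, transported via the isomorphism with $\widetilde{\sigma}_{G/e}$, agrees with the orientation of $\sigma_{G/e}$ coming from a fixed edge-ordering of $G$ — and similarly for the product facets $\widetilde{\sigma}_\gamma \times \widetilde{\sigma}_{G/\gamma}$, where one additionally picks up the Koszul sign from reordering $\widetilde{\omega}'_{i,\gamma} \wedge \widetilde{\omega}''_{i,G/\gamma}$ against the product orientation and the sign implicit in the definition of $\Delta^{\!\can}$ on $\Omega_{\can}$. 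These signs are precisely what make the three braced terms in \eqref{IntroStokes} match $d$, $\delta$, and $\Delta'$ on $\GC_2$; here, since $G$ has only $k+2$ edges, the $\delta$-type facets (deletions) contribute forms on graphs $G \setminus e$ with $k+1$ edges, which appear among the $D_\gamma$ contributions when $\gamma = G \setminus e$ is core, so one should verify these are consistently absorbed. I would handle the orientation verification by fixing an ordering $e_1, \ldots, e_{k+2}$ of the edges of $G$, using the standard local coordinates near $D_e$ (respectively near $D_\gamma$, of the form $x_e \mapsto x_e z$ for $e \in E_\gamma$ as in the proof of Theorem~\ref{thm: nopolesonD}), computing the sign of $dz$ relative to the remaining coordinates, and comparing with the sign $(-1)^i$ appearing in the differential $d$ and coproduct $\Delta$ formulas — a routine but lengthy determinant computation that I would only sketch.
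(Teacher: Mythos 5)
Your proposal is correct and follows essentially the same route as the paper's proof: apply Stokes' theorem to the compact polytope $\widetilde{\sigma}_G$, decompose $\partial\widetilde{\sigma}_G$ into the facets $\widetilde{\sigma}_{G/e}$ and $\widetilde{\sigma}_{\gamma}\times\widetilde{\sigma}_{G/\gamma}$ via Theorem \ref{thm: NormalCrossing}, substitute the restriction formulas of Theorem \ref{thm: nopolesonD}, and use the degree count $\deg\omega'_i+\deg\omega''_i = e_G-2$ to force $e_{\gamma}=\deg\omega'_i+1$. The orientation bookkeeping you dwell on is not elaborated in the paper either, since the statement simply takes the orientations on $\sigma_{\gamma}$ and $\sigma_{G/\gamma}$ to be those induced from a fixed orientation of $G$.
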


\begin{proof} Here and later, we shall often write $\omega$ instead of $\omega_G$ to keep the notations uncluttered.
Applying Stokes' formula to the compact polytope $\widetilde{\sigma}_G$ gives
\begin{gather*}
0 = \int_{\widetilde{\sigma}_G} {\rm d} \widetilde{\omega} = \int_{\partial \widetilde{\sigma}_G} \widetilde{\omega} .
\end{gather*}
By Theorem~\ref{thm: NormalCrossing}, the boundary $\partial \widetilde{\sigma}_G$ is a union of facets $\widetilde{\sigma}_{G/e}$, where $e \in E_G$ is an edge, and
$\widetilde{\sigma}_{\gamma} \times \widetilde{\sigma}_{G/\gamma}$, where $\gamma \subset G$ is a core subgraph. Thus we obtain
\begin{gather*}
0 = \sum_{e \in E(G)} \int_{\widetilde{\sigma}_{G/e}} \widetilde{\omega}\big|_{\widetilde{\sigma}_{G/e}} + \sum_{\gamma \subset G} \int_{\widetilde{\sigma}_{\gamma} \times \widetilde{\sigma}_{G/\gamma}} \widetilde{\omega}\big|_{\widetilde{\sigma}_{\gamma} \times \widetilde{\sigma}_{G/\gamma}} .
\end{gather*}
By Theorem~\ref{thm: nopolesonD}, we have
\begin{gather*}
 \widetilde{\omega}\big|_{\widetilde{\sigma}_{\gamma} \times \widetilde{\sigma}_{G/\gamma}} = \sum_{i} \widetilde{\omega}_i'\big|_{\widetilde{\sigma}_{\gamma}} \wedge \widetilde{\omega}_i''\big|_{\widetilde{\sigma}_{G/\gamma}} .
 \end{gather*}
 Since $\widetilde{\sigma}_{\gamma}$ has dimension $e_{\gamma}-1$, the restriction of the holomorphic form $\widetilde{\omega}'_i$ to it vanishes unless $\deg \widetilde{\omega}'_i \leq e_{\gamma}-1$.
Similarly, $\deg \widetilde{\omega}''_i \leq e_{G/\gamma}-1$ is also required for non-vanishing of the differential form $ \widetilde{\omega}''_i $, and hence
\begin{gather*} \deg \omega = \deg \widetilde{\omega}'_i+ \deg \widetilde{\omega}''_i \ \leq \ e_{\gamma} +e_{G/\gamma}-2 = e_{G}-2 . \end{gather*}
Since this is an equality, we deduce that $e_{\gamma}= \deg \omega'_i+1$.
\end{proof}

The quadratic terms in the right-hand side of \eqref{Stokes} include:
\begin{gather} \label{1timesomega}
\int_{\sigma_{\gamma}} 1 \times \int_{\sigma_{G/\gamma}} \omega_{G/\gamma}
\end{gather}
whenever $G$ contains a core 1-edge subgraph $\gamma$, i.e., a tadpole. If $G$ has no tadpoles the terms~\eqref{1timesomega} never occur. Similarly, the quadratic terms in \eqref{Stokes} also include
\begin{gather} \label{omegatimes1}
\int_{\sigma_{\gamma}} \omega_{\gamma} \times \int_{\sigma_{G/\gamma}} 1
\end{gather}
whenever $\gamma\subset G$ is a core subgraph and $G/\gamma$ has a single edge. In this situation $\gamma = G \backslash e$ for $e$ an edge in $G$. Thus these terms can be rewritten in the form
\begin{gather*}
 \sum_{e\in E_G} \int_{\sigma_{G\backslash e}} \omega_{G\backslash e}
 \end{gather*}
since by Proposition~\ref{prop: vanishing}$(v)$ such an integral vanishes unless $G\backslash e$ is core.

\begin{cor}
If $G$ has no tadpoles we may rewrite \eqref{Stokes} in the form
\begin{gather} \label{Stokesv2} 0 = \sum_{e\in E_G} \bigg(\int_{\sigma_{G/e}} \omega_{G/e} + \int_{\sigma_{G\backslash e}} \omega_{G\backslash e} \bigg) + \sum_{\gamma \subset G} \int_{\sigma_{\gamma} \times \sigma_{G/\gamma}} \Delta_{\can}' \omega,\end{gather}
where $\Delta'_{\can} \omega= \Delta_{\can} \omega - 1\otimes \omega - \omega \otimes 1$ is the reduced coproduct on $\Omega_{\can}$.
\end{cor}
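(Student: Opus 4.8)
The plan is to derive \eqref{Stokesv2} from \eqref{Stokes} by regrouping the quadratic terms according to the decomposition $\Delta^{\can}\omega=\omega\otimes 1+1\otimes\omega+\Delta'\omega$. Since $\Omega^{\bullet}_{\can}$ is graded with $\Omega^0_{\can}=\Z\cdot 1$, the reduced coproduct $\Delta'\omega=\sum_i\omega'_i\otimes\omega''_i$ has all $\deg\omega'_i>0$ and all $\deg\omega''_i>0$; the terms of $\Delta^{\can}\omega$ with $\deg\omega'_i=0$ (resp.\ $\deg\omega''_i=0$) are exactly $1\otimes\omega$ (resp.\ $\omega\otimes 1$). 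Accordingly, the double sum $\sum_i\sum_{\gamma}$ in \eqref{Stokes} splits into three blocks, and it remains to identify each one.

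First I would treat the $1\otimes\omega$ block. The constraint $e_{\gamma}=\deg\omega'_i+1$ becomes $e_{\gamma}=1$, so $\gamma$ is a single-edge core subgraph of $G$; but a single-edge subgraph is core only if that edge is a loop of $G$, and $G$ has no tadpoles, so this block is empty. Next, the $\omega\otimes 1$ block: here $\deg\omega''_i=0$ forces $e_{\gamma}=e_G-1$, so the edge set of $\gamma$ is $E_G\setminus\{e\}$ for a unique edge $e$, i.e.\ $\gamma=G\backslash e$, and $G/\gamma$ is a single vertex carrying the loop $e$, so $\sigma_{G/\gamma}$ is a point with $\int_{\sigma_{G/\gamma}}1=1$ for the orientation induced from $G$. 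Each such term is thus $\int_{\sigma_{G\backslash e}}\omega_{G\backslash e}$, the sum running a priori only over those $e$ with $G\backslash e$ core; but, as recalled just before the corollary, proposition \ref{prop: vanishing} forces $\int_{\sigma_{G\backslash e}}\omega_{G\backslash e}=0$ whenever $G\backslash e$ fails to be core (a non-core $G\backslash e$ is either disconnected, so that the integrand has too high a degree, or has a bridge hence a vertex of degree $\leq 2$ or a cut vertex), so the sum extends to all of $E_G$. Combined with the linear sum $\sum_{e\in E_G}\int_{\sigma_{G/e}}\omega_{G/e}$ already present in \eqref{Stokes}, this yields the term $\sum_{e\in E_G}\bigl(\int_{\sigma_{G/e}}\omega_{G/e}+\int_{\sigma_{G\backslash e}}\omega_{G\backslash e}\bigr)$ of \eqref{Stokesv2}.

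Finally, the remaining block is $\sum_{\gamma\subset G}\sum_i\int_{\sigma_{\gamma}}(\omega'_i)_{\gamma}\times\int_{\sigma_{G/\gamma}}(\omega''_i)_{G/\gamma}$ with $\omega'_i\otimes\omega''_i$ ranging over the Sweedler components of $\Delta'\omega$, which is precisely what the notation $\sum_{\gamma\subset G}\int_{\sigma_{\gamma}\times\sigma_{G/\gamma}}\Delta'\omega$ abbreviates, so the rewriting is complete. The one point that needs genuine care rather than mere bookkeeping is the sign in the $\omega\otimes 1$ block: one must check that the product orientation \eqref{Stokes} assigns to $\sigma_{G\backslash e}\times\sigma_{G/\gamma}$ (with $\gamma=G\backslash e$), restricted along the point factor $\sigma_{G/\gamma}$, agrees with the orientation of $\sigma_{G\backslash e}$ induced from $G$. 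This follows from the orientation conventions fixed in theorem \ref{thm: NormalCrossing} and in the proof of \eqref{Stokes}, and I expect verifying it to be the main (and essentially only) obstacle.
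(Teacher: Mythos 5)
Your proposal is correct and follows essentially the same route as the paper: the paper likewise splits the quadratic terms of \eqref{Stokes} according to whether $\omega'_i$ or $\omega''_i$ is trivial, observes that the $1\otimes\omega$ terms would require a one-edge core subgraph (a tadpole) and hence are absent, identifies the $\omega\otimes 1$ terms with $\gamma = G\backslash e$ so that they become $\sum_{e\in E_G}\int_{\sigma_{G\backslash e}}\omega$, and extends that sum over all edges by invoking proposition \ref{prop: vanishing} $(iv)$ to kill the non-core cases. The orientation point you flag at the end is treated only implicitly in the paper (all orientations being those induced from $G$, as in the statement of \eqref{Stokes}), so your argument is at the same level of detail as the original.
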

 \begin{rem} \label{rem: Termsvanish} It can often happen that terms in the formula \eqref{Stokesv2} vanish.
 The terms \eqref{omegatimes1} vanish if, for example, for every edge $e$ of $G$, the graph $G\backslash e$ has a vertex of valency $\leq 2$. The latter is guaranteed if $G$ has no two vertices of valency $\geq 4$ which are connected by an edge.

Likewise, the quadratic terms where $\omega_i'$ and $\omega''_i$ are non-trivial (have degree $>0$)
\begin{gather} \label{Quadterms} \int_{\sigma_{\gamma}} \omega_i' \times \int_{\sigma_{G/\gamma}} \omega_i''
\end{gather}
 often vanish. For example, if $\omega =\omega^{4m+1} \wedge \omega^{4n+1} $ is the wedge product of two primitive forms,
 then because $\omega_i'$ and $\omega_i''$ are both primitive, Lemma~\ref{lem: intGvanishes} implies that \eqref{Quadterms} vanishes
 unless $\deg_2 \gamma =\deg_2 G/\gamma =0$, and in particular, $\deg_2 G = 0$.

Further vanishing criteria can be obtained by combining Lemma~\ref{lem: intGvanishes} with the fact that if a~graph $\Gamma$ satisfies $3h_{\Gamma} -e_{\Gamma} \leq 2$ then it has a vertex of valency $\leq 2$ and thus vanishes in $\GC_2$.
 \end{rem}

 \subsection{Detecting graph homology classes}
 Using the formula \eqref{Stokesv2}, one can deduce the existence of non-vanishing homology classes in the graph complex from the
 non-vanishing of canonical integrals. A simple case is as follows.

 \begin{cor} \label{cor: IGnonzero}
 Suppose that $G\in \GC_2$ of degree $0$ is closed $({\rm d}G =0)$ and homogeneous of edge degree $e$. Let $\omega \in \Omega^{e-1}_{\can}$ be a primitive canonical form of degree $e-1$. If the canonical integral is non-vanishing:
 \begin{gather*}
 I_{G}(\omega) = \int_{\sigma_G} \omega_G \neq 0
 \end{gather*}
 then the class $[G]\in H_0(\GC_2)$ is non-zero.
 \end{cor}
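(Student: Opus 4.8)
The plan is to argue by contradiction using the Stokes formula \eqref{Stokesv2} together with the fact that the integral $I$ descends to the graph complex. Suppose $[G]=0$ in $H_0(\GC_2)$. Since $\omega$ is primitive of degree $e-1$ and $G$ has $e$ edges and degree $0$, the integral $I_G(\omega)=\int_{\sigma_G}\omega_G$ is defined. We want to show $I_G(\omega)=0$, contradicting the hypothesis. The key structural input is that $I(\,\cdot\,)(\omega)\colon (\GC_2)_{e-1}\to\C$ is a well-defined linear functional on the graph complex in edge-degree $e-1$ (the lemma preceding \eqref{canintegralfinite}), so it suffices to see that this functional kills boundaries, i.e. that $I_{dH}(\omega)=0$ for every $H\in\GC_2$ with $e_H=e+1$ and degree $1$ (so that $dH$ lands in edge-degree... — note $d$ preserves edge number, so $H$ must also have $e$ edges; more precisely $dH$ is a sum of contractions $H/e$, each with $e_H-1$ edges, so we need $e_H = e+1$... let me restate).

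First I would pin down the bookkeeping: the differential $d$ on $\GC_2$ sends a graph with $n$ edges to a combination of graphs with $n-1$ edges, and decreases degree by $1$. So if $[G]=0$ with $e_G=e$, then $G=dH$ for some $H$ with $e_H=e+1$ and $\deg H = \deg G + 1 = 1$, i.e. $2h_H = e_H - 1 = e$. Now apply the Stokes relation \eqref{Stokesv2} to this graph $H$, which has $k+2 = e_H$ edges where $k=e-1=\deg\omega$: we get
$$
0 = \sum_{f\in E_H}\Big(\int_{\sigma_{H/f}}\omega_{H/f} + \int_{\sigma_{H\backslash f}}\omega_{H\backslash f}\Big) + \sum_{\gamma\subset H}\int_{\sigma_\gamma\times\sigma_{H/\gamma}}\Delta'\omega,
$$
assuming (WLOG, since $\omega_H$ and these integrals vanish on graphs with tadpoles by proposition \ref{prop: vanishing}) that $H$ has no tadpoles. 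The first braced sum is, up to signs matching orientations, exactly $I_{dH}(\omega) = I_G(\omega)$. So I must show the remaining two contributions vanish. Since $\omega$ is \emph{primitive}, the reduced coproduct $\Delta'\omega = 0$, killing the entire quadratic sum $\sum_\gamma\int\Delta'\omega$ outright. That leaves the $\delta$-type terms $\sum_{f}\int_{\sigma_{H\backslash f}}\omega_{H\backslash f}$; these assemble into $I_{\delta H}(\omega)$ in the notation of \eqref{deltadef}, up to sign.

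The main obstacle is therefore controlling the $\delta$-term. Here I would use the hypothesis that $G = dH$ with $[G]$ a degree-$0$ class together with the freedom in choosing $H$: one needs a primitive $H$ with $\delta H = 0$, or at least $I_{\delta H}(\omega) = 0$. One route: since $G$ has degree $0$ and is closed, by lemma \ref{lem: intGvanishes} the relevant integrals only see degree-$0$ graphs, and $H\backslash f$ has degree $\deg H - 1 = 0$ precisely when... — actually $e_{H\backslash f} = e_H - 1 = e$ and $h_{H\backslash f} = h_H$ or $h_H-1$ depending on whether $f$ is a bridge; combined with $2h_H = e$ one checks $H\backslash f$ fails $e_{H\backslash f}=2h_{H\backslash f}$ unless $f$ is a bridge, and a bridge-contraction/deletion makes $H\backslash f$ disconnected or introduces a low-valence vertex, forcing $\omega_{H\backslash f}=0$ by proposition \ref{prop: vanishing}. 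I expect the clean statement to be: for degree reasons (lemma \ref{lem: intGvanishes} applied to the primitive form $\omega$ and the graph $H\backslash f$, which has degree $\deg H - 1 = -1 < 0$ unless $f$ is a bridge, and the bridge case is killed by one-vertex-reducibility / disconnectedness), every term $\int_{\sigma_{H\backslash f}}\omega_{H\backslash f}$ vanishes. Granting this, \eqref{Stokesv2} collapses to $0 = \pm I_{dH}(\omega) = \pm I_G(\omega)$, contradicting $I_G(\omega)\neq 0$; hence $[G]\neq 0$. I would double-check the orientation signs relating the first braced sum of \eqref{Stokesv2} to $I_{dH}(\omega)$ — matching the combinatorial signs $(-1)^i$ in the definition of $d$ with the orientation of $\partial\widetilde\sigma_H$ induced by that of $\widetilde\sigma_H$ — as that is the one genuinely fiddly point, but it is the standard Stokes-versus-simplicial-boundary comparison and presents no conceptual difficulty.
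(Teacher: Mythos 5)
Your proposal is correct and follows essentially the same route as the paper's proof: assume $G = dX$ for some $X$ of degree $1$, apply the Stokes relation \eqref{Stokesv2} to $X$, use primitivity of $\omega$ to kill the quadratic (reduced coproduct) terms, and use lemma \ref{lem: intGvanishes} to kill the edge-deletion terms, so that $I_G(\omega)=0$, a contradiction. Your explicit handling of the bridge-deletion case (where $X\backslash f$ has degree $0$ but is disconnected, so the primitive form still vanishes) is in fact slightly more careful than the paper, which simply asserts that $\delta X$ has degree $>0$.
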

 \begin{proof}
 Suppose that $G = {\rm d}X$, where $X$ is a linear combination of graphs in $\GC_2$ of degree $1$. Applying formula \eqref{Stokesv2} to $X$ implies that
 \begin{gather*}
 0 = \int_{{\rm d}X} \omega + \int_{\delta X} \omega.
 \end{gather*}
 By Lemma~\ref{lem: intGvanishes}, the restriction of $\omega$ to $\delta X$ vanishes, since $\deg_2 \delta X= \deg_2 X+1 >0$. We~there\-fore deduce that $0 = \int_{{\rm d}X} \omega=I_G(\omega)$, a contradiction.
 \end{proof}
 The proof implies that if $\omega \in \Omega_{\can}$ is primitive, and $X \in \GC_2$ has degree $\deg_2 X=1$ in the graph complex with edge-grading $\deg(\omega)+2$, then we have:
 \begin{gather} \label{IomegaRelations}
 \int_{{\rm d}X} \omega=0.
 \end{gather}
 See Section~\ref{sect: Examples} for examples of relations between canonical integrals obtained in this way.

There exist more elaborate versions of Corollary~\ref{cor: IGnonzero} involving diagram chases around the graph complex.
We describe a basic mechanism in the next paragraph.

\subsection{Applications} \label{sect: precip} The following argument shows how cohomology classes may appear in unexpected degrees in the graph complex.

\begin{lem} 
Suppose that $X \in \GC_2$ of degree $\deg_2 X = n$ such that ${\rm d}X = \delta X=0$, and let $\omega \in \Omega_{\can}$ with $\deg \omega = e(X)-1$. Either
\begin{gather*}
[X] \in H_n(\GC_2)
\end{gather*}
is non-zero, or there exists $X' \in \GC_2$ satisfying ${\rm d}X' = \delta X' =0$ of degree $\deg_2 X' = n+2$ with
\begin{gather*}
\int_{X'} \omega \equiv \int_{X} \omega \mod \text{\rm (products of canonical integrals)}.
\end{gather*}
If $n\geq 0$ and $\omega$ is a linear combination of products of at most two primitive canonical forms, then in fact
\begin{gather*}
\int_{X'} \omega = \int_{X} \omega.
\end{gather*}
\end{lem}
\begin{proof}
If the homology class $[X]$ were to vanish, then there exists $Y \in \GC_2$ such that ${\rm d}Y=X$. Set $X' = - \delta Y$. Since $\delta^2=0$ we have $\delta X'=0$. Using ${\rm d} \delta = - \delta {\rm d}$ we also deduce that ${\rm d}X'=0$.
Now apply \eqref{Stokesv2} to $Y$ to obtain
\begin{gather*}
0 = I_{{\rm d}Y}(\omega) + I_{\delta Y} (\omega) + I_{\Delta' Y}(\Delta'_{\can} \omega),
\end{gather*}
 which implies that $I_{X'}(\omega) = I_X(\omega) + I_{\Delta' Y}(\Delta'_{\can} \omega) $. The term $ I_{\Delta' Y}(\Delta'_{\can} \omega) $ is a linear combination of products of canonical integrals of factors of $\omega$, which proves the first statement.
 For the second, note that the degree of $Y$ equals $n+1\geq 1$, and so $ I_{\Delta' Y}(\Delta'_{\can} \omega) $ vanishes when $\omega$ is a linear combination of products of two primitive forms $\omega^{4i_1+1} \wedge \omega^{4i_2+1}$, by Remark~\ref{rem: Termsvanish}.
 \end{proof}

\begin{cor} \label{cor: detectclassv2} Suppose that $X \in \GC_2$ of degree $\deg_2 X = n\geq0 $ such that ${\rm d}X = \delta X=0$, and let $\omega \in \Omega_{\can}$
of degree $\deg \omega = e(X)-1$
be a linear combination of products of at most $2$ primitive canonical forms, such that
\begin{gather*}
 \int_{X} \omega \neq 0.
 \end{gather*}
Then there exists an $m\geq 0$, and an element $X_m \in \GC_2$ satisfying ${\rm d}X_m = \delta X_m=0$, such that its homology class $[X_m] \in H_{n+2m}(\GC_2)$ is non-zero and
\begin{gather*}
\int_{X_m} \omega = \int_{X} \omega.
\end{gather*}
\end{cor}
\begin{proof}
Apply the previous lemma repeatedly to $X_0=X$ to obtain a sequence $X=X_0$, $X_1,\dots,X_m $ of elements satisfying ${\rm d} X_i = \delta X_i=0$ and such that $I_{X_i}(\omega) = I_X(\omega)$. Since the loop number of $X_i$ decreases by 1 at each step, this process terminates after a finite number of steps by Proposition~\ref{prop: vanishing}, and the last one in the sequence must be a non-trivial homology class.
\end{proof}

In Section~\ref{sect: omega59} we apply the corollary to an element $X = \delta G$, where $G$ represents a class in~$H_0(\GC_2)$ with non-trivial coproduct.

\begin{rem} In \cite{SpectralSequenceGC2} it is shown that
$H(\GC_0, {\rm d}+ \delta) \cong \bigoplus_{n\geq 1} \Q [W_{2n+1}] $ is generated by the wheel classes. Thus, for any homogeneous $Z\in \GC_0$ such that ${\rm d}Z = \delta Z =0$ which is not proportional to a wheel class, there exists $X$ such that $({\rm d}+ \delta) X=Z$. By applying \eqref{Stokesv2} we deduce that
\begin{gather*}
I_{Z}(\omega) = I_{\Delta' (X)}( \Delta'_{\can}(\omega)),
\end{gather*}
for
 any canonical differential form $\omega \in \Omega^{e_Z-1}_{\can}$. In particular, canonical integrals of any such $Z$ are trivial modulo products of lower order canonical integrals.
\end{rem}

\section{Outer motive and canonical motivic periods of graphs}

\subsection{A motive associated to the graph complex}
For any connected oriented graph $G$, one can define the
graph motive \cite{BEK, Cosmic}
\begin{gather*}
\mathrm{mot}_G = H^{e_G-1} \big( P^G \backslash Y_G, D \backslash (D \cap Y_G)\big)
\end{gather*}
which is to be viewed in a category $\mathcal{H}_{\Q}$ of realisations over $\Q$ (see, for example, \cite{NotesMot, deligneP1}).
 If $G$ has connected components $G_1,\dots, G_n$, define $\mathrm{mot}_G$ to be $\bigotimes_{i=1}^n \mathrm{mot}_{G_i}$. The objects $\mathrm{mot}_G$ are equipped with face maps \cite{Cosmic}
\begin{align}
i_e\colon\ &\qquad\quad\ \, \mathrm{mot}_{G/e}\To \mathrm{mot}_G, \nonumber
\\
 i_{\gamma}\colon\ &\mathrm{mot}_{\gamma} \otimes \mathrm{mot}_{G/\gamma}\To \mathrm{mot}_G, \label{motface}
\end{align}
as well as maps induced by isomorphisms $\tau\colon G \cong G'$ which we denote by:
\begin{gather} \label{mottau}
\tau\colon\ \mathrm{mot}_{G'} \cong \mathrm{mot}_G.
\end{gather}
Note that the face maps increase the cohomological degree by one and correspond to boundary maps in cohomology.
Define the ind-motive of all graphs (resp.~of bounded genus) to be a limit of the graph motives with respect to \eqref{motface} and \eqref{mottau}:
\begin{gather} \label{TotalMotive}
\mathrm{Mot_{Graphs}} = \bigoplus_G \mathrm{mot}_G /{\sim}, \qquad
\mathrm{Mot^{\leq g}_{Graphs}} = \bigoplus_{h_G \leq g} \mathrm{mot}_G /{\sim}.
\end{gather}
For the second line of \eqref{motface}, this means that the images of any two face maps
\begin{gather*}
 i\colon\ \mathrm{mot}_{g} \otimes \mathrm{mot}_{h} \To \mathrm{mot}_G \qquad \text{and} \qquad
 i'\colon\ \mathrm{mot}_{g} \otimes \mathrm{mot}_{h} \To \mathrm{mot}_{G'}
 \end{gather*}
are identified, and we take $\sim$ to be the equivalence relation generated by this together with $i_e$ (which is actually a special case of $i_{\gamma}$, since $\mathrm{mot}_e$ is the trivial object), and $\tau$. Since for any two graphs $g,h$ one can insert $g$ into a vertex of $h$ to obtain a graph $G$ such that $g \leq G$ and $G/g= h$, we deduce a product
\begin{gather*}
\mathrm{Mot^{\leq a}_{Graphs}}\otimes \mathrm{Mot^{\leq b}_{Graphs}} \To \mathrm{Mot^{\leq a+b}_{Graphs}} ,
\end{gather*}
which is canonical, by definition of $\sim$. A similar product exists on $\mathrm{Mot_{Graphs}}$ by dropping the restriction on loop numbers.
The object $\mathrm{Mot_{Graphs}^{\leq g}}$ could be viewed as a motive associated to Outer space $\mathcal{O}_g$. Note that, in reality, we are actually interested in the smallest quotients of~\eqref{TotalMotive} whose dual Betti realisation contains the projective limit of the Betti classes $[\widetilde{\sigma}_G]$ defined presently, but for simplicity we will say nothing more about this.

\subsection{Motivic period integrals}
If $G$ is equipped with an orientation, the Feynman polytope defines by Theorem~\ref{thm: NormalCrossing} a canonical Betti homology class
\begin{gather*}
[\widetilde{\sigma}_G] \in (\mathrm{mot}_G)^{\vee}_B
\end{gather*}
which satisfies the following properties with respect to face maps:
\begin{gather*}
(i^{\vee}_e)^{B} [\widetilde{\sigma}_G] = [\widetilde{\sigma}_{G/e}],
\\
 (i^{\vee}_{\gamma})^{B} [\widetilde{\sigma}_G] =
 [\widetilde{\sigma}_{\gamma}] \otimes [\widetilde{\sigma}_{G/\gamma}]
\end{gather*}
induced by the boundary map applied to graph polytopes, where $\widetilde{\sigma}_{G/e}$, $\widetilde{\sigma}_{\gamma}\times \widetilde{\sigma}_{G/\gamma}$ are given the induced orientations. Furthermore, isomorphisms $\tau\colon G \cong G'$ induce
\begin{gather*}
 (\tau^{\vee})^B [\widetilde{\sigma}_G] = [\widetilde{\sigma}_{G'}],
 \end{gather*}
where $\tau$ is compatible with the orientations on $G$, $G'$ (in the case when it reverses orientations, the previous formula has a minus sign).

Now let $\omega\in \Omega^k_{\can}$ be a canonical differential form of degree $k$, and suppose that $G$ is an oriented graph with $k+1$ edges.
By Theorem~\ref{thm: nopolesonD}, the form $\widetilde{\omega}_G$ has no poles along $D\subset P^G$, and therefore its restriction to $D$ vanishes, because $D$ is of dimension $<k$.
 It therefore defines a relative cohomology class
\begin{gather*}
[\widetilde{\omega}_G] \in (\mathrm{mot}_G)_{{\rm d}R}
\end{gather*}
whose image under the natural map $(\mathrm{mot}_G)_{{\rm d}R} \rightarrow H_{{\rm d}R}^{e_G-1} \big( P^G \backslash Y_G\big)$ is the absolute class $[\widetilde{\omega}_G]^{\mathrm{abs}}$ defined in Section~\ref{sect: AbsCohomClass}.
\begin{defn} Let $G$ be an oriented graph with $e_G= k+1$ edges.
Define the motivic canonical integral to be the ``motivic period'' \cite{NotesMot}
 \begin{gather*} 
 I_G^{\mm}(\omega) = [\mathrm{mot}_G, [\widetilde{\sigma}_G], [\widetilde{\omega}_G]]^{\mm},
 \end{gather*}
 where the orientation on $\widetilde{\sigma}_G$ is given by that of $G$.
 \end{defn}

The canonical integral $I_G(\omega)$ can be retrieved from its motivic version by applying the period homomorphism~\cite{NotesMot}, i.e., $I_G(\omega) =\operatorname{per} I^{\mm}_G(\omega)$.

\begin{lem}
The motivic period $I^{\mm}_G(\omega)$ only depends on the class of $G$ in $\GC_N$.
 \end{lem}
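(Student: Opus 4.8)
The plan is to show that $I^{\mm}_G(\omega)$ is unchanged by the two equivalence relations \eqref{GequivalenceRelations} defining $\GC_2$. The second relation, $(G,\eta) = (G',\sigma(\eta))$ for an isomorphism $\sigma : G \overset{\sim}{\to} G'$, is immediate from the functoriality of all the data involved: the automorphism (or isomorphism) $\sigma$ induces compatible isomorphisms \eqref{mottau} on $\mathrm{mot}_G$, sends $[\widetilde{\sigma}_G]$ to $[\widetilde{\sigma}_{G'}]$ with the transported orientation, and satisfies $\sigma^* \widetilde{\omega}_{G'} = \widetilde{\omega}_G$ by lemma \ref{lem: autoinvariant} (in its cohomological incarnation, the class $[\widetilde{\omega}_G]^{\mathrm{abs}}$ is automorphism-invariant, hence so is the relative class $[\widetilde{\omega}_G]$). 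Since a motivic period is an equivalence class of triples, an isomorphism of triples leaves $I^{\mm}_G(\omega)$ unchanged; this already handles the quotient by isomorphisms, and in particular shows $I^{\mm}_G(\omega) = I^{\mm}_{G'}(\omega)$.

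For the orientation-reversal relation $(G,-\eta) = -(G,\eta)$, I would argue as follows. Reversing the orientation $\eta$ of $G$ reverses the induced orientation on the polytope $\widetilde{\sigma}_G$, hence replaces the Betti class $[\widetilde{\sigma}_G]$ by $-[\widetilde{\sigma}_G]$ in $(\mathrm{mot}_G)^{\vee}_B$, while $\mathrm{mot}_G$ and $[\widetilde{\omega}_G]$ are untouched. By the bilinearity of the motivic period bracket in its homology argument (the pairing between $(\mathrm{mot}_G)^{\vee}_B$ and $(\mathrm{mot}_G)_{dR}$ that underlies \eqref{motGomega} is $\Q$-linear), this sends $I^{\mm}_G(\omega)$ to $-I^{\mm}_G(\omega)$, which is exactly the required compatibility with the first relation in \eqref{GequivalenceRelations}.

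Combining the two, the assignment $(G,\eta) \mapsto I^{\mm}_G(\omega)$ descends to the $\Q$-vector space $\GC_2$, so $I^{\mm}_G(\omega)$ depends only on the class $[G]$ (indeed extends $\Q$-linearly to all of $\GC_2$). I do not expect a serious obstacle here: everything reduces to the functoriality of $\mathrm{mot}_G$, $[\widetilde{\sigma}_G]$ and $[\widetilde{\omega}_G]$ already recorded above, together with bilinearity of the period bracket; the only point requiring a line of care is checking that the orientation of $\widetilde{\sigma}_G$ induced by $\eta$ flips sign precisely when $\eta$ does, which is the same bookkeeping already used in the proof that the ordinary integral $I$ is well-defined on $\GC_2$.
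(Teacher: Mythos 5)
Your proposal is correct and follows essentially the same route as the paper: orientation reversal flips the sign of the Betti class $[\widetilde{\sigma}_G]$ and hence of $I^{\mm}_G(\omega)$, while invariance under isomorphisms follows from the functoriality of $\mathrm{mot}_G$, of $[\widetilde{\sigma}_G]$, and the automorphism-invariance of the canonical form (lemma \ref{lem: autoinvariant}), so the assignment descends through the relations \eqref{GequivalenceRelations}. The only point the paper adds, which you omit but which is a one-line consequence of proposition \ref{prop: betavanishes}, is that $I^{\mm}_G(\omega)$ vanishes when $G$ has a two-valent vertex (so graphs excluded from $\GC_2$ for that reason, whose class is zero, indeed contribute zero).
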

\begin{proof}
Reversing the orientation of $G$ reverses the sign of $[\widetilde{\sigma}_G]$ and hence of $I^{\mm}_G(\omega)$. Functoriality with respect to isomorphisms:
\begin{gather*}
I^{\mm}_G(\omega) = I^{\mm}_{\tau(G)} (\omega)
\end{gather*}
 follows from the formalism of motivic periods and the fact (Lemma~\ref{lem: autoinvariant}) that $\omega$ is invariant with respect to automorphisms. Finally, it follows from Proposition~\ref{prop: betavanishes} that $I^{\mm}_G(\omega)$ vanishes if $G$ has a two-valent vertex, since $\omega_G$ and hence $\widetilde{\omega}_G$ already vanishes.
\end{proof}

 It is undoubtedly true that $I^{\mm}_G(\omega)$ and $I^{\mm}_{G^{\vee}}(\omega)$ are equal when $G$ is a~planar graph and $G^{\vee}$ a~planar dual, but the argument is more delicate.

 \subsection{Cosmic Galois group and Outer space}
 In \cite{Cosmic}, the cosmic Galois group (a name first suggested by Cartier) was defined to be the quotient of the (de Rham) Tannaka group of the category $\mathcal{H}_{\Q}$ which acts on the system\footnote{To be more precise, on the system consisting of the smallest quotient objects $\mathrm{mot}_G\rightarrow {}_{\sigma}\mathrm{mot}_G$ with the property that $[\widetilde{\sigma}_G] \in \big(\mathrm{mot}^B_G\big)^{\vee}$ is in the image of $\big({}_{\sigma}\mathrm{mot}^B_G\big)^{\vee}$.}
 of objects $\mathrm{mot}_G$. It is a pro-algebraic group over $\Q$ which acts on the de Rham vector spaces $\mathrm{mot}^{{\rm d}R}_G$ in such a~way that it respects the (de Rham versions of) the face maps \eqref{motface} and \eqref{mottau}. In particular, it acts on $\mathrm{Mot_{Graphs}}$ and
 $\mathrm{Mot_{Graphs}^{\leq g}}$
 and respects the relations betwen motivic periods
 \begin{gather}
I^{\mm}_{G/e}(\omega) = I^{\mm}_{G}\big( i_e^{{\rm d}R} \omega\big),\nonumber
\\
 I^{\mm}_{\gamma}(\omega) I^{\mm}_{G/ \gamma}(\omega') = I^{\mm}_{G}\big(i_{\gamma}^{{\rm d}R} (\omega\otimes \omega')\big) \nonumber,
 \\
 I^{\mm}_{G}( \omega) = I^{\mm}_{G'}\big(\tau^{{\rm d}R} \omega\big),\label{motivicfacerels}
 \end{gather}
 where $\omega$, $\omega'$, $\omega'' $ are de Rham cohomology classes of the appropriate degrees and $\tau\colon G\cong G'$. It~must be emphasized that the maps $i^{{\rm d}R}_e$, $i^{{\rm d}R}_{\gamma} $ increase cohomological degree, and are not to be confused with the restriction maps which go into Definition~\ref{defn: formsonPtot}.

\subsection{Motivic Stokes formula}
The motivic periods $I^{\mm}_G(\omega)$, where $\omega$ is a canonical form, vanish in all the situations listed in~Proposition~\ref{prop: vanishing}.

\begin{thm} The motivic version of \eqref{Stokes} holds. If $\omega$ is a canonical form of degree $k$, and $G$ has $k+2$ edges, then:
\begin{gather} \label{motStokes}
0 = \sum_{e \in E(G)} I^{\mm}_{G/e}(\omega) + \sum_i \sum_{\gamma \subset G} I^{\mm}_{\gamma}(\omega'_i) I^{\mm}_{G/\gamma}(\omega''_i),
 \end{gather}
where the second sum is over all core subgraphs $\gamma \subset G$ with $e_{\gamma} = \omega'_i-1$ and $\Delta_{\can} \omega = \sum_i \omega'_i \otimes \omega''_i$ is the coproduct applied to $\omega$.
\end{thm}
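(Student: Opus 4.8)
The plan is to promote the proof of the ordinary (non-motivic) Stokes relation \eqref{Stokes} to the level of motivic periods by replacing every use of Stokes' theorem and of the restriction formulas for $\widetilde{\omega}_G$ with the corresponding statements in the category $\mathcal{H}_\Q$ of realisations. The point is that all the geometric input is already in place: theorem \ref{thm: NormalCrossing} describes the boundary $D$ of $P^G\backslash Y_G$ as a simple normal crossing divisor whose components are (products of) spaces of the same type, and theorem \ref{thm: nopolesonD} together with \eqref{RestrictomegaToDgammaAndCoproduct} tells us exactly how the de Rham class $[\widetilde{\omega}_G]$ restricts to each stratum. What remains is to express the "motivic Stokes" identity as a consequence of the long exact sequence of the triple, or equivalently, of the fact that the coboundary map followed by restriction to the boundary composes to zero.

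\begin{proof}
Since $\Delta^{\can}$ is multiplicative and the statement is linear in $\omega$, we may assume $\omega = \beta^{4k+1}$ is primitive, so that $k+2 = e_G$ and $\deg\omega = 4k+1 = e_G - 2$. Consider the graph motive $\mathrm{mot}_G = H^{e_G-2}(P^G\backslash Y_G, D\backslash(D\cap Y_G))$ — note that here $e_G - 2 = \deg\omega$, one less than the cohomological degree appearing in \eqref{motGomega}, because $G$ now has one more edge than the degree of $\omega$. There is a connecting morphism in the long exact sequence of the pair,
$$ \partial\ :\ H^{e_G-2}\big(P^G\backslash Y_G,\, D\backslash(D\cap Y_G)\big) \To H^{e_G-1}_{D\backslash(D\cap Y_G)}\big(P^G\backslash Y_G\big)\ , $$
and, since $D$ is simple normal crossing, the right-hand group receives contributions from each irreducible component $D_e$ and $D_\gamma$; by purity (Gysin), the contribution of a component $Z$ is $H^{e_G-2}(Z\backslash (Z\cap Y_G))(-1)$. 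This is a morphism in $\mathcal{H}_\Q$, hence compatible with Betti and de Rham realisations and with the action of the Tannaka group. The class $[\widetilde{\omega}_G]\in(\mathrm{mot}_G)_{dR}$ is well-defined because $\widetilde{\omega}_G$ has no poles on $D$ (theorem \ref{thm: nopolesonD}), and its image under $\partial$ on the component $D_e$ is $[\widetilde{\omega}_G|_{D_e}] = [\widetilde{\omega}_{G/e}]$, while on $D_\gamma$ it is $\sum_i [\widetilde{\omega}'_{i,\gamma}\wedge\widetilde{\omega}''_{i,G/\gamma}]$ by \eqref{RestrictomegaToDgammaAndCoproduct}, which under the Künneth isomorphism on $D_\gamma\backslash(D_\gamma\cap Y_G)\cong (P^\gamma\backslash Y_\gamma)\times(P^{G/\gamma}\backslash Y_{G/\gamma})$ (equation \eqref{Dgammaproduct}) is $\sum_i [\widetilde{\omega}'_{i,\gamma}]\otimes[\widetilde{\omega}''_{i,G/\gamma}]$. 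Dually, the boundary $\partial\,[\widetilde{\sigma}_G]$ of the Feynman polytope decomposes, by theorem \ref{thm: NormalCrossing} and the compatibility of $[\widetilde{\sigma}_\bullet]$ with face maps recorded after \eqref{motGomega}, as $\sum_{e}\pm[\widetilde{\sigma}_{G/e}] + \sum_{\gamma}\pm[\widetilde{\sigma}_\gamma]\otimes[\widetilde{\sigma}_{G/\gamma}]$, with the signs dictated by the induced orientations.

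Now apply the formalism of motivic periods \cite{NotesMot}. For a motivic period $[M, \sigma, \nu]^{\mm}$ with $M$ in $\mathcal{H}_\Q$, $\sigma\in M_B^\vee$, $\nu\in M_{dR}$, and a morphism $f: M\to N$, one has the functoriality $[N, f_B^\vee{}^{-1}?, \ldots]$— more precisely, if $\sigma\in M_B^\vee$ lies in the image of $N_B^\vee$ under $f_B^\vee$, then $[M,\sigma,\nu]^{\mm}$ can be rewritten in terms of $N$; and when $f$ is a connecting map as above the period of $[M, \sigma, \partial_{dR}\nu']^{\mm}$ vanishes whenever $\partial_{dR}\nu' = 0$ or $\sigma$ is annihilated by the relevant boundary. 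The relation we want is precisely the statement that pairing the class $[\widetilde{\sigma}_G]$ against $\partial[\widetilde{\omega}_G]$ — which is computed by the decomposition above — equals zero, because $\partial[\widetilde{\omega}_G]$ is itself the image of the absolute class under a map whose composition with the previous term in the exact sequence is zero; equivalently, $[\widetilde{\omega}_G]$ is already a \emph{relative} class, i.e. restricts to zero on $D$, which forces $\sum$ (its residues along all components) $= 0$ as a class in $H^{e_G-1}_D$. Feeding the two decompositions into the bilinear pairing underlying the motivic period and matching signs with orientations gives exactly
$$ 0 = \sum_{e\in E_G} I^{\mm}_{G/e}(\omega) + \sum_i\sum_{\gamma\subset G} I^{\mm}_\gamma(\omega'_i)\, I^{\mm}_{G/\gamma}(\omega''_i)\ , $$
where the degree bookkeeping of the proof of \eqref{Stokes} — that $[\widetilde{\sigma}_\gamma]$ has dimension $e_\gamma-1$, forcing $e_\gamma = \deg\omega'_i + 1$ for a non-zero contribution — carries over verbatim. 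The general (non-primitive) case follows by multiplicativity of $\Delta^{\can}$ and of motivic periods under tensor product, exactly as at the end of the proof of theorem \ref{thm: nopolesonD}.
\end{proof}

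The main obstacle is the careful formulation of the "motivic Stokes" identity: one must check that the connecting morphism $\partial$ is genuinely a morphism in the category $\mathcal{H}_\Q$ (this is standard, via the Gysin/purity triangles, but needs the simple normal crossing property of $D$ and the product structure \eqref{Dgammaproduct} to identify the target), and that the vanishing $0 = \langle \partial[\widetilde\sigma_G], [\widetilde\omega_G]\rangle$ at the motivic level is a formal consequence of $[\widetilde\omega_G]$ being a relative class together with the compatibility of $[\widetilde\sigma_\bullet]$ with boundary maps — rather than something requiring a genuinely new geometric input. Everything else (signs from orientations, degree constraints, reduction to the primitive case) is identical to the topological proof of \eqref{Stokes}, now read one cohomological degree lower.
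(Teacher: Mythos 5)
Your overall strategy --- promote the topological Stokes argument to the category $\mathcal{H}_{\Q}$ by expressing it through morphisms (face maps) and the relative cohomology sequence --- is the same as the paper's, but the step that actually produces the relation is wrong as you have written it. You claim that $[\widetilde{\omega}_G]$ ``is already a relative class, i.e.\ restricts to zero on $D$''. This is false in the situation of the theorem: when $e_G=k+2$ the divisor $D$ has dimension $k=\deg\omega$, and by theorem \ref{thm: nopolesonD} the restrictions of $\widetilde{\omega}_G$ to the components $D_e$, $D_{\gamma}$ are exactly $\widetilde{\omega}_{G/e}$ and $\sum_i\widetilde{\omega}'_{i,\gamma}\wedge\widetilde{\omega}''_{i,G/\gamma}$, which are non-zero in general (the vanishing-on-$D$ statement is only used in the paper when $e_G=k+1$, for dimension reasons). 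If your claim were true, every term $I^{\mm}_{G/e}(\omega)$ and $I^{\mm}_{\gamma}(\omega'_i)I^{\mm}_{G/\gamma}(\omega''_i)$ would vanish separately and \eqref{motStokes} would be vacuous, contradicting e.g.\ the relation $I^{\mm}_{W_5}(\omega^9)=2I^{\mm}_{Z_5}(\omega^9)$ derived from it. Relatedly, the boundary operator you use is misidentified: you invoke purity/Gysin and speak of ``residues along all components'' with a Tate twist, but $\widetilde{\omega}_G$ has \emph{no poles} along $D$, so all such residues are trivially zero and carry no information (your degree in the purity isomorphism is also off by one). The operation that matters is restriction to $D$ followed by the connecting map into \emph{relative} cohomology in degree $e_G-1$, i.e.\ the face maps $i_e$, $i_{\gamma}$ of \eqref{motface}, not the Gysin sequence for the complement of $D$.

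The correct mechanism, which is the paper's proof, is short: in the cone model for relative algebraic de Rham cohomology one has $d(\widetilde{\omega}_G,0)=(0,\widetilde{\omega}_G|_D)$, so the class of $(0,\widetilde{\omega}_G|_D)$ vanishes in $(\mathrm{mot}_G)_{dR}=H^{e_G-1}_{dR}(P^G\backslash Y_G, D\backslash(D\cap Y_G))$; decomposing $\widetilde{\omega}_G|_D$ over the components of $D$ via \eqref{RestrictomegaToDgammaAndCoproduct} gives the identity \eqref{CohomologicalStokes}, $0=\sum_e i_e([\widetilde{\omega}_{G/e}])+\sum_i\sum_{\gamma}i_{\gamma}([\widetilde{\omega}'_i|_{\gamma}]\otimes[\widetilde{\omega}''_i|_{G/\gamma}])$, which is a genuine relation among de Rham classes of the single object $\mathrm{mot}_G$. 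The motivic statement then follows by pairing with the one Betti class $[\widetilde{\sigma}_G]$ and using the relations \eqref{motivicfacerels}, $I^{\mm}_{G/e}(\omega)=I^{\mm}_G(i_e^{dR}\omega)$ and $I^{\mm}_{\gamma}(\omega')I^{\mm}_{G/\gamma}(\omega'')=I^{\mm}_G(i_{\gamma}^{dR}(\omega\otimes\omega'))$. Your final step instead pairs a Betti boundary decomposition against restricted forms, which only reproves the numerical identity \eqref{Stokes}; to obtain a relation between \emph{motivic} periods you must exhibit it as induced by morphisms in $\mathcal{H}_{\Q}$ applied to one framed object, which is precisely what \eqref{CohomologicalStokes} together with \eqref{motivicfacerels} accomplishes and what your write-up never actually invokes. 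The degree bookkeeping forcing $e_{\gamma}=\deg\omega'_i+1$ and the reduction to primitive $\omega$ are fine and do carry over as you say.
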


\begin{proof} The proof using Stokes' formula is valid in the context of motivic periods since it can be expressed in terms of face maps via the long relative sequence of cohomology.
Concretely, one deduces from this the relation
\begin{gather} \label{CohomologicalStokes}
0= \sum_e i_e([\widetilde{\omega}_{G/e}]) + \sum_i \sum_{\gamma \subset G} i_{\gamma}\big(\big[\widetilde{\omega}'_i\big|_{\gamma}\big] \otimes \big[\widetilde{\omega}''_i\big|_{G/\gamma}\big]\big) \end{gather}
and the identity then follows from the relations \eqref{motivicfacerels}.
\end{proof}

\subsection{A question about the motivic Galois action} \label{sect: QuestionGalois}
 A canonical differential form $\omega$ of degree $k$ defines a \emph{collection of classes}
\begin{gather*}
[\widetilde{\omega}_G] \in \mathrm{mot}^{{\rm d}R}_G
\end{gather*}
 for all $G$ with $e_G=k+1$ edges.
 This collection satisfies the properties that it
 \begin{itemize}\itemsep=0pt
 \item is functorial with respect to isomorphisms of graphs,
 \item vanishes on graphs satisfying the conditions of Proposition~\ref{prop: vanishing},
 \item satisfies the cohomological relations \eqref{CohomologicalStokes}.
 \end{itemize}
 The cosmic Galois group respects all these properties.
 Consider the $\Q$-vector space $\mathsf{H}_{\can}$ gene\-rated by the images of the classes
 $[\widetilde{\omega}_G]$, for all $G$, under the de Rham versions of the maps \eqref{motface} and \eqref{mottau}. It can be viewed
as a $\Q$-subspace of the de Rham total motive~\eqref{TotalMotive}
\begin{gather*}
 \mathsf{H}_{\can} \subset \mathrm{Mot^{{\rm d}R}_{Graphs}}.
 \end{gather*} Since the cosmic Galois group acts on this inductive limit (i.e., it respects the maps \eqref{motface} and~\eqref{mottau}),
it is natural to ask if it preserves the space $\mathsf{H}_{\can}$. If so, it would be very interesting to know how it acts upon it.

 The examples in Section~\ref{sect: Examples} seem to suggest, for example, that
 the cosmic Galois group preserves the subspace generated by the canonical classes $1$ and $\omega^5$.

 Note that we do not suggest that the cosmic Galois group necessarily acts directly on $\Omega_{\can}$: it is conceivable that a canonical form $\omega$ gives rise to algebraically independent motivic periods~$I^{\mm}_G(\omega)$ and $ I^{\mm}_{G'}(\omega)$ with entirely different Galois actions.

 \subsubsection{Relation to Feynman integrals}
 For any oriented graph $G$ with edges numbered from $1,\dots, n$, let us write
\begin{gather} \label{OmegaGprojform}
\Omega_G = \sum^n_{i=1} (-1)^i \alpha_i {\rm d}\alpha_1 \wedge \dots \wedge \widehat{{\rm d}\alpha_i} \wedge \dots \wedge {\rm d}\alpha_n.
\end{gather}
 Quantum field theory provides, for every $G$ with $\deg_2 G=0$ which has no subgraphs $\gamma$ of degree $\deg_2 \gamma<0$, a canonical differential form
 \begin{gather*} 
 \omega^{\mathrm{Feyn}}_G = \frac{\Omega_G}{\Psi_G^2}
 \end{gather*}
 which by \cite{BEK} defines a form $\widetilde{\omega}^{\mathrm{Feyn}}_G=\pi_G^* \omega^{\mathrm{Feyn}}_G$ whose class is
 \begin{gather*}
 \big[\widetilde{\omega}^{\mathrm{Feyn}}_G\big] \in \mathrm{mot}^{{\rm d}R}_G.
 \end{gather*}
 The period integrals of these classes, called Feynman residues:
 \begin{gather*}
 I^{\mathrm{Feyn}}_{G} = \int_{\sigma_G} \omega^{\mathrm{Feyn}}_G = \int_{\sigma_G} \frac{\Omega_G}{\Psi_G^2}
 \end{gather*}
 have been studied intensely (see \cite{Schnetz} for a survey of known results).
The set of all such classes generates under the maps \eqref{motface} and \eqref{mottau} a $\Q$-vector space $\mathsf{H}_{\mathrm{Feyn}}$.
It can be viewed as a subspace
\begin{gather*}
 \mathsf{H}_{\mathrm{Feyn}} \ \subset \ \mathrm{Mot^{{\rm d}R}_{Graphs}}.
 \end{gather*}
The examples of classes $\omega \in \mathsf{\Omega}_{\can}$ considered in Section~\ref{sect: Examples} seem to be contained in $\mathsf{H}_{\mathrm{Feyn}}$. For example, one can express $W_4$ as a minor of $W_5$ by contracting one edge and deleting another. Therefore by applying the two corresponding face maps,
we can view the degree 7 class $\big[\widetilde{\omega}^{\mathrm{Feyn}}_{W_4}\big] \in \mathrm{mot}^{\mathrm{dR}}_{W_4}$ as a class of degree 9 in $\mathrm{mot}^{\mathrm{dR}}_{W_{5}}$. We expect that it is proportional to the class of the canonical form $[\widetilde{\omega}^9]$ (see discussion in Section~\ref{sect: Examples}).
In practice, this means that canonical integrals seem to reduce to Feynman residues by integration-by-parts identities, at least for graphs of small loop order, i.e., $ \mathsf{H}_{\mathrm{can}} $ appears to be contained in $ \mathsf{H}_{\mathrm{Feyn}} $ at low orders. It would be very interesting to know if this is always the case, and to understand in more detail the relationship between the spaces $\mathsf{H}_{\mathrm{can}}$, $\mathsf{H}_{\mathrm{Feyn}}$ and $\mathsf{H}_{\mathrm{can}} \cap \mathsf{H}_{\mathrm{Feyn}} $.

\section{Examples} \label{sect: Examples}
In the following examples, we will orient our graphs so that the integrals of canonical forms are non-negative.
In each example, the first step in computing a canonical integral is to compute the integrand in parametric form using its definition and some of the tricks described in earlier sections (notably a suitable $LBU$ decomposition). For the first few examples,
the integrals themselves can then be computed directly using the algorithm of \cite{Massless, PeriodsFeynman} which has been implemented in \cite{Bogner,Panzer}; the later ones require the more powerful approach of \cite{BorinskySchnetz}. The fact that the latter method is applicable uses Remark~\ref{rem: degreexe}, as pointed out by Schnetz.

\subsection[The form omega5]
{The form $\boldsymbol{\omega^5}$}
The canonical form of degree $5$ was computed in Example~\ref{examples: smallbetas}. It is non-vanishing only on the wheel with 3 spokes, the unique graph of degree zero at 3 loops in $\GC_2$ (all other graphs with 3 loops and 6 edges have a doubled edge or two-valent vertex). The form $\omega^5_{W_3}$ was computed in Examples~\ref{example: W3} and~\ref{example: W3form} and satisfies
\begin{gather*}
\omega^5_{W_3} = 10 \omega^{\mathrm{Feyn}}_{W_3}.
\end{gather*}
Its canonical integral is thus proportional to the Feynman residue and gives
\begin{gather*}
I_{W_3}\big(\omega^5\big) = 10 I^{\mathrm{Feyn}}_{W_3} = 60 \zeta(3).
\end{gather*}
Since the (de Rham) Galois conjugates of the motivic version of $\zeta(3)$ are $1$ and itself, this example provides some possible evidence in favour of
Section~\ref{sect: QuestionGalois}.

\subsection[The form omega9]
{The form $\boldsymbol{\omega^9}$}
Let $G$ be the wheel with 5 spokes, and let $S_5\subset E_{W_5}$ denote its five inner spoke edges. With the notation \eqref{OmegaGprojform}, one can compute:
\begin{gather*}
\omega^{9}_{W_5} = 18 \bigg( \frac{ 1}{\Psi_{W_5}^2} + 12 \frac{\prod_{e\in S_5} x_e }{\Psi_{W_5}^3} \bigg) \Omega_{W_5}.
\end{gather*}
The corresponding canonical integral is
\begin{gather*}
I_{W_5}\big(\omega^9\big) = 1260 \zeta(5).
\end{gather*}
It can be computed using the software implementations mentioned at the beginning of this section. The integral of the first term
\begin{gather*}
\omega^{\mathrm{Feyn}}_{W_5} = \frac{\Omega_{W_5}}{ \Psi_{W_5}^{2} }
\end{gather*} is convergent and proportional to $\zeta(7)$, which is the Feynman residue of $W_5$. Thus the canonical integral $I_{W_5}\big(\omega^9\big)$ has ``weight drop'', and indeed one checks that $\big[\widetilde{\omega}^9_{W_5}\big]^{\mathrm{abs}}$ vanishes.
 Hodge-theoretic considerations \cite[Section~7.5, Example~9.7]{Cosmic} imply that this integral is related via face maps $\iota_{\gamma}$ to periods of minors of $W_5$.
 Concretely, the integrand $\omega^{9}_{W_5}$ is exact, and so it would be interesting, by a double application of Stokes' formula (or for instance by \cite[Proposition 37]{PeriodsFeynman}) to relate it explicitly to the Feynman period of
the wheel with \emph{four} spokes $W_4$, which is a minor of $W_5$ obtained by contracting one edge and deleting another.

The Feynman residue of the latter \cite{BroadhurstKreimer, Schnetz} is
\begin{gather*}
I^{\mathrm{Feyn}}_{W_4}= \int_{\sigma_{W_4}} \omega_{W_4}^{\mathrm{Feyn}} = \int_{\sigma_{W_4}}\frac{\Omega_{W_4}}{\Psi_{W_4}^2} = 20 \zeta(5) .
\end{gather*}
 This suggests that the cohomology class $[\widetilde{\omega}^9_{W_5}]$ is in the image of $\mathsf{H}^{\mathrm{Feyn}}$ (Section~\ref{sect: QuestionGalois}).
The same comment applies to the graph $Z_5$ in the figure below.

\begin{figure}[h]
\centering
\includegraphics[width=11cm]{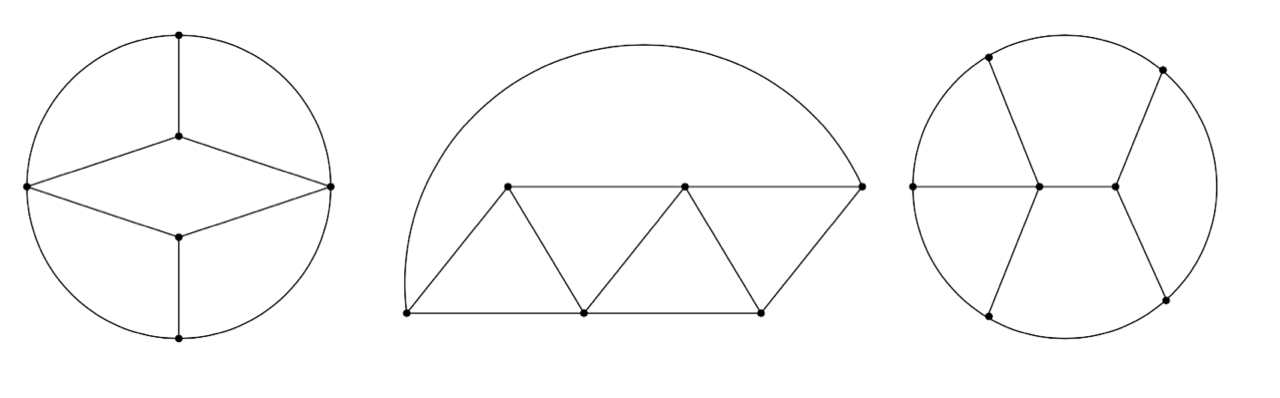}
\put(-60,0){ $X_5$}\put(-310,0){ $T_5=W_3:W_3$}\put(-175,0){ $Z_5$}
\caption{Two five-loop graphs with 10 edges (left), and a five loop graph with 11 edges (right).}\label{figureTZX}
\end{figure}

The form $\omega^9$ pairs with a number of other graphs with 10 edges and $5$ loops. Two are depicted in Figure \ref{figureTZX}: a graph $T_5$ which is a two-vertex join of $W_3$ with itself (Remark~\ref{rem: 2VJ}), and the zig-zag graph $Z_5$. One calculates, with some effort, that
\begin{gather*}
I_{T_5} \big(\omega^9\big)= 0 \qquad \text{and} \qquad I_{Z_5} \big(\omega^9\big)= 630 \zeta(5).
\end{gather*}
Interestingly, $\omega^9_{T_5}$ is not identically zero, although its integral vanishes. These results are consistent with the formula~\eqref{Stokes}. Indeed, one verifies that the graph homology class $[T_5]$ is zero, and that with suitable orientations,
\begin{gather*}
 {\rm d} X_5 = 2 Z_5 - W_5,
 \end{gather*}
 where $X_5$ is the graph depicted in Figure~\ref{figureTZX} on the far right. This identity implies the following relation between homology classes
 \begin{gather*}
 [W_5] = 2 [Z_5] \in H_0(\GC_2).
 \end{gather*}
 By the motivic version of \eqref{IomegaRelations} it also implies that
 \begin{gather} \label{W5Z5motivicrelation}
 I^{\mm}_{W_5}\big(\omega^9\big) = 2 I^{\mm}_{Z_5} \big(\omega^9\big).
\end{gather}
Thus we see that \eqref{Stokes} transfers information in a non-trivial way between
different graphs. The motivic version \eqref{motStokes} implies an explicit constraint on the action of the cosmic Galois group: Galois conjugates of motivic Feynman periods of the different graphs $Z_5$ and $W_5$ are constrained by the relation \eqref{W5Z5motivicrelation}.

\subsection[The form omega5 ^ omega9]
{The form $\boldsymbol{\omega^5 \wedge \omega^9}$}\label{sect: omega59}
Recall that it follows from \eqref{WillwacherGRT} and \eqref{MTZinjection} that there exists an element $\xi_{3,5} \in \GC_2$ with $16$~edges, $8$~loops, of degree zero, which satisfies ${\rm d} \xi_{3,5}=0$ and is dual to $[\sigma_3, \sigma_5]$.
Since the antisymmetrized Connes--Kreimer coproduct is dual to the Lie algebra structure on graph cohomology Section~\ref{sect: FurtherStructures}, it follows that $\Delta' \xi_{3,5} = W_3 \otimes W_5 - W_5 \otimes W_3$ plus possible extra terms involving graphs with tadpoles or vertices of degree $\leq 2$ whose canonical integrals vanish by Proposition~\ref{prop: vanishing}, and which we can ignore.

 Apply equation \eqref{Stokesv2} to $\xi_{3,5}$ and $\omega= \omega^5 \wedge \omega^9$ together with the above computations for the wheel integrals to deduce that
\begin{gather*}
\int_{\delta \xi_{3,5}} \omega^5 \wedge \omega^9 \in \Q^{\times} \zeta(3) \zeta(5),
\end{gather*}
where $\delta \xi_{3,5} \in \GC_2$ has edge grading 15, and loop grading $7$.
Since ${\rm d} \xi_{3,5}=0$ we deduce that ${\rm d} (\delta \xi_{3,5})= 0$, and we may apply Corollary~\ref{cor: detectclassv2} with $X= \delta \xi_{3,5}$ to deduce the existence of a~non-trivial class in either $H_1(\GC_2)$ or $H_3(\GC_2)$ with the same canonical integral. The computer calculations mentioned in the introduction show that $H_1(\GC_2)$ vanishes at 7 loops, and hence
\begin{cor}
There exists an element
$\Xi_{3,5} \in \GC_2$ at $15$ edges, and $6$ loops with the property that ${\rm d} \Xi_{3,5}=0$
 such that
 \begin{gather*}
 I_{\Xi_{3,5}}\big(\omega^5 \wedge \omega^9\big) = \zeta(3) \zeta(5).
 \end{gather*}
Its homology class is non-zero:
\begin{gather*}
0 \neq [\Xi_{3,5}] \in H_3(\GC_2).
\end{gather*}
 \end{cor}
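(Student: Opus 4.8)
The plan is to run a diagram chase in $\GC_2$ driven by the Stokes relation \eqref{Stokesv2}, taking as input the two wheel evaluations $I_{W_3}(\omega^5)=60\,\zeta(3)$ and $I_{W_5}(\omega^9)=1260\,\zeta(5)$ together with the class $\xi_{3,5}$ furnished by \eqref{WillwacherGRT}--\eqref{MTZinjection}. Recall that $\xi_{3,5}\in\GC_2$ sits in edge-degree $16$ and loop-degree $8$, has degree zero, satisfies $d\xi_{3,5}=0$, and has reduced coproduct $W_3\otimes W_5-W_5\otimes W_3$. Throughout I take $\omega=\omega^5\wedge\omega^9$, the generator of $\Omega^{14}_{\can}$, whose reduced coproduct is $\Delta'\omega=\omega^5\otimes\omega^9-\omega^9\otimes\omega^5$ by Example \ref{example: SmallOmegas}.

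First I would apply \eqref{Stokesv2} to $\xi_{3,5}$, which is legitimate since its constituent graphs have no tadpoles. The contraction term equals $I_{d\xi_{3,5}}(\omega)$, hence vanishes because $d\xi_{3,5}=0$ and $I$ descends to $\GC_2$. In the quadratic term, primitivity of $\omega^5$ and $\omega^9$ together with Lemma \ref{lem: intGvanishes} restricts the sum to core subgraphs $\gamma$ with $\deg\gamma=\deg(G/\gamma)=0$; feeding in the coproduct of $\xi_{3,5}$ and the facts that $\omega^5$ pairs nontrivially only with $W_3$ among $6$-edge, $3$-loop graphs and $\omega^9$ with $W_5$ collapses this term to $c\,\zeta(3)\zeta(5)$ with $c=2\cdot 60\cdot 1260\in\Q^{\times}$. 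Therefore $I_{\delta\xi_{3,5}}(\omega)=-c\,\zeta(3)\zeta(5)$, where $\delta\xi_{3,5}\in\GC_2$ has $15$ edges, $7$ loops, degree $1$, and is $d$-closed since $d\delta=-\delta d$.

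Next, using that $H_1(\GC_2)$ vanishes at $7$ loops, I would write $\delta\xi_{3,5}=dX$ with $X\in\GC_2$ of degree $2$, hence $16$ edges and $7$ loops, taking $X$ to be a combination of honest graphs (no tadpoles, no $\leq 2$-valent vertices). Applying \eqref{Stokesv2} to $X$, the key point — this is Remark \ref{rem: Termsvanish} — is that the genuinely quadratic terms now vanish: a nonzero contribution would require a core $\gamma\subset X$ with $\deg\gamma=\deg(X/\gamma)=0$, forcing $h_\gamma+h_{X/\gamma}=3+5=8$, which contradicts $h_X=7$. Hence $0=I_{dX}(\omega)+I_{\delta X}(\omega)$, so $I_{\delta X}(\omega)=c\,\zeta(3)\zeta(5)$. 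Here $\delta X$ has $15$ edges, $6$ loops, degree $3$, and $d\,\delta X=-\delta\,\delta\xi_{3,5}=0$. Rescaling, $\Xi_{3,5}=c^{-1}\delta X$ is $d$-closed with $I_{\Xi_{3,5}}(\omega^5\wedge\omega^9)=\zeta(3)\zeta(5)$.

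For non-triviality, $\Xi_{3,5}\neq 0$ in $\GC_2$ because its canonical integral is nonzero and $I$ factors through $\GC_2$. It cannot be a boundary: any graph with $16$ edges and $6$ loops has $11$ vertices and $2\cdot 16<3\cdot 11$, hence a vertex of valence $\leq 2$, so the degree-$4$, loop-degree-$6$ part of $\GC_2$ is zero and $d$ has trivial image in degree $3$ at $6$ loops; alternatively, apply \eqref{Stokesv2} once more to a hypothetical $W$ with $dW=\delta X$ and use that $\omega_\Gamma$ vanishes for any $\Gamma$ with $15$ edges and fewer than $6$ loops (Corollary \ref{corHginequality}). Either way $0\neq[\Xi_{3,5}]\in H_3(\GC_2)$. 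The hard part is not conceptual but bookkeeping: one must fix orientations so that the three braced terms of \eqref{Stokesv2} genuinely realise $I_{d(\cdot)}$, $I_{\delta(\cdot)}$ and the $\Delta'$-term on oriented graphs, and one must track the edge- and loop-gradings precisely, since it is exactly the drop of the loop number from $8$ to $7$ that annihilates the obstructing quadratic Stokes terms at the second stage.
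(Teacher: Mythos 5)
Your proposal is essentially the paper's own proof: Stokes \eqref{Stokesv2} applied to $\xi_{3,5}$ (the $d$-term vanishing, the quadratic term giving a nonzero multiple of $\zeta(3)\zeta(5)$ via the wheel integrals and lemma \ref{lem: intGvanishes}), then $H_1(\GC_2)=0$ at $7$ loops to write $\delta\xi_{3,5}=dX$, then Stokes applied to $X$ with the quadratic terms killed by the loop/degree count of remark \ref{rem: Termsvanish}, and finally non-triviality of $[\delta X]$ because any $Y$ with $dY=\delta X$ would have a $2$-valent vertex. The only caveat is your exact constant $c=2\cdot 60\cdot 1260$: the chain-level coproduct of a representative of $\xi_{3,5}$ may contain further degree-zero terms (e.g.\ involving $Z_5$, which also pairs nontrivially with $\omega^9$), so only membership in $\Q^{\times}\,\zeta(3)\zeta(5)$ is justified --- which is all the paper claims and all the argument needs.
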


 Similar arguments by applying \eqref{Stokesv2} along the lines of Section~\ref{sect: precip} can be used to compute
other examples of non-trivial pairings between canonical forms and graph homology (see Table~\ref{tableForms}). Note the similarity between this argument and that of~\cite{SpectralSequenceGC2}, except for the additional role played by the Lie coalgebra structure.

\subsubsection[The complete graph K6]{The complete graph $\boldsymbol{K_6}$}

Recall from Example~\ref{Example: Kn} that the Laplacian $L_{K_6}$ of the complete graph $K_6$ corresponds to the generic symmetric matrix of rank~$5$.
 One verifies that the canonical form $\omega^5 \wedge \omega^9$ is proportional to the invariant volume form. One can subsequently deduce from this, with the help of a~computer, that
 \begin{gather*}
 \omega_{K_6}^5 \wedge \omega^9_{K_6} = \frac{9!}{8} \frac{\prod_{e \in E_{K_6}} x_e }{\Psi_{K_6}^3} \Omega_{K_6} ,
 \end{gather*}
from which it is obvious that the associated canonical integral is positive and hence non-zero.
 Schnetz, using the method of \cite{BorinskySchnetz}, has computed
 \begin{gather*}
 I_{K_6} \big(\omega^5 \wedge \omega^9\big) = \frac{9!}{16}\bigg(360 \zeta(3,5)+690 \zeta(3)\zeta(5)- \frac{29 \pi^8}{315}\bigg)=1708.1901\dots.
 \end{gather*}
 The multiple zeta value $\zeta(3,5) =\sum_{1\leq n_1<n_2}\frac{1}{n_1^3n_2^5}$ is expected to be transcendental over the $\Q$-algebra generated by odd zeta values.
 It would be very interesting to relate this integral, via Stokes' formula and face maps, to the Feynman residue of the complete bipartite graph $K_{3,4}$, as one has the following identity:
 \begin{gather*}
 I_{K_6} \big(\omega^5 \wedge \omega^9\big) = \frac{9!}{16}\bigg(15 \zeta(3)\zeta(5)-\frac{25}{96} \int_{\sigma_{K_{3,4}}} \omega^{\mathrm{Feyn}}_{K_{3,4}} \bigg)
 \end{gather*}
 (the Feynman residue for $K_{3,4}$ is called $P_{6,4}$ in~\cite{Schnetz}). It would be very interesting to interpret this identity by relating it to the
 Borel regulator~\cite{Siegel}.

\subsection{Further wheels}
 For the wheel with seven spokes,
we check that
\begin{gather*}
\omega^{13}_{W_7} = 26 \big(1 + 60 Y+ 360 Y^2 \big) \frac{\Omega_{W_7}}{\Psi_{W_7}^2}, \qquad \text{where}\quad Y= \frac{\prod_{e\in S_7} x_e }{\Psi_{W_7}}
\end{gather*}
 and $S_7\subset E_{W_7}$ denotes the internal spokes of $W_7$. Its canonical integral is evidently positive and hence non-zero.
Schnetz has confirmed using \cite{BorinskySchnetz} that
\begin{gather*}
I_{W_7}\big(\omega^{13}\big)= 24024\zeta(7).
\end{gather*}
 In general, one can write a graph Laplacian for wheel matrices explicitly as in \cite[formula~(11.3)]{BEK} and use formula \eqref{PulloutEigenvalues} to compute the canonical forms to leading order. We can easily deduce that, for example
\begin{gather*}
\omega^{4n+1}_{W_{2n+1}} \equiv (8n+2) \omega^{\mathrm{Feyn}}_{W_{2n+1}}
\bigg(\!\!\!\!\!\mod \prod_{e \in S_{2n+1} }x_e \bigg) .
\end{gather*}
We expect that $I^{\mm}_{W_{2n+1}}\big(\omega^{4n+1}\big)$ is a non-zero rational multiple of the motivic odd zeta value $\zeta^{\mathfrak{m}}(2n+1)$ of weight $2n+1$.
Computations to appear in the forthcoming preprint~\cite{BorinskySchnetz} suggest that the rational coefficient is given by
\begin{gather*}
I_{W_{2n+1}}(\omega^{4n+1}) \overset{?}{=} (2n+1) \binom{4n+2}{2n+1}\zeta(2n+1) .
\end{gather*}

\begin{rem}
The above examples suggest considering the following family of period integrals. For any odd wheel $W_{2n+1}$, with $n\geq 1$, consider
\begin{gather*}
I_n^{(k)} = \int_{\sigma_{W_{2n+1}}} \bigg(\frac{\prod_{e\in S_{2n+1}} x_e}{\Psi_{W_{2n+1}}} \bigg)^{\!k} \frac{\Omega_{W_{2n+1}}}{\Psi^2_{W_{2n+1}}}
\end{gather*}
 for all $k \geq 0$, where $S_{2n+1}$ denotes the internal spokes of $W_{2n+1}$. A standard Picard--Fuchs argument implies that they satisfy recurrence relations in $k$.
 It is shown in \cite{BorinskySchnetz} using Gegenbauer polynomial techniques that
 \begin{gather*}
 I^{(k)}_n= \frac{2}{(2k+2)!} \binom{4n}{2n} \sum_{m=1}^{\infty} \frac{ \prod_{\ell=1}^k \big(m^2-\ell^2\big) }{m^{4n-1}}
 \end{gather*}
which, by expanding the product in the previous expression, is a sum of odd single zetas with weights from $4n-2k-1$ to $4n-1$.
 \end{rem}

\subsection{Summary}
 The following table summarizes the canonical integrals which are known or conjectured at present (right three columns) and compares them with Feynman residues of primitive divergent graphs (left two columns).
\begin{gather*}
\setlength{\tabcolsep}{4.5pt}\def\arraystretch{1.3}
\begin{array}{c|c||c|c|c} \hline
\text{Graph } G & \text{Feynman } I_G^{\mathrm{Feyn}} & \text{Graph } G & \omega\in \Omega_{\can} &\text{Canonical } I_G(\omega) \\ \hline %
W_3 & 6 \zeta(3) & W_3 & \omega^5 & 60 \zeta(3) \\
 W_4 & 20 \zeta(5) & W_5 & \omega^9 & 1260 \zeta(5) \\
W_5 & 70 \zeta(7) & W_7 & \omega^{13} & 24024 \zeta(7) \\
\vdots & \vdots & \vdots & \vdots &\vdots
\\
 W_{n+1} & \binom{2n}{n} \zeta(2n-1) & W_{2n+1} & \omega^{4n+1} & (2n+1) \binom{4n+2}{2n+1} \zeta(2n+1) ? \\[2pt] \hline
 W_3:W_4 & 120 \zeta(3) \zeta(5) & \Xi_{3,5} & \omega^{5} \wedge \omega^9 & \zeta(3) \zeta(5) \\ \hline
 K_{3,4} & * \zeta(3) \zeta(5) + * P_{3,5} & K_6 & \omega^{5} \wedge \omega^9 & * \zeta(3) \zeta(5) + *P_{3,5} \\ \hline
\end{array}
\end{gather*}
In the above table, the quantity
\begin{gather*}
P_{3,5} = \zeta(3,5) - \frac{29}{12} \zeta(8)
\end{gather*}
is a multiple zeta value of weight 8 and an asterisk denotes a known rational number.
The fact that the canonical integral for $K_6$ produces the same period $P_{3,5}$ strongly suggests that it can be reduced to the Feynman residue of $K_{3,4}$ (and to products of the Feynman residues for~$W_3$,~$W_4$) by application of Stokes' formula and relations in the cohomology of graph hypersurface complements. Note that $K_{3,4}$ is not a minor of $K_6$, but the Feynman residue for~$K_{3,4}$ has weight drop (lower than expected transcendental weight) which suggests that $P_{3,5}$ is a~generalised Feynman period of a~graph which is a~common minor of both $K_{3,4}$ and $K_6$, which might explain its simultaneous appearance in $I^{\mathrm{Feyn}}_{K_{3,4}}$ and $I_{K_6}\big(\omega^5 \wedge \omega^9\big)$.

\subsection*{Acknowledgements} This project has received funding from the European Research Council (ERC) under the European Union's Horizon 2020 research and innovation programme (grant agreement no.~724638). Many thanks to M.~Chan (who spotted a mistake in an earlier proof of Theorem~\ref{thm: nopolesonD}), S.~Galatius (who pointed out that~\eqref{mu2nvanishes} follows from the Amitsur--Levitzki theorem), S.~Payne, G.~Segal for discussions and especially R.~Hain and K.~Vogtmann, of which the present project is an offshoot of joint work. I am very grateful to O.~Schnetz, for computing the above examples of canonical integrals, M.~Borinsky, for sharing his computations of Euler characteristics, C.~Dupont and the referees for many helpful comments and corrections.

\pdfbookmark[1]{References}{ref}
\LastPageEnding

\end{document}